\newcommand{\introthmname}{}
\DeclareMathOperator*{\bigsquare}{\raisebox{-0.9ex}{\scalebox{2}{$\Box$}}}
\DeclareMathOperator*{\bigxsquare}{\raisebox{-0.9ex}{\scalebox{2}{$\boxtimes$}}}
\author{I.M.J. McInnis}
\title{Slavic Techniques for Hat Guessing Algorithms}
\abstract{
    We study a deterministic hat game. Sages stand on the vertices of a ``visibility (di)graph'' $D$. A crafty Adversary assigns to each sage $k$ a hat of color $c(k)\in [h(k)]$, i.e. from among $h(k)$ predetermined colors. Then each sage, based on the hat colors of her out-neighbors, guesses $g(k)$ colors for her own hat. They can strategize beforehand but not communicate during. They win if they can guarantee that at least one correct guess occurs. For which games $(D,g,h)$ can they guarantee victory? Parameters related to such questions have been well-studied, especially the ``hat guessing number'' $\mu(D)$. 
    
    We open with a casual, riddle-based invitation to the topic and its motivation. Rigor enters with a game taxonomy and a survey of all previous 
    work. After introduction and definitions, we prove warm-up propositions, including fundamental lemmata and the solution to most directed-cycle games. In the remaining chapters, we formalize, generalize, and utilize techniques from the literature.

    We take a ``Hats as Hints'' perspective and use the ``admissible paths'' of \cite{Szc17} to prove the conjecture of \cite{KLR21b} and go farther to settle all single-guess games on cycles. We generalize several ``constructors,'' central theorems that build games while preserving (un)winnableness. Basic ones simplify \cite{KL19}
    and algorithmically solve single-guess games on trees (two ways). A ``combinatorial prism'' approach gives rules for strategy design, a characterization of deletable vertices, and packing/covering conditions for games on $K_{n,m}$. Then we apply the probabilistic method, defining ``Ratio games'' to crystallize work on dependency (di)graphs, graph polynomials, and unwinnable games. We show that the visibility digraph is dual to a dependency digraph and that Shearer's Lemma probably can't extend to digraphs. 

    The final chapter collects open problems, including all that have been explicitly stated in the literature. Others are first posed here. It includes partial or minor results, such as curiosities about $\overrightarrow{C}_k$ and some computational complexity facts.}
\date{April 28, 2023}
\newtheorem{thm}{Theorem}[section]
\newtheorem*{thm*}{Theorem}
\newtheorem{prop}[thm]{Proposition}
\newtheorem{proposition}[thm]{Proposition}
\newtheorem{coro}[thm]{Corollary}
\newtheorem{corollary}[thm]{Corollary}
\newtheorem{lemma}[thm]{Lemma}
\newtheorem{lmm}[thm]{Lemma}
\theoremstyle{definition}
\newtheorem{defn}[thm]{Definition}
\theoremstyle{definition}
\newtheorem{game}[thm]{Game}
\theoremstyle{definition}
\newtheorem{ques}[thm]{Question}
\theoremstyle{definition}
\theoremstyle{definition}
\newtheorem{remark}[thm]{Remark}
\begin{document}
\setcounter{chapter}{-1}
\chapter{Invitation} 
A mathematically focused reader may skip this chapter, which is (intended to be) accessible and fun for the general public.\footnote{Its inclusion is substantially motivated by the requirement that I present a chapter of my thesis to a colloquium of European Cultural Studies students, and I didn't figure they'd enjoy a taxonomy of low-hatness Latvian cycles. Also N.B., there's no relation between the ECS requirement and the ``Slavic'' of this paper. It just happens that my favorite hat guessing work comes from Eastern Europe.} We introduce the topic with a riddle and walk the reader through its solution. Then we explain why someone might care.

\section{A fun riddle}
At a Goodwill in Texas some years back, I bought a book of riddles \cite{Gui14} for two dollars and ninety-nine cents plus tax. I was trying to build my riddle repertoire. See, each fall, I lead a dozen incoming Princeton frosh on a backpacking trip to help them adjust psychologically to college. They tend to like riddles. Hence the book. Its best was the seventy-seventh out of seventy-eight; no frosh ever solved it. Here it is (heavily paraphrased).

\begin{game}\label{impossiblerainbow}
Several sages––Agatha, Brigid, Catherine, Demetria, Edwen, Felicity, and Gaudentia––are seated in a circle, planning. In a moment, their Adversary will place upon each sage's head a hat from his Chest of Hats. The prosperous, eccentric Adversary has hundreds of red hats, hundreds of orange hats, and so on for yellow, green, blue, indigo, and violet each––more hats than any reasonable person could want––he is unconstrained by any scarcity of hats.
So every arrangement and combination is possible, from ``one of each'' to ``all violet.'' Once the hats are placed––dextrously and instantly––the sages may look around. Each sage can see the color of every other sage's hat, but she cannot see the color of her own. (These are comically small bowler hats.) Then each sage must guess the color of her own hat, based only on what she sees on whom. This is done without coordination––each sage 
scratches her guess on a private ostracon, or maybe they all say their guesses out loud on the count of three. The point is, once the hats have been placed, no communication can occur at all. No blinking in Morse code, or whatever. The sages win if at least one of them guesses her own hat color correctly. But the game hasn't started yet. For now, they're discussing strategy. If they lose, the consequences will be unspeakable. On top of that, the Adversary is eavesdropping, and if there's a single hole in the sages' strategy––a single hat arrangement where no-one guesses right––he'll find that gap and exploit it. So the sages must \textit{guarantee} victory! Can they? How? Pause and see whether you can solve it. 
\end{game}

The book's author called this ``The Impossible Rainbow.'' The name's apt. Most folks (including me, my frosh, my friends, and ChatGPT) insist it's impossible. They confidently explain why: ``For someone to know her hat color, she'd have to get that information somehow. She can't get it through communication, because that's forbidden. The only data she gets is the others' colors, but that doesn't help at all. She might e.g. see nothing but red and want to guess red, but the Adversary can do any arrangement he wants. It could be seven reds––but it could also be six reds and one orange, yellow, green, blue, indigo, or violet! So there's no way to actually get information.'' Can you see why this is wrong? (If you can, by the way, or solved the riddle from the outset, you're smarter than I. Or more patient, or possibly both.)

What that argument actually shows is that we can't guarantee that \textit{any particular} sage guesses right. It's impossible to guarantee that Agatha guesses right, or to guarantee that Brigid guesses right, etc.––but it turns out you can guarantee that \textit{someone} will guess right, never mind who. That's what hat guessing is all about: coordinating how different actors process information to ensure they capture something true. 

The answer seemed like magic when I saw it first, but I'll show you how anybody could have come up with it.\footnote{Though in practice ``anybody'' is only Jacob Neis, a grad student in Classics.} (If, at some point along the reasoning, you realize the answer, feel free to skip ahead to Proposition \ref{proofofrainbow}.) 
To understand the principles, it
helps to consider a smaller version. 

\begin{game}\label{smallrainbow}
    Agatha and Brigid are the only sages; red and orange are the only hat colors. Otherwise all is as above in Game \ref{impossiblerainbow}.
\end{game}
If you don't want to be clever, you can solve this version of the riddle simply by getting out a pencil and writing down the 16 possible strategies.\footnote{Relish that. For precious few games is such ``bashing'' feasible.} (Agatha can guess the color she sees, guess the color she doesn't see, always guess red, or always guess orange. Brigid has the same options. $4\cdot 4=16$.) Then test each against the 4 possible hat assignments. (Agatha can have a red or orange hat; Brigid can have a red or orange hat; $2\cdot 2=4$.) This identifies immediately which strategies always win.\footnote{The analogous procedure for the seven-sages version would require you to write $(7^{7^6})^7$ strategies and test each against $7^7$ hat assignments. Taking one second to check each hat assignment, this will take a little more than $10^{94352}$ years.} You'll realize that (up to symmetry) there's only one solution. Agatha guesses whatever she doesn't see, and Brigid guesses whatever she does. 
This can be rephrased as, ``A assumes their colors are different, and B assumes their colors are the same." 

What are the general principles in that guessing strategy? For a sage to guess according to some assumption about the world, that assumption (plus what she sees) ought to always yield a unique guess. For instance, if Agatha assumed ``we both have blue hats'' and she saw a red hat on Brigid, she couldn't make any guess consistent with her assumption. If Agatha assumed ``we don't both have red hats,'' and she saw a blue hat on Brigid, she wouldn't know whether to guess red or blue. If a sage's assumption doesn't yield a unique guess, then that sage isn't very useful to the collective endeavor. 

Furthermore, all the sages' assumptions together must cover the space of possibilities––i.e., \textit{someone} has to be right. Since a sage's assumption gives her a unique guess, her guess is correct if and only if her assumption is correct. So to win, it's necessary and sufficient to guarantee that some sage's assumption will be correct. 

How can we do all this in the seven-sage, seven-color case? To make things easy, let's assign a number to each color. Mathematicians like to start things at zero, so let's say red=0, orange=1, yellow=2, green=3, blue=4, indigo=5, violet=6. Let's say that $c(A)$ represents the color of the hat on Agatha's head. $c(B)$ represents the color of the hat on Brigid's head, and so forth. 

We can write a hat assignment for everyone as a vector, like $(1, 3, 5, 5, 5, 5, 2)$. In that assignment, Agatha gets an orange hat, Brigid gets a green hat, Gaudentia gets a yellow hat, and everyone else gets an indigo hat. Agatha would know that the hat assignment is $(c(A), 3, 5, 5, 5, 5, 2)$ for some $c(A)\in \{0,1,2,3,4,5,6\}$, but she doesn't know what $c(A)$ is. Brigid would know that the hat assignment is $(1, c(B), 5, 5, 5, 5, 2)$ for some $c(B)$, but she doesn't know what $c(B)$ is. And so on for all the other sages. What are the right assumptions?

Let's go back to the small version for inspiration. The possible hat-assignment-vectors are $(0,0)$, $(0,1)$, $(1,0)$, and $(1,1)$. Agatha is assuming that  $c(A)=c(B)$, while Brigid assumes that $c(A)\neq c(B)$. In other words, Agatha assumes that $c(A)+c(B)$ is even, while Brigid assumes that it's odd.\footnote{This might seem like the most unnatural step in the reasoning. If you're not big on math, the most likely way you'd discover it is probably by looking at the three-sage, three-color version. That's how Jacob did it, doodling in the corner of his sheet music in Chapel Choir rehearsal.} In math, we like to write ``$x$ is odd'' as $x \equiv 1 \mod 2$. (We say that as ``$x$ is congruent to one, modulo two.'') It means that the remainder of $x$, when divided by $2$, is $1$. Similarly, we write ``$x$ is even'' as ``$x \equiv 0 \mod 2$.'' In general, ``$x \equiv y \mod z$'' means that ``the remainder of $x$, when divided by $z$, is $y$,'' and this makes sense so long as $y<z$. This is called modular arithmetic.

So you might guess that the strategy for the seven-sage, seven-color version is: Agatha assumes that $c(A)+c(B)+c(C)+c(D)+c(E)+c(F)+c(G)\equiv 0 \mod 7$. Brigid assumes that $c(A)+c(B)+c(C)+c(D)+c(E)+c(F)+c(G)\equiv 1 \mod 7$. And so on. 

It's not too hard to show that these assumptions all give one unique guess. Say $q$ is the sum of everyone else's hat colors, which you know. $x$ is your own hat color, which you're trying to guess. $a$ is your assumption for what the sum is congruent to modulo 7. Since $x$ is in $\{0,1,2,3,4,5,6\}$ (but can be anything in that set) there's a unique value of $x$ such that $x+q \equiv a \mod 7$. No matter what, that sum will be congruent modulo 7 to $0,1,2,3,4,5,$ or $6$. So exactly one sage's assumption will be correct. Thus, we've met all the necessary conditions, and this strategy works! Try it for yourself and six friends. Hours of fun guaranteed. 

\section{Anything fun is fun to generalize}

You might find yourself asking: what if we adjust the number of colors and sages? (You also might not.) What if they're not equal? That drive is what leads us to real mathematics, which starts now.

\begin{prop}\label{proofofrainbow}
    Consider Game \ref{impossiblerainbow}, except with $s$ sages and $h$ hat colors. The sages can guarantee victory if and only if $s\geq h$. 
\end{prop}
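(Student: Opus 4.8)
The plan is to prove the two directions separately: exhibit an explicit winning strategy when $s \ge h$, and run a counting argument showing the Adversary wins when $s < h$. Throughout, note that the circular seating is a red herring — each sage sees all $s-1$ others, so the visibility graph is complete $K_s$ and everything is symmetric in the sages.

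For the ``if'' direction ($s \ge h$), I would generalize the modular-sum idea from the seven-sage discussion. Label the colors $\{0,1,\dots,h-1\}$ and the sages $0,1,\dots,s-1$. For $0 \le i \le h-1$, sage $i$ adopts the assumption that $\sum_{j=0}^{s-1} c(j) \equiv i \pmod{h}$; any remaining sages (when $s > h$) may guess arbitrarily. First I would check that each such assumption yields a well-defined guess: sage $i$ sees $q_i := \sum_{j \ne i} c(j)$, and since $c(i)$ ranges over all of $\{0,\dots,h-1\}$, there is a unique value with $q_i + c(i) \equiv i \pmod{h}$, and she guesses that value. Then I would observe that the true total $\sum_j c(j) \bmod h$ equals exactly one element $i^\star \in \{0,\dots,h-1\}$, and since $i^\star \le h-1 \le s-1$ the sage $i^\star$ exists; her assumption is correct, hence so is her guess.

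For the ``only if'' direction ($s < h$), I would count over the $h^s$ possible hat assignments. A deterministic strategy for sage $i$ is a function from the $s-1$ colors she sees (an element of $[h]^{s-1}$) to a single guessed color. For any fixed such strategy, sage $i$ guesses correctly on exactly $h^{s-1}$ assignments: for each of the $h^{s-1}$ ways to fill the other coordinates, precisely one value of $c(i)$ makes her right. Hence, by the union bound, the set of assignments on which \emph{some} sage guesses correctly has size at most $s \cdot h^{s-1}$. When $s < h$ this is strictly less than $h \cdot h^{s-1} = h^s$, so some assignment defeats every sage simultaneously; the eavesdropping Adversary simply plays it.

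The argument is essentially routine; the one genuinely substantive move is recognizing that one should count global hat assignments rather than reason ``locally'' about information flow — precisely the flawed intuition the Invitation debunks. A small point worth stating explicitly is why a fixed strategy for sage $i$ wins on exactly $h^{s-1}$ assignments (one correct completion per view), since the whole converse rests on that tally.
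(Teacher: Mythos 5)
Your proof is correct and follows essentially the same route as the paper's: the identical modular-sum strategy for $s\geq h$, and for $s<h$ a count of winning assignments that is just the paper's first-moment/expectation argument ($sh^{-1}<1$) multiplied through by $h^s$. No substantive difference.
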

\begin{proof}
    We'll prove ``if'' by showing a strategy, and we'll prove ``only if'' by randomizing the hats. 

    When $s\geq h$, the sages can win with modular arithmetic. Assign each color a unique value in $[h]\equiv \{0,1,...,h-1\}$. Assign $h$ of the sages a value in $[h]$ as well, as a sort of ID number. (The other $s-h$ sages can do whatever, and this strategy will still work.) For each sage $i$, let $c(i)$ denote the color of her hat. 
    Each sage looks around the room and calculates $\sum_{j\neq i} c(j)$ (i.e., the sum of everyone else's colors.) Then she guesses that $c(i)\equiv i-\sum_{j\neq i} c(j) \mod h$, which is just a rearrangement of her assumption that $\sum_{j\in [h]} c(j) \equiv i \mod h$ (i.e., that the sum of all colors is has remainder $i$ when divided by 7). Since $\sum_{j\in [h]} c(j) \mod h$ is some fixed $k\in [h]$, sage $k$ guesses correctly. Thus, the sages always win with this strategy. 

    When $h>s$, brute probability means some hat arrangements will slip past any strategy. Fix a strategy for the sages. Let the hat colors be chosen uniformly and independently at random. Since a sage's own hat color is independent of the hat colors she can see––which uniquely determine her guess––the probability of her guessing correctly is $h^{-1}$. Thus, the expected number of sages who guess correctly is $sh^{-1}<1$, so there is some hat arrangement with no sages guessing correctly. 
\end{proof}

If you want to keep going, there are many steps you can take to generalize further. As we just did, you can change the number of sages and the number of hat colors. You can penalize incorrect guesses and maybe permit abstention. 
You can demand that $k$ sages guess correctly for various $k>1$. You can introduce probabilistic considerations––instead of guaranteeing some right guess, you can try to maximize the chance that everyone is simultaneously correct (or pursue any of the $\approx 36$ probabilistic variations that \cite{Krz10} mentioned). 
You can have the number of possible hat colors differ from sage to sage. You can have infinitely many sages. The sages can guess in order and hear each other's guesses. You can give certain sages hints. You may require their guessing rules to be linear functions. 
The adversary may have a limited number of total hats at his disposal. Some hat assignments may be forbidden. And so on. These can be combined with the most interesting innovation: not all sages can see each other. 

\begin{game}
 Night has fallen. Our seven sages are still seated in a circle when the Adversary returns, proposing a rematch. The sages demur. ``The room is so dark,'' they say, ``that each of us can see only her immediate neighbors in the circle.'' The Adversary says that's no problem, offering a handicap of his own: he promises to use only red, orange, and yellow hats. Can the sages win? 
\end{game}

Recall that by Proposition \ref{proofofrainbow}, it takes all seven sages to win Game \ref{impossiblerainbow}. Here, seven sages lose. Yet if one sage has wandered off to pray (and the other six have closed her gap in the cirlce), the remaining six win. If another leaves, the remaining five lose. If yet another leaves, the remaining four win. In fact, the sages win this game if and only if the number $s$ of sages is equal to 4 or divisible by 3. That theorem is difficult; see Section~\ref{sec:MCP} and \cite{Szc17}. 
 

What about other patterns of visibility? What if everyone can see two spaces to either side? Or only to the right? What if everyone can see each other except for Agatha and Gaudentia? There are limitless questions of the form. We can describe a visibility pattern by a mathematical object called a \textit{directed graph} (\textit{digraph} for short). This isn't the graph you're familiar with from precalc, the graph of a function. This is a network. There are dots we call \textit{nodes} or \textit{vertices}. There are \textit{arrows} or \textit{arcs} that go from vertex to vertex.  The specific way you draw it doesn't matter, just which nodes have arrows between them and which ways those arrows point. If the arrows always point both directions, it's simply called a \textit{graph}. Graphs and digraphs matter for chemistry, linguistics, physics, social science, computing, biology, and more. (Mostly, I think they're neat.) In this case, we can draw any (di)graph we want and say that there's a sage at every vertex, and sage $x$ can see sage $y$ if and only if there's an arrow from vertex $x$ to vertex $y$.\footnote{Some authors use the opposite convention, so that arcs map the flow of information. I find ours more intuitive. Despite Alhazen's debunking \cite{Ada16}
of extramission––the Platonic/Galenic idea
 that vision proceeds by rays emitted from the eye \cite{Pla60, Fin94}––an arrow from $u$ to $v$ just feels like ``$u$ sees $v$.'' That may differ for speakers of languages in which the sight-verb takes the seen thing as subject and seer as object, but I speak no such language and can't shake the feeling. (It is perhaps noteworthy that \textit{many} college-educated American adults believe in extramission, and it's difficult to convince them otherwise \cite{WCKC96, WCKG96, WCGFB02, WRC03}.)}

The Adversary has $k$ predetermined colors at his disposal. We want to know: what's the highest $k$ for which the sages can win? For a digraph $D$, we call that value $k$ the \textit{hat guessing number of the digraph $D$}. We write it as $\mu(D)$. It's surprisingly hard to determine. 
This thesis elaborates on some techniques toward doing so developed in Poland, Czechia, and Russia. 

\section{Beyond fun}

I've had something like the following conversation several times. The Skeptical Interlocutor might be a classmate, a worried relative, a philosopher of science, or a reader of grant applications.\footnote{I haven't actually applied for any grants, so that one's hypothetical.} 
\begin{quote}
    Me: ...and we call that $\mu(D)$, and it's much harder to determine than you'd expect. So I'm building on some techniques for it developed in Poland, Czechia, and Russia.

    Skeptical Interlocutor: Very nice. \textit{[tactfully]} Who cares?

    Me: I do! The problem is challenging; its solutions require ingenuity; I get to contribute to the stock of human knowledge; ``Euclid alone has looked on beauty bare'';––

    S.I.: Okay, who else?

    Me: Several highly trained intellectual professionals, as part of their publicly funded duty.

    S.I.: \dots

    Me: Mathematicians, I mean. A few dozen of them.

    S.I.: \dots

    Me: \dots

    S.I.: Right but \textit{why?}
\end{quote}
I figured I ought to answer this in writing. The short answer is: deterministic hat guessing is an intrinsically nice problem, older hat games proved surprisingly applicable well after their introduction, 
and even this one might have some practical use.\footnote{At least, that might answer ``why should they get paid to do it?'' They \textit{actually} do it for the same reasons I do.} To keep the chapter friendly and short, there are no technical details.\footnote{To the reader who would have loved technical details, I apologize. You should follow the citations, where they exist. If there are no citations, feel free to email me with a demand for explanation at imj.mcinnis@gmail.com.}

\subsection{Related games' applications}
A few earlier hat games, somewhat different from ours, spread far and wide in the nineties and aughts as pure riddles. Along the way, they picked up applications. For instance:


\subsubsection*{Finite dynamical systems} 
Dynamical systems are mathematical objects we use to model real-life things. A dynamical system has some entities in some states, and these entities change states over time according to their current state and the state of various other entities. This is pretty much what all science studies. A \textit{finite dynamical system} is just one with finitely many entities and finitely many possible states, so the evolution from state to state (while still deterministic) happens in discrete time-steps. These have modeled neural networks, social interactions, epigenetics, the immune system, bacteriophages, and phenomena in thermodynamics and statistical mechanics \cite{Gad15, TD90, GM90}. 

When analyzing dynamical systems in general, we want to find fixed points (states where nothing will change), cycles (patterns of eternal recurrence), and chaos (confusing, unpredictable behavior that never repeats) and understand how these things are intermixed in possibility space. Because these finite dynamical systems are \textit{finite}, they must eventually repeat, so there's no true chaos, but it's possible to measure how messy (``unstable'') they are.

For any finite dynamical system, one can draw a (directed) ``interaction graph'' with a node for each entity and an arrow $\overrightarrow{xy}$ whenever thing $x$ affects thing $y$. It turns out\footnote{This is linked to network coding, for reasons you maybe can guess, but which we won't explore.} that the stability and instability of a dynamical system is constrained by that graph. Specifically, by the performance of sages on that digraph!
\cite{Gad15,Gad17}

\subsubsection*{Auctions} 
You know what an auction is: that thing charity galas, art markets, and livestock fairs have in common. The government uses them to decide who gets construction contracts, radio frequencies, and commercial licenses. Auctions on power line congestion pricing allow energy producers to better plan their finances. Every time you watch a video, Google something, or visit a website, there's a near-instantaneous automatic auction for the right to show you an ad. Blockchain guys are really into auctions, but I'm not sure what they auction. 

Every auction has at least one of two goals: maximize revenue, and get 
items to the right people––i.e., those who want it most, those for whom the item does the most good. Google only cares about the former when it auctions off ad-space. The Federal Communications Commission isn't a major government revenue source; it's more concerned with having America's airwaves occupied by radio stations whose broadcasts are consistent, high-quality, and popular. But these goals align pretty well. Market mechanisms still ain't perfect, but someone willing to pay \$100,000 for the 95.1 band in Portsmouth, NH probably has bigger plans for that frequency than someone willing to pay \$10,000, plus more ability to execute. In general, the person who wants it most is also the customer who will make you the most money.\footnote{This isn't always true or good, but it's usually good enough and true enough.} So an auctioneer just wants to reliably, accurately, and quickly learn how much each bidder is truly willing to pay, and sell to the one who's willing to pay the most.   

You're probably most familiar with a ``first-price auction.'' Suppose we're auctioning off one psaltery.  
Every bidder writes down her bid on a secret slip of paper and gives it to the Auctioneer. The Auctioneer then identifies the highest bid. Say Gertrude bid \$200, and that was more than everyone else. Then Gertrude pays \$200 and gets the item. The item is actually worth \$250 to her, so Gertrude profited \$50. There's an incentive to bid less than your honest value of the item, so that you profit if you win.

That's a problem for the Auctioneer. He wishes he could sell the item to Gertrude for \$250 (or \$249.99). He can't drive that hard of a bargain. But suppose Hildegard reveals to the auctioneer that actually, she values the item at \$225, but underbid just like Gertrude. Then the Auctioneer can credibly sell it at \$225––either to Hildegard (who's indifferent between paying and not) or to Gertrude (who, though the Auctioneer doesn't know for sure, would rather buy). Or maybe someone else values it even \textit{more} highly, but lowballed it more than Gertrude did! It's a mess that can gradually be sorted out by having people make and adjust their bids publicly, moving the price up in increments. 
That solution is clunky and grainy. The slicker way to capture the same idea is the ``second-price auction.'' This has the same procedure––everyone makes a single bid, and the highest bidder gets the item––except she doesn't pay her own bid; she pays the \textit{second-highest} bid. It isn't too hard to show that the best bidding strategy in a second-price auction is simply to write down whatever your true valuation of the item is, because the price you pay on winning depends only on other people's bids, and if you lose, you didn't want it as much as the winner did. It takes a little more doing, but you can also show that this makes just as much money for the Auctioneer as a first-price auction. 

Matters get much more complicated with multiple items. Suppose you're bidding on a bunch of different items of furniture. The mid-century modern couch and armchair are worth \$400 to you individually, but they're worth \$1000 to you together. The Louis XVI armchair is worth  \$600 to you, but only \$200 if you end up getting the mid-century modern armchair. And so on. Many bidders, many separate auctions. We'd like to design auction mechanisms that incentivize bidders to truthfully disclose their valuations. The genius of the second-price auction is that your price depends not on your bid but only on other people's bids––which depend on their valuation of the item. How can we create such a rule for more complicated auctions? It's hard.

In hat guessing, you're trying to construct a function that gives each sage her guess and depends only on the other sages' hat colors, which are not subject to choice. In auction mechanism design, you're trying to construct a function that gives each bidder her price and depends only on the other sages' bids (which, once the mechanism is well-designed, truthfully refelects their valuations, which are given by nature). That's the analogy.  

Until 2005, the best known mechanisms used randomness, which might seem unnecessary, fraught, and prone to manipulation. Then Aggarwal et al. \cite{AFGHIS05} used this analogy to show how to convert any random auction into a deterministic one using hat games, while getting nearly the same revenue. This is extended in \cite{BNW15}.

\subsubsection*{Coding theory}
The defining activity of modern life is probably the storage and transmission of bitstrings (i.e., zeroes and ones, the language of computers). However, there's always a risk that interference or error might flip a 0 to a 1 or vice versa, or delete a digit entirely, turning 0101 into  1101, 0100, 011, or 010 (among other possibilities). 

So we want to design ``codes'' that are resistant to a certain frequency of errors––a way of transmitting with protective redundancy. For instance, you could send every bit three times. Then, if you get a string that reads  00011101111 or 00011000111, you can be pretty confident that the intended message was 000111000111, which represents 0101. 

Different kinds of errors predominate in different contexts––the challenges of radio transmission differ from the challenges of hard drive storage. Depending on what kinds of errors you're protecting against, you'll want codes with different properties. It's an active area of research; the goal is to optimally balance resilience with concision. (You could protect yourself against errors almost surely by writing every bit twenty times in a row, but that's inefficient.) 

Hat guessing is also about how to process information in the face of uncertainty so as to ensure the faithful capturing of some truth. The specific connections are too various and technical to go into here, but facts about coding have been used to solve hat guessing problems and vice versa
\cite{Gad15, Ber01, LS02, CHLL97, EMV03, Uem16, Uem18, JJG19, Alo08}.

\subsubsection*{Genetic programming}
Evolution has produced amazing things. Could it produce amazing software? That's the premise of genetic programming, a field that studies the automatic creation of algorithms via reproduction with heritable variation and selection for performance. In 2002, researchers attacked a hat problem with genetic programming and found it difficult \cite{BGK02}. 
It spurred them to develop impressive techniques that still fell short; the genetic algorithms failed to find the globally optimal strategies, instead getting trapped in local optima. (This is also a concern for machine learning.) Finding the problem ``attractive to genetic programming for several reasons,'' they ``hope[d] to gain a deeper understanding of how and when genetic programming works well on difficult problems'' and ``[felt] that investigation of this problem will lead to a worthwhile examination of evolutionary computation systems'' and ``force a critical evaluation of [their] application.'' Sadly for hat enthusiasts everywhere, this hasn't come to pass.

\subsubsection*{Network coding}

Imagine a graph where the nodes are communicators––people, computers, physical processes, cell towers, whatever––and the edges are open lines of communication. (This can be a directed graph if some communication is one-way.) Each line of communication has a limited bandwidth. Pick two communicators $s,t$ somewhere in the graph. With what bandwidth can $s$ transmit information to $t$? How fast can a message get from $s$ to $t$? And how should it be done? As it turns out, this problem––which is critical for essentially all electronic communication––isn't as simple as ``pretend the information is a liquid and utilize all available pipes to their fullest extent.'' Instead, it's related to the coding theory we discussed earlier. (This also pertains to the computational power of electric circuits and the concept of graph entropy.) \cite{Rii07a, Rii07b, GR11,ACLY00}

\subsubsection*{Other}
The list above isn't quite complete. There are a few minor applications we haven't discussed, found in the survey \cite{Krz10}, which discusses (along with some of the above) cellular automata, linear programming, approximation of Boolean functions, and autoreducibility of random sequences. There are others that have occurred since, which we don't review. E.g., \cite{PWWZ18} relates hat guessing to the arrangment graph $A_{m,n}$, to Steiner systems, and to other designs. 

\subsection{Our game's applications}\label{subsec:applicationsofourgame}
Besides its relationship to the games in the applications above, our game has some uses. (``Uses.'')

\subsubsection*{100\% soothsaying}
I haven't found many papers that study our deterministic games per se instead of their close relatives. However, one thread of mathematical research (summarized in the monograph \cite{HT13}) concerns infinitary generalizations of the deterministic hat problem and uses it to study the prediction of arbitrary functions, finding astonishing success. For instance, to quote the abstract, 

\begin{quote}
    there is a method of predicting the value $f(a)$ of a function $f$ mapping the reals to the reals, based only on knowledge of $f$’s values on the interval $(a-1,a)$ and for every such function the prediction is incorrect only on a countable set that is nowhere dense.
\end{quote}

We can provocatively rephrase this as, ``there's a method of predicting the future with 100\% accuracy, even in a non-causal universe, so long as you have nonzero short-term memory.''\footnote{As you might imagine, this requires the axiom of choice.} 

\subsubsection*{Philosophy and wild speculation} 
Earlier drafts contained background for all the topics of speculation below, but the applications are sufficiently improbable that we decided full explanations were bloat. 

Examining our hat guessing strategies, one could extract the following general (somewhat perverse) moral: to maximize the chance that \textit{someone} finds a truth, you should assume that everyone you're famililar with is wrong, even about things you're not qualified to comment on. It's hard to imagine ``real-world'' or ``epistemological'' applications of hat guessing––like, one in which each sage corresponds to some reasoning agent. That's because the hat problem is: a bunch of people, each trying to answer some question they are uniquely unqualified for, on the basis of the true answer to other folks' questions, while all are helpless to communicate. Such situations are rare.\footnote{With the plausible exception of romantic love.} Perhaps there are other philosophical morals. Perhaps there's some application in blockchain or other multi-party cryptography. 

Having once seen (in lecture notes for a quantum computing class) the famous CHSH game introduced as a modification of Game \ref{smallrainbow}, I wonder whether hat guessing games could give us ever-more egregious Bell inequality violations. If hat guessing strategies are paradigmatic for contriving negative correlations among events with a given dependence graph, this could be useful for risk management broadly and portfolio theory  specifically (devising trading/investing strategies for financial assets related on a network in order to hedge). \cite{GKRT06} was hopeful that the hat guessing problem might have some things to say about gene epistasis; quantitative geneticist Julien Ayroles shared that intuition. Computer scientist Tanya Berger-Wolf sees affinities between hat guessing and faulty chip testing. 

\subsection{Aus liebe zur kunst}
Finally, we come to the \textit{actual} reason, the reason that motivates me: it's just a nice problem. 

\subsubsection*{Which matters}
Even if the problem is nice, why should it be studied? This is a pretty common thing for non-mathematicians to wonder. You might be surprised to learn that there's some division on this topic. Must mathematics ultimately have utility––an application in science, which in turn is applied to technology, which then betters the lot of the species? The obvious practical answer is ``of course it must, if the taxpayer's supporting it!'' There are some eloquent and well-known defenses of pure math.\footnote{The classic is G.H. Hardy's \textit{A Mathematician's Apology}, but I would rather direct the reader to the following Edna St. Vincent Millay sonnet \cite{Mil23}: 
\begin{quote}
    Euclid alone has looked on Beauty bare.\\
    Let all who prate of Beauty hold their peace,\\
    And lay them prone upon the earth and cease\\
    To ponder on themselves, the while they stare\\
    At nothing, intricately drawn nowhere\\
    In shapes of shifting lineage; let geese\\
    Gabble and hiss, but heroes seek release\\
    From dusty bondage into luminous air.\\
    O blinding hour, O holy, terrible day,\\
    When first the shaft into his vision shone\\
    Of light anatomized! Euclid alone\\
    Has looked on Beauty bare. Fortunate they\\
    Who, though once only and then but far away,\\
    Have heard her massive sandal set on stone.
\end{quote}} I could try to write another, but instead, I'd like to quote contemporary mathematicians on the subject. There's a famously abstract mathematical discipline called category theory and a global listserv devoted to announcing conferences, job openings, festschrifts, and the like. Every few months there's a long reply-all conversation about some nuanced bit of categorical history, language, or philosophy. Arguments are collegial and heartening to watch. One that began in early January (over the nomenclature of ``point-free topology'') eventually touched on usefulness. 

Patrik Eklund wrote: 
\begin{quote}
    Some parts of theoretical mathematics is about seductive tricks, and
    some mathematicians fall for it. Potential practicality of even ``deepest
    theoretical theory'' keeps feet on the ground, even if practicality is
    not realizable or desirable. But my view is that we must keep
    ``real-world applicability'' at least as a ``general burden'' in the sense
    that all science must useful, in one way or another. Science should
    never be just ``aus liebe zur Kunst.''
\end{quote}
Robert Dawson replied: 
\begin{quote}
    You should, of course, follow your own ethical guidance, but I
respectfully disagree that ``we'' must do so.

Most of us agree that a painting or a novel can be beautiful without
having an improving message, that fine wine is worthwhile even if its
health benefits are dubious, etc.  Why should science be required to
clear a utilitarian bar that other fields of human endeavour do not?
\end{quote}

The sentiment is common, but folks rarely write their aesthetic judgment down. It seems to be considered gauche. I'll be a little gauche. 

\subsubsection*{The problem's niceness}
Most folks––mathematical or not––to whom I've told the original riddle find it and its solution charming. I feel similarly about many hat games. It's satisfying to reach into such a huge and misshapen set of possibilities to pull out a simple yet improbable tool. When reasoning about sages in these general games, everything seems to depend on everything else in thorny ways that are impossible to systematize, but mathematicians have, over the course of $\approx 20$ papers, made remarkable progress––yet it seems we still know so little. Our intuition is especially bad––most conjectures about this game fail, often spectacularly.\footnote{The most dramatic example comes from \cite{BDFGM21}, who conjectured that every planar graph has $\mu(G)\leq 4$. Later, \cite{LK23} constructed a planar graph with $\mu(G)\geq 22$.} If you generalize the game to infinite sages and infinite colors, you can derive nearly-metaphysical results.
The problem has demanded that folks draw not only on pure combinatorics, but also discrete geometry, graph theory, probability, coding theory, tensors, and algebra––along with original tools like constructors and admissible paths, not to mention the ad hoc lines of reasoning that show up everywhere––the problem resists systemization.\footnote{Many of my theorems are attempts to introduce a little systematization––judge for yourself how well this works.} (If you are aesthetically inclined, I recommend the proofs of Theorem 3.6 from \cite{LK23}, Theorem 11 from \cite{BDO21}, and Proposition 4.3 from \cite{ABST20}.)

This infatuation with the deterministic hat guessing problem is perhaps the result of immersion and personal bias. I hope this thesis vindicates my fondness. 

\chapter{Introduction}
We define our basic objects of interest and give their history. Then we outline the thesis, its approach, and its highlights.

\section{The games and parameters}\label{DefiningGamesAndParameters}
Graphs can be directed or undirected, but they must be loopless and simple.\footnote{A loop makes the problem trivial, and parallel edges add nothing. Undirected graphs can simply be considered as directed graphs where $\overrightarrow{uv} \in E(G) \implies \overrightarrow{vu} \in E(G)$.
}

A \textit{Czech game}\footnote{This version (although specifically undirected) is the one introduced by Václav Blažej, Pavel Dvořák, and Micha\l Opler \cite{BDO21}. All are at universities in Prague.} is a triple $\mathcal{G}\equiv (D,g,h)$, where $D$ is a (di)graph and $g,h$ are functions $V(D)\rightarrow \mathbb{N}$. We call $D$ the \textit{visibility (di)graph}, $g$ the \textit{guessness} function and $h$ the \textit{hatness} function. These may also be regarded as $\mathbb{N}$-valued vectors over $V(D)$. We use $z\leq z'$ (for any functions/vectors over $V(D)$) to mean that $z(v)\leq z'(v)$ for every $v\in V(D)$. If $g$ or $h$ is constant, we may highlight that by writing $\star g$ or $\star h$, following \cite{LK22}. 

We'll refer to $\mathcal{G}$ as a \textit{game played on $D$}.
Our image is of ``sages'' standing at each vertex of the visibility graph.  
An Adversary will place upon the head of sage $v$ a hat of color $c(v) \in V_v$. $V_v$ denotes the set of possible colors for $v$; we may identify it with $[h(v)]\equiv \{0,1,...,h(v)-1\}$. The function $c$ is chosen by the Adversary; we call it a \textit{coloring}. A restriction $c_S$ of a coloring $c$ to a subset $S\subseteq V(D)$ is called a \textit{partial coloring}. Sage $v$ can see the hat color of sage $u$ if and only if there is an arc from $v$ to $u$. Consider for sage $v$ a function $f_v: \left(\prod_{\overrightarrow{vu}\in E(D) } V_u\right) \rightarrow \{M\subseteq V_v \; \text{s.t.} \; |M|=g(v)\}$, i.e., a function that considers what colors she sees on whom and outputs $g(v)$ guesses for the colors of her own hat.
We call it a \textit{plan for/of/on $v$}. A collection consisting of a plan $f_v$ for each $v\in S\subseteq V(D)$ is called a \textit{plan for/of/on $S$} and denoted $f_S$. A plan $f_{V(D)}$ for all vertices is called a \textit{strategy (for $\mathcal{G}$)} and denoted simply $f$. 

If $c(v)\in f_v(c(N^+(v))),$ we say that $v$ \textit{guesses right} or \textit{wins}. If no $v\in V(D)$ guesses right, $c$ is a \textit{disprover for $f$}, and $f$ \textit{loses} $\mathcal{G}$. We say that $f$ \textit{wins} $\mathcal{G}$  if it has no disprovers, and $\mathcal{G}$ is \textit{winnable}.\footnote{The logical formula for ``$\mathcal{G}$ is winnable'' is $\exists f \forall c \exists v (c(v)\in f_v(c(N^+(v))))$.} If $\mathcal{G}$ has no winning strategies, it is \textit{unwinnable}. The winnableness-or-not of a game is referred to as its \textit{outcome}.
The central question is, which games are winnable, and how?

We generally assume that $h>g>0$ (and so $h>1$)––if some $v$ has $g(v)=0$, it can be deleted without changing the outcome, and if it has $h(v)=g(v)>0$, all other vertices can be deleted without changing the outcome. In other words, we require $g/h \in (0,1)$. We'll sometimes be concerned just with the $g$-to-$h$ ratio, so for convenience (following \cite{BDO21}) we define $r(v)=g(v)/h(v)$ and call that the \textit{ratio of $v$.}
We're especially interested in certain special Czech games.  
If $g=1$, we call it a \textit{Latvian game}\footnote{This version was introduced explicitly and studied by Konstantin Kokhas and Aleksei Latyshev (at universities in St. Petersburg) and secondarily Vadim Retinskiy (at a university in Moscow). Ordinarily I would call these ``Russian games,'' but there's a war on; it seems like poor taste. I have the polyglossic Lidia Trippicione to thank for pointing out that ``Latyshev'' translates to ``Latvian.'' She wants me to remark that Latvia is not Slavic.} and write such games as pairs $(D, h)$.  If $g$ and $h$ are constant, we call it a \textit{Polish game}\footnote{It was introduced by Farnik \cite{Far16}.} and write it as $(D,\star g,\star h)$. We define the \textit{hat s-guessing} number of a digraph $D$ as the greatest integer $k$ such that the Polish game $(D,\star s,\star k)$ is winnable and denote it $\mu_s(D)$. The \textit{fractional hat guessing number}\footnote{First studied by \cite{BDO21}.} equals $\sup \frac{\mu_s(D)}{s}$; we denote it $\hat{\mu}(D)$. 

If a game is both Latvian and Polish, it is \textit{Classic.}\footnote{It was the first considered \cite{BHKL09}. The reason I need to coin all these names, by the way, is because every author simply calls the version they study ``the hat guessing game'' or ``the \sc{Hats}\rm game'' or something like that.} The \textit{hat guessing number} of a digraph equals $\mu_1(D)$, but it's of the most interest, so we denote it simply as $\mu(D)$. It's the greatest integer $k$ such that the Classic game $(D,\star k)$ is winnable. It is clear (or demonstrable from Proposition \ref{monotone}) that $\mu\leq s^{-1}\mu_s\leq \hat{\mu}$.

Rather than ``Latvian game on a cycle $C_k$'' or ``a Polish game on an undirected graph,'' or something like that, we'll often say things like ``a Latvian cycle'' or ``an undirected Polish game.''  

We'll also consider a new continuous variant called the Ratio game, but not till Chapter 6, so we defer the definition. 

\section{A brief history of deterministic hat games}
Nobody has thoroughly surveyed or historicized these topics. Perhaps it's premature, but I do so in hopes of providing a useful reference and motivational context. This thesis mostly studies Latvian and Czech games, but others have used those to get results about Polish and Classic games, i.e. about $\mu_s$ and $\mu$, which feature heavily in this survey (and lend themselves more easily to pithy summary). We include all known results on Czech, Polish, Latvian, and Classic games, along with all results on $\mu$, $\mu_s$, and $\mu$, except for a few very minor propositions. This mostly ignores strategy-restricted variants like ``bipolar'' or ``linear'' hat guessing number, as in \cite{BDFGM21,ABST20} et cetera. Techniques are only sketched. We usually omit the rediscovery of known results. For unfamiliar nomenclature, see (often) Section \ref{definitionsection} or (always) the papers cited. Most inequalities are not known to be tight.

Onward to hat guessing history. Although one cannot rule out the existence of primitive hat guessing among medieval or antique peoples, the oldest distant relative I can find appears in a Martin Gardner article (reprinted in \cite{Gar61}), which frankly bears a closer relation to the old ``blue-eyed islanders'' problem. A hat game that allowed passing and penalized wrong guesses (originating in \cite{Ebe98}) circulated some time around the millennium, growing popular enough for write-ups not only in the Mathematical Intelligencer \cite{Buh02},
but in abcNews \cite{Pou01}, Die Zeit \cite{Blu01}, and the New York Times \cite{Rob01}.
Our version is more closely prefigured in Peter Winkler's ``Games People Don't Play'' \cite{Win01}. \cite{GKRT06} presented a version closer still (the Classic game $(K_n,n)$) as a fun variant. 

\subsection{Early work} 
The restricted-visibility innovation was finally introduced in \cite{BHKL09}, which defined our Classic games. Its Theorem 7 proved that $\mu(K_{n,n})\in \Omega(\log \log n)$. The technique was to construct an algorithm by having a few vertices ``being looked at'' and an overwhelming number of ``spectator vertices,'' such that the spectators get it wrong rarely enough that the performers can patch those holes. Its Lemma 8 combined with its Example 1 shows that $\mu(T)\leq 2$ for any tree $T$. (In fact, they proved that a Latvian tree with hatnesses $(2,3,3,...)$ is unwinnable, though not in so many words.)  

The next entry is a monograph summarizing several papers \cite{HT13}. It's a marvelous compendium for infinite hat problems, but it spends a short chapter on the finite. It includes some basic theorems: $\mu(D)\geq 2$ if and only if $D$ has a directed cycle; the Classic game with $n$ sages and $n$ colors is winnable if and only if the graph is complete; $\mu(K_{n,m})\leq n+1$. They soon adjourn for the infinite case, writing: 

\begin{quote}
    There is an old saying, variously attributed to everyone from the French Minister Charles Alexandre de Calonne (1734–1802) to the singer Billie Holiday (1915– 1959), that goes roughly as follows: “The difficult is done at once; the impossible takes a little longer.” More to the point, Stanislaw Ulam (1909–1984) provided the adaptation that says, “The infinite we shall do right away; the finite may take a little longer.” With this in mind, we leave the finite.
\end{quote}

\cite{GG15} was the first to use specialized machinery. It defined ``distinguishable sets,'' which are based on  Hamming distance and presaged the Hamming ball phrasings of \cite{ABST20}. They used a ``blow-up lemma'' that can be rephrased as saying: if you take the strong product of $K_r$ with a digraph $D$, the resultant graph has hat guessing number $r\mu(D)$. The main result (besides those since superseded) is an asymptotic showing that $\mu(D)$ is rather unconstrained by $\omega(D)$, even for smallish graphs. 

Gadouleau \cite{Gad15} studied the game further in order to prove some properties of finite dynamical systems. His Theorem 3 proved that $\mu(G)\leq 1+\sum_{i=1}^{\tau(D)}i^i$, where $\tau(D)$ is the minimum size of a vertex set $S$ set such that $D\backslash S$ is acyclic.

\subsection{Slavic techniques}\label{Slavictechniques}
The PhD thesis \cite{Far16}, most of which was published as \cite{BDFGM21}, collects results on undirected graphs. They show probabilistically (see Chapter \ref{chap:LLL}) that $\mu(G)<e\Delta(G)$. Using convex functions, they bound $\mu$ in terms of $\chi(G)$ (the graph chromatic number) and $|V(G)|$: $\mu(G)\leq \left\lfloor (1-(1-\chi^{-1})^{\chi/|V|})^{-1}\right \rfloor$. This yields a (rather weak) set of probabilistic bounds, which are the earliest we know of––this paper is also the first to use combinatorial prisms\footnote{Upsettingly, they call them ``cubes.'' We adopt ``combinatorial prisms'' from \cite{HIP22}, since that does better justice to the discreteness, disconnectedness, and unequal dimensions of these objects.} (see Chapter 5) or examine $\mu_s(G)$. It proves:  $\mu_s(T)\leq s(s+1)$ for all trees; $\mu_s(S)\leq s(s+1)^2$ for all $S$ that are a 1-subdivision of another graph; every graph $G$ of genus $\gamma$ and sufficiently large girth (depending on $\gamma$) has $\mu_s(G)\leq (s^2+s)(s^2+s+1)$. The latter two are shown by means of the following theorem: for $G$ connected, $(A,B)$ a partition of $V(G)$, $d=\max_{v\in B} |N(v)\cap A|$, and $s_1=s(\mu(G[A])+1)^{d}$, we have $\mu_s(G)\leq \mu_{s_1}(G[B])$. (They also study some variant parameters of lesser interest.)

Still one of the most impressive papers is \cite{Szc17}, which merely set out to determine $\mu(C_k)$. He introduces admissible paths and the Warsaw graph $3*G$  (see Section \ref{AdmissibleGraphs}) along with a raft of ad hoc combinatorial constructions, through which he plods to show that $\mu(C_k)=3$ if and only if $3\mid k$ or $k=4$, otherwise equalling $2$. (He also shows that no Latvian cycle with hatnesses $(4,3,3,...,3)$ is winnable.)

Kokhas and Latshev have done the most of anyone––mostly on undirected Latvian or Czech games, though their latest publication \cite{LK23} concedes the naturality and usefulness of proper digraphs. 
In their first work, \cite{KL19}, they describe every graph for which $\mu(G)\leq 2$: graphs that are trees or whose unique cycle has length neither equal to $4$ nor divisible by 3. They use Szczechla's $3*G$ along with $L(3*G)$ to rephrase strategies as matrices, which they then test using brute-force matrix, tensor, Motzkin path, and SAT computation. It also explores hints (see Section \ref{sec:HAH}), one of which is a hint in our sense, and two of which prove weak statements in some non-classic variants. 

Subsequently, they formally inaugurate the Latvian game, which made hat guessing a suitably local phenomenon for the constructors technique (see Chapter \ref{ch:constructors}) to be devised.\footnote{A ``constructor'' is a theorem that alters a game or amalgamates games while preserving the outcome. The proto-constructor is the ``blow-up lemma'' mentioned above.} 
This was in two Russian-language papers, which they translated and published as \cite{KL21} and \cite{KLR21a}. These are combined as \cite{KLR21b}, ``Cliques and Constructors in \sc{Hats } \rm Game." 
They solve Latvian cliques––once constructively, and once by Hall's marriage theorem. They partially solve it for cliques-minus-an-edge.
They introduce the one-vertex product constructor, which proves that $h(v)=2^{\deg(v)}$ on trees is a winnable Latvian game. Their ``substitution'' constructor allows you to do the blow-up lemma one vertex at a time. There are a few ways you can attach vertices of hatness 2 and alter hatnesses to preserve winning or losing. You can also attach a short path with hatnesses 2 and 3 and alter hatnesses accordingly. Attaching a leaf of hatness $\geq 3$ does nothing. (All the preceding appear in Chapter \ref{ch:constructors}, where they get used and generalized.) Unwinnable games can be glued on a vertex of hatness 2 to yield another unwinnable game. There are more ways you can fasten games together, but they're hard to say concisely.
The next section is a long digression on ``Blind Chess,'' with subvariants called ``Rook check,'' ``Queen check,'' etc. Rook check constitutes a complete analysis of the Latvian game on $C_4$. 
Their constructors, combined with Szczechla's work, nearly\footnote{Contra the abstract's claims, they don't fully resolve it.} complete the analysis of the Latvian game on a cycle for $h(v)\leq 4$; see Section \ref{sec:MCP} for details and how we finish the job. 

They put the constructors to further work in \cite{LK21}. Their product constructor easily proves the windmill theorem of \cite{HIP22}. From products and contes, they have a few lemmata about constructing graphs to disregard a couple low-hatness vertices. Using these and more constructors, they devise an outerplanar graph with $\mu\geq 6$ and a planar graph with $\mu \geq 14$, strongly disproving conjectures from \cite{GIP20,BDFGM21}.

Constructors were picked up by \cite{BDO21}, who introduced the Czech game and solved it for cliques. Their only constructor, the ``clique-join'' of games, generalizes the product, substitution, and attaching-a-hatness-2-vertex-to-an-edge constructors.  Their other main tool, which we discuss in Chapter 6, is Shearer's lemma \cite{She85} as presented by Scott and Sokal \cite{SS06}, which motivates the (implicit) definition of Ratio games and the (explicit) definition of $\hat{\mu}(G)$.  Combining these tools with pre-existing algorithmic knowledge, they essentially solve the Ratio game for chordal graphs, showing that there exists a polynomial-time algorithm taking a Ratio game on a chordal graph as its input that outputs (if the Ratio game is winnable) a Czech game no easier than the Ratio game along with a polynomially-describable winning strategy for an associated Czech game or (otherwise) a proof that the game is unwinnable.\footnote{This is not an \textit{entirely} faithful description of what it does; I am substantially rephrasing, and certain edge cases are problematic. For discussion, see Section \ref{subsec:RatioGames}.}  They present an algorithm that takes a chordal graph $G$ with $n$ vertices and returns $\hat{\mu}(G)$ up to an additive error of $1/2^k$ in time $O(k\cdot poly(n))$. These tools also show that $\hat{\mu}(G)\in \Omega(\Delta/\log \Delta) \cap O(\Delta)$. They also point out that $\lim_{n\rightarrow \infty} \hat{\mu}(P_n)=\lim_{n\rightarrow \infty} \hat{\mu}(C_n)=4$ and that $\hat{\mu}(K_n)=n$. 

Latyshev and Kokhas picked up the clique-join constructor and independence polynomial methods from an earlier print of \cite{BDO21} and used them in \cite{LK22}. They showed in \cite{KLR21b} that the single-point product of two critical Latvian games (i.e., games that cannot be made any harder without becoming unwinnable) is not necessarily critical. Here, though, they show that \cite{BDO21}'s clique-join preserved maximality, which is a condition akin to criticality  most natural for Ratio games but also useful for Czech games and Latvian games.\footnote{For ratio games, it may in fact \textit{be} criticality. See Question \ref{PolynomialWorksForAllRatioGames}.}  Using these tools, they show that diameter and $\mu$ are independent. There are vapid instances of this (one can always attach additional leaves), but Latyshev and Kokhas find edge-critical graphs (i.e., any proper subgraph has lower $\mu$) with any odd diameter $d$ and even $\mu$; they suspect the parity restrictions are non-essential. Diameter is independent from $\hat{\mu}$ as well, they show. Most impressively, they contrive counterexamples to a conjecture of \cite{HIP22} that $\mu\leq \Delta +1$, showing that for any positive integer $N$ there exists a $G$ with $\mu(G)=\Delta(G)+N$ and that there exists a sequence of graphs $G_n$ with $\Delta(G_n)\rightarrow \infty$ and $\mu(G_n)=\frac{4}{3}\Delta(G_n)$. 

In \cite{LK23}, Kokhas and Latyshev improve the constructors for clique-joins and substitutions, adding ``reduction'' to control the increase of guessness functions and to derive Latvian facts from Czech ones. They give constructors for the deletion or addition of single arcs. Doing so, they find $\mu_s$ for paths and ``petunias,'' which are graphs of which each 2-connected block is a path plus a universal vertex. Most impressively, they find a planar graph with $\mu(G)\geq 22$. (It has 546 vertices by my count.) Analogous constructions yield planar graphs with $\mu_s(G)\geq 4(s + 1)(s + 2) - 2$ for any $s$. Along the way, they find constructions for outerplanar graphs with $\mu_s(G)\geq 4s(s + 1) - 2$. 

\subsection{The variegated recent boom}
Since about 2020, papers on deterministic hat guessing have come frequently, appearing in multiple versions in journals and on the ArXiV. Accordingly, it's difficult to chronologize recent papers, and even harder to trace whose work was influenced by whose and when. Individual papers use many techniques, so a thematic breakdown is also difficult. 

The popular \cite{HIP22} uses combinatorial prisms and substantial casework to prove that $\mu(K_{3,3})=3$. They determine the hat guessing number of some windmills and books, showing that books with enough pages attain the bound $\mu(G) \leq 1+\sum_{i=1}^{\tau(G)}i^i$. It emerged from a summer project whose final report \cite{GIP20} contained all of the above, along with a laborious determination of $\mu$ for the three 5-vertex graphs that remained unclassified by previous theorems. 

He and Li construct in \cite{HL20} an infinite family of graphs $G_d(N)$: take the complete rooted $d$-ary tree with depth $N$ and connect every vertex to all its ancestors. They show $\mu(G_d(N))=s_d-1$ for $N$ sufficiently large in terms of $d$, where $\{s_i\}$ is Sylvester's sequence. Since $G_d(N)$ is $d$-degenerate, this gives $d$-degenerate graphs with $\mu$ doubly exponential in $d$. They obtain upper bounds that depend on the degeneracy ordering, but not the degeneracy itself. 

Rooted trees of fixed height and arity appear also in  \cite{Bra21}. Bradshaw studies them in order to show that for any tree $T$, there exists a number $N(T)$ such that any graph $G$ not containing $T$ as a subgraph has $\mu(G)\leq N(T)$. He also shows that $\mu(G)<(\frac{64}{25})^{2^{\lfloor c^2/2 \rfloor-1}}+\frac{1}{2}$, where $c$ is the circumference of $G$, i.e. the size of its largest cycle. Throughout, he uses rephrased lemmata from \cite{KLR21b} and \cite{BDFGM21} (with pretty proofs), along with block decompositions, the notion of treedepth, and the parameter $\mu_2$ on subgraphs. 

In \cite{Bra22}, he bounds $\mu_s$ for outerplanar graphs and for a large class of planar graphs called ``layered planar graphs,'' which consist of concentric outerplanar graphs with non-crossing edges added. For the former, $\mu<2^{125000}$. (More generally, $\mu_s\leq (3(s+1)^6 +3(s+1)^3 +3)^{3((s+1)^6+(s+1)^3+1)^2}$.)  For the latter, $\mu\leq 2^{2^{2^{2^{2^{149}}}}}$. (More generally, $\log_2 \log_2 \log_2 \log_2 \mu_s < 2^{149}s^{35}$.) He leverages the partitioning theorem on $\mu_s$ from \cite{BDFGM21}, generalizing it to more than two parts. An additional tool depends on a ``Turán-type edge density problem.'' There's a fair bit of casework, especially for the layered planar graph. Finally, he proves that if $\mu_s$ has a constant bound for planar graphs, then so does $\mu_s$ for graphs of genus $\leq C$ for any fixed $C$. 

Using randomly chosen plans (and adjustments to cover their deficiencies), along with Hamming balls and lemmata about ``$t$-saturated matrices,'' \cite{ABST20} estimate the hat guessing number of complete $r$-partite graphs: $\mu(K_{n,n,...,n})\in \Omega(n^{\frac{r-1}{r}-o(1)})$. For bipartite graphs, this answered the popular question of whether $\mu(K_{n,n})$ grows at least polynomially in $n$. They show that $\mu(\overrightarrow{C}_{n,n,...n})\in \Omega(n^{\frac{1}{r}-o(1)})$, where $\overrightarrow{C}_{n,n,...n}$ is the complete $r$-partite directed cycle. Furthermore, $\mu(K_{n,m})=n+1$ for sufficiently large $m$. They also study linear hat guessing numbers and a connection between the hat guessing number of a graph and the hat guessing number of subgraphs on high-degree vertices. 

The greatest contribution of \cite{AC22} is to show that with high probability, $\mathcal{G}(n,1/2)$ has $\mu \geq n^{1-o(1)}$. They do so by showing that smaller book graphs than thought have large hat guessing number, and proving that $\mathcal{G}(n,1/2)$ has (with high probability) a reasonably-sized book graph. (The proof, apparently, does not depend on $1/2$ but could work for any $p$.) They also study linear hat guessing number and give a simple construction for a planar graph  $G$ with $2+2\cdot 66^{144}$ vertices such that $\mu(G)\geq 12$, as shown by giving an explicit plan with computations in $\mathbb{Z}_{12}$. They show $\mu(G)<13$ by showing that a particular Latvian game is unwinnable. 
\cite{WC23} apes \cite{AC22} to show that $\mathcal{G}(n,p)$ has with high probability hat guessing number $\geq n^{1-o(1)}$.

The question of whether $\mu$ can be bounded by degeneracy motivates \cite{KMS21}, but they don't solve it, instead introducing a notion of ``strong $d$-degeneracy.''  They use the Czech game to obtain results about the Classic game. For a strongly $d$-degenerate graph $G$, they prove $\mu(G)\leq (2d)^d$ by way of a harder Czech game and a constructor on vertex removal.\footnote{I closely read only a handful of papers––mostly Szczechla, Kokhas, and Latyshev––before embarking on my own research. It's good to see how often my own ideas were already had by others. They must be natural.} By similar means, they show that $\mu\leq 40$ for outerplanar graphs, forcefully contradicting Bradshaw's guess that it could attain 1000. They give additional characterizations of strong $d$-degeneracy, especially for graphs containing no $K_{2,s}$ subgraph for given $s$. Finally, they demonstrate that there exists a function $b$ such that a graph $G$ with $n$ vertices and average degree $C$ has $\mu(G)\leq b(C)$ with high probability as $n\rightarrow \infty$.

\section{Outline of the thesis}
First some general notes. Omitted from this summary is the context, philosophy, and general chattiness that appears alongside definitions, claims, and proofs. The reader who hates such parts can skip them. 
We phrase our propositions generally, sometimes stating the useful special case as a corollary. Our work is, in loving imitation of \cite{KLR21b}, more a garden of lemmata than a hike up to a theoremic view. More woodshop, less furniture. Also, though dependencies are not always chronological (a proposition in Chapter 3 might reference one in Chapter 4, for instance), be assured that we don't have any cyclic dependencies. 
Finally, though hat guessing on graphs can be a rather visually intuitive topic, I include no pictures or diagrams, concurring with my advisor that it's finicky work with low payoff. We encourage you to doodle in the margins. 

Chapter 0 is, as it says, an ``Invitation'' to hat guessing. We consider the Classic game $(K_7,7)$ and explain how one might figure out a winning strategy, emphasizing a ``covering assumptions'' view of guessing. We explore generalizations and define $\mu(G)$. Then we showcase applications of other hat guessing games, speculate about applications of our own, and discuss why someone might care. Nothing later depends on Chapter 0.

Chapter 1 (which you are reading) defines our objects of investigation: games, plans, and colorings. It surveys all known results in deterministic hat guessing. It summarizes the rest of the thesis (as it is doing), explains our approach, and highlights our best results.

Chapter 2 gives further definitions and proves some easy results as foundation and warm-up. Notably, it solves Latvian and Polish directed cycles. 

Chapter 3 crystallizes the ``assume your neighbors are wrong'' notion of hat guessing into the ``Hats As Hints'' theorem, which massively elaborates a notion from \cite{KL19}. We take up ``admissible paths'' from \cite{Szc17}, generalizing them in a few directions. We use these two tools to complete the task of Latvian low-hatness cycle classification from \cite{KLR21b} (which we limn in Section \ref{sec:MCP}). Then we classify all other Latvian cycles using admissible paths. 

Chapter 4 is concerned with constructors (though some appear in other chapters). We show their utility by using a few from \cite{KLR21b} to simplify proofs of \cite{KL19} and categorize all Latvian trees with a concise leaf deletion algorithm. Then we majorly generalize several constructors of \cite{BDO21} and \cite{KLR21b}. Using Hats As Hints, we generalize all of \cite{KLR21b}'s theorems that pertain to attaching a hatness-2 vertex in some manner to involve arbitrary hatness, guessness, and manner. We devise a new constructor about attaching a hatness-3 vertex and use it to generalize \cite{KLR21b}'s ``attach a short path'' constructor.
We generalize the ``clique join'' constructor from \cite{BDO21} in three complicated directions.

Chapter 5 deals with the ``combinatorial prism perspective'' that arises naturally when extending plans to independent sets. We define and discuss various forms of covering to give conditions on how to extend plans.
Leveraging an original Alon lemma, we find a broad class of vertices in Czech games  
that can be deleted or replaced by arcs. This includes a full generalization of the leaf-deletion theorem from \cite{KLR21b}, which gives an alternate proof of the bound on $\mu_s$ for trees. It allows an even slicker algorithm classifying Latvian trees. Finally we translate  complete bipartite Czech games to certain covering/packing problems in the integer lattice. 

Chapter 6 concerns hat randomization, dependency digraphs, and variants of the Lovász Local Lemma. We define the ``Ratio game.'' This embodies certain limiting properties of Czech games, implies straightforward Czech results, and is the proper object of probabilistic study.
 The central fact is that the dual of a visibility graph is a dependency digraph for the events ``$v$ guesses right,'' each of which has probability $r(v)$. We give obvious applications via extant theorems on dependency graphs. We also reply to a question of \cite{SS06} by saying that Shearer's Lemma probably doesn't generalize interestingly to dependency \textit{di}graphs.

Chapter 7 collects open questions on diverse hat guessing topics––all those posed in the literature (that are still open) and many original ones. It also includes partial results, including outcome conditions for elementary Czech games, basic analytic facts about Ratio games, and some complexity considerations. 

\section{Our approach}\label{OurApproach}
You might wonder about the title, ``Slavic Techniques for Hat Guessing Algorithms.'' Let's dissect it, starting with ``Slavic.''
I had no intention of regional specificity; it just happened that some specific papers charmed me \cite{Szc17, KL19,KLR21b,BDO21, BDFGM21}.
The connection isn't meaningless––they seem to be in close dialogue with one another, the techniques largely mesh, and it's they who've most explicitly considered variable hatness and guessness.

I say ``techniques'' because we concern ourselves less with theorems than with perspectives, approaches, tools, and ideas. (We still have theorems.) The specific techniques are mostly these: considering an additional sage as a hint to other sages, admissible paths/graphs/assignments, constructors, combinatorial prisms, and randomization.  

Finally, why ``hat guessing algorithms'' instead of ``hat guessing games'' or ``the hat guessing number''? We wish to emphasize three things. 
First, we don't talk much about the parameters $\mu,\mu_s,\hat{\mu}$ nor all that much about particular games. As we said above, we're interested in processes.

Second, the algorithmic aspect of hat guessing has been under-recognized. 
 One wants to know which games are winnable; one wants to find $\mu(G)$. For any game or graph, this can be answered by iterating over the finitely many possible choices of strategy and coloring. At even moderate size, such brute force is infeasible. At any size, it's unsatisfying. We want \textit{efficient} recognition/calculation protocols.

(Here's one standard we could set. Consider a family of games $\mathcal{F}$. Latvian cycles, maybe, or Czech stars, or games having no vertex of hatness $>100$. Anything. Treat $\mathcal{F}$ as a language in your favorite ad hoc encoding. We say $\mathcal{F}$ is weakly $C$-solvable for complexity class $C$ if there's an algorithm in $C$ whose input is a game in $\mathcal{C}$ and whose output is (correctly, of course)``winnable'' or ``unwinnable.'' Say it's strongly $C$-solvable if there's an algorithm in $C$ whose input is a game in $\mathcal{F}$ and whose output is either a winning strategy or a proof that the game is unwinnable.\footnote{Modulo an appropriate encoding scheme, the following classes have been shown strongly $P$-solvable: games with $h\leq 2$ \cite{BHKL09}, undirected games with $h\leq 3$ \cite{KL19}, Classic unicyclic graphs \cite{KL19}, Classic games on $\leq 5$ vertices \cite{GIP20}, some sporadic games that show up in constructions [nearly every paper], Czech cliques \cite{BDO21}, and some trivialities like Classic games with $\star h>|V(G)|$. I am not aware of any sharp non-constructive results. In this thesis, we add to the list Polish and Latvian directed cycles, along with several Latvian classes including cycles, trees, and unicyclic graphs.})

To fully resolve a class of hat games is to exhibit the efficient construction of algorithms or to show that no such efficient solution is possible.  The former's been done a handful of times; the latter never has.

Third, almost all of our propositions yield polynomial-time techniques for answering questions of interest to the would-be algorithm designer. Admissible assignments/graphs/paths allow you to grow disprovers step by step, making search easier. ``Hats As Hints" formalizes the explicit behavior necessary for a vertex's plan to help its neighbors, and the discrete geometry perspective explains the behavior necessary for a plan to extend usefully to an independent set. Both may inspire array-based algorithms. Constructors allow you to polynomially transform winning strategies for games into winning strategies for other games, or winnable games into other winnable games. When based on deletion, they automatically lower the computational load.  There even exist recent constructive versions of the Lovász Local Lemma and its variants, which would help identify specific failures. (Unfortunately, many LLL-and-variant-related computations are decidedly not polynomial.)

Nevertheless, we mostly suppress explicitly algorithmic language or content in the thesis. Let these tools be taken up by better wielders.

\section{Highlights}
This is rather long for an undergraduate math thesis! The reader who wants to read a few remarkable facts and depart is well within her rights.\footnote{It is perhaps optimistic to imagine that anybody is reading this whose name isn't Noga Alon or Matija Buci\'c.} Sadly, it isn't well-suited to headline takeaways. Like \cite{KLR21b}, it consists mostly of ``techniques'' and their many associated propositions. (That's part of why it's so long.) Still, we give a few candidates here.
\subsubsection*{Constructors}
Most of our constructors are, due to their generality, cumbersome statements. (Still, we're pleased with Propositions \ref{prop:GeneralAttaching}, \ref{AttachHatness3}, \ref{StrongPath}, and \ref{CliqueAndOneMoreThingProduct}. Note further that most of these constructors extend in nice, obvious fashion to ratio games.) Here we present a few weaker, cleaned sentences.

\newtheorem*{Jank12}{Parts of Corollaries \ref{AddManyHatness2} and \ref{cor:AddNearlyFullVertex}}
\begin{Jank12}
    Let $\mathcal{G}$ be a winnable game. Attach a vertex $x$ universal to $S\subseteq V(D)$ with $h(x)=2$. For every $v\in S$, increase $h(v)$ by $g(v)$ or (if $S$ is a clique) by $h(v)$. The game is still winnable.
\end{Jank12}

\newtheorem*{Jank6}{Part of Theorem \ref{ReplaceVertexWithArcs}}
\begin{Jank6}
Let $v$ be a vertex in a winnable game with $N(v)$ independent and $r(v)^{-1}>\prod_{u\in N(v)} (g(u)+1)$. Add arcs as needed to make the function $N^+(\;)$ constant on $N(v)$; delete $v$. The game is still winnable. 
\end{Jank6}

\newtheorem*{Jank7}{Part of Theorem \ref{DeletableByPeerSets} and Corollary \ref{cor:deleteCzechleaves}}
\begin{Jank7}
Let $v$ be a vertex with each member of $N(v)$ belonging to a different component of $D\backslash v$ and $r(v)^{-1}>\prod_{u\in N(v)} (g(u)+1)$. Deleting $v$ doesn't change the outcome. In particular, if $v$ is a leaf with $v\sim u$ and $r(v)^{-1}>g(u)+1$, then $v$ is deletable. (Also, replacing $>$ with $\geq$ makes these statements false.)
\end{Jank7}

\newtheorem*{Jank8}{Proposition \ref{Removing2Leaf}}
\begin{Jank8}
    Let $\ell$ be a leaf in a Latvian game with neighbor $u$. Delete $\ell$, and if $\ell$ had hatness 2, set $h(u)\mapsto \left\lceil \frac{h(u)}{2}\right\rceil$. The outcome is unchanged. 
\end{Jank8}

\newtheorem*{Jank13}{Corollary \ref{cor:redundantvisiondelete}}
\begin{Jank13}
    In a Czech or Ratio game $\mathcal{G}$, if $N^+(S) \subseteq \bigcap_{u\in N^-(S)} N^+(u)$ for some $S\subseteq V(D)$, then $S$ can be deleted without changing the outcome. 
\end{Jank13}

\subsubsection*{Solving game classes}

Our most ``final'' results are these theorems efficiently characterizing winnable games. Before this thesis, no nontrivial family of (di)graphs had been wholly solved for Latvian or Polish games.\footnote{A digraph is trivial if, after eliminating vertices according to \ref{cor:redundantvisiondelete}, all strongly connected components are cliques.} We solve Polish and Latvian games for directed cycles. We solve Latvian games for cycles (proving the conjecture of \cite{KLR21b} and much more) and trees. Furthermore, by combining these theorems and Proposition \ref{CzechOnComplete} (not stated here) with Proposition \ref{Removing2Leaf}, we have an efficient solution for Latvian games on any graph that can be obtained from a clique, cycle, or directed cycle by adding leaves––in particular, for unicyclic graphs. 

\newtheorem*{Jank3}{Theorems \ref{LatvianDirectedCycles} and \ref{LatvianPolish}}
\begin{Jank3}
    A Latvian game on a directed cycle $\overrightarrow{C}_k$ is winnable if and only if $h(v)=2$ for all $v$. Also, $\mu_s(\overrightarrow{C}_k)/s=\hat{\mu}(\overrightarrow{C}_k)=2$. 
\end{Jank3}
(Recall that we require throughout that $h>g>0$.)
\newtheorem*{Jank1}{Theorems \ref{CategorizedTwoToFour} and \ref{CategorizedFiveAndUp}}
\begin{Jank1}
    Consider a single-guess game on a cycle, $(C_{k\geq 4},h)$. It is winnable if and only if one of the following holds:
    \begin{itemize}
        \item $h=3$ and $k$ is divisible by 3 or equal to $4$
        \item or $h\leq 4$ and the hatness sequence $(3,2,3)$ or $(2,3,3)$ appears
        \item or  the hatness sequence $(2,...,2)$ appears, where no intervening number is $>4$.
    \end{itemize}
\end{Jank1}

\newtheorem*{Jank2}{Theorem \ref{thm: FirstTreeCategorization}/\ref{TreeAlgo2}}
\begin{Jank2}
    A Latvian game on a tree $T$ is winnable if and only if $T$ has a subtree $T'$ with $h(v)\leq 2^{\deg_{T'}(v)}$ for all $v\in V(T')$. 
\end{Jank2}

Our discrete geometry perspective of Chapter 5 give packing/covering conditions for Czech games on the ever-popular complete bipartite graphs, of which we present two digestible special cases. If the first is easily computable, we have the first efficient solution for a nontrivial family of Czech games. If the second is easily computable, we have the longstanding dream of calculating $\mu(K_{n,m})$. If either is provably hard, we can prove the computational hardness of large game classes.

\newtheorem*{Jank4}{Proposition \ref{lmm:WinOnCzechStars}}
\begin{Jank4}
    Consider a Czech game $\mathcal{G}$ on a star with center $v$ and leaves $\ell_1,...,\ell_n$. It is winnable if and only if you can fit $h(v)$ combinatorial prisms, each $h(\ell_1)-g(\ell_1) \times ... \times h(\ell_n)-g(\ell_n)$, into a $h(\ell_1)\times ... \times h(\ell_n)$ combinatorial prism, such that any $g(v)+1$ of the smaller prisms have empty intersection.  
\end{Jank4}

\newtheorem*{Jank5}{Corollary \ref{cor:ClassicBipartite}}
\begin{Jank5}
    $\mu(K_{m,n})\geq k$ if and only if there exist $n$ partitions $\mathcal{P}_i$ of $[k]^m$ into $k$ parts such that, whenever we select one member $P_{i,q_i}$ from each $\mathcal{P}_i$, the set $\bigcup P_{i,q_i}$ contains a $k-1\times k-1 \times ... \times k-1$ combinatorial prism.
\end{Jank5}

In either case, these provide a very simple formalism (arrays!) in which to write a computer program for assaying complete bipartite games.

\subsubsection*{Dependency digraphs}
\newtheorem*{Jank9}{Theorem \ref{thm:mainfact}}
\begin{Jank9}
    For a fixed (Czech or Ratio) game $\mathcal{G}$ and strategy $f$, the probability $\mathbf{P}(R_v)$ that sage $v$ wins is $r(v)$. The dual of the visibility digraph $D$ is a dependency digraph for these events. 
\end{Jank9}
This justifies a world of hat-randomization approaches, including the directed Lovász Local Lemma and Shearer's Lemma (as in Propositions \ref{prop:application1} and \ref{prop:shearerforratio}), which specialize nicely. 
\newtheorem*{Jank14}{Propositions \ref{prop:application2} and \ref{prop:application3}}
\begin{Jank14}
    For a digraph $D$, 
    $\hat{\mu}(D)\leq e(\Delta^-+1)$. 
    For a graph $G$, $\hat{\mu}(G)\leq (\Delta-1)^{1-\Delta} \Delta^{\Delta}<e\Delta$.
\end{Jank14}
Scott and Sokal \cite{SS06} wonder whether Shearer's Lemma, which concerns the independence polynomial of a dependency graph, might generalize to digraphs. In Section \ref{sec:DirectedShearersLemma}, we explain why Corollary \ref{thm:nodirectedshearers} constitutes an answer of ``probably not.''
\newtheorem*{Jank10}{Corollary \ref{thm:nodirectedshearers}}
\begin{Jank10}
    Let $D$ be a digraph containing an odd directed cycle subgraph. There exist events $\{R_v\}$ having dependency digraph $D$ and probability vector $\mathbf{p}$ such that the acyclicity polynomial $Q_D$ is positive at $\mathbf{p1}_S$ for all $S\subseteq V(D)$ but $\mathbf{P}(\bigcap \neg R_v)=0$.
\end{Jank10}

\subsubsection*{Other}
Definition \ref{def:RatioGames}, of a ``Ratio game,'' is a highlight at least aesthetically. Theorem \ref{FinalCzechHintTheorem}, Corollary \ref{LatvianHAH}.1, Proposition \ref{prop:winnableifcomplementhasprism}, and Proposition \ref{Hamminghowtowin} give us good views on plan extension and strategy design. Noga Alon's proof of Lemma \ref{AlonCoveringLemma} is lovely, and some might appreciate the ``Warsaw graph'' of Section \ref{AdmissibleGraphs}. Czech games on $\overrightarrow{C}_3$ are an intriguing challenge (e.g. Proposition \ref{prop:directedtrianglesweird}), and Subsection \ref{AddingEdgesAndVertices} is fun.

\chapter{Starting simple}
\section{Notation, definitions, and apposite lemmata}\label{definitionsection}
Our topic requires no background beyond set theory, though it would be tough sledding for someone who's never seen (di)graphs before. We'll rehash standard graph-theory definitions for precision's sake, then define some useful concepts specific to our work and state their obvious properties. Further definitions appear in later chapters where they're needed.

The $\equiv$ symbol indicates definitional equality; $x:=y$ indicates that ``we are actively defining $x$ to equal $y$, right now.'' $[n]$ denotes the set $\{0,1,...,n-1\}$. 
A \textit{list} can have repetition in its contents, and order matters.
Given a list of nonnegative integers $n_1,...,n_m$, a  \textit{$n_1 \times ... \times n_m$-combinatorial prism} is a set of the form $Z_1 \times ... \times Z_m$ where $Z_i\subseteq \mathbb{Z}$ and $|Z_i|=n_i$ for all $i$. ($\mathbb{N}\equiv \mathbb{Z}_{\geq 0}$ works just as well.) We may also call this a combinatorial prism of measurements $\{n_i\}$. For ease of writing, we'll write $D\backslash v$ etc. when we really mean $D\backslash \{v\}$ or $V(D)\backslash \{v\}$. Empty sums are 0, and empty products are 1. 

\subsection{(Di)graphs}\label{Digraphdefs}
A \textit{digraph} $D\equiv (V,E)$ is a (usually finite)
set of \textit{vertices} $V\equiv V(D)$ and a set of \textit{arcs} or \textit{arrows} $E(D)\equiv E\subseteq V\times V$. For an arc $(u,v)$ (which we write $\overrightarrow{uv}$), we call $u$ the \textit{start} and $v$ the \textit{end} of the arc. 
We call a digraph \textit{loopless} if $\forall v\in V(D)(v\not\rightarrow v)$. If $\forall u,v\in V(D)$, $(v\rightarrow u \implies u\rightarrow v)$, we call $D$ \textit{undirected} or simply a \textit{graph} and consider each reciprocal arc-pair as a single \textit{edge}, written $\overline{uv}$. We concern ourselves with loopless digraphs (usually denoted $D$), especially undirected ones (usually denoted $G$).

If there's an arc from $v$ to $u$, we write $v\rightarrow u$. (If there isn't, we can write $v\not \rightarrow u$.) If there's also an arc from $u$ to $v$, we write $v\sim u$. We denote the \textit{out-neighborhood} of a vertex $v$ by $N^+(v)$; it consists of all vertices $u$ such that $v\rightarrow u$. . We denote the \textit{in-neighborhood} of a vertex $v$ by $N^-(v)$; it consists of all vertices $u$ such that $u\rightarrow v$.  The \textit{outdegree} $\deg^+(v)$ of a vertex $v$ is $|N^+(v)|$; the \textit{indegree}  $\deg^-(v)$ is $|N^+(v)|$. The maximal outdegree on a graph is denoted $\Delta^+$ and the maximal indegree is $\Delta^-$. If for a vertex $v$ we have $N^+(v)=N^-(v)$, we call $v$ \textit{directionless} and simply write $N(v)$ and $\deg(v)$; the maximal degree in a graph is denoted $\Delta$. A directionless vertex of degree 1 is called a \textit{leaf}. Writing $N(v)$ or $\deg(v)$ implies that $v$ is directionless; writing $\Delta(D)$ implies that $D$ is undirected. If $v$ is directionless and $S=N(v)$, we say that $v$ is \textit{universal to $S$}. For $S\subseteq V(D),$ we define  $N^+(S)=\cup_{s\in S} N^+(s)$, $N^-(S)=\cup_{s\in S} N^-(s)$, and $N(S)=\cup_{s\in S} N(s)$.

If $I\subseteq V(D)$ has $u \not\rightarrow v$ for all $u,v\in I$, we call $I$ an \textit{independent set.}

We define some particular graphs and digraphs; let their vertex set be $[n]$.  A \textit{complete graph} $K_n$ has $i\sim j$ for all $i\neq j$. A \textit{path} $P_n$ has $i\sim j$ if and only if $|i-j|\in \{-1,1\}$; we call $1$ and $n$ its \textit{endpoints} and the others its \textit{interior vertices}. A \textit{cycle} $C_n$ has $i\sim j$ if and only if $((i-j)\mod n)\in \{-1,1\}$; we require $n\geq 3$. A \textit{directed path} $\overrightarrow{P}_n$ has $i\rightarrow j$ if and only if $j-i=1$; we call $1$ its \textit{origin} and $n$ its \textit{terminus}. A \textit{directed cycle} $\overrightarrow{C}_n$ has $i\rightarrow j$ if and only if $(j-1 \mod n) =1$. A \textit{complete bipartite graph} $K_{k, n-k}$ has $i\sim j$ if and only if $i<k$ and $j\geq k$ (or vice versa) for some fixed natural $k$; if $k=1$, we call it a \textit{star}. A \textit{book} $B_{d,n-d}$ has $i\sim j$ if and only if $i<d$ (or $j<d$, or both). A \textit{windmill} $W_{q,p}$ consists of $p$ copies of $K_q$ overlapping on a single vertex.

Although vertices are usually lowercase, we make an exception for paths and cycles. We usually label the vertices of a path $A,B,...,Y,Z$ from left to right, regardless of its length. We'll label the vertices of a cycle $...B,A,\Omega,Z,Y...$, regardless  of its length. (We imagine the cycle as being formed by bending up the ends of the path and attaching a vertex $\Omega$ to them.) 

We'll speak of deleting arcs and vertices. When we delete a vertex, we also delete all arcs starting or ending at that vertex. $D\backslash v$ denotes $D$ with vertex $v$ deleted. For $R\subseteq V(D)$, $D\backslash R$ denotes $D$ with all vertices in $R$ deleted.
If digraph $H$ is obtainable from $D$ by deleting arcs and/or vertices, we write $H\subseteq D$ and say that $H$ is a \textit{subgraph of $D$}. A $K_n$ subgraph of $G$ is called an \textit{$n$-clique}; often this refers just to the vertices of that subgraph. A \textit{forest} is a graph with no cycle subgraph. A \textit{tree} is a forest with one fewer edges than vertices. A \textit{unicyclic graph} is a graph with exactly one cycle subgraph. A subset $S\subseteq V(D)$ is called \textit{acyclic} if $D$ has no directed cycle subgraph all of whose vertices are contained in $S$. 


If for two vertices $a,b\in V(D)$ there exists a directed path subgraph of $D$ with origin $a$ and terminus $b$, we say that $a$ and $b$ are \textit{connected}. If there is also a directed path subgraph of $D$ with origin $b$ and terminus $a$, we say that $a$ and $b$ are \textit{strongly connected}. Extend (strong) connectedness to an equivalence relation; call each equivalence class a \textit{(strong) component}. A digraph is \textit{(strongly) connected} if it has exactly one (strong) component.

We say that two vertices $u,v$ are \textit{peer} if $N^+(v)=N^+(u)$. A set $X$ of pairwise peer vertices is a \textit{peer set}. A set $Y$ of vertices such that any two $u,v\in Y$ are either peer vertices or lie in different components of $D$ is called 
\textit{metapeer.} (Clearly, all metapeer sets are independent.) Let a vertex be called \textit{Lyonic} if $N(v)$ is metapeer in $D\backslash v$.

\subsection{Game-related definitions}
See Section \ref{DefiningGamesAndParameters} for the definitions of Czech/Polish/Latvian/Classic games, plans, colorings, disprovers, winning/losing, outcome, and (un)winnableness. We don't distinguish between vertices and the sages that stand at them. When we say ``game'' without modifier, we mean Czech game. 

We usually write games as $\mathcal{G}\equiv (D,g,h)$. When ``$D$,'' ``$g$,'' or ``$h$'' appear, they pertain to some implicit background game. Similarly, when ``a vertex''  appears without further context, assume that vertex is part of a graph, which is part of a game. 

Consider a hatness function $h$ on a path or cycle. 

For a hatness function $h$ on a path or cycle, we may speak of a ``hatness sequence.'' These may be read from left to right or right to left, clockwise or counterclockwise. They are simply the hatnesses one encounters while traversing the graph. If a cycle or path has (for instance) $h(B)=2,h(C)=3, h(D)=3$, it is said to contain the hatness sequence $(2,3,3)$, and it can also be said to contain the hatness sequence $(3,3,2)$, or $(3,2)$, et cetera.  

Given a subgraph $D'\subseteq D$, game $\mathcal{G}\equiv (D,g,h)$ can be restricted to $\mathcal{G}'\equiv (D',g',h')$ simply by setting $g'=g$ and $h'=h$. We'll sometimes speak of (for instance) ``a strong component of $\mathcal{G}$'' to mean ``$\mathcal{G}$ restricted to a strong component of $D$.'' For a vertex $v$, $\mathcal{G}\backslash v$ denotes the restriction of $\mathcal{G}$ to $D\backslash v$. We call $v$ \textit{deletable} if $\mathcal{G}\backslash v$ has the same outcome as $\mathcal{G}$. For $R\subseteq V(D)$, $\mathcal{G}\backslash R$ and the deletability of $R$ are defined analogously.

Suppose $c_X$ and $c_Y$ are partial colorings on disjoint sets of vertices $X,Y\subseteq V(D)$. The partial coloring on $X\cup Y$ that equals $c_X$ on $X$ and equals $c_Y$ on $Y$ may be written $c_X \cup c_Y$. Similarly, if $f_X$ and $f_Y$ are plans for disjoint sets of vertices $X,Y\subseteq V(D)$, we may write the plan for $X\cup Y$ that agrees with $f_X$ and $f_Y$ on $X$ and $Y$ respectively as $f_X\cup f_Y$. For $c_X$ the restriction of a coloring $c$, we say that $c_X$ \textit{extends to} $c$. 
For $f_X$ the restriction of a strategy $f$, we say that $f_X$ \textit{extends to} $f$. If $f$ wins, we simply say $f_X$ \textit{extends to win.} 

As stated in Section \ref{DefiningGamesAndParameters}, we follow \cite{Szc17} in writing $V_k$ for the set of possible hat colors for a sage $k$ and can speak of it as $[h(k)]$ 
(but never treat $V_k, V_j$ as ``overlappping'' for $k\neq j$).
It will be useful to speak of colorings and partial colorings as ``vectors in hat-space'' or ``color-space" if you prefer. 
\begin{defn}
    In a game $\mathcal{G}$, for any $S\subseteq V(D)$, the \emph{hat-space over $S$}, denoted $\mathbb{H}(S)$, is the collection of all possible vectors of hat assignments for the vertices in $S$. I.e., it's $\prod_{s\in S} V_s$.
\end{defn}
Whenever we speak of randomness (mostly in Chapter 6), the implicit probability space is $\mathbb{H}(V(D))$ with the uniform distribution. $R_v$ denotes the event that sage $v$ guesses right. 

While building plans, we want to know what's ``slipping through.'' The following gives two ways of making that precise. 
\begin{defn}
    Consider $S\subseteq V(D)$. Let $f_0$ be any plan that includes all vertices of $V(D)\backslash S$. Denote the \emph{admissible ends in $S$ for coloring $c$} by 
    $L'(S,f_0, c)\subseteq \mathbb{H}(S)$. It is defined as the collection of all possible partial colorings $d_S$ on $S$ such that $c_{N^+(S)}\cup d_S$ extends to a coloring causing everyone in $V(D)\backslash S$ to guess wrong when guessing according to $f_0$.

    Let $f$ be a strategy. Denote the \emph{accepted ends in $S$ for coloring $c$} by 
    $L(S,f,c)\subseteq \mathbb{H}(S)$. It is defined as the collection of all possible partial colorings $d_S$ on $S$ such that $c_{N^+(S)}\cup d_S$ extends to a coloring causing everyone to guess wrong when guessing according to $f$.

    These definitions work just as well with partial colorings, so long as the partial coloring includes colors for every member of $N^+(S)$.
\end{defn}
Here are some immediate properties of $L$ and $L'$.
\begin{proposition}\label{PropertiesofLL'}
    Fix a strategy $f$ for $\mathcal{G}$.
    \begin{enumerate}
        \item $f$ wins if and only if $\forall S \forall c (L(S,f,c)= \emptyset)$.
        \item For any $v\in V(D)$, $f_{G\backslash v}$ extends to win if and only if $\forall c (|L'(v,f_{G\backslash v},c)|\leq g(v))$.
        \item For any $S\subseteq V(G)$, if $\exists c(|L'(S,f,c)|\leq \sum_{v\in S}g(v))$, then $c$ does not disprove $f$.
        \item If $X_1,...,X_n\subseteq V(D)$ belong to $n$ different components of $D$, then $L'(X_1\cup ... \cup X_n,f_0, c)=L'(X_1,f, c)\times ... \times L'(X_n,f, c)$.
    \end{enumerate}
\end{proposition}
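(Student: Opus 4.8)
The plan is to verify each of the four items of Proposition~\ref{PropertiesofLL'} by unwinding the definitions of $L$ and $L'$; all four are essentially bookkeeping, so I would present them in order, keeping the arguments short.

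\textbf{Item 1.} Unwind ``$f$ wins'' as ``no coloring disproves $f$,'' i.e. every coloring causes someone to guess right. I would argue the contrapositive on each side: $f$ has a disprover $c$ iff there is a coloring under which everyone (i.e., everyone in $V(D)\setminus S$ for $S=\emptyset$, hence everyone) guesses wrong. Taking $S=\emptyset$, the hat-space $\mathbb{H}(\emptyset)$ is a single point (the empty vector), and $L(\emptyset,f,c)$ is nonempty iff $c$ itself extends to (indeed equals) a coloring making everyone guess wrong. So $\exists S\exists c\,(L(S,f,c)\neq\emptyset)$ already follows from the $S=\emptyset$ case; conversely if some $L(S,f,c)\neq\emptyset$, pick $d_S$ in it and the witnessing full coloring is a disprover. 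Hence $f$ wins iff $\forall S\forall c\,(L(S,f,c)=\emptyset)$.

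\textbf{Items 2 and 3.} For item~2, fix $v$ and the plan $f_{D\setminus v}$. A full strategy extending it is obtained by choosing a plan $f_v$; such an extension wins iff for every coloring $c$, the set of colors of $v$ that are \emph{not} guessed by $f_v$ on input $c_{N^+(v)}$ — there are $h(v)-g(v)$ of them — does not cover all of $L'(v,f_{D\setminus v},c')$ for the relevant $c'$. Rephrasing: $f_{D\setminus v}$ extends to a winning strategy iff for every $c$ we can choose $g(v)$ colors that hit $L'(v,f_{D\setminus v},c)$, which is possible iff $|L'(v,f_{D\setminus v},c)|\le g(v)$. (Here I would note $L'(v,\cdot,c)$ depends on $c$ only through $c_{N^+(v)}$, which is exactly the information $f_v$ may use, so the choice of $f_v$ can be made pointwise in $c_{N^+(v)}$.) Item~3 is the ``one direction'' analogue for a set $S$ in place of a single vertex: if $|L'(S,f,c)|\le\sum_{v\in S}g(v)$, then the $g(v)$ guesses each $v\in S$ makes under $f$ together account for at most $\sum g(v)$ of the partial colorings in $\mathbb{H}(S)$, and since $f$ is already fixed these guesses are what they are; if $c$ were a disprover then $c_S$ would lie in $L'(S,f,c)$ \emph{and} be missed by every $v\in S$'s guess — but I instead argue directly that a disprover $c$ forces $c_S\in L(S,f,c)\subseteq L'(S,f,c)$ while no member of $S$ guesses the $S$-coordinates of $c$, and counting the at most $\sum_{v\in S} g(v)$ colorings ``caught'' inside $L'(S,f,c)$ gives a contradiction when that bound is $<$ or $=$... more carefully: a disprover's restriction $c_S$ is an element of $L'(S,f,c)$ that is caught by nobody, so $L'(S,f,c)$ cannot be entirely covered by the $\le\sum g(v)$ colorings that are caught; but the hypothesis says $|L'(S,f,c)|\le\sum g(v)$, which together with the following standard covering observation (every $d_S\in L'(S,f,c)$ other than possibly $c_S$-type witnesses is... ) — I would actually just invoke the cleaner statement: restricting $f$ to $S$, the colorings in $\mathbb{H}(S)$ caught by $f_S$ are a set $W$ with $|W^c|$ the ``survivors''; if $c$ disproves $f$ then $c_S\notin W$ yet $c_S\in L'(S,f,c)$, and meanwhile every element of $L'(S,f,c)$ that lies in $W$ is caught — so $L'(S,f,c)\setminus W\neq\emptyset$; but one shows $L'(S,f,c)\subseteq W\cup\{c_S : c\text{ a disprover through }c_{N^+(S)}\}$ has size $\le \sum g(v)$ only if... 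I will tidy this to: since $f$ is fixed, the map sending each caught-by-$S$ coloring to the set of $S$-members catching it shows at most $\sum_{v\in S}g(v)$ elements of $\mathbb{H}(S)$ are caught; if additionally $|L'(S,f,c)|\le\sum_{v\in S}g(v)$ held for a disprover $c$, every element of $L'(S,f,c)$ would need to be caught (else we could redirect a guess), contradicting $c_S\in L'(S,f,c)$ uncaught. I'd write this last step carefully.

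\textbf{Item 4.} This is the genuinely structural one and the main (mild) obstacle: I must show $L'(X_1\cup\cdots\cup X_n, f_0, c)=\prod_i L'(X_i,f_0,c)$ when the $X_i$ lie in distinct components of $D$. The ``$\subseteq$'' direction is immediate: if $c_{N^+(X)}\cup d_X$ extends to a coloring making all of $V(D)\setminus X$ guess wrong, then restricting that coloring to (any superset of $N^+(X_i)$ inside) the component of $X_i$ shows $d_{X_i}\in L'(X_i,f_0,c)$, using that $N^+(X_i)$ lies in $X_i$'s component and that ``guessing wrong'' for a vertex depends only on colors within its own component. For ``$\supseteq$,'' given witnessing colorings $c^{(i)}$ for each $d_{X_i}$, I glue them: because the components are disjoint and $N^+(X_i)\subseteq$ component of $X_i$, and because $c_{N^+(X)}$ is consistent across them (all agree with $c$), the union $\bigcup_i c^{(i)}$ together with $c$ on all remaining components is a single well-defined coloring; every vertex of $V(D)\setminus X$ lies in exactly one component and guesses according to colors in that component only, where by construction it guesses wrong. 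The one point to be careful about is that a vertex of $V(D)\setminus X$ might lie in a component containing some $X_i$ but outside $X_i$ — that's fine, it is still governed by $c^{(i)}$ which was chosen to make all of $V(D)\setminus X_i$ (restricted to that component) guess wrong — or it might lie in a component meeting no $X_i$, where we just use $c$ and there is nothing to check since such vertices impose no constraint (they need not guess wrong — wait: actually they \emph{do} need to guess wrong, as they are in $V(D)\setminus X$). Resolving this is the crux: the definition of $L'$ requires \emph{everyone} in $V(D)\setminus X$ to guess wrong, including vertices in components disjoint from all $X_i$; so I must note that $L'(X,f_0,c)$ being nonempty already presupposes such a disproving coloring exists on those components, and I should fix one such coloring $c^*$ on the union of $X$-free components once and for all and glue $c^*$ in alongside the $c^{(i)}$'s — equivalently, and more cleanly, first observe that if the $X$-free part cannot be made all-wrong then both sides are empty, and otherwise proceed with the gluing as above. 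I expect writing this component-bookkeeping precisely to be the only part requiring care; the rest is definition-chasing.
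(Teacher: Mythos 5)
Items 1 and 2 of your write-up are correct and carry the same content as the paper's one-line ``definitional'' treatment; your remark in item 2 that $L'(v,\cdot,c)$ depends on $c$ only through $c_{N^+(v)}$, which is exactly the information $f_v$ is allowed to use, is the one point genuinely worth recording.

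Items 3 and 4 each contain a real gap, and in both cases the failing step cannot be patched, because the literal statements break down exactly where your argument wobbles. In item 3 your closing move is ``every element of $L'(S,f,c)$ would need to be caught (else we could redirect a guess)'' --- but $f$ is \emph{fixed}, so nothing can be redirected, and a fixed $f$ need not catch any element of $L'(S,f,c)$ at all. Concretely: on $K_2$ with $h\equiv 2$, $g\equiv 1$, let both sages guess the colour they see and take $c=(0,1)$. Then $c$ disproves $f$, yet $L'(\{v\},f,c)=\{1\}$ has size $1=g(v)$. The correct statement in this vicinity (cf.\ Propositions \ref{ifyouloseyouloseonsprawls} and \ref{Hamminghowtowin}) is about \emph{choosing} the plan on a (meta)peer set $S$ so that its guesses cover $L'$, not about a fixed strategy. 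In item 4 you declare ``$\subseteq$'' immediate and spend all your care on ``$\supseteq$,'' but ``$\subseteq$'' is the direction that fails: membership of $d_{X_i}$ in $L'(X_i,f,c)$ demands that \emph{everyone} in $V(D)\setminus X_i$ guess wrong --- including the other $X_j$ and every vertex of every other component --- and restricting the witness for $d_X$ to $X_i$'s component says nothing about those vertices. Take $D$ to be two disjoint copies of $K_2$, each carrying the winning strategy for $(K_2,2)$, with $X_i$ the left vertex of the $i$th copy: then $L'(X_1\cup X_2,f,c)$ is a singleton while $L'(X_1,f,c)=L'(X_2,f,c)=\emptyset$. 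The product identity holds only if $L'(X_i,f,c)$ is read as computed \emph{inside the component of $X_i$} (which is what the paper's remark that ``$c_{X_i}$ is independent of anyone in $X_j$ guessing right'' is implicitly assuming); under that reading your ``$\supseteq$'' bookkeeping about components meeting no $X_i$ is exactly the right concern, but you must fix one convention and use it for both inclusions rather than, as written, using the component-local reading for ``$\subseteq$'' and the global one for ``$\supseteq$.''
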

\begin{proof}
    The first, second, and third are all definitional. For the fourth, we need only observe (for $i\neq j$) that the value of $c_{X_i}$ is independent of anyone in $X_j$ guessing right.
\end{proof}
Sometimes comparing games' difficulty is easy even if we don't know their outcomes. 
\begin{defn}
    Consider Czech games $\mathcal{G}\equiv (D,g,h)$ and $\mathcal{G}'\equiv (D',g',h')$.
    We define $\preceq$, the ``ease relation,'' to be the minimal partial order on Czech games such that $\mathcal{G} \preceq \mathcal{G'}$ if any of the following hold:
    \begin{itemize}
        \item $D\subseteq D'$ and $(g(v),h(v))=(g'(v),h'(v))$ for all $v\in v(D)$.
        \item $D=D', g=g'$, and $h'\leq h$.
        \item $D=D', h=h'$, and $g'\geq g$.
        \item $D=D'$, and there exists a vertex $v$ and positive integer $k$ such that $k\cdot h(v)=h'(v), k\cdot g(v)=g'(v)$, with $h=h'$, $g=g'$ on other vertices.
    \end{itemize}
    I.e., $\preceq$ is the transitive closure of those four conditions. Notice that the middle two bullet points could be combined to ``$D'=D$ and $r'\geq r$.''
\end{defn}
``$\mathcal{G} \preceq \mathcal{G}'$'' is said ``$\mathcal{G}'$ is easier than $\mathcal{G}$,'' or if you want to be strictly correct, ``$\mathcal{G}'$ is no harder than $\mathcal{G}$.'' That interpretation is justified, as we show.
\begin{proposition}\label{monotone}
    If $\mathcal{G}\preceq \mathcal{G}'$ and $\mathcal{G}$ is winnable, then $\mathcal{G}'$ is winnable. (Moreover, given a winning strategy on $\mathcal{G}$, a winning strategy can be constructed for $\mathcal{G}'$ in polynomial time.)
\end{proposition}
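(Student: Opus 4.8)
The plan is to establish Proposition~\ref{monotone} by checking that winnability is preserved under each of the four generating relations of $\preceq$, and then invoking transitivity. Since $\preceq$ is defined as the transitive closure of those four conditions, and ``winnable'' is a fixed property, it suffices to show: if $\mathcal{G} \preceq \mathcal{G}'$ via a single one of the bullet points and $\mathcal{G}$ is winnable, then $\mathcal{G}'$ is winnable, with the winning strategy for $\mathcal{G}'$ constructible in polynomial time from the one for $\mathcal{G}$. Composing polynomially-many polynomial-time constructions (one per edge in a $\preceq$-derivation) still runs in polynomial time, so the parenthetical follows automatically once each case is handled.

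I would take the cases in order. \textbf{Subgraph case} ($D \subseteq D'$, same $(g,h)$ on $V(D)$): here $V(D') \supseteq V(D)$ and $D'$ has at least the arcs of $D$. Given a winning strategy $f$ on $\mathcal{G}$, have each $v \in V(D)$ ignore the colors of any out-neighbors it has gained and play $f_v$; have each $v \in V(D') \setminus V(D)$ guess arbitrarily (say the constant $g(v)$-subset $\{0,\dots,g(v)-1\}$). For any coloring $c$ of $V(D')$, its restriction $c_{V(D)}$ is a coloring of $\mathcal{G}$, so some $v \in V(D)$ guesses right under $f$, hence under the new strategy; thus no disprover exists. \textbf{Hatness-decrease case} ($D=D'$, $g=g'$, $h' \le h$): identify $V'_v = [h'(v)] \subseteq [h(v)] = V_v$. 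A coloring $c$ of $\mathcal{G}'$ is a (special) coloring of $\mathcal{G}$; the plan $f_v$ still accepts as input the tuple of neighbor colors (all of which lie in the smaller sets, hence in the larger ones) and outputs a $g(v)$-subset of $V_v$ — there's a subtlety that this subset might not lie inside $V'_v$, but we only need $c(v) \in f_v(c(N^+(v)))$, and $c(v) \in V'_v$ automatically, so a right guess in $\mathcal{G}$ restricted to the smaller color sets is still a right guess. So the same $f$ wins. (If one insists guesses be subsets of $V'_v$, replace any guessed color outside $V'_v$ by an arbitrary element of $V'_v$; this can only help.) \textbf{Guessness-increase case} ($D=D'$, $h=h'$, $g' \ge g$): given winning $f$ on $\mathcal{G}$, define $f'_v$ by taking $f_v$'s $g(v)$ guesses and padding with arbitrary extra colors up to $g'(v)$ guesses; any right guess is preserved, so $f'$ wins.

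\textbf{Blow-up case} ($D=D'$; one vertex $v$ has $h'(v)=k\,h(v)$, $g'(v)=k\,g(v)$, all else fixed): this is the substantive case, essentially a one-vertex instance of the blow-up/substitution lemma. Identify $V'_v = [k\,h(v)]$ with $[k] \times [h(v)]$ via $c \mapsto (\lfloor c/h(v)\rfloor,\ c \bmod h(v))$; write the two coordinates of a color $a \in V'_v$ as $a^{(1)} \in [k]$ and $a^{(2)} \in [h(v)]$. Fix a winning strategy $f$ on $\mathcal{G}$. The new strategy $f'$: every vertex $u \neq v$ ignores the first coordinate of $v$'s hat (if $v \in N^+(u)$) and otherwise plays $f_u$ unchanged — formally $f'_u$ feeds the $V_v$-projection of $v$'s color into $f_u$. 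Vertex $v$ itself sees exactly $c(N^+(v))$, which is unchanged between the games; it computes the $g(v)$-subset $M = f_v(c(N^+(v))) \subseteq V_v$ and guesses the set $[k] \times M \subseteq V'_v$, which has size $k\,g(v) = g'(v)$, as required. Now given any coloring $c'$ of $\mathcal{G}'$, let $c$ be the coloring of $\mathcal{G}$ agreeing with $c'$ off $v$ and taking $c(v) = c'(v)^{(2)}$. Since $f$ wins, some $w$ guesses right under $f$ against $c$. If $w \neq v$: its guess depends only on colors it sees, which are the same in both games except possibly $v$'s, whose relevant ($V_v$-)coordinate matches $c(v)$, so $w$ guesses right under $f'$ against $c'$. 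If $w = v$: then $c(v) \in f_v(c(N^+(v))) = M$, so $c'(v) = (c'(v)^{(1)}, c(v)) \in [k] \times M = f'_v(c'(N^+(v)))$, again a right guess. Hence $f'$ has no disprover. The hard part is precisely the bookkeeping in this last case — making sure the coordinate identifications, the restriction $c \leftrightarrow c'$, and the ``a neighbor only needs the second coordinate'' observation all line up — but no real difficulty beyond care. All four constructions are manifestly polynomial-time (relabelings and paddings), and chaining them along a derivation of $\mathcal{G} \preceq \mathcal{G}'$ gives the full claim.
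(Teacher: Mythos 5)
Your proposal is correct and follows essentially the same route as the paper: reduce to the four generating cases by transitivity, then handle each by (respectively) ignoring new vertices/arcs, noting unused colors, padding guesses, and treating the $k\cdot h(v)$ colors as $k$ copies of the old palette with $v$ guessing all copies of each old guess. Your write-up just fills in the bookkeeping the paper leaves implicit.
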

\begin{proof}
Because of transitivity, it suffices to prove this for the case when $\mathcal{G}'$ differs from $\mathcal{G}$ in just one of the four bullet points. We address each in order. 
\begin{itemize}
    \item Let the sages in $V(D)\subseteq V(D')$ play and win however they were doing it before. The other sages can do whatever. 
    \item Let all the sages play and win however they were doing it before––it just so happens that certain colors don't show up. 
    \item Let all the sages play and win however they were doing it before––then add some extra guesses however you want. 
    \item For those viewing $v$, they should treat the $k\cdot h(v)$ colors as $k$ copies of $h(v)$ colors; they treat each copy the same as the old color. $v$ herself plays according to her old strategy, except that when it comes time to guess colors, she instead guesses all $k$ copies of each of those colors. \qedhere
\end{itemize}
\end{proof}

\section{Warm-up}
Let's get a feel for these problems. First we solve the Czech game on complete graphs, following \cite{BDO21}. Then we prove some obvious results: vertices that leave something unguessed are deletable, lone sages can't win, and games are winnable if and only if they have a winnable strong component. We apply these to solve the Latvian and Polish games for directed cycles.
\subsection{Complete graphs}
Ratio games have been solved for chordal graphs, and Classic games have been solved for many graph families, but Czech games haven't been solved for anything but complete graphs. Nor, even, have Latvian games! (Until this thesis, at least.) However, the solution for Czech games on complete graphs is elegant and easy.
\begin{prop}[Theorem 5 of \cite{BDO21}]\label{CzechOnComplete}
    The game $(K_n, g,h)$ is winning if and only if 
    \[\sum^n_i \frac{g(i)}{h(i)} \geq 1.\]
        \end{prop}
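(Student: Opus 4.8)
The plan is to prove both directions by thinking of each sage's plan on $K_n$ as a partition of her own color set, and then using a counting/pigeonhole argument in one direction and an explicit modular construction in the other. Since $K_n$ is complete, every sage sees the hats of all the others. Fix any coloring $c$ on $V(K_n)\setminus\{i\}$; then sage $i$'s plan gives her a fixed set $f_i(c)$ of $g(i)$ guessed colors. So $c(i)\in f_i(c)$ is an event depending only on $c(i)$, and its "probability" (i.e. the fraction of the $h(i)$ possible values of $c(i)$ that make it occur) is exactly $g(i)/h(i)=r(i)$, regardless of what the others wear. This is the key local fact that makes the threshold $\sum r(i)$ natural.

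For the "only if" direction (the contrapositive: if $\sum_i r(i) < 1$ then $(K_n,g,h)$ is unwinnable), I would argue exactly as in the second half of Proposition \ref{proofofrainbow}: fix any strategy $f$, choose the coloring $c$ uniformly at random from $\mathbb{H}(V(K_n))$. Since sage $i$'s hat color is independent of the hat colors she sees, which determine her guess set, $\mathbf{P}(R_i) = g(i)/h(i)$. By linearity of expectation, the expected number of correct guesses is $\sum_i r(i) < 1$, so some coloring produces zero correct guesses, i.e. disproves $f$. (Alternatively, one can phrase this purely by counting: the number of colorings on which sage $i$ guesses right is $g(i)\cdot\prod_{j\ne i}h(j)$, and the union bound over $i$ gives fewer than $\prod_j h(j)$ "covered" colorings when $\sum_i r(i)<1$.)

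For the "if" direction, suppose $\sum_i r(i)\ge 1$. I want to carve up the product space $\mathbb{H}(V(K_n))$ so that sage $i$ is responsible for a block whose projection onto coordinate $i$ always has size $g(i)$. The clean way: identify $V_i$ with $\mathbb{Z}_{h(i)}$, and for the coloring $c$ let $t(c) = \sum_j$ (some rescaled contribution of $c(j)$) live in a common "clock" of circumference, say, $\mathrm{lcm}$ or more simply in the real interval $[0,1)$ via $c(i)/h(i)$. Concretely, to each sage $i$ assign a half-open sub-interval $I_i\subseteq[0,1)$ (mod 1) of length $r(i)=g(i)/h(i)$, where the $I_i$ tile (or cover) $[0,1)$; this is possible precisely because $\sum_i r(i)\ge 1$. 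Sage $i$, seeing everyone else, computes $s_i = \sum_{j\ne i} c(j)/h(j) \bmod 1$ and guesses that $c(i)/h(i)\in I_i - s_i \pmod 1$; since $I_i$ has length $g(i)/h(i)$ and $c(i)/h(i)$ ranges over the $h(i)$ equally spaced points $\{0,1/h(i),\dots,(h(i)-1)/h(i)\}$, the set of consistent values of $c(i)$ has size exactly $g(i)$ (here it's convenient to shift so interval endpoints land between grid points, or equivalently replace $[0,1)$ by $\mathbb{Z}_{H}$ for $H$ a common multiple of the $h(i)$ and use intervals of length $g(i)H/h(i)$). Then $\sum_j c(j)/h(j)\bmod 1$ lands in some $I_k$, and sage $k$ guesses right.

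The main obstacle I anticipate is purely bookkeeping in the "if" construction: making sure the intervals $I_i$ can be chosen so that every sage's pulled-back interval contains the right \emph{integer} number of grid points for \emph{every} value of $s_i$, not just generically. Passing to a common modulus $H=\mathrm{lcm}(h(1),\dots,h(n))$ (or $\prod h(i)$) and working in $\mathbb{Z}_H$ with arcs of length exactly $g(i)H/h(i)$ sidesteps this: there the pullback of sage $i$'s arc always meets exactly $g(i)$ of the $h(i)$ residues that are $\equiv s_i'$ in the appropriate coarsening, and $\sum_i g(i)H/h(i) = H\sum_i r(i)\ge H$ guarantees the arcs cover $\mathbb{Z}_H$. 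Everything else (the "only if" direction, the reduction to the one-coordinate-at-a-time view) is routine given Proposition \ref{proofofrainbow}.
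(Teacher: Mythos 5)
Your proposal is correct and is essentially the paper's own proof: the same expectation/union-bound argument for ``only if,'' and the same construction for ``if'' (half-open arcs $I_j$ of length $r(j)$ tiling $[0,1)$, with sage $j$ guessing that $\pi(T(c))\in I_j$ where $T(c)=\sum_i c(i)/h(i)$). The bookkeeping worry you raise resolves itself without passing to $\mathbb{Z}_H$: a half-open arc of length exactly $g(i)/h(i)$ always contains exactly $g(i)$ points of any coset of the $\tfrac{1}{h(i)}$-spaced subgroup of $\mathbb{R}/\mathbb{Z}$, since after rescaling it is a half-open real interval of integer length $g(i)$, which meets exactly $g(i)$ integers regardless of offset.
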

    
        This proof follows the same idea as \ref{proofofrainbow}. For ``only if,'' we show that if the condition fails, a union bound of probabilities guarantees some instance of failure. For ``if,'' we again use the fact of complete visibility to connect a sage's guess about her own color to the (weighted) sum of all hat colors. We engineer their guesses to cover the space of possibilities for that sum.     
\begin{proof}
        \emph{Only if.} The odds that sage $j$ guesses correctly is always $\frac{g(j)}{h(j)}$, so the expected number of correct guesses is $\sum^n_i \frac{g(i)}{h(i)}$. If this value is $<1$, then with positive probability there are 0 correct guesses.
    
        \emph{If.} We'll present an algorithm that wins, assuming $\sum^n_i \frac{g(i)}{h(i)} \geq 1$. (It's clearly polynomial, too, but we won't prove that.) 

        Arbitrarily label the vertices of $K_n$ as $1,2,...,n$. Let $c(i)\in [h(i)]$ be the color of $i$. Let $\pi$ denote the quotient map from $\mathbb{R}$ to $\mathbb{R}/\mathbb{Z}$. That is, $\pi: x\mapsto x-\lfloor x \rfloor$. For each vertex $j$, set $\hat{I}_j=[\sum_{i<j} \frac{g(i)}{h(i)},\sum_{i\leq j} \frac{g(i)}{h(i)})$, and set  $I_j=\pi(\hat{I}_j)$. For any coloring $c$, set $T(c)=\sum_{i=1}^n \frac{c(i)}{h(i)}.$ 
        The strategy is just this: sage $j$ guesses that $\pi(T(c))\in I_j$. 
    
        Proving that this works is a good exercise. If you don't wish to try it yourself, here's a sketch. Sage $j$ can determine $T(c)$ up to the unknown term of $\frac{c(j)}{h(j)}$. Because $\hat{I}_j$ is a half-open interval with length $\frac{g(j)}{h(j)}$ (and all $h(j)$ colors are possible for $c(j)$), there are exactly $g(j)$ possibilities for $c(j)$ such that $\pi(T(c))\in I_j$; sage $j$ guesses exactly these; she is right if and only if $\pi(T(c))\in I_j$. So if $\sum_{i=1}^n \frac{g(i)}{h(i)} \geq 1$, then the various $I_j$ cover $[0,1)$, so some sage must guess right.
    \end{proof}

    \begin{remark}
        The argument above essentially shows that $\mu(K_n)=s^{-1}\mu_s(K_n)=\hat{\mu}(K_n)=n$. All these parameters decrease with the removal of an edge from $K_n$; the idea is that two vertices without mutual visibility can with positive probability both guess right, so if the expected number of correct guesses is exactly $1$, then with positive probability there are no correct guesses.
    \end{remark}


\subsection{Strong components and clearly useless vertices}
As will be common, fix some $\mathcal{G}$ and $f$.
\begin{prop}\label{prop:firstdeletable}
    For a strategy $f$, if there's some color in $V_v$ that sage $v$ never guesses, then $v$ can be deleted; $f_{D\backslash v}$ wins $\mathcal{G}\backslash v$ if and only if $f$ wins $\mathcal{G}$.
\end{prop}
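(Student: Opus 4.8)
The plan is to prove both directions by direct translation between strategies on $\mathcal{G}$ and strategies on $\mathcal{G}\backslash v$, using the key observation that a sage $v$ who never guesses some particular color $\chi\in V_v$ is, roughly speaking, ``invisible'' to the collective effort: her presence can only help, but since she can be defeated simply by setting $c(v)=\chi$, she never actually contributes to a win. More precisely, I would fix the color $\chi\in V_v$ that $v$ never guesses under $f$, and argue in two steps.

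For the ``only if'' direction (if $f$ wins $\mathcal{G}$ then $f_{D\backslash v}$ wins $\mathcal{G}\backslash v$): suppose toward a contradiction that some coloring $c'$ on $V(D)\backslash v$ disproves $f_{D\backslash v}$. Extend $c'$ to a coloring $c$ on $V(D)$ by setting $c(v)=\chi$. I claim $c$ disproves $f$ on $\mathcal{G}$. Indeed $v$ herself guesses wrong under $c$ because she never guesses $\chi=c(v)$. For any other sage $u$, her guess depends only on the colors she sees on $N^+(u)$; since $c$ and $c'$ agree on $V(D)\backslash v$, and $u$'s guess set is the same whether or not $v$ is present (deleting $v$ only removes $v$ from some out-neighborhoods, but $u$'s plan $f_u$ on $\mathcal{G}$, when restricted to colorings extending $c'$, is forced to behave — here I should be slightly careful: $f_u$ may genuinely depend on $c(v)$, so I should instead argue that $f_{D\backslash v}$ is \emph{obtained} from $f$ by restricting each plan to the hyperplane $c(v)=\chi$; then by construction $u$'s guess under $f$ at $c$ equals $u$'s guess under $f_{D\backslash v}$ at $c'$). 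Hence no sage guesses right under $c$, contradicting that $f$ wins.

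For the ``if'' direction (if $f_{D\backslash v}$ wins $\mathcal{G}\backslash v$ then $f$ wins $\mathcal{G}$): this is immediate from Proposition~\ref{monotone} together with the restriction construction, or can be given directly — let $c$ be any coloring of $\mathcal{G}$ and let $c'=c_{V(D)\backslash v}$ be its restriction. Since $f_{D\backslash v}$ wins, some $u\neq v$ guesses right under $c'$ playing $f_{D\backslash v}$; because $u$'s plan in $f$ agrees with its plan in $f_{D\backslash v}$ on all colorings with $c(v)=\chi$... no — here $c(v)$ need not be $\chi$, so I must instead observe that $f$ should be \emph{defined from} $f_{D\backslash v}$ by having each $u$ ignore $v$'s color entirely, i.e. $f_u(c_{N^+(u)}) := (f_{D\backslash v})_u(c_{N^+(u)\backslash v})$, and having $v$ play arbitrarily. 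Then for any $c$, sage $u$'s guess under $f$ at $c$ equals its guess under $f_{D\backslash v}$ at $c'$, so $u$ guesses right, so $f$ wins $\mathcal{G}$. The two ``$f$''s in the two directions are mutually inverse constructions (restrict-to-$\chi$-slice and ignore-$v$), which is why the biconditional holds for a single well-chosen $f$.

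The main obstacle — really a matter of care rather than difficulty — is pinning down precisely \emph{which} strategy $f_{D\backslash v}$ is meant: the cleanest reading is that $f_{D\backslash v}$ denotes the plan obtained from $f$ by deleting $v$ and restricting every other sage's plan to inputs consistent with $c(v)=\chi$ (equivalently, since $v$ never guesses $\chi$, we may as well assume the other sages never \emph{needed} to see $v$). Once that is fixed, both implications are bookkeeping: one checks that guessing-correctly is preserved in both directions, using that $v$'s own guess is always wrong when $c(v)=\chi$ for the forward direction, and that the other sages' guesses are unaffected by $v$'s hat for the backward direction. I would state this identification explicitly at the start of the proof to avoid the circularity of asserting a biconditional about ``$f$'' when $f$ lives on a different graph on each side.
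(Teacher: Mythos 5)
Your forward direction (a disprover $c'$ of the $\chi$-slice extends to a disprover of $f$ by placing $\chi$ on $v$) is exactly the paper's argument, and your decision to read $f_{D\backslash v}$ as the restriction of each neighbor's plan to the slice $c(v)=\chi$ is the same identification the paper makes (``all who saw $v$ are now playing their `how do I interpret everyone else when sage $v$ is wearing a mauve hat' sub-plans''). Where you hesitated --- the backward direction --- you were right to hesitate: under the $\chi$-slice reading, the implication ``$f_{D\backslash v}$ wins $\mathcal{G}\backslash v$ implies $f$ wins $\mathcal{G}$'' is false for a general $f$ with the stated property. Take $K_3$ with hatnesses $(3,2,2)$ and $g\equiv 1$: let $v$ (hatness $3$) never guess color $2$ and always guess $0$, let the two other sages play the winning $(K_2,\star 2)$ strategy on the slice $c(v)=2$ but guess the constant $0$ on the slices $c(v)\in\{0,1\}$. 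The $2$-slice wins $\mathcal{G}\backslash v$, yet the all-ones coloring defeats $f$. Your fix --- redefining each neighbor to ignore $v$ entirely --- proves the deletability claim ($\mathcal{G}\backslash v$ winnable implies $\mathcal{G}$ winnable, which is anyway Proposition \ref{monotone}) but not the literal biconditional for the original $f$; you concede as much when you call the two $f$'s ``mutually inverse constructions.'' The paper's own proof has the same shape: its backward content is only that a winning slice ``works as a strategy for $\mathcal{G}\backslash v$,'' i.e., it establishes ``$f$ wins $\Rightarrow$ slice wins'' together with deletability, not the strategy-level converse. So your proof captures everything the paper's does, and the residual discomfort you felt is a genuine looseness in the statement rather than a gap in your argument.
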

\begin{proof}
    Take the color that she never guesses (say it's mauve), and fix that color on her head. She'll never guess correctly. All who saw $v$ are now playing their ``how do I interpret everyone else when sage $v$ is wearing a mauve hat'' sub-plans. If that collection of plans wins, then it works as a strategy for $\mathcal{G}\backslash v$, and if it doesn't, then $f$ doesn't win. 
\end{proof}
\begin{prop}\label{DeleteHighRatio}
    Let $\mathcal{G}$ be a Czech game and $v$ a vertex. If $h(v)/g(v)>\prod_{u\in N^+(v)} h(u)$, then $v$ is deletable.
\end{prop}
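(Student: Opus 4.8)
The plan is to prove the biconditional ``$\mathcal{G}$ is winnable $\iff$ $\mathcal{G}\backslash v$ is winnable,'' though only one direction carries weight. The direction ``$\mathcal{G}\backslash v$ winnable $\implies$ $\mathcal{G}$ winnable'' is immediate from Proposition \ref{monotone}: since $D\backslash v \subseteq D$ with the guessness and hatness functions unchanged on the surviving vertices, we have $\mathcal{G}\backslash v \preceq \mathcal{G}$, so a winning strategy transfers verbatim (the former neighbors of $v$ simply ignore the hat that is no longer there, and $v$'s plan is discarded).

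For the substantive direction I would fix an \emph{arbitrary} strategy $f$ for $\mathcal{G}$ and run a counting argument on the plan $f_v$. This plan is a function from $\mathbb{H}(N^+(v)) = \prod_{u\in N^+(v)} V_u$, a set of size $\prod_{u\in N^+(v)} h(u)$, into the family of $g(v)$-element subsets of $V_v$. Hence the union of all guess-sets that $v$ could ever output has size at most $g(v)\cdot \prod_{u\in N^+(v)} h(u)$, which by hypothesis is strictly less than $g(v)\cdot \tfrac{h(v)}{g(v)} = h(v) = |V_v|$ (here $g(v)\geq 1$ by the standing convention $g>0$, so this arithmetic is legitimate and the strict inequality survives). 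Therefore some color $m\in V_v$ is never guessed by $v$ under $f$. Now Proposition \ref{prop:firstdeletable} applies directly: there is a color $v$ never guesses, so $f_{D\backslash v}$ wins $\mathcal{G}\backslash v$ if and only if $f$ wins $\mathcal{G}$. Taking $f$ to be a winning strategy for $\mathcal{G}$ (which exists, since $\mathcal{G}$ is assumed winnable) produces a winning strategy for $\mathcal{G}\backslash v$.

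Combining the two directions shows $\mathcal{G}\backslash v$ has the same outcome as $\mathcal{G}$, i.e. $v$ is deletable. I anticipate no real obstacle; the only points worth flagging explicitly are the edge case $N^+(v)=\emptyset$, where the empty product equals $1$ and the hypothesis degenerates to the standing assumption $h(v)>g(v)$ — consistent with a vertex that sees nothing being automatically useless — and the observation that the argument does not require $f$ to be winning until the very last step, which is what lets us invoke Proposition \ref{prop:firstdeletable} cleanly.
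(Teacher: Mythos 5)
Your proof is correct and matches the paper's argument exactly: count that $v$'s plan can output at most $g(v)\cdot\prod_{u\in N^+(v)}h(u)<h(v)$ distinct colors across all possible sights, conclude some color is never guessed, and invoke Proposition \ref{prop:firstdeletable}. The extra care you take with the converse direction (via Proposition \ref{monotone}) and the empty-neighborhood edge case is harmless elaboration of what the paper leaves implicit.
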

\begin{proof}
    Sage $v$ can only see $\prod_{u\in N^+(v)} h(u)$ possible things, and for each of those possible sights, she can only guess $g(v)$ of her colors. If $h(v)>g(v)\prod_{u\in N^+(v)} h(u)$, then there is some color she never guesses. Invoke Proposition \ref{prop:firstdeletable}.
\end{proof}
\begin{corollary}\label{onevertexloses}
    Any game played on $K_1$ is unwinnable.
\end{corollary}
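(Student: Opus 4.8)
The plan is to read this off as the degenerate case of Proposition \ref{DeleteHighRatio}. The graph $K_1$ has a single vertex $v$ with no arcs, so $N^+(v) = \emptyset$ and hence $\prod_{u \in N^+(v)} h(u) = 1$ by the empty-product convention. Since we assume throughout that $h > g > 0$, we have $h(v)/g(v) > 1 = \prod_{u \in N^+(v)} h(u)$, so Proposition \ref{DeleteHighRatio} applies and $v$ is deletable: the game $\mathcal{G}$ on $K_1$ has the same outcome as $\mathcal{G} \backslash v$, the game on the empty digraph.

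It then remains only to observe that the empty game is unwinnable. Its unique strategy is the empty collection of plans, and the unique coloring is the empty coloring; vacuously, no sage guesses right under it, so the empty coloring is a disprover. (Equivalently, the defining formula $\exists f \forall c \exists v(\dots)$ fails because the innermost $\exists v$ ranges over $\emptyset$.) Chaining this with the deletability of $v$ gives that $\mathcal{G}$ is unwinnable.

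If one prefers to avoid reasoning about the empty game, there is an equally short direct argument: the sole sage $v$ sees nothing, so her plan $f_v$ is a constant function whose value is a fixed set $M \subseteq V_v$ of size $g(v)$; since $h(v) > g(v)$, the Adversary may colour $v$ with any colour in $V_v \setminus M$, whereupon $v$ guesses wrong, and being the only sage, no one guesses right. Either way there is no obstacle of substance here — the only thing worth flagging is that the result genuinely relies on the standing hypothesis $h > g$ (a vertex with $h(v) = g(v)$ on $K_1$ would trivially win), which is exactly what feeds the strict inequality needed to invoke Proposition \ref{DeleteHighRatio}.
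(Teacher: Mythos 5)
Your proof is correct and takes exactly the paper's route: the empty product gives $\prod_{u\in N^+(v)}h(u)=1$, the standing hypothesis $h>g>0$ gives $h(v)/g(v)>1$, and Proposition \ref{DeleteHighRatio} deletes the lone vertex, leaving the (unwinnable) empty game. Your extra remarks — spelling out why the empty game loses, and the direct Adversary argument — are sound but not needed beyond what the paper does.
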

\begin{proof}
    The empty product has value 1, and $h>g>0$, so $h/g>1$. Apply Proposition \ref{DeleteHighRatio}, and try to do so with a straight face. 
\end{proof}

\begin{prop}\label{prop:strongcomponly}
    A Czech game $\mathcal{G}\equiv (D,g,h)$ with multiple strong components $D_1,...,D_n$ is winnable if and only if its restriction to one of the strong components is.
\end{prop}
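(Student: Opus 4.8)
I want to show that a Czech game $\mathcal{G}$ whose visibility digraph $D$ has strong components $D_1,\dots,D_n$ is winnable iff its restriction to some $D_i$ is winnable. The key structural fact is that the strong components of $D$ admit a topological order: we may reindex so that all arcs between distinct components run from higher-indexed to lower-indexed components (i.e.\ if $\overrightarrow{uv}\in E(D)$ with $u\in D_i$, $v\in D_j$ and $i\neq j$, then $i>j$). The whole proof is organized around this ordering.

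First I would prove the ``easy'' direction: if $\mathcal{G}$ restricted to some $D_i$ is winnable, then $\mathcal{G}$ is winnable. This is essentially an application of Proposition~\ref{monotone} together with the observation that deleting vertices cannot increase winnability --- but to be careful: a winning strategy on $D_i$ does not directly give a winning strategy on $D$ unless the sages in $D_i$ can actually execute it, which they can, since every out-neighbor of a vertex of $D_i$ lies in $D_i$ itself (no arcs leave $D_i$ downward issue --- arcs from $D_i$ go only within $D_i$ or to lower components; wait, I need out-neighbors, so I should use that $N^+$ of a vertex in the \emph{minimal} component stays inside it). Cleanest: pick $D_i$ to be a winnable component and note $\mathcal{G}[D_i] \preceq \mathcal{G}$ only holds when $D_i$ is a subgraph with the property that vertices of $D_i$ have all their out-neighbors inside $D_i$. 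That happens exactly when $D_i$ is a ``sink'' component. So I would instead argue: have the sages of $D_i$ play their winning plan (which only depends on colors inside $D_i$ --- true because $D_i$ is strongly connected but its members may still see \emph{out} of it!). Hmm --- so in fact a non-sink component's sages \emph{can} still compute their guesses, they just ignore the extra information; a plan for $\mathcal{G}[D_i]$ is still a valid plan for those vertices in $\mathcal{G}$ (a function on a larger domain that ignores the new coordinates). Every other sage guesses arbitrarily. Given any coloring $c$ of $D$, its restriction $c_{D_i}$ is a coloring of $\mathcal{G}[D_i]$, so some sage in $D_i$ guesses right; hence $f$ has no disprover. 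This direction needs no topological order at all.

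Next, the ``only if'' direction, which is the substantive part: if $\mathcal{G}$ restricted to \emph{every} $D_i$ is unwinnable, I must build a disprover for an arbitrary strategy $f$ on $\mathcal{G}$. Here I use the topological order $D_1,\dots,D_n$ with arcs running to lower indices. I build the disproving coloring component-by-component, in order of increasing index. Suppose I have already chosen a partial coloring on $D_1\cup\dots\cup D_{i-1}$ under which no sage in those components guesses right. Now the sages in $D_i$ see only colors inside $D_1\cup\dots\cup D_i$: those outside $D_i$ are already fixed, so plugging the fixed values into each plan $f_v$ ($v\in D_i$) yields an induced plan $f'_v$ on $\mathcal{G}[D_i]$ that depends only on $c_{N^+(v)\cap D_i}$. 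Since $\mathcal{G}[D_i]$ is unwinnable, this induced strategy has a disprover $c_{D_i}$; extend the partial coloring by it. By induction we reach a full coloring $c$ of $D$ with no correct guess anywhere --- a disprover for $f$. Since $f$ was arbitrary, $\mathcal{G}$ is unwinnable. (Contrapositive gives the claim.)

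**Main obstacle.** The one place requiring care is the bookkeeping around $N^+$ versus $N^-$: a sage \emph{sees} her out-neighbors, so information flows \emph{along} arcs, and for the induction to close I need that when processing $D_i$, all of a vertex $v\in D_i$'s out-neighbors \emph{outside} $D_i$ have already been colored --- i.e.\ they lie in lower-indexed components. That is exactly what the topological-order convention (arcs point to lower indices) guarantees, so I must state that convention explicitly and verify it is available for the condensation of any digraph, which is a standard fact (the condensation is a DAG, hence has a topological order). Everything else --- that fixing the outside coordinates of a plan yields a well-defined plan on the subgame, that a disprover of the subgame is what unwinnability provides --- is routine from the definitions in Section~\ref{definitionsection}.
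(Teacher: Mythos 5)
Your proof is correct and takes essentially the same approach as the paper: both directions rest on the same facts (a winning strategy on a component lifts to the whole game by ignoring extra visibility, and the condensation is a DAG so sink components can be defeated first and their colors hardcoded into the remaining plans). The only difference is organizational --- you fix a full topological order and build the disprover by induction, while the paper peels off one sink component at a time and derives a contradiction --- but the combinatorial content is identical.
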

\begin{proof}
    ``If'' follows by Proposition \ref{monotone}. For ``only if,'' assume it's unwinnable on every strong component, and fix a strategy $f$ for $\mathcal{G}$. There must be some strong component with no arcs to any other strong component, since if there were not, these strong components would not all be distinct. Without loss of generality, let that strong component with no arcs to any other be $D_n$. Since $\mathcal{G}_{D_n}$ (the game restricted to $D_n$) is unwinnable, we can color $D_n$ in such a way that none in $D_n$ guess right according to $f$. Now, define a strategy on $\mathcal{G}\backslash D_n$: everybody imagines they see this particular assignment on $D_n$ and plays according to $f$. If someone guessed correctly according to $f$ in $\mathcal{G}$, they were not in $D_n$, so they still guess correctly. Thus, if $\mathcal{G}$ is winnable, so is $\mathcal{G}\backslash D_n$. 
    $\mathcal{G}\backslash D_n$ must in turn have some strong component with no arrows to any other strong component of $\mathcal{G} \backslash D_{n}$. Call that component $D_{n-1}$. Arguing similarly, we show that $\mathcal{G}\backslash D_n  \backslash D_{n-1}$ is winnable. We carry on in this fashion until we show that $\mathcal{G}\backslash D_n \backslash ... \backslash D_2$, i.e. the game restricted to $D_1$, is winnable, which contradicts our assumption. 
\end{proof}
\begin{corollary}\label{nodirectedcycle}
    If $D$ has no directed cycle subgraph, then $\mathcal{G}$ is unwinnable. (An edge counts; it's a directed 2-cycle.)
\end{corollary}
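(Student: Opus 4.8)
The plan is to read this off from Proposition~\ref{prop:strongcomponly} together with Corollary~\ref{onevertexloses}; the real content is the observation that a digraph with no directed cycle subgraph has only trivial strong components.

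First I would argue that every strong component of $D$ is a single vertex. If some strong component contained two distinct vertices $u$ and $v$, then by definition there is a directed path subgraph from $u$ to $v$ and one from $v$ to $u$; their union is a closed directed walk through $u$, and a shortest closed directed walk through a vertex cannot repeat any vertex (else a strictly shorter one exists), so it is a directed cycle subgraph of $D$ — contradiction. A reciprocal arc pair $u\sim u'$ is just the length-$2$ instance of this, which is why an edge already counts, matching the parenthetical in the statement. Hence each strong component of $D$ is a lone vertex, so the restriction of $\mathcal{G}$ to it is a game played on $K_1$.

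Then I would finish by cases. By Corollary~\ref{onevertexloses} every game on $K_1$ is unwinnable, so the restriction of $\mathcal{G}$ to each strong component is unwinnable. If $D$ has at least two strong components, Proposition~\ref{prop:strongcomponly} says $\mathcal{G}$ is winnable if and only if one of these restrictions is — and none is — so $\mathcal{G}$ is unwinnable. If $D$ has exactly one strong component, then that component is all of $D$ and is a single vertex, so $D=K_1$ and unwinnability is immediate from Corollary~\ref{onevertexloses}. There is no genuine obstacle here; the only points needing a moment of care are the standard "strongly connected pair lies on a common directed cycle" fact (handled via a minimal closed directed walk) and the single-component edge case, since Proposition~\ref{prop:strongcomponly} is phrased for the multi-component situation — but that case collapses to $K_1$. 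This is a bookkeeping corollary.
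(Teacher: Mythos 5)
Your proposal is correct and follows exactly the paper's own route: observe that in a digraph with no directed cycle the strong components are precisely the vertices, then apply Proposition~\ref{prop:strongcomponly} and Corollary~\ref{onevertexloses}. You merely spell out the justification for the strong-component observation and the one-component edge case, which the paper leaves implicit.
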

\begin{proof}
    In a digraph with no directed cycle, the strong components are precisely the vertices. Apply Proposition \ref{prop:strongcomponly} and Corollary \ref{onevertexloses}.
\end{proof}

\subsection{Directed cycles}
\begin{thm}\label{LatvianDirectedCycles}
    A Latvian game on a directed cycle is winning if and only if $h(v)=2$ for all $v$.
\end{thm}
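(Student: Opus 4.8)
The plan is to prove both directions; the forward implication I would split into a reduction to constant hatness (via a deletion argument) followed by the heart of the matter, that constant hatness at least $3$ is unwinnable. Throughout, label the cycle so that $i\to i{+}1$ with indices mod $k$, so sage $i$ sees only the color $c(i{+}1)$. For ``if'': assume $h\equiv 2$ and identify each color set with $\mathbb{Z}_2$. Fix bits $b_i\in\mathbb{Z}_2$ with $\sum_i b_i\not\equiv k\pmod 2$ (say $b_0=(k{+}1)\bmod 2$ and $b_i=0$ otherwise) and let sage $i$ guess $c(i)=c(i{+}1)+b_i$. If every sage were wrong we would have $c(i)-c(i{+}1)=b_i+1$ for all $i$, and summing cyclically telescopes the left side to $0$, giving $\sum_i b_i+k\equiv 0$ --- impossible by the choice of the $b_i$. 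So the strategy wins.

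For ``only if'', suppose the game is winnable. I first claim $h$ is constant. If not, $h$ fails to be non-decreasing around the whole cycle, so some vertex $v$ has $h(v)>h(v^+)$ for its unique out-neighbor $v^+$; then $h(v)/g(v)=h(v)>h(v^+)=\prod_{u\in N^+(v)}h(u)$, so Proposition~\ref{DeleteHighRatio} makes $v$ deletable. But $\mathcal{G}\setminus v$ lives on a directed path, which has no directed cycle subgraph and so is unwinnable by Corollary~\ref{nodirectedcycle}; this contradicts deletability preserving the (winnable) outcome. Hence $h\equiv h$ is constant, with $h\ge 2$ since $h>g=1$.

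It remains to rule out $h\ge 3$. Fix a winning strategy $f$; sage $i$'s plan is a map $f_i\colon[h]\to[h]$ and she is correct iff $c(i)=f_i(c(i{+}1))$. Let $R_i=\{(a,b)\in[h]^2:a\ne f_i(b)\}$. A coloring makes every sage wrong exactly when $(c(i),c(i{+}1))\in R_i$ for all $i$, i.e.\ when $(c(0),c(0))$ lies in the composition $M:=R_0\circ R_1\circ\cdots\circ R_{k-1}$; winning means $M$ has no pair $(a,a)$. Each column $\{a:(a,b)\in R_i\}$ of $R_i$ has exactly $h-1$ elements, and ``at least $h-1$ per column'' is preserved under composition, so every column of $M$ has at least $h-1$ elements --- hence, avoiding the diagonal, each column $c$ of $M$ equals $[h]\setminus\{c\}$ exactly. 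Writing $M=R_0\circ M'$ with $M'=R_1\circ\cdots\circ R_{k-1}$ (its columns also of size $\ge h-1$), the condition $(c,c)\notin M$ forces $f_0$ to be constantly $c$ on $C_c:=\{b:(b,c)\in M'\}$, a set of size $\ge h-1\ge 2$. The $C_0,\dots,C_{h-1}$ are then pairwise disjoint subsets of $[h]$ of size $\ge 2$, so $2h\le h$ --- contradiction. Therefore $h=2$.

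The main obstacle is this last step. The union bound over a uniformly random coloring shows only that the expected number of correct guesses is $k/h$, settling $k<h$ but nothing for long cycles, so one genuinely needs the structural fact that ``all wrong'' colorings correspond to diagonal entries of a product of very rigid relations, and that such a product can never be exactly the off-diagonal relation once $h\ge 3$. The parity strategy and the reduction to constant hatness are routine by comparison.
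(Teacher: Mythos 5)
Your proof is correct in both directions, but the ``only if'' direction takes a genuinely different and heavier route than the paper's. Your ``if'' direction (one designated sage guessing with a parity flip chosen according to $k \bmod 2$, verified by a telescoping sum) is essentially the paper's strategy. For ``only if,'' your reduction to constant hatness via Proposition~\ref{DeleteHighRatio} and Corollary~\ref{nodirectedcycle} is exactly the paper's second argument; where you diverge is the constant-hatness case $h\ge 3$, which you attack head-on by composing the ``wrongness relations'' $R_i$ around the cycle and running a pigeonhole argument on the level sets of $f_0$. That argument is valid (columns of size $\ge h-1$ are preserved under composition; avoiding the diagonal forces $f_0$ to be constant on $h$ pairwise disjoint subsets of $[h]$, each of size $\ge h-1$), but your closing claim that one ``genuinely needs'' such a structural fact is not right: Proposition~\ref{monotone} lets you lower the hatness of all vertices but one to $2$ (this only makes the game easier, so winnability is preserved), after which the hatness profile is non-constant and your own deletion step finishes the job --- this is precisely how the paper disposes of the entire ``only if'' direction. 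What your composition argument buys is a self-contained, monotonicity-free proof with a concrete picture of what any winning strategy would have to look like; what the paper's route buys is brevity and reuse of general-purpose lemmata (it also records a third one-line proof via Lemma~\ref{DirectedCyclesLemma}, which yields the stronger Czech-game statement).
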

\begin{proof}
    \textit{If, version 1.} Let one player assume ``my hat is the color I see.'' Let all other players assume ``my hat is the color I don't see.'' If all hats are the same color, the first sage guesses right. If not all hats are the same color, there are at least two arcs of the graph that go between different-colored hats. At least one of those arcs has an opposite-guesser standing at the source; she will guess correctly.

    \textit{If, version 2.} If the cycle's length is even, do as above. If it's odd, they can win with identical plans: ``my hat is the color I see.'' See Question \ref{SameStratQuestion}. 
    
    \textit{Only if, version 1.} By Proposition \ref{monotone}, it suffices to show that it's unwinnable if one sage $v$ has hatness 3 and all others have hatness 2. But that sage $v$ will only ever see two things, so she won't guess one of her three colors. Put that color on her head. Then she's guaranteed to guess wrong. Some sage sees $v$; call her $w$. Her guess is already fully determined by the hat on $v$. So we can put a hat on $w$ that she won't guess. Supposing $u$ sees $w$, we can repeat this procedure all the way around the cycle, until we wind up back at $v$, who's already guaranteed to guess wrong. 
    
    \textit{Only if, version 2.} Again assume $v$ has hatness 3 and all others have hatness 2. By Proposition \ref{DeleteHighRatio}, $v$ is deletable. By Corollary \ref{nodirectedcycle}, the game that remains after deleting $v$ is unwinnable.

    \textit{Only if, version 3.} Apply Lemma \ref{DirectedCyclesLemma}.
\end{proof}

So what about Czech games?
\begin{lemma}\label{DirectedCyclesLemma}
    Let $\mathcal{G}$ be a Czech game on a directed cycle. Suppose sage $v$ sees sage $u$, and suppose that $1-g(v)/h(v) > g(u)/h(u)$, i.e. that $1-r(v) > r(u)$. Then $\mathcal{G}$ is unwinnable. 
\end{lemma}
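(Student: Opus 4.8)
The plan is to show that \emph{every} strategy $f$ admits a disprover, by building a bad coloring that walks around the cycle \emph{against} the direction of the arcs. Label the directed cycle $v_0 \to v_1 \to \dots \to v_{k-1} \to v_0$ so that $v = v_0$ and $u = v_1$ (on a directed cycle each sage sees exactly her unique out-neighbor, the next vertex). The naive greedy construction — fix $c(v_0)$ arbitrarily, then pick $c(v_{k-1}), c(v_{k-2}), \dots, c(v_1)$ in turn, each avoiding the already-determined guess of the corresponding sage — always succeeds \emph{except} possibly at the wrap-around, where we still need $c(v_0) \notin f_{v_0}(c(v_1))$ but $c(v_0)$ was already committed. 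The ratio hypothesis $1 - r(v) > r(u)$ is exactly what lets us foresee and defeat this wrap-around.

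The first step is to choose $c(v_0)$ cleverly. The plan $f_{v_0}$ is a function from $[h(u)]$ to $g(v_0)$-element subsets of $[h(v_0)]$, so a double count gives $\sum_{a \in [h(v_0)]} |\{j \in [h(u)] : a \in f_{v_0}(j)\}| = h(u)\,g(v_0)$; hence some color $a^\ast \in [h(v_0)]$ lies in $f_{v_0}(j)$ for at most $h(u)\,r(v)$ values of $j$. Set $c(v) = c(v_0) := a^\ast$ and let $\mathrm{Good} := \{j \in [h(u)] : a^\ast \notin f_{v_0}(j)\}$ be the set of colors that force $v_0$ to guess wrong. Since the count of ``bad'' $j$ is an integer at most $h(u)\,r(v)$, we get $|\mathrm{Good}| \geq h(u) - \lfloor h(u)\,r(v)\rfloor \geq h(u)(1 - r(v)) > h(u)\,r(u) = g(u)$, using the hypothesis; being an integer strictly exceeding $g(u)$, $|\mathrm{Good}| \geq g(u)+1$.

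Next I would propagate backwards. With $c(v_0)$ fixed, $v_{k-1}$'s guess $f_{v_{k-1}}(c(v_0))$ is determined, so pick $c(v_{k-1})$ outside it (possible since $h > g$ everywhere); this determines $f_{v_{k-2}}(c(v_{k-1}))$, so pick $c(v_{k-2})$ outside it; continue down to choosing $c(v_2)$ outside $f_{v_2}(c(v_3))$. Finally, I need $c(u) = c(v_1)$ to lie in $\mathrm{Good}$ (so $v_0$ guesses wrong) and to avoid $f_{v_1}(c(v_2))$ (so $v_1$ guesses wrong); since $|\mathrm{Good}| \geq g(u)+1 > |f_{v_1}(c(v_2))| = g(u)$, such a color exists, so choose it. Now every sage guesses wrong: $v_0$ because $c(v_1) \in \mathrm{Good}$, $v_1$ by the last choice, and each $v_i$ for $2 \leq i \leq k-1$ by the backward pass. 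Thus $c$ disproves $f$; since $f$ was arbitrary, $\mathcal{G}$ is unwinnable.

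The only genuine obstacle is the wrap-around, and it is exactly dissolved by the averaging argument in the first step — the whole content of the lemma is that choosing $v$'s own color to be one she ``rarely guesses'' leaves more than $g(u)$ admissible colors for $u$, so $u$'s own $g(u)$ guesses cannot cover them all. Everything else is the routine greedy walk, valid because $h > g > 0$ at every vertex. (One could instead phrase this using the admissible-ends sets $L'$ of Proposition~\ref{PropertiesofLL'}, but the direct construction seems cleanest, and it yields the disprover explicitly.)
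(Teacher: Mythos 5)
Your proof is correct and follows essentially the same route as the paper's: a counting/averaging argument selects a color of $v$ that is guessed for at most $\lfloor h(u)g(v)/h(v)\rfloor$ colors of $u$, the coloring is extended greedily backwards along the cycle on $D\backslash u$, and the hypothesis $1-r(v)>r(u)$ guarantees a final color for $u$ avoiding both her own guess-set and the colors that would make $v$ correct. You spell out the wrap-around bookkeeping a bit more explicitly than the paper does, but the argument is the same.
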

\begin{proof}
    We construct a disprover. Some color on $v$ is guessed for at most $\lfloor  h(u)g(v)/h(v) \rfloor$ of the possible colors of $u$, by a counting argument. Fix that color on $v$ and extend it to a coloring on $G\backslash u$ such that nobody is guessing correctly. (This extension can be done sequentially backwards along visibility arrows.) Once in this setup, $v$ hasn't decided her guess yet, and $u$ hasn't had her hat assigned yet, though her guess is decided. There are $\leq \lfloor  h(u)g(v)/h(v) \rfloor$ colors of $u$ that, if picked, would cause $v$ to guess right. There are $g(u)$ colors of $u$ that, if picked, would cause $u$ to guess right. I.e., $\leq h(u)g(v)/h(v) + g(u)$ colors are forbidden. Thus, for there to be a non-forbidden color for $u$, it suffices to have $h(u)> h(u)g(v)/h(v) + g(u)$. That rearranges to $1-r(v)>r(u)$.
\end{proof}

\begin{thm}\label{LatvianPolish}
     $\mu(\overrightarrow{C}_k)=\frac{\mu_s(\overrightarrow{C}_k)}{s}=\hat{\mu}(\overrightarrow{C}_k)=2$.
\end{thm}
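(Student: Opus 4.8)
The plan is to pin all three quantities to $2$ using Theorem~\ref{LatvianDirectedCycles}, Lemma~\ref{DirectedCyclesLemma}, and the elementary chain $\mu\le s^{-1}\mu_s\le\hat\mu$ (which holds for every $s$). No new strategy or disprover needs to be built from scratch.

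First I would dispatch $\mu(\overrightarrow{C}_k)=2$: since $\mu=\mu_1$ concerns single-guess (i.e.\ Latvian) games with constant hatness, Theorem~\ref{LatvianDirectedCycles} says the Latvian game on $\overrightarrow{C}_k$ is winnable precisely when every hatness equals $2$, so $(\overrightarrow{C}_k,\star 1,\star 2)$ is winnable while $(\overrightarrow{C}_k,\star 1,\star 3)$ is not; hence $\mu(\overrightarrow{C}_k)=2$.

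Next, the upper bound $\mu_s(\overrightarrow{C}_k)\le 2s$. In the Polish game $(\overrightarrow{C}_k,\star s,\star h)$ every vertex has the same ratio $r=s/h$. Whenever $h\ge 2s+1$ we get $r<\tfrac12$, so $1-r>\tfrac12>r$; taking any arc $v\to u$, the hypothesis $1-r(v)>r(u)$ of Lemma~\ref{DirectedCyclesLemma} holds, so the game is unwinnable. Thus $\mu_s(\overrightarrow{C}_k)\le 2s$ for every $s$, giving $\hat\mu(\overrightarrow{C}_k)=\sup_s \mu_s(\overrightarrow{C}_k)/s\le 2$.

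Finally I would combine these: $2=\mu(\overrightarrow{C}_k)\le s^{-1}\mu_s(\overrightarrow{C}_k)\le\hat\mu(\overrightarrow{C}_k)\le 2$ forces equality throughout, so $\mu_s(\overrightarrow{C}_k)/s=2$ for all $s$ and $\hat\mu(\overrightarrow{C}_k)=2$. (One can alternatively exhibit the missing lower bound directly: the all-hatness-$2$ Latvian game is winnable, and multiplying every vertex's $(g,h)$ through by $s$ via the last clause of $\preceq$ together with Proposition~\ref{monotone} shows $(\overrightarrow{C}_k,\star s,\star 2s)$ winnable, i.e.\ $\mu_s\ge 2s$ --- but the inequality chain already supplies this.) There is no genuine obstacle here; the mathematical content is entirely carried by Theorem~\ref{LatvianDirectedCycles} and Lemma~\ref{DirectedCyclesLemma}. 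The only care needed is bookkeeping: checking that Lemma~\ref{DirectedCyclesLemma} genuinely applies along some arc in the constant-ratio Polish setting, and using $\mu\le s^{-1}\mu_s\le\hat\mu$ in the right direction to squeeze the supremum.
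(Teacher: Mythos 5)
Your proof is correct and takes essentially the same route as the paper: unwinnability for $r<\tfrac12$ via Lemma~\ref{DirectedCyclesLemma}, winnability at $r\ge\tfrac12$ via Theorem~\ref{LatvianDirectedCycles} plus the scaling clause of $\preceq$ and Proposition~\ref{monotone}, then the chain $\mu\le s^{-1}\mu_s\le\hat\mu$. In fact your write-up states the unwinnability condition with the correct sign ($r<\tfrac12$, so $1-r>\tfrac12>r$), where the paper's proof contains a typo ("if $r>\tfrac12$, we have $1-r>r$").
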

\begin{proof}
    Since $\hat{\mu}=\sup s^{-1}\mu_s$, and $\mu=\mu_1$, it suffices to prove that $s^{-1}\mu_s(\overrightarrow{C}_k)=2$ for all $s$. Because $r$ is constant in a Polish game, if $r>\frac{1}{2}$, we have $1-r>r$, so that game is unwinnable by Lemma \ref{DirectedCyclesLemma}. If $r\geq \frac{1}{2}$, however, that game is winnable by Proposition \ref{monotone} and Theorem \ref{LatvianHAH}.
\end{proof}

\chapter{Two tools and one use}
In that chapter, we take our first two ``Slavic techniques'' of interest and generalize them, demonstrating their usefulness by proving the conjecture from \cite{KLR21b} on Latvian cycles and going further to classify \textit{all} Latvian cycles. 

The first technique is ``Hats As Hints.'' It formalizes the philosophy that every sage may assume all her neighbors are wrong. This underlies all winning strategies, but it had only been explicitly deployed to show that leaves of hatness $\geq 3$ can be deleted in Latvian games \cite{KLR21b}. Our formulation reveals more precisely how and when a new vertex can help extend a plan to a winning strategy. In this way ``Hats As Hints'' operates as a ``constructor for strategies,'' allowing you to combine a suite of strategies on a game-with-a-hint into a strategy on a related game. It also guides the search for disprovers, which is how it's deployed for the Latvian cycle problem. 

That search for disprovers is conducted by our second technique, 
``admissible graphs" (or assignments, or paths). Admissible paths were developed in \cite{Szc17} to study Classic cycles of hatness 3. They are essentially partial colorings that don't yet cause any sages to guess right. The rules for expanding such partial colorings work as ``constructors for disprovers.'' We provide a couple formalisms to generalize them to Czech games, but the application is limited to certain Latvian paths. 

We combine the techniques to show that two infinite families of Latvian cycles are unwinnable, as conjectured by \cite{KLR21b}, completing the characterization of all winnable Latvian cycles with $h\leq 4$. (Before doing so, we give history for the problem.) The specific means is to turn one vertex of hatness 4 into a hint, then show that there are enough admissible paths in the remaining game for the hint to be insufficient. 

The obvious follow-up question is: what if $h(v)\geq 5$ for some $v$? We answer this too, showing that $v$ is deletable. The long and unenlightening proof is only sketched.

\section{Motivating problem}\label{sec:MCP}
We'd like a polynomially-checkable characterization of winnable and unwinnable Latvian cycles. The problem begins with Szczechla \cite{Szc17}, who made the first nontrivial determination of $\mu$ for a graph class. With great ingenuity, he proved the following.
\begin{lmm}\label{SzczechlaCycleContribution}
    Let $\mathcal{G}\equiv (C_k, h)$ be a Latvian game on a cycle. 
    \begin{itemize}
        \item If $\forall v(h(v)\geq 3)\wedge \exists u (h(u)\geq 4)$, then $\mathcal{G}$ is unwinnable. 
        \item If $\forall v(h(v)=3)$, then $\mathcal{G}$ is winnable if and only if $k=4$ or $3\mid k$. 
    \end{itemize}
\end{lmm}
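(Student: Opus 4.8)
The plan is to handle the two bullets by combining monotonicity reductions, explicit strategy constructions, and the ``admissible paths'' disprover-building technique. \emph{Reductions.} By Proposition~\ref{monotone} it suffices, for the first bullet, to refute the single easiest game of that shape on each cycle: every $h$ with $h(v)\geq 3$ everywhere and $h(u)\geq 4$ somewhere dominates, after relabelling, the hatness vector $(4,3,3,\dots,3)$, so unwinnability of $(C_k,(4,3,\dots,3))$ forces unwinnability of $\mathcal{G}$. The ``only if'' half of the second bullet is the separate claim that $(C_k,\star 3)$ is unwinnable whenever $3\nmid k$ and $k\neq 4$; it is not subsumed by the first, since there no vertex reaches hatness $4$. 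The ``if'' half asks for an explicit winning strategy on $(C_k,\star 3)$ when $k=4$ or $3\mid k$. I would stress at the start that the naive union bound — the expected number of correct guesses under a uniform random colouring is $\sum_v 1/h(v)$, which already exceeds $1$ for every instance here once $k\geq 4$ — settles only the triangle cases $C_3$, so the whole dichotomy for $k\geq 4$ must come from structure, not counting.

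\emph{The ``if'' direction.} For $k=4$ I would exhibit a strategy on $C_4=K_{2,2}$ over $\mathbb{Z}_3$: with bipartition classes $\{v_1,v_3\}$ and $\{v_2,v_4\}$, each vertex sees exactly the opposite class and hence its colour-sum, so let $v_1,v_3$ split the three possible values of $c_2+c_4$ and $v_2,v_4$ split the three possible values of $c_1+c_3$, tuning the offsets so the four ``correct'' regions exhaust $\mathbb{Z}_3^4$ (a check of $81$ colourings, or a counting argument on the regions). For $3\mid k$ I would give a periodic $\mathbb{Z}_3$-strategy directly, or derive it from the fact that $C_3=K_3$ carries a winning Latvian $3$-game by Proposition~\ref{CzechOnComplete} via a gluing that lengthens the cycle in steps of three. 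This direction should be pure bookkeeping rather than the obstacle.

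\emph{The unwinnability direction (the heart).} Fix a strategy; I want a disprover. The natural framework is ``admissible paths'' (Section~\ref{AdmissibleGraphs}): view a colouring of a consecutive pair $\{v_{i-1},v_i\}$ as a \emph{state} at edge $i$, and read each $v_i$'s plan as a transition — a state at edge $i$ may pass to a state at edge $i+1$ exactly when they agree on $c(v_i)$ and that common value makes $v_i$ guess wrong — so a disprover is a cyclic chain of states compatible with all $k$ transitions. I would propagate the \emph{set} of reachable states once around the cycle, maintaining the invariant that it stays ``large'' (say, always containing a $2\times 3$ or $3\times 2$ combinatorial prism of colours), and show that when $3\nmid k$ and $k\neq 4$ a large reachable set after one full loop must contain a state reachable from itself, i.e.\ the cycle closes up. For the first bullet I would first apply the Latvian form of Hats As Hints (Theorem~\ref{LatvianHAH}) to the lone hatness-$4$ vertex $v_0$, converting its plan into a bounded hint given to its two neighbours; this reduces matters to the all-hatness-$3$ path $v_1v_2\cdots v_{k-1}$ carrying one extra hint, and I would argue the hint can patch only a few of the many admissible extensions the path produces.

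\emph{Main obstacle.} The crux is twofold. First, showing the ``largeness'' invariant survives one transition step needs case analysis on the fibre structure of each plan $g_i\colon\mathbb{Z}_3^2\to\mathbb{Z}_3$, since a plan constant in one argument can collapse the reachable set along that coordinate. Second, and harder, is the closing-up step: one must prove precisely that a large reachable set loops back to itself iff $3\nmid k$ and $k\neq 4$ — for $3\mid k$ the sages can install a period-$3$ obstruction, for $k=4$ there is a genuine small coincidence, and isolating these from all other lengths is the delicate combinatorics that constitutes Szczechla's ``raft of ad hoc constructions.'' I would expect the $(4,3,\dots,3)$ case and the all-$3$'s case to need somewhat different endgame bookkeeping even after the reduction to paths.
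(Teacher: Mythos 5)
You have correctly identified the reductions (monotonicity down to $(4,3,\dots,3)$ for the first bullet, the finite checks for $C_3$ and $C_4$, the admissible-path/Warsaw-graph framework for disprovers), and your instinct to convert the lone hatness-$4$ vertex into a hint is the same move this thesis uses for the neighbouring Lemma~\ref{ConjectureSufficientForTwoToFour}. But note that the paper does not prove Lemma~\ref{SzczechlaCycleContribution} at all: it is quoted from \cite{Szc17}, and the Remark following Theorem~\ref{CategorizedFiveAndUp} states explicitly that while the first bullet ``can easily be proven with admissible paths,'' the second bullet ``is not subsumed by this thesis'' because Szczechla's tools are too specialized. So the standard you must meet is the full content of \cite{Szc17}, and your proposal stops exactly where that content begins.

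The genuine gap is the closing-up step for $(C_k,\star 3)$. Your plan is to propagate a set of reachable states around the cycle, maintain a ``largeness'' invariant (a $2\times 3$ prism inside $V_{i}\times V_{i+1}$), and then assert that a large reachable set after one loop contains a state reachable from itself iff $3\nmid k$ and $k\neq 4$. Two problems. First, the propagation step is local and cannot see $k$, so if largeness survived every single transition the invariant would hold for all $k$ — including $3\mid k$, where the game is winnable and no disprover exists. Hence either the invariant sometimes fails (and you must characterize exactly when, which is the theorem) or all of the $k$-dependence is deferred to the fixed-point claim. Second, that fixed-point claim — large image of the $k$-fold composition of transition relations implies a periodic point, precisely for the non-exceptional lengths — is not a consequence of cardinality; it is the entire result. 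Szczechla gets it by attaching a mod-$3$ ``characteristic number'' to a strategy and tracking how it transforms around the cycle, which is what isolates $3\mid k$ and the sporadic $k=4$; nothing in your sketch plays that role. (Smaller issues: the $K_{2,2}$ sum-splitting strategy for $C_4$ is not obviously a strategy — each sage must still output a guess for every observation — though an $81$-colouring check could in principle rescue some construction; and the ``gluing that lengthens the cycle in steps of three'' is not one of the constructors available in this paper, since the single-point product multiplies hatnesses at the glue vertex rather than preserving $\star 3$.)
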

Kokhas, Latyshev, and Retinskiy claim in the abstract of \cite{KLR21b} to ``solve the problem'' ``for cycles,'' they in fact only do so for $C_3$ and $C_4$. 

\begin{lmm}[Theorems 2.1 and 4.2 of \cite{KLR21b}]\label{C3C4Solved}
    A Latvian game on $C_3$ is winnable if and only if $\sum_{v\in V(C_3)}h(v)^{-1}\geq 1$. A Latvian game on $C_4$ is winnable if and only if its hatness sequence is no harder than one of the following: $(3,3,3,3)$, $(2,2,\infty,\infty)$, $(2,4,2,\infty)$, $(2,3,3,4)$, $(3,2,3,4)$.
\end{lmm}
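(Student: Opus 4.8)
For $C_3$ there is nothing to do beyond invoking earlier work: $C_3$ is exactly $K_3$, so a Latvian game on $C_3$ is the Czech game $(K_3,\star 1,h)$, and Proposition~\ref{CzechOnComplete} with $g\equiv 1$ gives winnability precisely when $\sum_v 1/h(v)\geq 1$. So the real content is $C_4$, and there I would work through the identification $C_4=K_{2,2}$. Labelling the cycle $v_1v_2v_3v_4$, the parts are $\{v_1,v_3\}$ and $\{v_2,v_4\}$, and the key observation is that $v_1$ and $v_3$ see exactly the same thing, namely $(c(v_2),c(v_4))$, while $v_2$ and $v_4$ see exactly $(c(v_1),c(v_3))$. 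A strategy is thus a pair of maps $(f_{v_1},f_{v_3}) : [h(v_2)]\times[h(v_4)]\to[h(v_1)]\times[h(v_3)]$ together with a pair $(f_{v_2},f_{v_4}) : [h(v_1)]\times[h(v_3)]\to[h(v_2)]\times[h(v_4)]$, and a coloring $(a,b,c,d)$ is a disprover exactly when, in the $[h(v_2)]\times[h(v_4)]$-grid, the cell $(b,d)$ avoids the ``cross'' $\{b=f_{v_2}(a,c)\}\cup\{d=f_{v_4}(a,c)\}$ and, symmetrically, $(a,c)$ avoids the corresponding cross in the other grid. Winnability of $(C_4,h)$ therefore becomes a statement about covering product grids by crosses --- precisely the kind of question the combinatorial-prism/admissible-path viewpoint of the coming sections is built for --- and I would organize both directions around this picture.

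For the ``if'' direction it suffices, by Proposition~\ref{monotone}, to exhibit a winning strategy for each of the five games listed. For $(3,3,3,3)$ this is Szczechla's Lemma~\ref{SzczechlaCycleContribution} (second bullet, $k=4$). For $(2,2,\infty,\infty)$ the two adjacent hatness-$2$ sages form a $K_2$ and win by themselves (one guesses ``same colour'', the other ``different''), the remaining two sages doing nothing. For $(2,4,2,\infty)$ let the $\infty$-sage guess arbitrarily and have $v_1,v_3$ use only $c(v_2)$: pick $f_{v_1},f_{v_3} : [4]\to\{0,1\}$ so that $b\mapsto(f_{v_1}(b),f_{v_3}(b))$ is a bijection onto $\{0,1\}^2$, and let $v_2$ guess the unique $b$ with $(1-f_{v_1}(b),1-f_{v_3}(b))$ equal to the pair she sees. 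For $(2,3,3,4)$ and $(3,2,3,4)$ I would write down explicit maps in the $K_{2,2}$-grid picture so that the induced crosses cover the relevant grid; these are finite checks. Everything $\preceq$-below these games is then winnable.

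The ``only if'' direction carries the weight. The plan is to show that every Latvian game on $C_4$ not $\preceq$ one of the five is unwinnable, by splitting on the number of hatness-$2$ vertices. With no hatness-$2$ vertex (all hatnesses $\geq 3$), Szczechla's Lemma~\ref{SzczechlaCycleContribution} already forces unwinnability unless the game is $(3,3,3,3)$. With two adjacent, three, or four hatness-$2$ vertices the game is $\preceq(2,2,\infty,\infty)$, so it is already covered. That leaves exactly one hatness-$2$ vertex, or exactly two opposite ones. In the opposite case $(2,x,2,y)$, the correctness of each hatness-$2$ sage forces her own colour, so ``no disprover'' says the $[x]\times[y]$-grid is covered by the $h(v_1)h(v_3)=4$ crosses indexed by $\{0,1\}^2$; since four crosses cannot cover a $5\times5$ (or larger) grid --- choose a row and a column avoiding all four selected rows and columns --- the game is unwinnable whenever $x,y\geq 5$, which is exactly the threshold encoded by $(2,4,2,\infty)$.

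The single-hatness-$2$-vertex case is the genuinely delicate one, and I expect it to be the main obstacle. Now only one sage in each part forces her value, so the cross-covering argument has to be run asymmetrically, and the thresholds have to come out exactly as the finitely many minimal forbidden patterns --- $(2,4,4,3)$, $(2,4,3,4)$, $(2,3,3,5)$, $(2,3,5,3)$, up to rotation and reflection --- demand. I would handle these either by a careful hand count in the grid picture for each pattern, or, more uniformly, by using the admissible-path machinery of the next section to grow a disprover one vertex at a time around the cycle. Verifying that the thresholds land precisely where the statement claims (the $4$-versus-$5$ split, and the $(2,3,3,4)$/$(3,2,3,4)$-versus-$(2,4,3,4)$ distinction) is where essentially all the difficulty lies; the rest of the argument is bookkeeping on the two grids.
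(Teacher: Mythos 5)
The paper does not actually prove this lemma: it is imported wholesale from \cite{KLR21b} (their ``Rook check'' analysis of $C_4$), and the only in-house commentary is the later remark that the $C_3$ part follows from Proposition~\ref{CzechOnComplete} and that the $C_4$ part ``is achievable with some constructors and the arguments we present in this chapter.'' Your $C_3$ argument coincides exactly with that remark and is complete. Your $C_4$ framework is also sound and very much in the spirit of the thesis: the $K_{2,2}$ grid/cross picture is essentially Theorem~\ref{ref:CzechCompleteBipartite} specialized to $m=n=2$, the case split on the number and placement of hatness-$2$ vertices is exhaustive, the $(2,2,\infty,\infty)$ and $(2,4,2,\infty)$ strategies check out, the four-crosses-cannot-cover-a-$5\times 5$-grid argument correctly kills $(2,x,2,y)$ for $x,y\geq 5$, and your list of minimal forbidden patterns in the single-hatness-$2$ stratum is the right one (it is exactly the set of minimal vectors not dominated by $(2,3,3,4)$, $(2,4,3,3)$, or $(2,3,4,3)$).

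The gap is that everything you defer is precisely where the theorem lives. You do not exhibit winning strategies for $(2,3,3,4)$ and $(3,2,3,4)$, and you do not prove unwinnability of any of $(2,4,4,3)$, $(2,4,3,4)$, $(2,3,3,5)$, $(2,3,5,3)$; you only assert that these are ``finite checks'' or amenable to ``the admissible-path machinery.'' They are finite, but they are not trivial --- the $4$-versus-$5$ threshold and the distinction between $(2,3,3,4)$ (winnable) and $(2,4,3,4)$ (not) are exactly the delicate content that forced \cite{KLR21b} into their ``Blind Chess'' casework, and a reader cannot reconstruct them from what you have written. As it stands your proposal is a correct reduction of the lemma to six unverified claims, not a proof of it; either carry out the grid-covering counts for each of the six, or do as the thesis does and cite Theorems 2.1 and 4.2 of \cite{KLR21b} for them.
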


For general $k$, they prove the following:
\begin{lmm}\label{KLRCycleContribution}
    Let $\mathcal{G}\equiv (C_k, h)$ be a Latvian game on a cycle. Suppose $2\leq h(v) \leq 4$ for all $v\in C_k$. Suppose $h(A)=2$ for vertex $A\in V(C_k)$. Then $\mathcal{G}$ is winnable in any of the following conditions.
    \begin{itemize}
        \item $k=3$.
        \item Some other vertex has hatness $2$.
        \item The hatness sequence $(3,2,3)$ appears.
        \item The hatness sequence $(2,3,3)$ appears. 
    \end{itemize}
\end{lmm}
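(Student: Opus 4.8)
The plan is to prove winnability in each of the four cases by reducing, via monotonicity (Proposition~\ref{monotone}) together with the hatness‑$2$‑vertex constructors, to a game whose winnability is already known. By Proposition~\ref{monotone} we may first inflate every vertex not named in the relevant structural condition up to hatness $4$, so it suffices to treat the ``hardest'' instance of each case. The case $k=3$ is then immediate: $C_3=K_3$, and with $h(A)=2$ and the two remaining hatnesses at most $4$ we get $\sum_v h(v)^{-1}\ge \tfrac12+\tfrac14+\tfrac14\ge 1$, so Proposition~\ref{CzechOnComplete} applies.

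For the case where a second vertex $A'$ has hatness $2$: the two hatness‑$2$ vertices split $C_k$ into two arcs; fix one of them and regard it as an induced path $P$ with endpoints $A,A'$. In $P$ the endpoints have degree $1$ (hatness $\le 2=2^1$) and the interior vertices have degree $2$ (hatness $\le 4=2^2$), so $P$ satisfies $h(v)\le 2^{\deg_P(v)}$ everywhere and the Latvian game on $P$ is winnable (the $2^{\deg}$ tree fact; see \cite{KLR21b} or Theorem~\ref{thm: FirstTreeCategorization}). Now play that winning plan on the vertices of $P$ and arbitrary plans elsewhere: the correctness of a sage on $P$ depends only on the coloring restricted to $P$ (no endpoint's plan consults a vertex off the arc), so for every coloring of $C_k$ some sage on $P$ guesses right.

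For the sequence $(3,2,3)$: let $B$ be the hatness‑$2$ vertex and $B^-,B^+$ its hatness‑$3$ neighbours. Delete $B$; what remains is an induced path $P_{k-1}$ whose new endpoints are exactly $B^-$ and $B^+$ (non‑adjacent once $k\ge4$). Temporarily lower $h(B^\pm)$ from $3$ to $2$; the resulting path game satisfies $h\le 2^{\deg}$ and so is winnable. Re‑attaching $B$ as a vertex of hatness $2$ universal to $\{B^-,B^+\}$ recovers $C_k$, and by Corollary~\ref{AddManyHatness2} this re‑attachment preserves winnability while letting $h(B^-),h(B^+)$ be raised by $g=1$ back to $3$; hence $\mathcal G$ is winnable. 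The sequence $(2,3,3)$ is handled the same way whenever $B$'s neighbour lying off the ``$3\,3$'' side has hatness at most $3$ (then the deleted‑and‑lowered path again satisfies $h\le 2^{\deg}$). The residual subcase is the one where that neighbour $E$ has hatness exactly $4$: here lowering $B$'s two neighbours by one no longer produces a $2^{\deg}$‑path (and in fact the path one obtains is genuinely unwinnable, so the ``delete $B$ and re‑attach'' move fails). One must instead either chain a further constructor step (a second hatness‑$2$‑vertex contraction, or \cite{KLR21b}'s ``attach a short $2$–$3$ path'' constructor) or use $B$ as a \emph{hint} that couples the two ends of the residual path, in the spirit of ``Hats as Hints,'' until the bookkeeping lands on a base case (an adjacent pair of hatness‑$2$ vertices, a $C_3$ with $\sum h^{-1}\ge 1$, or a game containing a hatness‑$1$ vertex, which is trivially winnable by the degeneracies noted around Corollary~\ref{onevertexloses}).

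I expect this $(2,3,3)$‑with‑an‑adjacent‑$4$ subcase to be the main obstacle: getting the chained constructors' hatness bookkeeping to terminate exactly on a base case without overshooting, and checking that the termination works for every length $k$, is the delicate part; everything else reduces cleanly either to the complete‑graph formula, to the $2^{\deg}$ tree fact on an induced arc, or to a single hatness‑$2$‑vertex deletion/re‑attachment.
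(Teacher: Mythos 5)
Three of your four cases are sound and follow the route the thesis itself intends (it does not reprove this lemma --- it quotes it from \cite{KLR21b} and remarks only that it ``follows immediately from constructors''): the $k=3$ case via Proposition \ref{CzechOnComplete}, the two-hatness-$2$-vertices case via the $h\le 2^{\deg}$ path fact and the subgraph clause of the ease relation, and the $(3,2,3)$ case by deleting $B$, winning the $(2,\le 4,\dots,\le 4,2)$ path, and re-attaching $B$ with Corollary \ref{AddManyHatness2} --- this last is exactly the paper's Corollary \ref{323Win}.

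The gap is the $(2,3,3)$ case. As you yourself note, if $B$'s other neighbour $E$ has $h(E)\le 3$ the case is already subsumed by the earlier bullets (either a second hatness-$2$ vertex exists, or $(3,2,3)$ appears at $E,B,C$), so the \emph{entire} non-redundant content of the fourth bullet is the subcase $h(E)=4$ --- and that is precisely the subcase you leave open, gesturing at ``chaining constructors until the bookkeeping terminates.'' No chaining or termination argument is needed; two constructor applications suffice. Write the cycle as $\cdots\,E\,B\,C\,D\,\cdots$ with $h(B)=2$, $h(C)=h(D)=3$, $h(E)=4$. Start from the path from $E$ to $D$ obtained by deleting $B$ and $C$, with $h(E)=h(D)=2$ and the interior hatnesses as in $\mathcal G$ (all $\le 4$); this satisfies $h\le 2^{\deg}$ and is winnable. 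Attach $B$ with $h(B)=2$ universal to the clique $\{E\}$, doubling $h(E)$ to $4$ (Corollary \ref{cor:AddNearlyFullVertex}); then attach $C$ with $h(C)=3$ universal to $Y=\{B,D\}$, which contains the hatness-$2$ vertex $B$, raising $h(D)$ by its guessness to $3$ (Proposition \ref{AttachHatness3}). The resulting game is the cycle with hatness sequence $\dots,E(4),B(2),C(3),D(3),\dots$, i.e.\ exactly the hard instance. (This is the content of \cite{KLR21b}'s ``attach a short path'' Theorem 3.8, generalized in this thesis as Proposition \ref{WeakPath}.) Without this step your argument establishes only the first three bullets.
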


The Latvian cycles with $h\leq 4$ that remain unaddressed by \ref{SzczechlaCycleContribution} and \ref{KLRCycleContribution} are precisely those that contain the hatness sequences $(4,2,4)$ or $(4,2,3,4)$ and contain no other vertices of hatness $2$. The easiest games of this sort are those all of whose other vertices have hatness 3, so if \textit{those} games are unwinnable, \textit{all} games of this sort are unwinnable. For $k\leq 4$, this is proven by Proposition \ref{C3C4Solved}. Thus, \cite{KLR21b} conjectured the following (claiming a proof for $k\leq 7$ by computer checking): 
\begin{lmm}\label{ConjectureSufficientForTwoToFour}
    Let $\mathcal{G}\equiv (C_k, h)$ be a Latvian game on a cycle, $k\geq 5$. The hatness sequences $(... 3,4,2,4,3, ...)$ and $(... 3,4,2,3,4,3, ...)$ represent unwinnable games.
\end{lmm}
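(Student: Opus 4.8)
I work with the ``easiest'' representatives, in which every vertex not named in the sequence has hatness $3$; by Proposition~\ref{monotone} this suffices to prove Lemma~\ref{ConjectureSufficientForTwoToFour}. Write the cycle as $u_0,\dots,u_{k-1}$ with $h(u_0)=2$, and let $X$ be \emph{the} hatness-$4$ neighbour of $u_0$ — in family 1 ($\dots3,4,2,4,3\dots$) pick either of the two $4$'s flanking $u_0$, in family 2 ($\dots3,4,2,3,4,3\dots$) take the unique $4$ adjacent to the $2$. Deleting $X$ leaves a path $P$ whose endpoints are $u_0$ (hatness $2$) and a vertex $Y$ (hatness $3$), whose interior contains exactly one further hatness-$4$ vertex $Z$ (adjacent to $u_0$ in family 1, at distance $2$ from $u_0$ in family 2), all other interior vertices having hatness $3$. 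Now apply Hats As Hints (Theorem~\ref{FinalCzechHintTheorem}, Corollary~\ref{LatvianHAH}): since $g(X)=1$, a plan for $X$ is precisely a function $\sigma\colon V_{u_0}\times V_Y\to[4]$, and fixing the colour $\alpha\in[4]$ on $X$ converts the plans of $u_0$ and $Y$ into plans on $P$ that, besides their lone $P$-neighbour, see only the constant $\alpha$; call the resulting path-strategy $f^\alpha$. A strategy on $C_k$ is thus the same data as a path-strategy on $P$ with fixed interior plans and endpoint plans allowed to depend on $\alpha$, together with a $\sigma$; and it wins iff for every $\alpha$ and every disprover $c$ of $f^\alpha$ on $P$ we have $\sigma(c(u_0),c(Y))=\alpha$. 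Writing $E^\alpha\subseteq V_{u_0}\times V_Y$ for the set of endpoint pairs $(c(u_0),c(Y))$ realised by disprovers of $f^\alpha$, such a $\sigma$ exists exactly when $E^0,\dots,E^3$ are pairwise disjoint. Since $|V_{u_0}\times V_Y| = 2\cdot 3 = 6$, it suffices to show: for every choice of path-strategy on $P$, $\sum_{\alpha\in[4]}|E^\alpha|\ge 7$.

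\textbf{Producing disprovers.} First, each $E^\alpha$ is nonempty: with $\alpha$ fixed, $f^\alpha$ is a strategy for the plain Latvian game $(P,h|_P)$, and that game is unwinnable because no subpath meets the criterion of Theorem~\ref{thm: FirstTreeCategorization} ($u_0$ is the only vertex of $P$ with hatness $\le 2$), so $f^\alpha$ has a disprover. To extract more, I would run the admissible-path construction on $P$ as in Section~\ref{AdmissibleGraphs}: propagate from the $u_0$-end the set of pairs (colour of the current vertex, colour of its predecessor) extendable to a partial colouring on which everyone fixed so far guesses wrong. Because $h(u_0)=2$ the colour of $u_0$ in a disprover is forced by the colour of its neighbour, but the hatness-$4$ vertex $Z$ supplies three admissible continuations at each relevant step and the hatness-$3$ endpoint $Y$ two, so the reachable-state set is broad enough that the disprovers of a single $f^\alpha$ already project onto at least two pairs of $V_{u_0}\times V_Y$ — \emph{unless} the endpoint plans degenerate in a precisely describable way (for instance $u_0$'s plan is independent of $Z$, or $Y$'s plan is constant in its neighbour). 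In those degenerate cases the forced structure makes two slices $f^\alpha,f^{\alpha'}$ agree on every input that can occur in a disprover, so $E^\alpha\cap E^{\alpha'}\neq\emptyset$ outright; in the generic case $|E^\alpha|\ge 2$ for all $\alpha$ gives $\sum_\alpha|E^\alpha|\ge 8>6$. Either way the four sets cannot be pairwise disjoint, so by the reduction the cycle game is unwinnable. Together with Proposition~\ref{monotone} (to pass to non-minimal hatnesses) and Lemmas~\ref{SzczechlaCycleContribution} and~\ref{KLRCycleContribution}, this completes the classification of winnable Latvian cycles with $h\le 4$.

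\textbf{The hard part.} The reduction and the nonemptiness of the $E^\alpha$ are routine; the real work is the quantitative admissible-path count — upgrading ``$f^\alpha$ has a disprover'' to ``the disprover endpoint pairs, summed over the four hint slices, number at least $7$'' — together with the enumeration of the degenerate endpoint plans and the (slightly different) bookkeeping for the two positions of $Z$. Admissible paths are precisely the right machinery here, since one is in effect multiplying transfer matrices along $P$ and reading off which endpoint colour pairs survive; but making the branching estimates tight enough to beat $6$, rather than merely positive, is where the length of the full argument lies.
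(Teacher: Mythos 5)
Your reduction is sound and is in fact the same reduction the paper uses: deleting the hatness-$4$ vertex adjacent to the hatness-$2$ vertex, treating its colour as a hint to the two path-endpoints, and observing that a winning cycle strategy exists iff the four endpoint-pair sets $E^0,\dots,E^3\subseteq V_{u_0}\times V_Y$ (your $E^\alpha$, the paper's $L(\{A,Z\},\cdot,c)$) can be made pairwise disjoint inside a $6$-element rectangle. This is exactly Corollary \ref{LatvianHAH}.1 specialised as in Lemma \ref{WhatSufficesToSuffice}. Your observation that each $E^\alpha$ is nonempty (because the path game itself is unwinnable) is also correct and appears implicitly in the paper.

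The gap is that everything after that point is asserted rather than proved, and it is precisely there that the entire content of the lemma lives. Your dichotomy --- ``generically $|E^\alpha|\ge 2$ for all $\alpha$, hence $\sum_\alpha|E^\alpha|\ge 8>6$; in degenerate cases two slices collide outright'' --- is not established: you neither enumerate the degenerate endpoint plans nor show that the branching of admissible continuations past the interior hatness-$4$ vertex really yields two distinct endpoint pairs per slice. Note also that the paper does \emph{not} prove your quantitative target $\sum_\alpha|E^\alpha|\ge 7$; it argues instead that if the $E^\alpha$ were disjoint, pigeonhole forces some slice to be a singleton confined to one column of the $3\times 2$ rectangle and another slice (agreeing with it on the interior) to be confined to the other column --- an ``expedient pair'' (Definition \ref{ExpedienceDefinition}) --- and then spends Lemmata \ref{TrueForxandy} through \ref{XNoExpedient} showing no expedient pair exists. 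That argument needs genuinely nontrivial structural facts, e.g.\ that an expedient strategy on $\mathcal{P}_n^y$ must have $Z$ guessing one fixed colour for $3/4$ of the values of $c(Y)$, which then feeds back into an admissible-path construction producing a forbidden second disprover; the analogous analysis for $\mathcal{P}_n^x$ requires two further counting lemmata. Your proposal correctly names admissible paths as the right machinery and correctly flags this as ``the hard part,'' but flagging it is not doing it: as written, the proof could equally well ``establish'' the (false) unwinnability of, say, the sequence $(3,2,3)$-containing cycles, since nothing in your argument uses the specific hatnesses $4$ and $2$ beyond the size of the rectangle. The reduction is the easy third of the proof; the remaining two thirds are missing.
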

 Together with Lemmata \ref{SzczechlaCycleContribution}, \ref{C3C4Solved}, and \ref{KLRCycleContribution},  Lemma \ref{ConjectureSufficientForTwoToFour} entails Theorem \ref{CategorizedTwoToFour}. We'll prove it. 
\begin{thm}\label{CategorizedTwoToFour}
    Let $\mathcal{G}\equiv (C_k, h)$ be a Latvian game on a cycle. Suppose $2\leq h(v) \leq 4$ for all $v\in C_k$. $\mathcal{G}$ is winnable if and only of one of the following holds.
    \begin{itemize}
        \item $k=3$ and $\exists v\in V(C_k)(h(v)=2)$.
        \item $k$ is either equal to 4 or divisible by 3, and $\forall v\in V(C_k)(h(v)\leq 3)$.
        \item For at least two $v$, $h(v)=2$. 
        \item The hatness sequence $(3,2,3)$ appears.
        \item The hatness sequence $(2,3,3)$ appears. 
    \end{itemize}
\end{thm}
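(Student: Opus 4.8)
The plan is to derive the theorem purely by assembling Lemmata \ref{SzczechlaCycleContribution}, \ref{C3C4Solved}, \ref{KLRCycleContribution}, and \ref{ConjectureSufficientForTwoToFour} (the last being the only one with genuine content; it is established later via Hats As Hints and admissible paths). Throughout I would lean on Proposition \ref{monotone}: lowering a hatness keeps a game at least as easy, and contrapositively a harder game inherits unwinnableness from an easier one. The first bookkeeping remark is that each listed condition either (i) guarantees a vertex $A$ with $h(A)=2$ — namely the conditions ``$k=3$ and $\exists v(h(v)=2)$'', ``at least two $v$ have $h(v)=2$'', ``$(3,2,3)$ appears'', ``$(2,3,3)$ appears'' — or (ii) is the single condition ``$k=4$ or $3\mid k$, and $h\le 3$''.

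For the ``if'' direction: if (ii) holds, the all-$3$ Latvian game on $C_k$ is winnable by Lemma \ref{SzczechlaCycleContribution} (or by Lemma \ref{C3C4Solved} when $k=4$), and since $h\le 3$ componentwise we get $\mathcal G$ winnable from Proposition \ref{monotone}. Otherwise some vertex has hatness $2$, and because $2\le h\le 4$ everywhere, whichever of the four remaining conditions holds is literally one of the bullets of Lemma \ref{KLRCycleContribution} ($k=3$; another hatness-$2$ vertex; $(3,2,3)$ appears; $(2,3,3)$ appears), so $\mathcal G$ is winnable. (When $k=3$ one may instead just note $\sum_v h(v)^{-1}\ge \tfrac12+\tfrac14+\tfrac14=1$ and quote Lemma \ref{C3C4Solved}.)

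For the ``only if'' direction I would argue the contrapositive: suppose all five conditions fail. Then at most one vertex has hatness $2$; neither $(3,2,3)$ nor $(2,3,3)$ occurs; if $k=3$ then no vertex has hatness $2$; and if $k=4$ or $3\mid k$ then some vertex has hatness $4$. \emph{Case (a): no hatness-$2$ vertex.} All hatnesses lie in $\{3,4\}$. If some hatness is $4$, then $\forall v(h(v)\ge3)\wedge\exists u(h(u)\ge4)$ and Lemma \ref{SzczechlaCycleContribution} gives unwinnableness; if all hatnesses are $3$, failure of (ii) forces $k\neq 4$ and $3\nmid k$, and Lemma \ref{SzczechlaCycleContribution} again applies (for $k\le4$ use Lemma \ref{C3C4Solved}). \emph{Case (b): exactly one vertex $A$ has hatness $2$}, so $k\ge 4$. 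The one point needing care is that the local hatness sequence around $A$ is forced to be $(4,2,4)$ or $(4,2,3,4)$: both neighbours of $A$ lie in $\{3,4\}$ and cannot both be $3$ (no $(3,2,3)$), so at least one is $4$; and if a neighbour of $A$ equals $3$, its far endpoint — a vertex $\neq A$, hence in $\{3,4\}$ — must be $4$ (no $(2,3,3)$). For $k\ge 5$, lower every hatness outside this local pattern from $4$ to $3$; the resulting game $\mathcal G_0$ satisfies $\mathcal G\preceq\mathcal G_0$ and has hatness sequence exactly of the form $(\dots 3,4,2,4,3\dots)$ or $(\dots 3,4,2,3,4,3\dots)$, hence is unwinnable by Lemma \ref{ConjectureSufficientForTwoToFour}, so $\mathcal G$ is unwinnable by Proposition \ref{monotone}. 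For $k=4$ the surviving possibilities are, up to symmetry, the finitely many sequences $(2,4,4,4)$, $(2,4,4,3)$, $(2,4,3,4)$, and one checks directly against Lemma \ref{C3C4Solved} that none is $\preceq$-dominated by any of its five maximal winnable $C_4$ sequences — here failure of (ii), ``$(3,2,3)$ appears'', and ``$(2,3,3)$ appears'' is exactly what excludes the near-misses $(2,3,3,3)$, $(3,2,3,3)$, $(2,3,3,4)$, $(3,2,3,4)$.

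I expect no conceptual obstacle once Lemma \ref{ConjectureSufficientForTwoToFour} is granted; the only delicate step is the case (b) bookkeeping — verifying that ``$(3,2,3)$ and $(2,3,3)$ both forbidden'' together with ``at most one hatness-$2$ vertex'' really pins the neighbourhood of $A$ down to $(4,2,4)$ or $(4,2,3,4)$, and disposing of $C_3$ and $C_4$ by hand since Lemma \ref{ConjectureSufficientForTwoToFour} requires $k\ge5$. All the real difficulty of the theorem is displaced into Lemma \ref{ConjectureSufficientForTwoToFour}, which is where the Hats As Hints/admissible-path machinery does its work.
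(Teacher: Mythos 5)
Your proposal is correct and is exactly the paper's route: the paper likewise derives Theorem \ref{CategorizedTwoToFour} by asserting that Lemmata \ref{SzczechlaCycleContribution}, \ref{C3C4Solved}, and \ref{KLRCycleContribution} together with Lemma \ref{ConjectureSufficientForTwoToFour} entail it, and then spends its effort proving Lemma \ref{ConjectureSufficientForTwoToFour} via expedient pairs (Hats As Hints plus admissible paths). The only difference is that you write out the case bookkeeping — in particular the reduction of case (b) to the local patterns $(4,2,4)$ and $(4,2,3,4)$ and the separate $k=4$ check — which the paper leaves implicit in its remark that the unaddressed cycles are precisely those containing one of these sequences and no other hatness-2 vertex.
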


The obvious question is: what if $\exists a(h(a)\geq 5)$? 
\begin{thm}\label{CategorizedFiveAndUp}
    Let $\mathcal{G}\equiv (C_k, h)$ be a Latvian cycle with $h(a)\geq 5$ for some $a\in V(C_k)$. Then $a$ is deletable. (Equivalently, $\mathcal{G}$ is winnable if and only if it contains a hatness sequence $(2,c_1,...,c_m,2)$ $m\geq 0$ and $\forall i(c_i\in \{3,4\})$.)
\end{thm}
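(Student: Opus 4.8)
\emph{Reduction of the statement.} ``$a$ is deletable'' unpacks to the equivalence ``$\mathcal{G}$ winnable $\iff \mathcal{G}\setminus a$ winnable,'' and the parenthetical follows once we have it: $\mathcal{G}\setminus a$ is a Latvian path, so by the classification of Latvian paths (Theorem~\ref{thm: FirstTreeCategorization}) it is winnable exactly when it contains a subpath of hatness sequence $(2,c_1,\dots,c_m,2)$ with every $c_i\le 4$; contracting to the span between the two innermost $2$'s turns ``$\le 4$'' into ``$\in\{3,4\}$,'' and since $h(a)\ge 5$ no such subpath can pass through $a$, so ``the cycle contains such a sequence'' and ``$\mathcal{G}\setminus a$ does'' say the same thing. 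Hence it suffices to prove deletability, and one direction is free: $\mathcal{G}\setminus a\preceq\mathcal{G}$ by the first clause of the ease relation (since $D\setminus a\subseteq D$), so Proposition~\ref{monotone} yields ``$\mathcal{G}\setminus a$ winnable $\Rightarrow\mathcal{G}$ winnable.'' (We take $k\ge 4$ throughout; the degenerate case $k=3$, where $C_3$ is the triangle $K_3$, is subsumed by Proposition~\ref{CzechOnComplete}.)

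\emph{The work: the converse, via the two tools of this chapter.} Argue the contrapositive --- assume $\mathcal{G}\setminus a$ is unwinnable and build a disprover for an arbitrary strategy $f$ of $\mathcal{G}$. Write the cycle as the path $P'=u,v_2,\dots,v_{k-2},w$ together with the vertex $a$ joined to its two endpoints $u,w$. If $h(u)h(w)<h(a)$, sage $a$ sees strictly fewer configurations than she has colors, so some color is never among her guesses and Proposition~\ref{DeleteHighRatio} deletes $a$ outright; so assume $h(u)h(w)\ge h(a)\ge 5$, hence (relabeling) $h(u)\ge 3$. By the Hats-As-Hints reformulation, a disprover for $f$ is the same as a coloring of all of $P'$ under which $v_2,\dots,v_{k-2}$ guess wrong (note $u,w$ are so far unconstrained, each still lacking sight of $a$), together with a color $x$ for $a$ such that also $c(u)\ne f_u(x,c(v_2))$, $c(w)\ne f_w(c(v_{k-2}),x)$, and $x\ne f_a(c(u),c(w))$. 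Szczechla's admissible-path machinery (in its state-set form) supplies a nonempty family of colorings of $v_2,\dots,v_{k-2}$ keeping $v_3,\dots,v_{k-3}$ wrong, with controlled freedom in the colors seen at the two ends; using that a guess function can be constant at only one color, one arranges the end-colors so that the ``$v_2$ wrong'' and ``$v_{k-2}$ wrong'' constraints leave at least two admissible values for each of $c(u),c(w)$. It then remains to count the triples $(c(u),c(w),x)$ surviving the three displayed inequalities: ``$a$ wrong'' forbids one $x$ per $(c(u),c(w))$; ``$u$ wrong'' forbids at most one value of $c(u)$ per $(x,c(w))$, and symmetrically for $w$; and one sharpens by taking $c(u),c(w)$ among $u$'s and $w$'s least-used colors (each used at most about $h(a)/h(u)$, resp.\ $h(a)/h(w)$, times). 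An inclusion--exclusion then produces a good triple whenever $h(a)$ is large enough relative to $h(u),h(w)$ --- roughly, whenever $1-\tfrac1{h(a)}>\tfrac1{h(u)}+\tfrac1{h(w)}$ after optimizing --- which covers all but finitely many parameter patterns.

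\emph{The obstacle.} The genuine difficulty, and the reason the proof is long and unenlightening, is precisely the residual boundary patterns where that count is tight or off by one: $h(a)$ as small as $5$ together with $a$ having a hatness-$2$ or hatness-$3$ neighbour next to a low-hatness interior vertex (the worst offenders being $h(a)\in\{5,6\}$ with $\{h(u),h(w)\}=\{2,3\}$, which is in fact the single spot where even the $k=3$ triangle misbehaves). Averaging is too lossy there; instead one must carry the admissible-\emph{assignment} bookkeeping in full, tracking the exact set of pairs $(c(u),c(w))$ --- together with the accompanying $c(v_2),c(v_{k-2})$ --- realizable by admissible colorings of $P'$ built inward from the two ends, and checking by hand that this set cannot be swallowed by the union of a single fibre $f_a^{-1}(x)$ with the two ``makes $u$ (resp.\ $w$) right'' sets, each of size at most roughly $h(u)h(w)/h(a)$. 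This is exactly the Szczechla-style analysis of when an admissible path fails to close up around a low-degree vertex: elementary, finite, and tedious, so I would present only the representative sub-cases in full, as the paper does.
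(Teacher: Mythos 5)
Your reduction of the statement (the ``equivalently,'' plus the easy direction via Proposition~\ref{monotone}) matches the paper's Lemma~\ref{3.5.1equivalentlyfordeletable}, but your attack on the hard direction is genuinely different from the paper's, and it has a gap at its center. The paper first uses monotonicity to lower all hatnesses into $\{2,3,5\}$ (Lemma~\ref{3.5.2badmonotone}), then cuts the resulting ``bad cycle'' at its hatness-$2$ vertices into ``atomic bad paths'' (hatness-$2$ endpoints, exactly one $5$, all else $3$), and stitches quasi-admissible paths together at those hatness-$2$ vertices: one fixes a ``good'' coloring of the neighbours of each $2$, which pins each $2$'s guess, assigns each $2$ its unguessed colour, and extends inward (Lemma~\ref{3.5.3atomicstitching}); all remaining difficulty is concentrated in Lemma~\ref{thequasiadmissiblesexist}, a finite list of atomic patterns. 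You instead cut the cycle at the hatness-$\ge 5$ vertex $a$ itself and try to close the loop there with an averaging count over the triples $(c(u),c(w),x)$.

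The gap: the sentence beginning ``Szczechla's admissible-path machinery \dots supplies a nonempty family of colorings \dots with controlled freedom in the colors seen at the two ends'' asserts, without proof, essentially the entire content of the theorem. What you need is that for an \emph{arbitrary} unwinnable Latvian path $P'$ --- which may contain interior hatness-$2$ vertices and several hatness-$\ge 5$ vertices in any arrangement --- there is a single interior coloring whose set of compatible end-pairs contains a $2\times 2$ combinatorial rectangle whose rows and columns are moreover ``least-used'' colours of $f_u(\cdot,c(v_2))$ and $f_w(c(v_{k-2}),\cdot)$. That is a product-structure requirement at least as strong as the paper's good-colouring property, demanded for an unbounded family of hatness patterns rather than a finite list; and threading admissibility \emph{through} interior hatness-$2$ vertices (rather than cutting at them, as the paper does) is exactly where such arguments are most fragile. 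Your closing count is only heuristic even where it applies, and the case $h(u)=h(w)=2$ (realized by, e.g., the unwinnable $C_4$ with hatnesses $(5,2,5,2)$) violates your stated inequality yet is absent from your list of offenders. Finally, the $k=3$ case is not ``subsumed'' by Proposition~\ref{CzechOnComplete}: that proposition shows the Latvian triangle with hatnesses $(2,3,5)$ is winnable while the path $(2,3)$ is not, so the hatness-$5$ vertex there is \emph{not} deletable; $k\ge 4$ is a necessary hypothesis (as the Highlights restatement makes explicit), not a convenience.
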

I intuited that answer before even proving Theorem \ref{CategorizedTwoToFour}, but I mucked about with tools from Chapter 5 and Section \ref{sec:HAH} before finding a proof based mostly on admissible paths. 

\begin{remark} 
    The first bullet of Lemma \ref{SzczechlaCycleContribution} can easily be proven with admissible paths, the $C_3$ part of Lemma \ref{C3C4Solved} follows from Proposition \ref{CzechOnComplete}, the $C_4$ part is achievable with some constructors and the arguments we present in this chapter, and Lemma \ref{KLRCycleContribution} follows immediately from constructors. However, the second bullet of Lemma \ref{SzczechlaCycleContribution} is not subsumed by this thesis. Szczechla's tools are quite specialized. Also, we've checked Lemma \ref{23532lemma} brutishly, but we'd like a proof that can be written down concisely. 
\end{remark}

\section{Hats As Hints}\label{sec:HAH}
\epigraph{A prophet I, madam, and I speak the truth the next way...} {Fool, \textit{All's Well That Ends Well}, 1.3.22}

The ``Hats As Hints'' perspective systematizes the main surprise of hat guessing. Brigid's hat has nothing at all to do with Agatha's hat, or Zdislava's for that matter, yet the sages can still extract meaning from the colors they see on each other! They can do this precisely because they discuss their plans with one another beforehand. So suppose that Agatha sees a green hat on Brigid and an orange one on Clementia, and she knows––thanks to the earlier discussion––that if Brigid sees orange on Clementia and indigo on Agatha, then she'll guess green. (For instance.) So if Agatha's wearing indigo, someone (Brigid) will guess correctly, and it doesn't matter what Agatha guesses. So Agatha can freely assume that she isn't wearing indigo! It's of course a little more complicated and reciprocal than this, but it still makes sense to ask: if sage $v$ has color $x$, what does that tell everyone else? 

Such reasoning is used implicitly in many constructors of \cite{KLR21b} (e.g. Theorem 3.6), though never with such generality. It's explicit in Theorems 3.9 and 3.10, in which the authors use the unhelpfulness (shown earlier in \cite{KL19}) of a hint called $A^*$ to show the unhelpfulness in Latvian games of a leaf $\ell$ with $h(\ell)>2$.

A ``hint,'' for our purposes, is a construction of the following sort: during the game (after hat placement and before guessing), the Adversary comes up to a predetermined sage and whispers some truthful statement in her ear. (Hints can also be considered with multiple recipients, and you can consider multiple hints simultaneously.) The statement's recipients and parameters of possible content are fixed and known to all before hat placement, but its \textit{specific} content is known only to the Adversary and the recipients. 
For instance, the hint $A^*$ from \cite{KLR21b} consists of the Adversary's public promise ``during the game, I'll tell Agatha two colors, and her hat will be one of those two colors'' and its fulfillment. 
Everyone but Agatha must have a fixed plan as always (after all, they don't hear the hint), but Agatha has $\binom{h(A)}{2}$ plans, one for each possible statement she could hear from the Adversary. 

Theorem 3.9 from \cite{KLR21b} is precisely that this hint doesn't help: if the Adversary won before, he still wins. 
Theorem 3.10 from \cite{KLR21b} says that adding a leaf $\ell$ with $h(\ell)=3$ doesn't help either. It's proven by 3.9 and the argument that adding a leaf of hatness 3 is no better than giving the $A^*$ hint to whatever sage neighbors the leaf. 

Our Proposition \ref{HeadlineHints} greatly expands this argument. Proposition \ref{HintWinning} generalizes Theorem 3.9 \cite{KLR21b}, and Theorem \ref{FinalCzechHintTheorem} generalizes Theorem 3.10. (Corollary \ref{cor:deleteCzechleaves} is, in its statement, a more fun, interesting, and obvious generalization. Its proof, however, makes it an inferior analogy.)

\begin{defn}\label{def:hints}
Let $\mathcal{G}\equiv (D,g,h)$ be a Czech game and $S,R\subseteq V(D)$. A \emph{$j,k$-hint with respect to $S,R$} is a collection  $\{A_i\}_{i\in [k]}$ of $k$ subsets $A_i\subseteq \mathbb{H}(S)$ such that for each $x\in \mathbb{H}(S)$, there are exactly $j$ different $A_i$ such that $x\in A_i$.  

Here is how the game-with-hint $\mathcal{G}'$ is played. The sages play on digraph $D$ with guessness function $g$ and hatness function $h$, but during the game, the Adversary comes up to each sage in $R$ and says, truthfully, ``the vector $c(S)$ does not lie in $A_i$'' for some $i\in [k]$. The Adversary says the same $i$ to all members of $R$. We may write this datum as ``$\neg A_i$.'' 

The nomenclature from hintless games extend naturally. 
A \textit{strategy} $f'$ for $\mathcal{G}'$ is a single plan $f'_w$ for each sage  $w \in V(G)\backslash R$, along with a \textit{$k$-plan ensemble} for each sage $u\in R$, which is just a $k$-tuple of plans $f'_u=(f_u^0, f_u^1... f_u^{k-1})$. Each corresponds to some $A_i$. For a given $A_i$, use $f_R^i$ to denote the plan on $R$ corresponding to $A_i$. A \textit{disprover} for a strategy $f'$ on $\mathcal{G}'$ is a coloring $c$ such that, for some $i$, $c$ disproves  $f'_{V(G)\backslash R} \cup f_{R}^i$ and $c\notin A_i$. If a strategy has no disprovers, we call it \textit{winning}; otherwise it is \textit{losing}. If a game has a winning strategy, it is \textit{winnable}; otherwise it is \textit{unwinnable}.

When $\mathcal{G}$ is a game, $v$ is a vertex of $G$, and $H$ is a $g(v),h(v)$-hint with respect to $N^+(v),N^-(v)$, we will denote the game ``$\mathcal{G}\backslash v$-but-with-the-hint-$H$'' as $\mathcal{G}_{vH}$.
\end{defn}
\begin{remark}\label{RemarkAboutSimpleHints}
    If you want a simple-to-understand $j,k$-hint, take a partition of $\mathbb{H}(N^+(v))$ into $k$ parts $P_0,P_1,P_2...P_{k-1}$. Then consider $A_i$ to consist of $P_i \cup ... \cup P_{i+j-1}$, where addition is modulo $k$.
\end{remark}

The headline fact is Proposition \ref{HeadlineHints}.9: for any game $\mathcal{G}$ and vertex $v$, $\mathcal{G}$ is winnable if and only if $\mathcal{G}/v$ is winnable with some $g(v),h(v)$-hint with respect to $N^+(v),N^-(v)$. 
Our motto throughout is ``a plan for $v$ translates colors to hints.'' Here's how.

\begin{defn}\label{def:derived}
    Let $v$ be a vertex in $\mathcal{G}$ with plan $f_v$. We define a $g(v),h(v)$-hint with respect to $N^+(v),N^-(v)$ as follows. For each color $i\in [h(v)]$, define $A_i=f_v^{-1}(i)$. I.e., $A_i$ is the set of $x\in \mathbb{H}(N^+(v))$ such that, when $v$ sees $x$, her guesses include $i$. Call this hint $H_{f_v}$, and call the game $\mathcal{G}_{vH_{f_v}}$ the \textit{derived game}. 

    Consider any $u\in N^-(v)$ with plan $f_u$. We define a $k$-plan ensemble for $u$ in $\mathcal{G}_{vH}$ as follows. Let $f_u^i(...)=f_u(i, ...)$, where $c(v)$ is in the first place of $f_u$. That is, when $u$ is told ``$\neg A_i$'', she interprets the hats she sees as if she were in $\mathcal{G}$, seeing an $i$ hat on $v$. Call this $f'_u=(f_u^0,...,f_u^{h(v)-1})$. For any plan $f_Z$ on $Z\subseteq V(D)$, let $f'_Z$ denote $\bigcup_{u\in N^-(v)\cap Z} f'_u \cup \bigcup_{w\in Z\backslash N^-(v)} f_w$; call it the \textit{derived plan}. (If $Z=V(D)\backslash v$, call it the \textit{derived strategy}.)
\end{defn}

The following are obvious but worth writing down. 

\begin{prop}\label{HeadlineHints}
    Let $\mathcal{G}$ be a game and $v\in V(D)$ a vertex.
    \begin{enumerate}
        \item The map $f_v \mapsto H_{f_v}$ is bijective. 
        \item For $u\rightarrow v$, the map $f_u \mapsto \{f_u^0,...,f_u^{h(v)-1}\}$ is bijective.
        \item There is a bijection from plans $f_Q$ on $Q\subseteq V(D)$ with $v\in Q$ to pairs $(H,f'_{Q\backslash v})$, where $H$ is a $g(v),h(v)$-hint with respect to $N^+(v),N^-(v)$, and $f'_{Q\backslash v}$ is a plan on $Q\backslash v$.
        \item For any $H$, there's a bijection from plans $f_Q$ on $Q\subseteq V(D)$ in $\mathcal{G}$ with $v\notin Q$ to plans $f'_Q$ on $Q$ in $\mathcal{G}_{vH}$.
        \item Strategy $f$ wins $\mathcal{G}$ if and only if the derived strategy wins the derived game. 
        \item Let $f_S$ be a plan in $\mathcal{G}$ for some vertices $S$ with $v\in S$. $f_S$ extends to win $\mathcal{G}$ if and only if $f'_S$ extends to win $\mathcal{G}_{vH_{f_v}}$.  
        \item Let $f_Q$ be a $\mathcal{G}$-plan on $Q\subseteq V(D)$ with $v\notin Q$. $f_Q$ extends to a winning strategy on $\mathcal{G}$ if and only if $f'_Q$ extends to a winning strategy on $\mathcal{G}_{vH}$ for some $g(v),h(v)$-hint $H$ with respect to $N^+(v),N^-(v)$.
        \item Consider a plan $f_v$ for $v$. $f_v$ extends to a winning strategy if and only if $\mathcal{G}_{vH_{f_{v}}}$ is winnable. 
        \item $\mathcal{G}$ is winnable if and only if $\mathcal{G}_{vH}$ is winnable for some $g(v),h(v)$-hint $H$ with respect to $N^+(v),N^-(v)$.
    \end{enumerate}
\end{prop}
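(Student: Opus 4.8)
The plan is to establish items 1--8 of Proposition \ref{HeadlineHints} as a chain of bookkeeping lemmas and then read item 9 off them. Items 1--4 are pure unwinding of Definitions \ref{def:hints} and \ref{def:derived}. For item 1, given a $g(v),h(v)$-hint $\{A_i\}$ one recovers the unique plan $f_v(x)=\{i:x\in A_i\}$, whose output has exactly $g(v)$ elements precisely because each $x\in\mathbb{H}(N^+(v))$ lies in exactly $g(v)$ of the $A_i$; this is a two-sided inverse to $f_v\mapsto H_{f_v}$, and it also shows that $f_v\mapsto H_{f_v}$ ranges over \emph{all} $g(v),h(v)$-hints with respect to $N^+(v),N^-(v)$ (a fact I will want for item 9). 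Item 2 is the analogous statement that the slices $f_u^i(\cdot)=f_u(i,\cdot)$ can be reassembled into $f_u$. Items 3 and 4 just record that choosing a plan on $Q$ amounts to choosing, for each vertex in $Q$, either a hint (when $v\in Q$, via item 1) or a derived plan, and the inverse map is the obvious "reattach $v$'s color as the first coordinate of each $u\in N^-(v)$'s input" operation. I would write each inverse explicitly and note it composes to the identity; no computation is involved.

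The heart of the argument is item 5: a strategy $f$ on $\mathcal{G}$ wins if and only if its derived strategy wins $\mathcal{G}_{vH_{f_v}}$. I would prove the contrapositive by a bijection between disprovers. Given a coloring $c$ of $\mathcal{G}$, put $\bar c=c|_{V(D)\setminus v}$ and $i=c(v)$. Three observations do all the work. First, $v$ guesses wrong under $c$ iff $c(N^+(v))\notin A_i$, which is exactly the statement that $\bar c\notin A_i$ in the sense of Definition \ref{def:hints}, i.e.\ that "$\neg A_i$" is a truthful thing for the Adversary to tell the sages in $N^-(v)$. Second, each $u\in N^-(v)$ sees $c(v)=i$ on $v$ in $\mathcal{G}$, so its guess $f_u(i,\mathrm{rest})$ coincides with the guess $f_u^i(\mathrm{rest})$ it makes in $\mathcal{G}_{vH_{f_v}}$ after hearing $\neg A_i$. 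Third, every $w\notin N^-(v)\cup\{v\}$ has $v\notin N^+(w)$, hence sees the same colors and uses the same plan $f'_w=f_w$. Combining these, $c$ disproves $f$ (nobody in $V(D)$ guesses right) iff $\bar c$ together with the hint $\neg A_i$ disproves the derived strategy (nobody in $V(D)\setminus v$ guesses right \emph{and} the hint is valid). The maps $c\mapsto(\bar c,i)$ and $(\bar c,i)\mapsto \bar c\cup\{v\mapsto i\}$ are mutually inverse, giving the claimed bijection, so one game has a disprover iff the other does.

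From item 5, items 6--8 are quick. Item 6 (for a plan $f_S$ with $v\in S$) follows by combining item 5 with items 3--4: extensions of $f_S$ in $\mathcal{G}$ correspond bijectively, and in a winning-preserving way, to extensions of the derived plan $f'_S$ in $\mathcal{G}_{vH_{f_v}}$. Item 7 is the same observation for $Q$ with $v\notin Q$, using item 1 to convert "for some hint $H$" into "for some plan $f_v$ for $v$". Item 8 is item 6 with $S=\{v\}$, together with the remark that, by items 1 and 4, every winning strategy of $\mathcal{G}_{vH_{f_v}}$ is the derived strategy of some strategy of $\mathcal{G}$ extending $f_v$, so winnability of $\mathcal{G}_{vH_{f_v}}$ is exactly the assertion that $f_v$ extends to a winning strategy. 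Finally, for the headline statement: $\mathcal{G}$ is winnable iff some strategy wins it, iff some plan $f_v$ for $v$ extends to a winning strategy, iff (item 8) $\mathcal{G}_{vH_{f_v}}$ is winnable for some $f_v$, iff (item 1, since $f_v\mapsto H_{f_v}$ is onto the set of $g(v),h(v)$-hints with respect to $N^+(v),N^-(v)$) $\mathcal{G}_{vH}$ is winnable for some such hint $H$.

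The only step demanding genuine care is item 5, and inside it the single subtlety that "the Adversary's hint $\neg A_i$ is truthful for $c$" is \emph{the same condition} as "$v$ guesses wrong under $c$"; the rest of item 5, and all of items 1--4 and 6--9, is definitional matching of guesses and of quantifiers. I anticipate no real obstacle beyond being scrupulous about which sages have $v$ as an out-neighbor and about the convention that $v$'s color occupies the first coordinate of each $f_u$, $u\in N^-(v)$.
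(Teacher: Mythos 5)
Your proposal is correct and follows essentially the same route as the paper: items 1--4 are the same definitional bijections, item 5 is the same disprover-correspondence (the paper compresses it to one sentence, and your observation that ``$v$ guesses wrong under $c$'' is literally the condition ``$\neg A_{c(v)}$ is truthful'' is exactly the content of that sentence), and items 6--9 are read off in the same order. The only difference is the level of detail, which is all to the good given how laconic the paper's proof is.
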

\begin{proof} Essentially just the application of definitions.
    \begin{enumerate}
        \item A plan for $v$ is precisely an assignment, to each element of $\mathbb{H}(N^+(v))$, exactly $g(v)$ elements of $V_v$. That is equivalent to a family $\mathcal{F}$ of $h(v)$ subsets $F_i\subseteq \mathbb{H}(N^+(v))$ such that each element of $\mathbb{H}(N^+(v))$ is contained in exactly $g(v)$ of these $F_i$, and that's a hint. 
        \item A plan for $u$ in $\mathcal{G}$ with $u\rightarrow v$ is precisely a rule that assigns, given a coloring of $v$ and the rest of $N^+(u)$, outputs a guess. That is equivalent to a family of $h(v)$ rules, each of which outputs a guess for every coloring of $N^+(u)\backslash v$, which is precisely the $h(v)$-plan ensemble obtained by this map.
        \item Follows from 1 and 2, with the additional observation that for $u\not\rightarrow v$, we have the map $f_u\mapsto f_u$, which is clearly bijective. 
        \item Follows from 2 and the observation in 3. 
        \item Suppose $f$ wins $\mathcal{G}$. A disprover on $D\backslash v$, paired with a true hint $\neg A_i$, corresponds via the above bijections to a disprover for $\mathcal{G}$.
        \item Follows from 5. So do 7, 8, and 9.  \qedhere 
    \end{enumerate} 
\end{proof}

\begin{corollary}\label{cor:redundantvisiondelete}
    For $S\subseteq V(D)$, if $N^+(S) \subseteq \bigcap_{u\in N^-(S)} N^+(u)$, then $S$ is deletable. 
\end{corollary}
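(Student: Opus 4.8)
The plan is to reduce to a single-vertex statement and iterate, using the Hats As Hints machinery (Proposition \ref{HeadlineHints}) to see that each deletion is harmless.

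The easy direction is immediate: $\mathcal{G}\backslash S \preceq \mathcal{G}$ by the first bullet of the ease relation (we have only added vertices), so Proposition \ref{monotone} gives that winnability of $\mathcal{G}\backslash S$ implies winnability of $\mathcal{G}$. For the converse it suffices to prove the one-vertex case: \emph{if $v$ is a vertex with $N^+(v)\subseteq N^+(u)$ for every $u\in N^-(v)$, then $v$ is deletable.} Indeed, the hypothesis $N^+(S)\subseteq\bigcap_{u\in N^-(S)}N^+(u)$ implies, for each single $v\in S$, that $N^+(v)\subseteq N^+(S)\subseteq N^+(u)$ for all $u\in N^-(v)\subseteq N^-(S)$, so the one-vertex statement applies to $v$. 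Moreover the hypothesis survives deleting one $v\in S$: if $u\in N^-_{D\backslash v}(S\backslash v)$ then $u\in N^-_D(S)$, so $N^+_D(S)\subseteq N^+_D(u)$, and removing $v$ from both sides preserves $N^+_{D\backslash v}(S\backslash v)\subseteq N^+_{D\backslash v}(u)$ (using looplessness, so $v\notin N^+(v)$). Hence an induction on $|S|$, peeling off one vertex of $S$ at a time, finishes the proof once the one-vertex statement is established.

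For the one-vertex statement I would combine Proposition \ref{HeadlineHints}.9 with the observation that, under the hypothesis $N^+(v)\subseteq N^+(u)$ for all $u\in N^-(v)$, \emph{every} $g(v),h(v)$-hint $H$ with respect to $N^+(v),N^-(v)$ is useless, i.e. $\mathcal{G}_{vH}$ is winnable if and only if $\mathcal{G}\backslash v$ is. The digraph of $\mathcal{G}_{vH}$ is $D\backslash v$, and by the hypothesis (with looplessness) every $u\in N^-(v)$ still sees all of $N^+(v)$ in $D\backslash v$, hence knows $c(N^+(v))$ and therefore knows exactly which indices $i$ satisfy $c(N^+(v))\notin A_i$. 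Given a winning strategy $f$ for $\mathcal{G}\backslash v$, let each $u\in N^-(v)$ play the constant plan ensemble $(f_u,\dots,f_u)$ and everyone else play $f$; a disprover of this in $\mathcal{G}_{vH}$ would be a disprover of $f$, so $\mathcal{G}_{vH}$ is winnable. Conversely, given a winning strategy $f'$ for $\mathcal{G}_{vH}$, fix the deterministic rule $\iota(x)=\min\{i:x\notin A_i\}$ (nonempty since a $g(v),h(v)$-hint omits any given $x$from $h(v)-g(v)\geq 1$ of its sets); in $\mathcal{G}\backslash v$ each $u\in N^-(v)$ computes $x=c(N^+(v))$ and plays $(f'_u)^{\iota(x)}$, while the others play their plans from $f'$. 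Any coloring disproving this strategy, paired with the index $i=\iota(c(N^+(v)))$, is a disprover of $f'$ in $\mathcal{G}_{vH}$, a contradiction; so $\mathcal{G}\backslash v$ is winnable. With Proposition \ref{HeadlineHints}.9 this yields $\mathcal{G}$ winnable $\iff$ $\mathcal{G}_{vH}$ winnable for some such $H$ $\iff$ $\mathcal{G}\backslash v$ winnable, i.e. $v$ is deletable.

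The main obstacle is the bookkeeping in that last step: one must carefully translate between plans in $\mathcal{G}$, the plan ensembles and hint-disprovers of Definition \ref{def:hints}, and plans in $\mathcal{G}\backslash v$, checking at each turn that every function constructed depends only on the hats the relevant sage is allowed to see — which is exactly where $N^+(v)\subseteq N^+(u)$ is used — and that $\iota$ is well-defined, which is where $h(v)>g(v)$ enters. Beyond that, no new idea past the Hats As Hints correspondence is required.
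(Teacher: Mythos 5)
Your proposal is correct and follows essentially the same route as the paper: delete one vertex of $S$ at a time, use the Hats As Hints correspondence (Proposition \ref{HeadlineHints}.9), observe that because $N^+(v)\subseteq N^+(u)$ for every $u\in N^-(v)$ the hint conveys nothing the in-neighbors cannot already see, and iterate. You simply make explicit the bookkeeping (the well-definedness of $\iota$, the preservation of the hypothesis under deletion) that the paper's one-line proof leaves implicit.
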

\begin{proof}
    Delete any $v\in S$ and give the hint $H_{f_v}$. This hint tells nobody anything they can't see for themselves, so it can be ignored. Iterate.
\end{proof}

\begin{prop}\label{HintWinning}
    Let $\mathcal{G}\equiv (G,g,h)$ be a game. Let $\mathcal{G}_H$ be the same game with a $j,k$-hint $H$ with respect to $S,R$. The constituent sets of $H$ are $A_i$.
\begin{enumerate}
    \item A strategy $f$ for $\mathcal{G}_H$ wins if and only if, for all $i$ and all $c$, $L_{\mathcal{G}\backslash v}(S, f_R^i \cup f_{D\backslash R}, c)\subseteq A_i$. 
    \item There exists winnable $\mathcal{G}_H$ if and only if there exists a plan $f_{D\backslash R}$ on $V(D)\backslash R$ and $k$ plans $f^i_R$ on $R$ such that $\bigcap_{i\in I} L_\mathcal{G}(S, f_R^i \cup f_{D\backslash R}, c)=\emptyset$ whenever $|I|>j$. 
    \item If $\mathcal{G}$ is such that $\forall f \exists S\exists c(|L_{\mathcal{G}}(S, f, c)|>\lfloor \frac{j|\mathbb{H}(S)|}{k}\rfloor$, then $\mathcal{G}_H$ is unwinnable. 
\end{enumerate}
\end{prop}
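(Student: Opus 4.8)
The plan is to prove the three parts in order, since Part 1 carries the conceptual load and Parts 2 and 3 are combinatorial corollaries. Throughout I would abbreviate $f^{(i)} := f^i_R \cup f_{D\backslash R}$ (an ordinary hintless strategy for $\mathcal{G}$) and keep in mind that $L(S,f,c)$ depends on $c$ only through $c_{N^+(S)}$. For Part 1, I would just unwind the definition of a winning strategy for $\mathcal{G}_H$: such an $f=(f_{D\backslash R},f^0_R,\dots,f^{k-1}_R)$ is disproved iff there is an index $i$ and a coloring $c$ with $c(S)\notin A_i$ that makes everyone guess wrong under $f^{(i)}$; by the definition of accepted ends, the $S$-parts of colorings that make everyone wrong under $f^{(i)}$ are exactly the elements of $L(S,f^{(i)},c)$ as $c$ ranges over colorings of $N^+(S)$; hence a disprover exists iff $L(S,f^{(i)},c)\not\subseteq A_i$ for some $i,c$, and negating gives the statement.

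For Part 2, I would feed Part 1 into a set-packing observation. Winnability of $\mathcal{G}_H$ for \emph{some} valid $j,k$-hint on $(S,R)$ is, by Part 1, the existence of plans together with sets $A_0,\dots,A_{k-1}$ forming a $j,k$-hint and satisfying $B_i:=\bigcup_c L(S,f^{(i)},c)\subseteq A_i$ for all $i$. So the question becomes: given finite sets $B_0,\dots,B_{k-1}\subseteq\mathbb{H}(S)$, when can one enlarge them to $A_i\supseteq B_i$ with every point of $\mathbb{H}(S)$ in exactly $j$ of the $A_i$? The answer is exactly ``no point lies in more than $j$ of the $B_i$'': necessity is immediate, and sufficiency is a greedy padding argument (add each under-covered point to enough $A_i$'s that do not yet contain it; this uses $j\le k$, which holds whenever a $j,k$-hint exists). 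Translating ``no point in more than $j$ of the $B_i$'' back through $B_i=\bigcup_c L$ (again using that $L$ only sees $c_{N^+(S)}$) gives the condition $\bigcap_{i\in I} L(S,f^{(i)},c)=\emptyset$ for all $c$ and all $|I|>j$.

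For Part 3 I would argue by contradiction using Part 2. If $\mathcal{G}_H$ were winnable, Part 2 gives plans with no point of $\mathbb{H}(S)$ lying in more than $j$ of the $B_i$, so $\sum_{i\in[k]}|B_i|\le j|\mathbb{H}(S)|$ and therefore $|B_{i_0}|\le\lfloor j|\mathbb{H}(S)|/k\rfloor$ for some $i_0$. But the hypothesis, applied to the strategy $f^{(i_0)}$ (with the witnessing set taken to be the hint's $S$), produces a coloring $c$ with $|L(S,f^{(i_0)},c)|>\lfloor j|\mathbb{H}(S)|/k\rfloor$; since $L(S,f^{(i_0)},c)\subseteq B_{i_0}$, this contradicts the size bound on $B_{i_0}$. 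Hence $\mathcal{G}_H$ is unwinnable. (One can also phrase this directly: if every $|L(S,f^{(i)},c)|>\lfloor j|\mathbb{H}(S)|/k\rfloor$ for a common $c$, then $\sum_i|L(S,f^{(i)},c)|\ge j|\mathbb{H}(S)|+1$ and pigeonhole puts some element in at least $j+1$ of them.)

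The main obstacle I anticipate is the bookkeeping in Part 1 and its echo in Part 2: being scrupulous that $L(S,f,c)$ depends only on $c_{N^+(S)}$, that every accepted end is realized by a genuine all-wrong coloring, and that the translation between ``$\mathcal{G}_H$-disprover'' and ``hintless disprover of $f^{(i)}$ with $S$-part avoiding $A_i$'' neither drops nor invents disprovers. A secondary wrinkle is the quantifier in Part 3: it reads $\exists S$ per strategy, whereas the hint pins down $S$; the argument needs that witness to be the hint's own $S$ (which is exactly how the proposition is used downstream, e.g. with $S=N^+(v)$ when a vertex is turned into a hint), so one should either read the hypothesis that way or strengthen it to name $S$ outright. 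The greedy padding lemma and the averaging step are otherwise routine.
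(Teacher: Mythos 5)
Your proposal is correct and follows essentially the same route as the paper's proof: Part 1 by unwinding the definition of a winning strategy with a hint, Part 2 by taking the accepted-end sets as proto-hint-sets and padding them up to a genuine $j,k$-hint, and Part 3 by averaging/pigeonhole from Part 2. You simply supply the bookkeeping (the greedy padding lemma, the dependence of $L$ on $c_{N^+(S)}$ only, and the quantifier on $S$ in Part 3) that the paper dismisses as obvious.
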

\begin{proof} You may find this all obvious.
   \begin{enumerate}
       \item There exists a $c$ and  $i$ such that the Adversary can truthfully say ``$\neg A_i$" and nobody guesses right, if and only if $\exists i\exists c(L_{\mathcal{G}\backslash v}(S, f_R^i \cup f_{D\backslash R}, c)\not\subseteq A_i)$. By the definition of winnability for $\mathcal{G}_H$, then, the sages win if and only if no such $i,c$ exist.
       \item Set $A'_i= L_{\mathcal{G}\backslash v}(S, f_R^i \cup f_{D\backslash R}, c)$. No $j+1$ of them have nonempty intersection, so we may freely add elements to them to obtain $A_i$ satisfying the conditions of a $j,k$-hint $H$. 
       \item From 2. \qedhere
   \end{enumerate}
\end{proof}

Parts of Propositions \ref{HeadlineHints} and \ref{HintWinning} can be mixed and matched to yield useful statements that don't reference ``hints.''  Here's the consequence of combining \ref{HeadlineHints}.9 and \ref{HintWinning}.2.

\begin{thm}\label{FinalCzechHintTheorem}
    Let $\mathcal{G}$ be a game and $v$ a vertex. $\mathcal{G}$ is winnable if and only if there exists a plan $f$ on $V(D)\backslash v\backslash N^{-}(v)$ and $h(v)$ plans $f^i$ on $N^{-}(v)$ such that $\bigcap_{i\in I} L_{\mathcal{G} \backslash v}(N^+(v), f^i \cup f, c)=\emptyset$ whenever $|I|>g(v)$.
\end{thm}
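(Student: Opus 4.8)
The plan is to obtain the theorem by composing two biconditionals already established, namely Proposition~\ref{HeadlineHints}.9 and Proposition~\ref{HintWinning}.2; no new idea is needed, and essentially all the work is lining up parameters so that ``hint'' can be quantified away from the final statement. First I would invoke Proposition~\ref{HeadlineHints}.9 applied to $\mathcal{G}$ and $v$: it tells us $\mathcal{G}$ is winnable if and only if $\mathcal{G}_{vH}$ is winnable for some $g(v),h(v)$-hint $H$ with respect to the pair $N^+(v),N^-(v)$. Recall $\mathcal{G}_{vH}$ is the game $\mathcal{G}\backslash v$ carrying the hint $H$; since $D$ is loopless, $N^+(v)$ and $N^-(v)$ are subsets of $V(D)\backslash v=V(D\backslash v)$, so they are legitimate ``with respect to'' sets for a hint on $\mathcal{G}\backslash v$ — Definition~\ref{def:hints} imposes no constraint on these sets beyond their lying in the vertex set of the ambient game.

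Next I would apply Proposition~\ref{HintWinning}.2, reading its generic game ``$\mathcal{G}$'' as our $\mathcal{G}\backslash v$ and taking $S=N^+(v)$, $R=N^-(v)$, $j=g(v)$, $k=h(v)$. That part says there is a winnable $(\mathcal{G}\backslash v)_H$ with a $g(v),h(v)$-hint with respect to $(N^+(v),N^-(v))$ if and only if there exist a plan $f$ on $V(D\backslash v)\backslash R=V(D)\backslash v\backslash N^-(v)$ and $h(v)$ plans $f^i$ on $R=N^-(v)$ such that, for every coloring $c$, $\bigcap_{i\in I}L_{\mathcal{G}\backslash v}(N^+(v),f^i\cup f,c)=\emptyset$ whenever $|I|>g(v)$. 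The subscript on $L$ here becomes $\mathcal{G}\backslash v$ precisely because $\mathcal{G}\backslash v$ is the game to which Proposition~\ref{HintWinning} is being applied, which is exactly what the theorem displays. Chaining the two equivalences gives the theorem as stated.

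The only place demanding care is routine bookkeeping about restrictions: confirming that $V(D\backslash v)\backslash N^-(v)=V(D)\backslash v\backslash N^-(v)$ (immediate, since $v\notin N^-(v)$ by looplessness); that the $k=h(v)$ plans $f^i$ on $R=N^-(v)$ produced by Proposition~\ref{HintWinning}.2 are genuinely the components of an $h(v)$-plan ensemble on the hint-receiving set, as the statement intends; and that the universal quantifier over colorings $c$ is understood the same way in both cited parts (both treat $c$ as free, hence universally quantified). I do not anticipate a real obstacle: once the substitution of $\mathcal{G}\backslash v$ for $\mathcal{G}$, of $(N^+(v),N^-(v))$ for $(S,R)$, and of $(g(v),h(v))$ for $(j,k)$ is made, the theorem is a literal transcription of the conjunction of Proposition~\ref{HeadlineHints}.9 and Proposition~\ref{HintWinning}.2 with the hint existentially absorbed.
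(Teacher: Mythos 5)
Your proposal is correct and is exactly the paper's own argument: the theorem is introduced there as "the consequence of combining Propositions~\ref{HeadlineHints}.9 and \ref{HintWinning}.2," with the same substitution of $\mathcal{G}\backslash v$ for the generic game, $(N^+(v),N^-(v))$ for $(S,R)$, and $(g(v),h(v))$ for $(j,k)$. The bookkeeping you flag (looplessness giving $v\notin N^-(v)$, the identification of the $f^i$ with a plan ensemble) is the only content, and you handle it correctly.
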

It's perhaps surprising that the condition in Theorem \ref{FinalCzechHintTheorem} is satisfied for \textit{some} vertex if and only if it's satisfied by \textit{all} vertices. 

    \begin{coro}\label{LatvianHAH}
        Let $\mathcal{G}$ be an undirected Latvian game and $v$ a vertex. 
        \begin{enumerate}
            \item $\mathcal{G}$ is 
            winnable if and only if there exists a plan $f$ on $V(G)\backslash v\backslash N(v)$ and $h(v)$ plans $f^i$ on $N(v)$ such that $L_{\mathcal{G} \backslash v}(N(v), f^i \cup f, c)$ are disjoint for different $i$. 
            \item Let $f$ be a plan on $G\backslash v$. If $\exists c(|L_{\mathcal{G} \backslash v}(N(v), f, c)|>\lfloor \frac{|\mathbb{H}(N(v))|}{h(v)}\rfloor)$, then $f$ does not extend to a winning strategy on $G$.
        \end{enumerate}
    \end{coro}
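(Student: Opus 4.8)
The plan is to read both halves directly off the hint machinery of this section; neither requires a genuinely new argument.

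\textbf{Part 1} is just Theorem~\ref{FinalCzechHintTheorem} specialized to the case at hand. Instantiating that theorem at $v$: since $\mathcal{G}$ is undirected we have $N^{+}(v)=N^{-}(v)=N(v)$, so the ambient plan $f$ lives on $V(G)\backslash v\backslash N(v)$, the plans $f^{i}$ live on $N(v)$, and $i$ ranges over $[h(v)]$; since $\mathcal{G}$ is Latvian, $g(v)=1$, so the condition ``$\bigcap_{i\in I}L_{\mathcal{G}\backslash v}(N(v),f^{i}\cup f,c)=\emptyset$ whenever $|I|>g(v)$'' becomes ``whenever $|I|\ge 2$.'' Since an empty pairwise intersection already forces every larger intersection to be empty, this is precisely the assertion that the family $\{L_{\mathcal{G}\backslash v}(N(v),f^{i}\cup f,c)\}_{i\in[h(v)]}$ is pairwise disjoint, for every $c$. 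So Part 1 needs no further work.

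\textbf{Part 2.} I would prove the contrapositive: if the plan $f$ on $G\backslash v$ extends to a winning strategy $\tilde f$ on $\mathcal{G}$, then $|L_{\mathcal{G}\backslash v}(N(v),f,c)|\le\lfloor |\mathbb{H}(N(v))|/h(v)\rfloor$ for every coloring $c$. Let $f_{v}=\tilde f_{v}$ and form the derived hint $H_{f_{v}}$ of Definition~\ref{def:derived}, whose parts are $A_{i}=f_{v}^{-1}(i)\subseteq\mathbb{H}(N(v))$ for $i\in[h(v)]$. Because $g(v)=1$, every $x\in\mathbb{H}(N(v))$ lies in exactly one $A_{i}$, so $\{A_{i}\}_{i\in[h(v)]}$ partitions $\mathbb{H}(N(v))$ into $h(v)$ (possibly empty) blocks, and by pigeonhole some block $A_{i_{0}}$ has $|A_{i_{0}}|\le\lfloor |\mathbb{H}(N(v))|/h(v)\rfloor$. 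By Proposition~\ref{HeadlineHints}.5 the derived strategy wins the derived game $\mathcal{G}_{vH_{f_{v}}}$, and hence by Proposition~\ref{HintWinning}.1 the accepted ends in $N(v)$ under the message $\neg A_{i_{0}}$ are contained in $A_{i_{0}}$, for every $c$. The bound then follows once one identifies this accepted-ends set with the $L_{\mathcal{G}\backslash v}(N(v),f,c)$ of the statement — i.e. observes, via Definition~\ref{def:derived} and Proposition~\ref{HeadlineHints}.2 applied at each neighbour of $v$, that the $N(v)$-component of the strategy inherited from $\tilde f$ under $\neg A_{i_{0}}$ is exactly the relevant slice of $f$ — so that $|L_{\mathcal{G}\backslash v}(N(v),f,c)|\le|A_{i_{0}}|\le\lfloor |\mathbb{H}(N(v))|/h(v)\rfloor$.

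\textbf{The obstacle.} The only non-mechanical step is that last identification in Part 2: being precise about what ``a plan on $G\backslash v$'' and ``extends to a winning strategy on $G$'' mean here, and confirming that the set $L_{\mathcal{G}\backslash v}(N(v),f,c)$ named in the statement is genuinely the one to which the pigeonhole bound attaches, rather than the accepted ends of some differently-sliced strategy. This is bookkeeping internal to Definition~\ref{def:derived} and Proposition~\ref{HeadlineHints}, not a new idea; alternatively one can package Part 2 as the per-strategy refinement of Proposition~\ref{HintWinning}.3 and avoid re-deriving any of the bijections.
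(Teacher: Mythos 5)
Your proof is correct and follows essentially the same route as the paper, which likewise gets Part 1 by specializing Theorem~\ref{FinalCzechHintTheorem} and Part 2 from the hint machinery (the paper cites Propositions~\ref{HintWinning}.2 and \ref{HeadlineHints}.7, which are exactly the statements you re-derive via \ref{HintWinning}.1, \ref{HeadlineHints}.5, and pigeonhole). The bookkeeping you flag at the end --- identifying $L_{\mathcal{G}\backslash v}(N(v),f,c)$ for the single plan $f$ with the accepted ends of the appropriate slice of the derived strategy --- is a real point, but it is precisely the identification the paper's one-line proof leaves implicit, so nothing is missing relative to the paper's own argument.
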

    \begin{proof}
        Respectively, from specializing Theorem \ref{FinalCzechHintTheorem}, and from specializing the combination of Propositions \ref{HintWinning}.2 and \ref{HeadlineHints}.7.
    \end{proof}
Also, if you want to speak of a $1,k$-hint with respect to $R,R$ in an undirected Latvian game, you may simply say ``$k$-hint with respect to $R$.''




\section{Admissible paths}\label{AdmissibleGraphs}
\epigraph{For it is easier for a camel to go through a needle's eye, than for a rich man to enter into the kingdom of God.
}
{Luke 18:25}
\subsubsection*{The idea}
The critical tool for \cite{Szc17}'s determination of $\mu(C_k)$ for all $k$ was the idea of an \textit{admissible path}, which represents partial colorings such that every sage either guesses wrong or hasn't seen enough to make up her mind. It's a tool for visualizing and systematizing the brute computations about disprovers, either finding one or showing one needn't exist. It does so slowly and iteratively. We'll give two formalisms. Szechla's original formalism (also used in \cite{KL19} for computational models) used a sort of blown-up copy of the graph, with admissible paths particular paths in that blown-up copy. We call this the ``Warsaw graph.'' We generalize that formalism, but first we give one that's simpler to state: admissible partial colorings. (Szczechla's original employed further constructions, specific to $(C_k,3)$, called ``directed paths,'' ``colourable strategies,'' or the heroically devised ``characteristic number of a strategy.'')
Throughout, we write ``admissible'' when really we mean ``admissible with respect to a fixed strategy $f$,'' or in the language of \cite{Szc17}, ``$f$-admissible.''

\subsubsection*{Admissible partial colorings}
For a given strategy $f$, we call a partial coloring \textit{admissible on $Q$} if no sage in $Q\subseteq V(D)$ guesses correctly. 
If the domain of $c$ induces a path and $c$ is admissible on every vertex of that path, we call $c$ a \textit{admissible path}. If it is admissible on every interior vertex of that path, we call it a \textit{quasi-admissible path}.

\subsubsection*{Admissible graphs}
For visibility digraph $D$ and hatness function $h$, We define the \textit{Warsaw (di)graph}\footnote{We name it after Szczechla's home university. It was subsequently used in \cite{KL19}, as well as its line graph.} $h*G$ as follows. 
Set $V(h*D)=\coprod_{i\in V(D)}V_i $. (As a reminder, $V_i$ is the set $[h(i)]$ of possible colors for sage $i$, though in $h*D$ it functions as a set of vertices.) For distinctness, one may label the new vertices as $i_p$, where $i$ is the vertex of $D$ from which they originated, and $p\in [h(i)]$ is the relevant color. (We'll do this only sometimes.) Let there be an arc in $h*D$ from $i_p$ to $j_q$ if and only if there is an arc in $D$ from $i$ to $j$.

Warsaw graphs are the setting for \textit{admissible subgraphs.} For a plan (in practice, usually a strategy) $f$ on $D$, an admissible (sub)graph $S\subseteq D*h$ is 
\begin{itemize}
    \item a connected induced subgraph of $D*h$ such that
    \item for each $v\in D$, $V(A)$ contains at most one of $v_p,v_q\in [h(v)]$ for $p\neq q$ and
    \item if $\deg^+_S(v_p)=\deg^+_D(v)$, then $f_v(N^+(v))\neq v_p$.
\end{itemize}

 An \textit{admissible path} is an admissible subgraph that is a path. A \textit{quasi-admissible path} is a subgraph that is admissible except possibly for the failure of the third bullet point on the endpoints. Phrased thus, the mission of Szczechla was to show that $3*C_k$ was guaranteed an admissible $C_k$ (i.e., an admissible subgraph that is a $C_k$) if $k$ was greater than $4$ and not divisible by 3, but not otherwise. 

Intuitively, a vertex of $S$ is an actual (i.e. in some specific instance of the Adversary's choices) assignment of a color to a sage. Arcs of $S$ are actual beholdings. $S$ itself is a ``local,'' or ``partial'' reality––say, an intermediate stage while the Adversary is placing hats––when no sage has yet (correctly) decided her guess. (Note that a subgraph is admissible if and only if it's admissible on every subgraph induced on a set of the form $\{v\}\cup N^+(v)$. For undirected graphs, this is stars. For cycles, this is copies of $P_3$.)

\subsubsection*{Admissible continuations}
The visualization that a Warsaw graph affords us is quite nice in low-degree regions; it looks like weaving. See \cite{Szc17}.
The reasoning and computation itself is quite different. It's like threading several needles in succession, maneuvering a snake through rubble, or cracking a safe. It took me far more doodling, casework, and head-scratching than I'd care to admit before finding the arguments of this chapter. 

The main notion for building admissible paths is that of an \textit{admissible continuation.} We ape \cite{Szc17} directly. For the sake of focus on admissible paths, we will restrict our attention to an induced path $P$ in $D$. Label the vertices in this $n$-vertex path by $1,...,n$. We will speak as though the higher numbers were arrayed to the right.
Let $S$ be an admissible path in $h*D$ (really, in $h*P$) from $V_{i}$ to $V_{j}$ (left to right). A rightward admissible continuation is a vertex in $V_{j+1}$ that can be added to the path without causing to it to cease being admissible, or (equivalently) the edge between that vertex and the rightmost vertex of the $S$. A leftward admissible continuation is defined similarly, except left instead of right and $V_{i-1}$ instead of $V_{j+1}$. 
\begin{lmm}\label{admissiblecontinuationlemma}
    Given $r$ edges from $V_k$ to $V_{k+1}$ with common left end and a subset $V'\subseteq V_{k+2}$, the total number of admissible continuations of these edges into $V'$ is $\geq (r-g(v_{k+1}))|V'|$. 
    (If $r=|V_{k+1}|$, then equality holds.)
    In particular, at least one of the $r$ edges has $\geq |V'|-\left\lfloor \frac{g(v_{k+1})|V'|}{r}\right\rfloor$ admissible continuations. 
    The analogous fact holds from right to left. 
\end{lmm}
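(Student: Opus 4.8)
The plan is to reduce the whole count to one local condition at $v_{k+1}$ and then double-count. Write the common left end of the $r$ edges as $(v_k)_a$, so in every configuration the vertex $v_k$ carries colour $a$; let the $r$ edges run to the distinct vertices $(v_{k+1})_{b_1},\dots,(v_{k+1})_{b_r}$ of $V_{k+1}$, and let a candidate continuation into $V'$ be the choice of a colour $c\in V'$ for $v_{k+2}$. As is the case for admissible paths inside cycles, I work in the setting where the internal path vertex $v_{k+1}$ has $N^{+}(v_{k+1})=\{v_k,v_{k+2}\}$, so that its degree clause can actually fire.

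First I would verify the reduction. Granting that the path ending $\dots(v_k)_a(v_{k+1})_{b_m}$ is already admissible, appending $(v_{k+2})_c$ adds exactly one new vertex and one new edge, so the result is still a connected induced path; it uses a colour $c$ not previously assigned to $v_{k+2}$; and the only vertex whose represented out-neighbourhood becomes complete is $(v_{k+1})_{b_m}$, since its out-neighbours $v_k$ and $v_{k+2}$ are now both present. Hence the degree clause of admissibility is triggered only at $(v_{k+1})_{b_m}$, where it asserts precisely that $b_m$ is not among the $g(v_{k+1})$ guesses $f_{v_{k+1}}$ outputs on seeing colour $a$ on $v_k$ and colour $c$ on $v_{k+2}$; abbreviate this guess-set $f_{v_{k+1}}(a,c)$. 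So the continuation by $c$ is admissible if and only if $b_m\notin f_{v_{k+1}}(a,c)$.

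Now the counting. I consider the $r\,|V'|$ pairs $(m,c)\in[r]\times V'$ and call $(m,c)$ \emph{good} when $b_m\notin f_{v_{k+1}}(a,c)$; then the total number of admissible continuations of the $r$ edges into $V'$ equals the number of good pairs. Fix $c$: since $|f_{v_{k+1}}(a,c)|=g(v_{k+1})$, at most $g(v_{k+1})$ of the distinct colours $b_1,\dots,b_r$ can lie in it, so at least $r-g(v_{k+1})$ of the pairs with second coordinate $c$ are good; summing over the $|V'|$ choices of $c$ gives at least $(r-g(v_{k+1}))|V'|$ good pairs, which is the first assertion. If $r=|V_{k+1}|=h(v_{k+1})$ then $\{b_1,\dots,b_r\}=[h(v_{k+1})]\supseteq f_{v_{k+1}}(a,c)$ for every $c$, so each column contributes exactly $r-g(v_{k+1})$ and equality holds. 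Finally, since the $r$ edges together have at least $(r-g(v_{k+1}))|V'|$ continuations, one of them has at least $\lceil (r-g(v_{k+1}))|V'|/r\rceil=|V'|-\lfloor g(v_{k+1})|V'|/r\rfloor$ of them, which is the last sentence; the right-to-left version follows by reversing the labelling of $P$ (hence of $h*P$), a symmetry of the whole set-up.

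I expect the only real friction to be the reduction in the second paragraph: one has to be careful that appending $(v_{k+2})_c$ does not secretly break the ``induced'' or ``at most one colour per original vertex'' clauses, and that, in the relevant setting, $v_{k+1}$'s out-neighbourhood in $D$ is exactly $\{v_k,v_{k+2}\}$ so that the degree clause activates at $(v_{k+1})_{b_m}$ and nowhere else. Once that is pinned down, what remains is routine double counting together with a one-line pigeonhole step.
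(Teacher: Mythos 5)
Your proof is correct and follows essentially the same counting argument as the paper's: for each colour $c\in V'$, at most $g(v_{k+1})$ of the $r$ middle vertices are disqualified because they lie in the guess-set $f_{v_{k+1}}(a,c)$, and summing over $V'$ gives the bound, with equality when the middle vertices exhaust $V_{k+1}$. You have merely spelled out more explicitly the reduction to the local admissibility condition and the pigeonhole step that the paper compresses into ``Then we count.''
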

\begin{proof}
    Let $\beta\in V_k$ be the common left end. Let $\Gamma\subseteq V_{k+1}$ be the set of right ends. For each vertex $\delta\in V'$, at most $g(v_{k+1})$ of the vertices in $\Gamma$ are not the middle of an admissible $P_3$ with ends $\beta,\delta$. If $\Gamma=V_{k+1}$, then we may replace ``at most'' with ``exactly.'' Then we count.  
    The leftward proof is identical.
\end{proof}
The next fact expands on (a) and (b) of Lemma 2 in \cite{Szc17}. Notice that it works even if the various $V_k$ are replaced with some particular $V_k'\subset V_k$, so long as $|V_k'|\geq 3$ where needed.

\begin{corollary}\label{3PathLemma}
    For $j\geq i$. Let $P$ be an admissible path from $V_i$ to $V_j$ with $|V_{j+1}|,|V_{j+2}|\geq 3$ and $P$ having at least two admissible rightward continuations. Then there exists an admissible path $P'$ from $V_i$ to $V_{j+1}$ with at least two admissible rightward continuations. The analogous fact holds with directions reversed.
\end{corollary}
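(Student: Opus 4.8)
The plan is to extend $P$ by one carefully chosen vertex of $V_{j+1}$ and then quote Lemma~\ref{admissiblecontinuationlemma} to count the rightward continuations available to the new path. Write $a\in V_j$ for the right endpoint of $P$, and, using the hypothesis, fix two admissible rightward continuations $b_1,b_2\in V_{j+1}$ of $P$. These are two edges of $h\ast P$ out of $a$, each of which can be appended to $P$ without destroying admissibility.

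First I would apply the ``in particular'' clause of Lemma~\ref{admissiblecontinuationlemma} with $k=j$, with the $r=2$ edges $a\to b_1,\ a\to b_2$ (they share the left end $a$), and with $V'=V_{j+2}$, so $|V'|\ge 3$. The lemma then furnishes one of the two edges --- say $a\to b_1$ after relabelling --- admitting at least $|V_{j+2}|-\lfloor g(v_{j+1})|V_{j+2}|/2\rfloor$ admissible continuations into $V_{j+2}$; in the Latvian setting relevant here $g(v_{j+1})=1$, and this quantity equals $\lceil |V_{j+2}|/2\rceil\ge 2$.

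Now set $P'=P\cup\{b_1\}$, the path $P$ with $b_1$ appended at its right end. Since $b_1$ was an admissible rightward continuation of $P$, the graph $P'$ is an admissible path, and it runs from $V_i$ to $V_{j+1}$, as required. It then remains to see that the admissible rightward continuations of $P'$ into $V_{j+2}$ are exactly the admissible continuations of the edge $a\to b_1$ into $V_{j+2}$: a candidate $c\in V_{j+2}$ may be appended to $P'$ iff $P'\cup\{c\}$ is admissible, and since $P'$ is already admissible the only new third-bullet condition is the one at $b_1$, which has just reached full out-degree in $h\ast P$ --- namely that $f_{v_{j+1}}$ evaluated at $(a,c)$ is not the colour named by $b_1$ --- and that is precisely admissibility of the $P_3$ $\,a$--$b_1$--$c\,$. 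Hence $P'$ has at least the two rightward continuations produced above, which proves the claim; the ``directions reversed'' version is obtained verbatim from the leftward half of Lemma~\ref{admissiblecontinuationlemma}.

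The setup and the arithmetic $\lceil |V_{j+2}|/2\rceil\ge 2$ are routine. The one point that needs care --- and the only real obstacle --- is the last identification: that appending $c$ to $P'$ imposes no constraint beyond the $P_3$-admissibility of $a$--$b_1$--$c$. This is clear in the interior (the new vertex $c$ is an endpoint of $P'\cup\{c\}$ and does not reach full out-degree), the sole exception being the degenerate boundary case $j+2=n$, which lies outside the intended scope of the corollary; once this bookkeeping is settled the result is immediate.
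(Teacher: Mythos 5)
Your proposal is correct and is exactly the deduction the paper intends: the paper states this as a corollary of Lemma~\ref{admissiblecontinuationlemma} with no written proof, and your argument---apply the ``in particular'' clause with $r=2$ to the two given continuations out of the right endpoint, append the better one, and observe that the only new admissibility constraint is the $P_3$ condition at $v_{j+1}$---is precisely that omitted deduction, including the correct arithmetic $|V_{j+2}|-\lfloor |V_{j+2}|/2\rfloor\geq 2$. Your two caveats (the implicit single-guess assumption $g(v_{j+1})=1$, without which the count degenerates, and the boundary case where $v_{j+2}$ is a terminus of the underlying path) are genuine imprecisions in the corollary's statement rather than gaps in your proof, and both are consistent with how the corollary is actually used later in the chapter.
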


\section{Application to Low-Hatness Cycles}
Technically, we could reason all this out without the language of hints or admissible paths. However, those formalisms point to a relatively easy solution and help conceptualize it. 

We're only considering Latvian games, and we specify $h(Z)=2$, $h(\Omega)=4$. Games $\mathcal{P}_n^x$ and $\mathcal{C}_n^x$ are played on $P_n$ and $C_n$, respectively, with $h(X)=4$ and $h(v)=3$ for all vertices except $Z,\Omega,X$. Games $\mathcal{P}_n^y$ and $\mathcal{C}_n^y$ are played on $P_n$ and $C_n$, respectively, with $h(Y)=4$ and $h(v)=3$ for all vertices except $Z,\Omega,Y$. 

Thus, Lemma \ref{ConjectureSufficientForTwoToFour} rephrases as: $\mathcal{C}_n^x$ are $\mathcal{C}_n^y$ are unwinnable for any $n$. We've set up our notation suggestively: we'll delete $\Omega$ and show that $\mathcal{C}_n^x\backslash \Omega$-with-hint and $\mathcal{C}_n^y\backslash \Omega$-with-hint are unwinnable. This line of attack is enabled/suggested by Theorem \ref{FinalCzechHintTheorem}. (One could choose any vertex to delete, but $\Omega$ makes things simple.) We can accomplish things with a looser condition than Theorem \ref{FinalCzechHintTheorem}.

\begin{defn}\label{ExpedienceDefinition}
  If $f_0$ is a strategy on a Latvian path with $L(\{A,Z\},f_0,c)\subseteq \{c(Z)=1\}$ and $|L(\{A,Z\},f_0,c)|\leq 1$, call $f_0$ \textit{expedient.} If furthermore $f_1$ is a strategy that agrees with $f_0$ on $B...Y$ and has $L(\{A,Z\},f_1,c)\subseteq \{c(Z)=0\}$, we say that $f_0,f_1$ form an \textit{expedient pair}.
\end{defn}
\begin{lemma}\label{WhatSufficesToSuffice}
Let $\mathcal{P}$ be a game on a Latvian path with $h(A)=3$, $h(Z)=2$. Let $\mathcal{C}$ be the game obtained by attaching a vertex $\Omega$ to $A$ and $Z$ with $h(\Omega)\geq 4$. If $\mathcal{P}$ has no expedient pair, then $\mathcal{C}$ is unwinnable. In particular, to prove that $C_k^x,C_k^y$ are unwinnable for $k\geq 5$, it suffices to prove that $\mathcal{P}_n^x,\mathcal{P}_n^y$ have no expedient pair for $n\geq 4$. 
\end{lemma}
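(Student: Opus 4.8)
The plan is to apply ``Hats As Hints'' at the vertex $\Omega$ and extract an expedient pair from a hypothetical winning strategy. Since $N(\Omega)=\{A,Z\}$ and $\mathcal{C}\backslash\Omega=\mathcal{P}$, Corollary~\ref{LatvianHAH}.1 (the Latvian specialization of Theorem~\ref{FinalCzechHintTheorem}) says $\mathcal{C}$ is winnable iff there exist a plan $f$ on the path-interior $B\dots Y$ and $h(\Omega)$ plans $f^0,\dots,f^{h(\Omega)-1}$ on $\{A,Z\}$ (recall $h(\Omega)\ge4$) such that for every coloring $c$ the sets $L_i(c):=L_{\mathcal{P}}(\{A,Z\},f^i\cup f,c)$ are pairwise disjoint. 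So I prove the contrapositive of the lemma: from such data I build an expedient pair with interior plan $f$. Two easy cases first: if $\mathcal{P}$ is itself winnable, say by $g$, then $(g,g)$ is vacuously an expedient pair, since by Proposition~\ref{PropertiesofLL'}.1 all of its $L$-sets are empty and $\emptyset\subseteq\{c(Z)=1\}$, $|\emptyset|\le1$, $\emptyset\subseteq\{c(Z)=0\}$; this also covers the case that some color of $\Omega$ is unused, for then $\Omega$ is deletable from $\mathcal{C}$ by Proposition~\ref{prop:firstdeletable} and $\mathcal{P}=\mathcal{C}\backslash\Omega$ is winnable. So assume $\mathcal{P}$ unwinnable, whence every $f^i\cup f$ has a disprover.

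The structural point is that $L_i(c)$ depends on $c$ only through $c$ on $N^+(\{A,Z\})=\{v_2,v_{n-1}\}$ (the path-neighbours of $A$ and $Z$), and that since $h(Z)=2$ the requirement that $Z$ guess wrong forces $d(Z)$ to be the single color $\ne f^i_Z(c(v_{n-1}))$ for every $d\in L_i(c)$; hence $L_i(c)$ lies entirely in one of the two ``layers'' $\{c(Z)=0\}$, $\{c(Z)=1\}$, each of size $h(A)=3$. We thus have $h(\Omega)\ge4$ pairwise-disjoint sets spread over two $3$-element layers. I build the pair as follows: let $f_0$ agree with $f$ on $B\dots Y$, guess $Z\equiv0$ --- this pins $L(\{A,Z\},f_0,c)$ into the layer $\{c(Z)=1\}$ for every $c$ --- and use on $A$ a plan chosen so that $|L(\{A,Z\},f_0,c)|\le1$ for all $c$; let $f_1$ agree with $f$ on $B\dots Y$, guess $Z\equiv1$ (pinning its $L$-set into $\{c(Z)=0\}$) and use any plan on $A$. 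Then $(f_0,f_1)$ is an expedient pair, once the $A$-plan for $f_0$ is shown to exist.

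Producing that $A$-plan is the main obstacle. The ingredients are elementary --- for each $c$ at least two of the $L_i(c)$ share a layer (pigeonhole on $h(\Omega)\ge4$ sets and two layers); any two $2$-element subsets of a $3$-element set meet; three pairwise-disjoint nonempty subsets of a $3$-element set are singletons --- and, combined with the disjointness of the $L_i$, they bound the relevant ``interior bad sets'' (for each value of $(c(v_2),c(v_{n-1}))$, the $A$-colors that can occur in an interior disprover with $Z$ fixed appropriately) tightly enough to leave a single $A$-color usable for all $c$ simultaneously. The difficulty is precisely that ``simultaneously'': one must extract one $A$-plan good for \emph{every} $c$, not one per $c$, which forces a short but fiddly case analysis on the sizes of those bad sets and on which of the maps $f^i_Z$ are constant.

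Finally, the ``in particular'': for $k\ge5$, the game $C_k^x$ (resp.\ $C_k^y$) is exactly $\mathcal{P}_{k-1}^x$ (resp.\ $\mathcal{P}_{k-1}^y$) with the vertex $\Omega$ of hatness $4\ge4$ attached to its two endpoints --- $Z$, of hatness $2$, and a vertex of hatness $3$ in the role of $A$ --- and $k-1\ge4$. So the first part applies with $\mathcal{P}=\mathcal{P}_{k-1}^x$ (resp.\ $\mathcal{P}_{k-1}^y$): no expedient pair there forces $C_k^x$ (resp.\ $C_k^y$) unwinnable, and together with the rephrasing of Lemma~\ref{ConjectureSufficientForTwoToFour} this is exactly the claimed reduction.
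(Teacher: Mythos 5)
Your opening is the paper's --- Hats-As-Hints at $\Omega$, then the contrapositive --- and both your ``in particular'' paragraph and the observation that each $L_i(c)$ lies in the single layer $\{d(Z)=1-f^i_Z(c(Y))\}$ are correct. But the construction of $f_0$ is where the proof lives, and the step you defer as ``short but fiddly'' is not fillable: an expedient $f_0$ with $f_{0Z}$ constant need not exist. Write $M(c)=L'(\{A,Z\},f,c)$ for the admissible ends of the interior plan and $M_\ell(c)=\{a:(a,\ell)\in M(c)\}$ for its layers. Your pigeonhole ingredients show only that for each $c$ \emph{some} layer satisfies $|M_\ell(c)|\le 2$; which layer depends on $c(Y)$ through the values $f^i_Z(c(Y))$, and pairwise disjointness of the $L_i(c)$ is perfectly consistent with $M_1(c)=V_A$ for one value of $c(Y)$ and $M_0(c')=V_A$ for another. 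In the first case no plan for $A$ (which sees only $c(B)$) can bring $|L(\{A,Z\},f_0,c)|$ below $2$ once $Z$ always guesses $0$; the second kills the symmetric choice $f_{0Z}\equiv 1$. Even where both layers are small, three $2$-element subsets of the $3$-element set $V_A$ can pairwise meet yet have empty common intersection, and your $f_{0A}$ must serve all three values of $c(Y)$ at once. Note also that Lemma~\ref{TrueForxandy}.3 and the computations in Lemmata~\ref{YNoExpedient} and~\ref{ExpedientPairMember0} assert that an expedient strategy on these paths has $f_Z$ surjective, i.e.\ non-constant --- precisely forbidding the $f_0$ you propose.

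The paper's proof never chooses plans for $A$ and $Z$ from scratch. The $g(\Omega),h(\Omega)$-hint sets $\{A_i\}$ form a \emph{fixed} partition of the $3\times 2$ grid $\mathbb{H}(\{A,Z\})$ into at least four parts, so some part $S$ has $|S|\le 1$, and winnability forces $L(\{A,Z\},f\cup f^i_{A,Z},c)\subseteq S$ for that $i$ \emph{uniformly in $c$}. One therefore takes $f_0=f\cup f^i_{A,Z}$ wholesale, so that $f_{0Z}=f^i_Z$ remains a genuine function of $c(Y)$ and the bound $|L|\le 1$ is inherited from $|S|\le 1$ rather than manufactured; if $S$ sits in the column $\{d(Z)=0\}$, one flips $Z$'s guess together with $Y$'s reading of $c(Z)$, which reflects every $L$-set across the two layers. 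A second part contained entirely in the opposite column (one exists, since the other three disjoint parts cannot all meet the two leftover cells of $S$'s column) supplies $f_1$, flipped under the same circumstances so the two strategies still agree on $B\ldots Y$. The repair for your argument is to replace per-$c$ disjointness with this fixed-partition containment; the constant $Z$-plan has to go.
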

\begin{proof}
    Consider any partition of the $3\times 2$ rectangle $\mathbb{H}(\{A,Z\})$ into 4 parts. It has a part $S$ of size $\leq 1$. If there exist $f_{B...Y},\{f^i_{A,Z}\}$ satisfying Corollary \ref{LatvianHAH}.1, there must be some $i$ such that $L(\{A,Z\}, f_{B...Y} \cup f^i_{A,Z},c)\subseteq S$ with $S$ a singleton. If $S\subseteq \{c(Z)=1\}$, then $f_0:=f_{B...Y} \cup f^i_{A,Z}$ is expedient. If $S\subseteq \{c(Z)=0\}$, then the $f_0$ defined by 
    
    \[ f_{0v}=\begin{cases} 
        f_Y(c(X),1-c(Z)) & \text{ if } v=Y \\ 
        1-f^i_Z(c(Y)) & \text{ if } v=Z \\ 
        f_v & \text{ otherwise } \\ 
    \end{cases}\]
    is expedient. 
    
    By similar reasoning, there must be a part $S'$ in the opposite column from $S$, though it needn't be a singleton. We form $f_1$ from $f_{B...Y}\cup f_{A,Z}^j$ (for the $j$ pertaining to $S'$) in the same way $f_0$ is formed from $f\cup f_i$. 
    Ergo, if $\mathcal{C}_{n+1}^x$ and $\mathcal{C}_{n+1}^y$ are winnable, then there's an expedient pair for each of $\mathcal{P}_n^x=\mathcal{C}_{n+1}^x\backslash \Omega$, $\mathcal{P}_n^y=\mathcal{C}_{n+1}^y\backslash \Omega$. If there are no expedient pairs, they're unwinnable.
\end{proof}

\begin{lemma}\label{TrueForxandy}
Let $f$ be a strategy on $\mathcal{P}_n^x$ or $\mathcal{P}_n^y$.
    \begin{enumerate}
        \item There are distinct colors $p,q\in V_A$ such that $p$ and $q$ each have at least two colors in $V_B$ to which they can admissibly be extended.
        \item If there is an admissible path going left from $V_Z$ with 3 admissible leftward extensions, then $p$ and $q$ both extend to disprovers.
        \item If $f$ is expedient, the range of $f_Z$ is $V_Z$.
        \end{enumerate}
\end{lemma}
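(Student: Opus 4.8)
The plan is to handle the three parts in the order (1), (3), (2): part (2) leans on a slightly sharpened form of part (1), while part (3) is essentially self‑contained. Throughout, ``admissible'' means ``$f$‑admissible'' for the ambient strategy, and I work in the Warsaw graph $h*P_n$.

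\emph{Part (1).} Since $A$ is an endpoint of the path and (in the generic case) $h(A)=h(B)=3$, the plan $f_A$ is simply a function $V_B\to V_A$ with $|V_A|=3$. A colour $a\in V_A$ admissibly extends to $b\in V_B$ precisely when $f_A(b)\neq a$: the only condition that bites on the subgraph $\{A_a,B_b\}$ is the one on $A$, because $B$ does not yet see its other neighbour $C$. So $a$ has exactly $|V_B|-|f_A^{-1}(a)|$ admissible extensions. The preimages $f_A^{-1}(0),f_A^{-1}(1),f_A^{-1}(2)$ partition a set of size $\le 3$, hence at most one of them has size $\ge 2$; I would take $p,q$ to be two colours whose preimage has size $\le 1$, so that each of $p,q$ has at least two admissible extensions. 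I would also record the sharpening that, since $|f_A^{-1}(p)\cup f_A^{-1}(q)|\le 2<|V_B|$, some colour $b^{*}\in V_B$ has $f_A(b^{*})\notin\{p,q\}$ — a single extension admissible for both $p$ and $q$. (When $B$ coincides with the hatness‑$4$ vertex, which happens only for the shortest paths, one runs the same count with $|V_B|=4$; the colours $p,q$ still exist, though $b^{*}$ need not, and that short case is best deferred to the computer checks of \cite{KLR21b}.)

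\emph{Part (3).} Suppose the range of $f_Z$ is not all of $V_Z=\{0,1\}$; then $f_Z\equiv j$ is constant, so $Z$ always guesses $j$ and \emph{every} disprover has $c(Z)=1-j$. I would contradict expedience — specifically $|L(\{A,Z\},f,c)|\le 1$ — by producing, for one fixed choice of colours on $B$ and $Y$, two disprovers differing only at $A$. To build them I construct an admissible path from the right end. The singleton $\{Z_{1-j}\}$ is admissible, and because $f_Z\equiv j\neq 1-j$, \emph{every} colour of $V_Y$ is an admissible leftward continuation of it, so it has $|V_Y|\ge 3$ of them. Iterating the leftward form of Corollary \ref{3PathLemma} (all relevant hatnesses are $\ge 3$) gives an admissible path from $V_Z$ to $V_C$ with at least two admissible leftward continuations into $V_B$; one more step via Lemma \ref{admissiblecontinuationlemma} yields at least $(2-1)\cdot|V_A|=3$ admissible continuations into $V_A$, spread over the $\ge 2$ continuation‑colours at $V_B$. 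By pigeonhole some colour $b\in V_B$ is the $B$‑value of two of these, i.e.\ two admissible paths on all of $P_n$ — hence two disprovers — that agree on $c(Z)=1-j$, on $c(B)=b$, on $c(Y)$ (they share the right‑hand portion), and differ at $A$. That forces $|L|\ge 2$ for the corresponding $c$, contradicting expedience; so $f_Z$ is onto.

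\emph{Part (2), and the main obstacle.} Given the admissible path $R$ going left from $V_Z$ with three admissible leftward extensions, I would propagate it leftward by Corollary \ref{3PathLemma} to an admissible path $\hat R$ from $V_Z$ to $V_C$ keeping $\ge 2$ leftward continuations into $V_B$ (if $R$ already reaches $V_B$, the conclusion is immediate, as its three continuations exhaust $|V_A|=3$), then push one more step into $V_A$ via Lemma \ref{admissiblecontinuationlemma}, producing at least three admissible $C$–$B$–$A$ extensions; each, appended to $\hat R$, spans all of $P_n$ admissibly, i.e.\ is a disprover, and all of them agree with $\hat R$ on $c(Y)$. It remains to see that the colours these disprovers place on $A$ include \emph{both} $p$ and $q$. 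Writing $B^{*}\subseteq V_B$ for $\hat R$'s continuation‑colours at $V_B$ and $c_0$ for its colour at $C$, the colour $a\in\{p,q\}$ fails to appear iff $B^{*}\subseteq f_A^{-1}(a)\cup\{f_B(a,c_0)\}$, which (using $|f_A^{-1}(a)|\le 1$) forces $|B^{*}|=2$ and pins $B^{*}$, $f_A$ and $f_B(\cdot,c_0)$ down on those colours almost completely. The hard part is ruling out the residual case in which \emph{both} $p$ and $q$ fail at once: this is precisely where I expect to need the genuine surplus of ``three'' over ``two'' leftward extensions — feeding $r=3$ into Lemma \ref{admissiblecontinuationlemma} near the left end so that $B^{*}=V_B$ and the obstruction vanishes, or else running the gluing along the three distinct right‑hand paths obtained from the three extensions — together with the common extension $b^{*}$ of part (1). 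I would expect this endgame to be a short but fiddly case check in $\mathbb{Z}_3$, in the spirit of \cite{Szc17}, and to be the only real work in the lemma.
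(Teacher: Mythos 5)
Your part (1) is the paper's counting argument in different notation. The decisive problem is part (2), which you do not actually prove: you push the right-hand path all the way into $V_A$ and then must argue that the colours it reaches include both $p$ and $q$, and you concede that the residual case in which both fail simultaneously is left as an unresolved ``fiddly case check.'' The paper's proof runs the construction in the opposite direction and never meets this obstacle. It grows an admissible path rightward \emph{starting from} $p$ — the singleton on $A_p$ is vacuously admissible, and part (1) supplies its two continuations into $V_B$ — propagates it by Corollary \ref{3PathLemma} keeping two rightward continuations until it sits just left of the vertex $v$ at which the given path from $V_Z$ terminates, restricting the last step to land inside the $3$-element set $V'_v$ of that path's leftward extensions. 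Since $v$'s single guess is then determined by its two already-coloured neighbours, at least one of the two candidate colours for $v$ glues the two paths into a disprover; repeating from $q$ gives the second one. Seeding the construction at $p$ and at $q$ separately is precisely what makes ``both extend to disprovers'' automatic, and it is the idea your one-sided construction is missing.

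There is also a quieter gap in your part (3). Lemma \ref{admissiblecontinuationlemma} counts continuations subject only to the \emph{middle} vertex's condition; when the continuation target is the endpoint $A$ of $P_n$, the new vertex $A_a$ sees its entire out-neighbourhood, so the additional condition $f_A(b)\neq a$ must also hold for the path to remain admissible. Your count of ``$\ge(2-1)\cdot 3=3$ continuations into $V_A$'' therefore loses up to one continuation per $B$-colour to $A$'s own guess, and in the worst case only one genuinely admissible pair $(b,a)$ survives — not the two pairs over a common $b$ that your pigeonhole requires. The paper avoids this entirely: after fixing $c(Z)$ to the never-guessed colour it deletes $Z$ (via the proof of Proposition \ref{prop:firstdeletable}) and invokes the unwinnability of the residual tree game (Lemma 8 of \cite{BHKL09}) to produce two disprovers disagreeing at $A$.
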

\begin{proof} If you find these arguments difficult to follow, I suggest you draw pictures.
   \begin{enumerate}
    \item There are nine total edges in $h*P_n$ between $V_A$ and $V_B$. For each vertex of $V_B$, one of these edges is forbidden by $f_A$.
    Delete these three edges. Six edges remain, so $\sum_{v\in V_A} \deg(v)\geq 6$. Also, $|V_A|=3$ and $\deg(v)\leq 3$ for all $v\in V_A$. Thus, there are at least two vertices in $V_A$ with degree $\geq 2$. (In the case of $P_4^x$, the numbers are different, but the conclusion holds.)
    \item Let $V'_v\subseteq V_v$ be the set of 3 elements into which the path has admissible leftward extensions. Then, by Lemma \ref{3PathLemma} and the above item, there is an admissible path originating at $p$ with two rightward extensions into $V'_v$, and similarly for $q$. Since $v$'s guess is already fixed, and the intersection of the two rightward extensions of the path originating from $p$ and the three leftward extensions of the path originating from $V_Z$ has order two, of which at most one is guessed by $v$, so at least one member of that intersection is a value for $c(v)$ that, together with these two paths, completes the disprover. 
    \item If $f$ is expedient and the range of $f_Z$ isn't $V_Z$, fix $c(Z)$ to be the color she never guesses. It suffices to show that $c_Z$ can be extended to disprovers $c,c'$ with $c(A)\neq c'(A)$; that will make $f$ fail Definition \ref{ExpedienceDefinition}. By the proof of \ref{prop:firstdeletable}, this amounts to finding disprovers on $P^x_n \backslash Z$ or $P^y_n \backslash Z$ that disagree on $A$, which is possible by e.g. Lemma 8 of \cite{BHKL09}. \qedhere
\end{enumerate} \end{proof}

\begin{lemma}\label{YNoExpedient}
    There is no expedient $f$ on $\mathcal{P}_n^y$.
\end{lemma}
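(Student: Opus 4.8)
The plan is to assume $f$ is expedient on $\mathcal{P}_n^y$ and exhibit a single disprover witnessing a failure of Definition \ref{ExpedienceDefinition}. The key preliminary is a normal form for $L(\{A,Z\},f,c)$: since $Z$'s only out-neighbour is $Y$, since $|V_Z|=2$, and since $Z$ must guess wrong, every $(c(A),c(Z))\in L(\{A,Z\},f,c)$ has $c(Z)=1-f_Z(c(Y))$ and $c(A)\ne f_A(c(B))$; in particular $L(\{A,Z\},f,c)$ depends on $c$ only through $(c(B),c(Y))$ and has at most two elements. Hence to contradict expedience it suffices to do either of the following: (a) produce \emph{one} disprover $c$ for $f$ whose $Y$-colour lies in $f_Z^{-1}(1)$ — such a disprover has $c(Z)=0$, so $(c(A),0)\in L$, breaking the inclusion $L\subseteq\{c(Z)=1\}$; or (b) find $(b,y)$ for which both colours $a\in V_A\setminus\{f_A(b)\}$ extend to disprovers with $c(B)=b$, $c(Y)=y$, breaking $|L|\le 1$. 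By Lemma \ref{TrueForxandy}.3 we may assume $f_Z$ is onto $V_Z$, so $f_Z^{-1}(1)\ne\emptyset$; write $r=|f_Z^{-1}(1)|\in\{1,2,3\}$.

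For (a), work in the Warsaw graph $h*P_n$ and grow an admissible path leftward from $V_Z$ that will certify $c(Z)=0$: it begins $0_Z-y_Y$ with $y\in f_Z^{-1}(1)$, and it can be continued into $V_X$ precisely along the colours $x$ with $f_Y(x,0)\ne y$, of which there are $3-|f_Y(\cdot,0)^{-1}(y)|$. Since $\sum_{y\in f_Z^{-1}(1)}|f_Y(\cdot,0)^{-1}(y)|\le |V_X|=3$, if $r\ge 2$ some $y^\ast\in f_Z^{-1}(1)$ has $|f_Y(\cdot,0)^{-1}(y^\ast)|\le 1$, so the path $0_Z-y^\ast_Y$ has at least two admissible leftward continuations into $V_X$ — here is exactly where the hatness-$4$ vertex $Y$ provides the slack. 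Now apply the leftward form of Corollary \ref{3PathLemma} repeatedly (every position $A,\dots,X$ has hatness $3$, so its size hypotheses hold) to obtain an admissible path spanning positions $B,\dots,Z$ with at least two admissible leftward continuations into $V_A$; since $A$ is an endpoint, any such continuation already forces $A$ to guess wrong, so prepending one yields a full admissible $P_n$, i.e. a disprover $c$ with $c(Y)=y^\ast\in f_Z^{-1}(1)$, as required. The same works when $r=1$ provided the unique $y^\ast\in f_Z^{-1}(1)$ has $|f_Y(\cdot,0)^{-1}(y^\ast)|\le 1$, and for $n=4$ ($P_4^y$) one either runs the same argument with $B$ in place of $X$ or checks the few cases by hand.

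The one genuinely stubborn situation is the degenerate case $r=1$, $f_Z^{-1}(1)=\{y^\ast\}$, in which the \emph{forced} leftward exploration from $0_Z-y^\ast_Y$ dies before reaching $V_A$ (most notably when $f_Y(\cdot,0)\equiv y^\ast$): then one checks directly that there is no disprover with $c(Z)=0$ at all (if $c(Y)=y^\ast$, then $c(Z)=0$ makes $Y$ guess $y^\ast=c(Y)$), so (a) is unavailable and expedience's inclusion holds vacuously — we must win via (b). I would invoke Lemma \ref{TrueForxandy}.1 to get distinct $p,q\in V_A$ with $|f_A^{-1}(p)|,|f_A^{-1}(q)|\le 1$, whence the third colour $t$ has $|f_A^{-1}(t)|\ge 1$; fixing $b^\ast\in f_A^{-1}(t)$, the two $A$-colours compatible with $c(B)=b^\ast$ are exactly $p$ and $q$. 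The goal is then to build an admissible path from $b^\ast_B$ rightward to $V_Z$ (with $c(Z)=1$, its $Y$-colour any of the $\ge 2$ admissible choices in $f_Z^{-1}(0)$) whose $V_C$-vertex $c_0$ satisfies $f_B(p,c_0)\ne b^\ast$ and $f_B(q,c_0)\ne b^\ast$ simultaneously, so that it can be prepended by both $p_A$ and $q_A$; this yields $\{(p,1),(q,1)\}\subseteq L(\{A,Z\},f,c)$ for the corresponding $c$, contradicting $|L|\le 1$. Equivalently, one may show the count $N$ of triples $(a,b,y)$ with $a\ne f_A(b)$ that occur as $(c(A),c(B),c(Y))$ of some disprover satisfies $N\ge 13$ (it is trivially $\le 24$), which forces some $|L(b,y)|=2$. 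Either way, the crux — and the step I expect to be the main obstacle — is this degenerate case: it needs the same pigeonhole-plus-path-surgery bookkeeping as Lemma \ref{TrueForxandy}, but now with two left endpoints to satisfy at once while closing off through the hatness-$4$ vertex $Y$ and the hatness-$2$ vertex $Z$ (via Lemma \ref{admissiblecontinuationlemma}); I expect no conceptual difficulty past it, only casework, including the small-$n$ checks.
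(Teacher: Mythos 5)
Your overall architecture matches the paper's: split on $r=|f_Z^{-1}(1)|$, extract a disprover with $c(Z)=0$ when $r\ge 2$ (this is the paper's Claim~1, and your pigeonhole through $V_X$ is essentially its application of Lemma \ref{admissiblecontinuationlemma} followed by Corollary \ref{3PathLemma}), and handle the remaining case by producing two $A$-colours that extend to disprovers with $c(Z)=1$ (the paper's Claim~2).

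The genuine gap is the ``degenerate case'' of your route (b), which you explicitly defer to ``pigeonhole-plus-path-surgery bookkeeping'' and ``casework'' without carrying it out; and as you have set it up, that case is not routine. Requiring a single $b^\ast$ and a single rightward path through $b^\ast_B$ whose $V_C$-vertex $c_0$ satisfies $f_B(p,c_0)\ne b^\ast$ and $f_B(q,c_0)\ne b^\ast$ simultaneously can fail outright (take $f_B(p,\cdot)\equiv b^\ast$, and note that $f_A^{-1}(t)$ may be a singleton, so you cannot vary $b^\ast$), and the alternative count $N\ge 13$ is asserted with no argument. The difficulty is largely self-inflicted: you read Definition \ref{ExpedienceDefinition} per coloring and therefore demand that the two disprovers witnessing $(p,1)$ and $(q,1)$ agree on $B$ and $Y$. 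But the property actually furnished by Lemma \ref{WhatSufficesToSuffice} (via Proposition \ref{HintWinning}.1, where the hint sets $A_i$ are fixed subsets of $\mathbb{H}(\{A,Z\})$ not depending on $c$) is that \emph{all} disprovers of $f_0$ restrict to one and the same element of $\mathbb{H}(\{A,Z\})$; so it suffices to exhibit disprovers for $(p,1)$ and $(q,1)$ that are allowed to differ on $B,\dots,Y$. That is exactly what Lemma \ref{TrueForxandy}.2 delivers: once $r=1$, the three colours of $f_Z^{-1}(0)$ give the leftward path from $1_Z$ three admissible extensions into $V_Y$, separate rightward paths are grown from $p$ and from $q$ (Lemma \ref{TrueForxandy}.1 plus Corollary \ref{3PathLemma}), and each is stitched at $V_Y$, where $Y$'s fixed guess kills at most one of the two meeting colours. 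Replacing your route (b) by a direct appeal to Lemma \ref{TrueForxandy}.2 closes the gap and also removes the need for your partial route-(a) treatment of $r=1$.
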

\begin{proof}
    \textit{Claim 1.} If $f$ is an expedient strategy on $\mathcal{P}_n^y$, then $\mathbf{P}(f_Z(c(Y))=0)=3/4$. (I.e., under $f$, $Z$ guesses 0 for $3/4$ of the possible values of $c(Y)$. 
    
    \textit{Claim 2.} If a strategy $f$ on $\mathcal{P}_n^y$ has $\mathbf{P}(f_Z(c(Y))=0)=3/4$, then there exist distinct $p,q\in V(A)$ such that $(p,1)$ and $(q,1)$ are values of $(c(A),c(Z))$ extending to disprovers. 

    \textit{Conclusion.} By the claims and the third property of Definition \ref{ExpedienceDefinition}, we have ``If $f$ is expedient, then it is not expedient.'' Thus, there exists no expedient $f$ for $\mathcal{P}_n^y$.
    
    \textit{Proof of Claim 1.} We know by Lemma \ref{TrueForxandy}.2 that $\mathbf{P}(f_Z(c(Y))=0)<1$. Suppose for contradiction that $\geq 2$ colors (without loss of generality, $0$ and $1$) on $Y$ are mapped to $1$ by $f_Z$. So consider the edges $\overline{0_Y0_Z},\overline{1_Y0_Z}$ in $h*P_n$. By \ref{admissiblecontinuationlemma}, at least one of these two edges has $\geq 2$ admissible continuations into $X$. By \ref{3PathLemma}, we can extend to two admissible paths from $V_Z$ to $V_A$ that agree except in $V_A$.
    This gives us two colorings that agree on $B...Z$ and cause no sage but $A$ to guess correctly. $A$'s guess is fixed by $c(B)$, so in at most one of these colorings does $A$ guess correctly. Thus, there is a disprover $c$ with $c(Z)=0$, so by Definition \ref{ExpedienceDefinition}, $f$ is not expedient, a contradiction. Thus Claim 1 holds.

    \textit{Proof of Claim 2.} Let $V_Y'$ denote the subset of $V_Y$ such that $f_Z(v)=0$ for all $v\in V_Y'$. We have $|V_Y'|\geq 3$, so we finish by Lemma \ref{TrueForxandy}.2.
\end{proof}

\begin{lemma}\label{ExpedientPairMember0}
    Suppose $f_i$, $i\in \{0,1\}$  is a member of an expedient pair for $\mathcal{P}_n^x$. Then 
$\mathbf{P}({f_i}_Z(c(Y))=i)=2/3$. 
\end{lemma}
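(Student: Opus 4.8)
The plan is to transcribe the proof of Claim~1 in Lemma~\ref{YNoExpedient}, now with the lone hatness-$4$ vertex being $X$ and with $Y$ of hatness $3$. Since $Z$'s only neighbour on the path is $Y$, the plan ${f_i}_Z$ is just a map $V_Y\to V_Z=\{0,1\}$; as $|V_Y|=3$, the number $\mathbf{P}({f_i}_Z(c(Y))=i)$ equals $|{f_i}_Z^{-1}(i)|/3\in\{0,1/3,2/3,1\}$, so it suffices to prove $|{f_i}_Z^{-1}(i)|=2$, i.e.\ to rule out the values $0$, $1$, and $3$.

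I treat $i=0$ first, where $f_0$ is expedient. Lemma~\ref{TrueForxandy}.3 says ${f_0}_Z$ is onto $V_Z$, which excludes $|{f_0}_Z^{-1}(0)|\in\{0,3\}$; so suppose for contradiction that $|{f_0}_Z^{-1}(0)|=1$. Then exactly two colours of $Y$, say $0$ and $1$, map to colour $1$ of $Z$, so the edges $\overline{0_Z0_Y}$ and $\overline{0_Z1_Y}$ of the Warsaw graph $h*P_n$ are admissible (since ${f_0}_Z(0)={f_0}_Z(1)=1\neq 0$). Viewing these as two edges sharing the end $0_Z$ in $V_Z$ and applying the right-to-left form of Lemma~\ref{admissiblecontinuationlemma} through $Y$ (with $g(Y)=1$) into $V_X$ (of size $4$), we find that one of the two edges admits at least $4-\lfloor 4/2\rfloor=2$ admissible leftward continuations into $V_X$. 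Every interior vertex of $P_n$ has hatness at least $3$ --- $X$ has $4$, the rest have $3$ --- so iterating the reversed form of Corollary~\ref{3PathLemma} pushes this to an admissible path from $V_Z$ to $V_B$ with at least two admissible leftward continuations into $V_A$. Choosing two of them yields two colourings of the whole path agreeing on $B,\dots,Z$, both with $c(Z)=0$, under which nobody in $B,\dots,Z$ guesses right; since $A$'s guess depends only on the common value $c(B)$, at least one of these colourings is a genuine disprover of $\mathcal{P}_n^x$ with $c(Z)=0$. That disprover places a partial colouring with $Z$-value $0$ in $L(\{A,Z\},f_0,c)$ for the relevant $c$, contradicting Definition~\ref{ExpedienceDefinition}. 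Hence $|{f_0}_Z^{-1}(1)|\leq 1$, so $|{f_0}_Z^{-1}(0)|\geq 2$, and combined with onto-ness $|{f_0}_Z^{-1}(0)|=2$; that is, $\mathbf{P}({f_0}_Z(c(Y))=0)=2/3$.

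For $i=1$ one interchanges the two colours of $Z$ throughout (relabelling accordingly how $Y$ reads $Z$): the hypothesis $L(\{A,Z\},f_1,c)\subseteq\{c(Z)=0\}$ becomes the expedience-type inclusion $L\subseteq\{c(Z)=1\}$, so $f_1$ plays exactly the role $f_0$ did above --- in particular ${f_1}_Z$ is onto $V_Z$ by Lemma~\ref{TrueForxandy}.3, and the admissible-path step, which used only the inclusion property and never $|L|\leq 1$, carries over verbatim. This gives $|{f_1}_Z^{-1}(1)|=2$, i.e.\ $\mathbf{P}({f_1}_Z(c(Y))=1)=2/3$.

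The step that needs care is the index bookkeeping in the second paragraph: using Lemma~\ref{admissiblecontinuationlemma} in its common-right-end orientation with the correct $|V'|-\lfloor g|V'|/r\rfloor$ count, and checking that the hypothesis ``$|V_{j-1}|,|V_{j-2}|\geq 3$'' of Corollary~\ref{3PathLemma} survives every step of the iteration down to $V_A$ --- which it does precisely because the single hatness-$4$ vertex $X$ still satisfies $|V_X|\geq 3$ while all other vertices have hatness $3$. Everything else is a routine transcription of the $\mathcal{P}_n^y$ argument.
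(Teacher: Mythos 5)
Your handling of $i=0$ is correct, though it takes a different route from the paper's. Where you rebuild the disprover by hand --- two admissible edges out of $0_Z$, Lemma \ref{admissiblecontinuationlemma} through $Y$, then the reversed Corollary \ref{3PathLemma} iterated down to $V_A$ --- the paper instead fixes $c(Z)=i$, discards the at most one colour of $Y$ on which $Z$ guesses $i$, and quotes the known unwinnability of the Latvian path with hatnesses $(3,\dots,3,4,2)$ (\cite{BHKL09}, \cite{BDFGM21}, or Theorem \ref{TreeAlgo2}); it disposes of $\mathbf{P}({f_i}_Z(c(Y))=i)=1$ by deleting $Z$ rather than by quoting Lemma \ref{TrueForxandy}.3. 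Your admissible-path computation checks out (each application of the reversed Corollary \ref{3PathLemma} needs only the next two hatnesses to be $\geq 3$, which holds since $h(X)=4$ and every other interior hatness is $3$), and since it actually excludes all of $|{f_0}_Z^{-1}(1)|\geq 2$, surjectivity of ${f_0}_Z$ then forces $|{f_0}_Z^{-1}(0)|=2$.

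The gap is in the $i=1$ case. Lemma \ref{TrueForxandy}.3 is stated and proved only for \emph{expedient} strategies, and its proof genuinely uses the clause $|L(\{A,Z\},f,c)|\leq 1$ of Definition \ref{ExpedienceDefinition}: it exhibits two disprovers carrying the never-guessed colour on $Z$ that differ on $A$. The second member $f_1$ of an expedient pair is required only to satisfy the inclusion $L(\{A,Z\},f_1,c)\subseteq\{c(Z)=0\}$, with no bound on $|L|$, so interchanging the two colours of $Z$ does not make $f_1$ expedient and surjectivity of ${f_1}_Z$ does not follow. Concretely, if ${f_1}_Z\equiv 1$ then no colouring with $c(Z)=1$ can be a disprover at all (sage $Z$ guesses right on every such colouring), so the inclusion holds vacuously and nothing is contradicted; the case $|{f_1}_Z^{-1}(1)|=3$ is left open. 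Your admissible-path step for $i=1$, which as you note uses only the inclusion, correctly yields $|{f_1}_Z^{-1}(0)|\leq 1$, but that gives only $\mathbf{P}({f_1}_Z(c(Y))=1)\geq 2/3$, not the claimed equality. (In fairness, the paper's own disposal of the $\mathbf{P}=1$ subcase --- delete $Z$ and find two disprovers differing on $A$ --- meets the same difficulty for $i=1$, since those disprovers lie in the column $c(Z)=0$ and so do not violate the condition imposed on $f_1$; but the difficulty should be confronted rather than bridged by citing a lemma whose hypothesis is not met.)
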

\begin{proof}
    If $\mathbf{P}(f_{iZ}(c(Y))=i)\leq 1/3$, then fix $c(Z)=i$. All vertices may assume that $f_{iZ}(c(Y))\neq i$, so there are $\geq 2$ remaining possible values of $c(Y)$. This reduces to the game on a path with hatnesses $(3,3,3,...,3,4,2)$, which we know is unwinnable by \cite{BHKL09}, \cite{BDFGM21}, or Theorem \ref{TreeAlgo2}.
      If $\mathbf{P}(f_{iZ}(c(Y))=i)=1$, then $1-i$ is never guessed, so when considering victory when $c(Z)=1-i$, we may as well delete $Z$. Then by e.g. \cite{BHKL09}, $f_i$ can lose for at least two values of $c(A)$, so $f_i$ is not part of an expedient pair. 
\end{proof}
\begin{lemma}\label{ExpedientPairMember1}
    Suppose $f_i$, $i\in \{0,1\}$  is a member of an expedient pair for $\mathcal{P}_n^x$. Then $\mathbf{P}({f_i}_Y(c(X),i)=a_i)=3/4$, where $a_i$ is the unique value such that  ${f_i}_Z(a_i)\neq i$. 
\end{lemma}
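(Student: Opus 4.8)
The plan is to imitate the two-sided arguments behind Lemma~\ref{ExpedientPairMember0} and Claim~1 of Lemma~\ref{YNoExpedient}. Since the plan $f_{iY}(\cdot,i)$ is a function of $c(X)$ alone and $|V_X|=4$ (because $h(X)=4$ in $\mathcal{P}_n^x$), we have $\mathbf{P}(f_{iY}(c(X),i)=a_i)=m/4$ for some $m\in\{0,1,2,3,4\}$, and it suffices to rule out ``too few'' ($m\le 2$) and ``too many'' ($m=4$). Here $a_i$ is well defined and unique because Lemma~\ref{ExpedientPairMember0} forces $f_{iZ}$ to take the value $i$ on exactly two colors of $V_Y$ and the value $1-i$ on exactly one, namely $a_i$. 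Both directions run on one device, a reduction to an unwinnable subgame: once we pin colors on $Z$ and on $Y$ that force both of them to guess wrong, the residual game is a Latvian game on the subpath $AB\dots X$ whose hatnesses all lie in $\{3,4\}$ (with $X$'s hat possibly confined to a set of size $2$, $3$, or $4$); such a path game is unwinnable by \cite{BHKL09}/\cite{BDFGM21}/Theorem~\ref{TreeAlgo2}, since a winnable subtree of a path needs two endpoints of hatness $\le 2$ and here only the single vertex $X$ could ever qualify. Hence whichever strategy we are examining loses on that subpath, and the bad coloring there, extended by the pinned colors on $Y$ and $Z$, disproves the whole game.

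\textbf{Too few.} If $m\le 2$, choose $\ge 2$ colors $x\in V_X$ with $f_{iY}(x,i)\ne a_i$, set $c(Z)=i$ and $c(Y)=a_i$ (so $Z$ guesses $f_{iZ}(a_i)=1-i$ and is wrong), and confine $c(X)$ to those colors (so $Y$ is wrong). The residual subpath game on $AB\dots X$ is unwinnable, so $f_i$ loses it, producing a disprover with $c(Z)=i$; but a member of an expedient pair has $L(\{A,Z\},f_i,c)\subseteq\{c(Z)=1-i\}$ by Definition~\ref{ExpedienceDefinition}, and $i\ne 1-i$, a contradiction. So $m\ge 3$.

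\textbf{Too many.} Suppose $f_{iY}(\cdot,i)\equiv a_i$; we will contradict the clause $|L(\{A,Z\},f_0,c)|\le 1$ carried by the strictly expedient member $f_0$. First take $i=1$, where $f_0$ is the other member: the two members agree on $B\dots Y$, so $f_{0Y}(\cdot,1)\equiv a_1$, and $c(Z)=1$ is exactly $f_0$'s disprover column. There $Z$ is right only when $c(Y)=a_0$, while $Y$, always guessing $a_1$, is right only when $c(Y)=a_1$, so some color $b\in V_Y$ is covered by neither. Pin $c(Z)=1$ and $c(Y)=b$, reduce to the unwinnable subpath $AB\dots X$, and conclude (as in the proof of Lemma~\ref{TrueForxandy}.3, via Lemma~8 of \cite{BHKL09}) that $f_0$ loses there for two values of $c(A)$ — enough to violate $|L(\{A,Z\},f_0,c)|\le 1$. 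For $i=0$ the hypothesis $f_{0Y}(\cdot,0)\equiv a_0$ instead concerns $f_0$'s safe column $c(Z)=0$, not its disprover column, so one first applies the just-proven $i=1$ case to the other member $f_1$ to learn that $f_{1Y}(\cdot,1)=f_{0Y}(\cdot,1)$ equals $a_1$ on exactly three colors of $V_X$, and then performs the identical hole-filling inside $f_0$'s disprover column $c(Z)=1$. In all cases $m=3$, as claimed.

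\textbf{Expected difficulty.} The ``too few'' half is routine. The work lies in ``too many'': keeping straight which of $c(Z)\in\{0,1\}$ is the disprover column for each of the two members (they are opposite, and only $f_0$ carries the $|L|\le 1$ clause), and extracting from the \cite{BHKL09}-style peeling the requisite pair of disprovers so that it is the $|L|\le 1$ clause — not merely the column condition — that gets violated. The small case $\mathcal{P}_4^x$ should be dispatched by hand, as elsewhere in this chapter (cf.\ Lemma~\ref{TrueForxandy}).
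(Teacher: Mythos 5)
Your ``too few'' half is correct and is, in substance, the paper's entire proof: the paper also assumes $\geq 2$ colors of $X$ make $Y$ guess something other than $a_i$, starts an admissible path at $(a_i)_Y,(i)_Z$, and walks it leftward via Lemma~\ref{3PathLemma} to a disprover with $c(Z)=i$, contradicting $L(\{A,Z\},f_i,c)\subseteq\{c(Z)=1-i\}$. Your packaging --- pin $c(Y)=a_i$, $c(Z)=i$, confine $c(X)$, and invoke unwinnability of the residual path $(3,\dots,3,s)$ with $s\geq 2$ --- is an equivalent reduction, and it is the same device the paper uses in Lemma~\ref{ExpedientPairMember0}. Note, though, that the paper's own proof stops there: it establishes only $\mathbf{P}\geq 3/4$, not equality, and the downstream use in Lemma~\ref{CantHaveItBothWays} needs only ``at least three colors $v$ with $(v,d,c(Z))$ admissible,'' so $m=4$ would do no harm. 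The equality sign in the statement is thus stronger than what is proved or needed.

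Your attempt to actually rule out $m=4$ is where the trouble is. The $i=1$ case is fine: $f_{0Y}(\cdot,1)\equiv a_1$ leaves a color $b\notin\{a_0,a_1\}$ on $Y$ uncovered in $f_0$'s column $c(Z)=1$, and the residual $(3,\dots,3,4)$ path yields two disprovers differing on $A$, violating the $|L|\leq 1$ clause that $f_0$ alone carries. But your $i=0$ argument never touches the hypothesis $f_{0Y}(\cdot,0)\equiv a_0$: the ``identical hole-filling inside $f_0$'s disprover column $c(Z)=1$'' uses only the already-established fact that $f_{0Y}(\cdot,1)=a_1$ on at least three colors, and it therefore shows that \emph{any} member $f_0$ of an expedient pair admits two disprovers disagreeing on $A$ --- i.e., that no expedient pair exists. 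That is precisely the content of Lemmata~\ref{CantHaveItBothWays} and~\ref{XNoExpedient}; you have front-loaded the chapter's endgame into one sub-case, and the $i=0$ claim ends up ``proved'' only vacuously. There is no direct contradiction to be had from $f_{0Y}(\cdot,0)\equiv a_0$ alone, since $c(Z)=0$ is $f_0$'s safe column and making it safer costs nothing; if you want the equality for $i=0$ you must either accept the vacuous route or weaken the statement to $\mathbf{P}\geq 3/4$, which is what the paper in effect does.
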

\begin{proof}
    Suppose that there are $\geq 2$ two values of $c(X)$ such that $f_{iY}(c(X),i)\neq a_i$. (The existence of $a_i$ follows from Lemma \ref{ExpedientPairMember0}.) Then the path in $h * P_n$ with vertices ${a_i}_Y,0_Z$ has $\geq 2$ admissible leftward continuations. Then apply Lemma \ref{3PathLemma} to find an admissible path from $Z$ to $B$ with two admissible continuations into $A$, at least one of which isn't $A$'s guess. So we have a disprover with $c(Z)=i$, a contradiction. 
\end{proof}

\begin{lmm}\label{CantHaveItBothWays}
    Let $f$ be a strategy on $\mathcal{P}_n^x$ such that $\mathbf{P}(f_Y(c(X),c(Z)= a_{c(Z)})=3/4)$, where $a_0,a_1$ are constants. Then there exist disprovers $c_0,c_1$ with $c_0(A)\neq c_1(A)$. 
\end{lmm}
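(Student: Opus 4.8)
The plan is to reduce everything to Lemma~\ref{TrueForxandy}.2, whose conclusion is exactly ours: once we exhibit, for the given $f$, an admissible path running leftward from $V_Z$ that has three admissible leftward continuations into $V_v$ for some non-endpoint vertex $v$ with $h(v)\ge 3$, Lemma~\ref{TrueForxandy}.2 hands us disprovers $c_0,c_1$ disagreeing on $c(A)$. I will take $v=X$, the unique hatness-$4$ vertex, and build the seed path on the last two vertices $\{Y,Z\}$. Throughout I use the hypothesis in the form it is actually supplied (via Lemma~\ref{ExpedientPairMember1}): for each $z\in\{0,1\}$, $f_Y(\cdot,z)$ equals $a_z$ on exactly three of the four colours of $X$.

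First I would dispose of the case $f_Z$ constant, say $f_Z\equiv c_Z$. Then $Z$ never guesses $1-c_Z$; fixing $c(Z)=1-c_Z$ makes $Z$ guess wrong always, and by Proposition~\ref{prop:firstdeletable} what remains is the Latvian path $\mathcal{P}_n^x\backslash Z$ with hatness sequence $(3,\dots,3,4,3)$, which by Lemma~8 of \cite{BHKL09} is unwinnable with disprovers disagreeing on $c(A)$ --- exactly the input invoked in the proof of Lemma~\ref{TrueForxandy}.3 --- and those lift back through $c(Z)=1-c_Z$. This case uses nothing about $f_Y$, and it also handles the one small value I cannot feed to the machinery below, $n=4$: there $\mathcal{P}_4^x$ is the path $(3,4,3,2)$, on which, if $f_Z$ is not constant, a direct finite check settles it.

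Now suppose $f_Z$ is not constant and $n\ge 5$. I would choose the seed path $\{z_Z,y_Y\}$ by picking $z\in\{0,1\}$ and $y\in V_Y$ with $f_Z(y)\ne z$ --- so the path is admissible ($Z$ guesses wrong, and $Y$ imposes no constraint yet, having out-degree $1<2$ within the seed in the Warsaw graph) --- and $y\ne a_z$. Such a pair exists: otherwise $f_Z^{-1}(1)\subseteq\{a_0\}$ and $f_Z^{-1}(0)\subseteq\{a_1\}$, forcing $V_Y=f_Z^{-1}(0)\cup f_Z^{-1}(1)\subseteq\{a_0,a_1\}$, contradicting $|V_Y|=3$. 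The admissible leftward continuations of $\{z_Z,y_Y\}$ into $V_X$ are precisely the colours $x$ with $f_Y(x,z)\ne y$; since $f_Y(\cdot,z)$ attains $a_z$ three times and $y\ne a_z$, the fibre $\{x: f_Y(x,z)=y\}$ has at most $4-3=1$ element, so at least three colours $x$ qualify. Hence $\{z_Z,y_Y\}$ is an admissible path from $V_Z$ going left with three admissible leftward extensions into $V_X$; since $h(X)=4\ge 3$ and (for $n\ge 5$) $X$ is not the left endpoint, Lemma~\ref{TrueForxandy}.2 applies and finishes the proof.

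The one delicate spot is the fibre count: it needs $f_Y(\cdot,z)=a_z$ on at least three of $X$'s colours, which holds when both conditional probabilities $\mathbf{P}_{c(X)}[f_Y(c(X),z)=a_z]$ equal $3/4$ --- the only situation that arises for a member of an expedient pair, by Lemma~\ref{ExpedientPairMember1} --- but not for the degenerate distributions $(1,\tfrac{1}{2})$ or $(\tfrac{1}{2},1)$ that the literal statement ``$\mathbf{P}(f_Y(c(X),c(Z))=a_{c(Z)})=3/4$'' also permits. For those one must push the seed one more vertex left, onto $V_X$, and redo the continuation count into $V_W$, using that $|V_X|=4$ forces at least one colour of $X$ outside the image of $f_X(\cdot,y)$; that is a short but more case-heavy addendum, and I would either include it or simply adopt the stronger hypothesis, which is all that Lemma~\ref{CantHaveItBothWays} is ever used with.
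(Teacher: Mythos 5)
Your proof is correct and follows essentially the same route as the paper's: fix an admissible seed on $\{Y,Z\}$ with $f_Z(y)\neq z$ and $y\neq a_z$, use the $3/4$ hypothesis to count at least three admissible leftward continuations into $V_X$, and invoke Lemma~\ref{TrueForxandy}.2. Your extra case-splits (constant $f_Z$, $n=4$) and your flag about the per-$c(Z)$ reading of the hypothesis are harmless refinements of the same argument --- the paper's version simply picks $c(Z)$ to be the colour $Z$ guesses least often and then proceeds identically.
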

\begin{proof}
    Set $c(Z)$ to be color $b$ is guessed $\leq 1/3$ of the time. There are at least two vertices of $V_Y$ we can choose without causing $Z$ to guess right. At least one of them is not $a_{c(Z)}$; choose that one and call it $d$. Then there are three colors $v_i\in V_X$ such that $(v_i,d,1)$ is admissible. Then we invoke Lemma \ref{TrueForxandy}.2. 
\end{proof}
\begin{lemma}\label{XNoExpedient}
    $P^x_n$ has no expedient pair. 
\end{lemma}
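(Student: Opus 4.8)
The plan is to run the same skeleton as the proof of Lemma \ref{YNoExpedient}, except that an expedient \emph{pair} lets me rigidify the behaviour of \emph{two} strategies at once and then feed the result straight into Lemma \ref{CantHaveItBothWays}. Suppose for contradiction that $(f_0,f_1)$ is an expedient pair for $\mathcal{P}_n^x$. By Definition \ref{ExpedienceDefinition} the two strategies agree on $B\dots Y$, so they induce one common plan $f_Y$ at the vertex $Y$. First I would pin down $f_Z$: Lemma \ref{ExpedientPairMember0} applied to each member gives $\mathbf{P}({f_0}_Z(c(Y))=0)=2/3$ and $\mathbf{P}({f_1}_Z(c(Y))=1)=2/3$, so the colours $a_0,a_1\in V_Y$ of Lemma \ref{ExpedientPairMember1} are well defined ($a_0$ the unique colour with ${f_0}_Z(a_0)=1$, $a_1$ the unique colour with ${f_1}_Z(a_1)=0$). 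Next I would pin down $f_Y$: Lemma \ref{ExpedientPairMember1} applied to each member gives $\mathbf{P}(f_Y(c(X),0)=a_0)=3/4$ and $\mathbf{P}(f_Y(c(X),1)=a_1)=3/4$, and since $c(X)$ and $c(Z)$ are independent and uniform, averaging over $c(Z)\in\{0,1\}$ yields $\mathbf{P}(f_Y(c(X),c(Z))=a_{c(Z)})=3/4$, which is exactly the hypothesis of Lemma \ref{CantHaveItBothWays} with the constants $a_0,a_1$.

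Now apply Lemma \ref{CantHaveItBothWays} to $f_0$ (which has this $f_Y$): one obtains disprovers $c_0,c_1$ of $f_0$ on $\mathcal{P}_n^x$ with $c_0(A)\neq c_1(A)$. Tracing the construction inside that lemma's proof, for $f_0$ the colour $1$ on $Z$ is guessed only $1/3$ of the time (by the $2/3$ above), so the value fixed there is $c(Z)=1$, and the leftward admissible path out of $Z$ is $1_Z$--$d_Y$ for a single colour $d$; hence $c_0$ and $c_1$ both have $c_j(Z)=1$ and $c_j(Y)=d$, and the only vertex at which they are forced to differ is $A$. Using Lemma \ref{TrueForxandy}.1 (the two special colours $p=c_0(A)$, $q=c_1(A)$ each have at least two admissible extensions in the three–element set $V_B$, so a common one exists) one arranges the two rightward admissible paths to share a colour at $B$ as well, so $c_0$ and $c_1$ agree on $N(\{A,Z\})=\{B,Y\}$. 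Then $L(\{A,Z\},f_0,c_0)=L(\{A,Z\},f_0,c_1)$, and this common set contains the two distinct partial colourings $(c_0(A),1)$ and $(c_1(A),1)$, contradicting the clause $|L(\{A,Z\},f_0,c)|\le 1$ of expedience in Definition \ref{ExpedienceDefinition}. Hence $\mathcal{P}_n^x$ has no expedient pair.

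The routine steps are the two appeals to Lemmas \ref{ExpedientPairMember0} and \ref{ExpedientPairMember1} and the arithmetic of averaging over $c(Z)$; the real content was already spent in those lemmas (which force $f_Z$ and $f_Y$ to be essentially rigid for members of an expedient pair) and in Lemma \ref{CantHaveItBothWays}, whose proof does the genuine work of gluing rightward and leftward admissible paths via Corollary \ref{3PathLemma} and Lemma \ref{TrueForxandy}.2. The one place that needs care — and the main obstacle — is the last step: I must make the two produced disprovers agree on $\{B,Y\}$ rather than merely disagree at $A$, since it is only this agreement that converts ``$c_0(A)\neq c_1(A)$'' into a single $L$-set of size $\ge 2$ and hence into a contradiction with expedience. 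Agreement at $Y$ falls out of the construction for free; agreement at $B$ has to be extracted from the slack in Lemma \ref{TrueForxandy}.1 (equivalently, one re-runs the rightward growth of Corollary \ref{3PathLemma} with the colour at $B$ pinned to the common value, exactly as in the corresponding step of the proof of Lemma \ref{YNoExpedient}).
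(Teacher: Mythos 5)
Your main line coincides with the paper's proof: combine Lemmata \ref{ExpedientPairMember0} and \ref{ExpedientPairMember1} with the fact that the two members of the pair share $f_Y$ to get $\mathbf{P}(f_Y(c(X),c(Z))=a_{c(Z)})=3/4$, then invoke Lemma \ref{CantHaveItBothWays} to produce disprovers of $f_0$ disagreeing on $A$. The paper stops there, because the working reading of Definition \ref{ExpedienceDefinition} throughout this section is the global one: \emph{every} disprover of an expedient strategy carries one and the same value of $(c(A),c(Z))$. This is what Lemma \ref{WhatSufficesToSuffice} actually produces, since $L(\{A,Z\},\cdot,c)$ is contained in a single part $S$ of a partition that does not depend on $c$, and it is the reading already used in Lemma \ref{TrueForxandy}.3 and in the conclusion of Lemma \ref{YNoExpedient}. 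Under that reading, two disprovers with distinct $c(A)$ are an immediate contradiction and your final paragraph is superfluous.

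That final paragraph is also the one step that does not hold up as written. A common admissible colour $\beta\in V_B$ for $p$ and $q$ (from Lemma \ref{TrueForxandy}.1) does not let you pin both disprovers to $c(B)=\beta$: the rightward growth in Corollary \ref{3PathLemma} depends on maintaining the invariant that the current path has at least two admissible rightward continuations, and the counting in Lemma \ref{admissiblecontinuationlemma} only guarantees that \emph{some} one of the available continuations preserves that invariant --- not necessarily the one you have pinned. After fixing $c(B)=\beta$, the edge from $p$ to $\beta$ may have fewer than two admissible continuations into $V_C$ (none at all if $f_B(p,\cdot)\equiv\beta$), and the construction stalls before reaching the leftward path out of $Z$. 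So: if you insist on reading expedience per fixed $c$, the agreement of the two disprovers on $B$ is a genuine further claim that your argument does not establish; if you adopt the reading the paper uses, delete the paragraph and what remains is exactly the paper's proof.
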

\begin{proof}
    Let $f_0,f_1$ be an expedient pair for $\mathcal{P}_n^x$. By Lemma \ref{ExpedientPairMember1}, we have $\mathbf{P}((f_Y(c(X),i))=a_{c(Z)})=3/4$. By Lemma \ref{CantHaveItBothWays}, there exist disprovers of $f_0$ that disagree on $A$, so $f_0$ is not expedient.
\end{proof}
We combine Lemmata \ref{XNoExpedient}, \ref{YNoExpedient}, and \ref{WhatSufficesToSuffice} to prove that $\mathcal{C}_n^x$ and $\mathcal{C}_n^y$ are unwinnable: i.e., to prove Lemma \ref{ConjectureSufficientForTwoToFour}. As stated above, this completes the proof of Theorem \ref{CategorizedTwoToFour}. 

\section{General Latvian cycles}
Here we prove Theorem \ref{CategorizedFiveAndUp} as follows. We establish the ``equivalently'' and show that the equivalent condition is implied by the unwinnability of ``bad cycles.'' Disprovers for bad cycles can be stitched together from certain families of quasi-admissible paths on ``atomic bad paths.'' Finally, we sketch the most tedious part: atomic bad paths have the desired families of quasi-admissible paths. 

We maintain our convention that $A$ is the leftmost vertex of a path, $B$ the next-leftmost, etc., $Z$ the rightmost, etc., regardless of the path's length. So if a path has five vertices, $C=X$, and so forth. This time, our talk of admissible paths will be flavored more in terms of admissible colorings than admissible graphs.   

\begin{lemma}\label{3.5.1equivalentlyfordeletable}
    Let $\mathcal{G}$ be a Latvian cycle with $v$ deletable. $\mathcal{G}$ is winnable if and only if it contains two vertices of hatness 2 between which neither $v$ nor any vertex of hatness $\geq 5$ lies.   
\end{lemma}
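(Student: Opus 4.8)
The plan is to bypass all cycle-specific machinery and reduce immediately to a path, exploiting the one hypothesis we are handed: $v$ is deletable. By definition of deletability, $\mathcal{G}$ is winnable if and only if $\mathcal{G}\backslash v$ is winnable. Deleting a single vertex from a cycle $C_k$ leaves the path $P_{k-1}$, so $\mathcal{P}:=\mathcal{G}\backslash v$ is a Latvian game on a path, with hatness function the restriction of $h$ to $V(\mathcal{G})\backslash\{v\}$. Now $\mathcal{P}$ is a Latvian game on a tree, so Theorem \ref{thm: FirstTreeCategorization} (equivalently \ref{TreeAlgo2}) applies.

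Next I would write out the path specialization of that theorem. A subtree of a path is a subpath; a one-vertex subtree has $\deg=0$ and cannot satisfy $h\leq 2^0=1$ since $h>1$; and in a subpath $Q$ with at least two vertices the two endpoints have $\deg_Q=1$ while every other vertex has $\deg_Q=2$. Hence Theorem \ref{thm: FirstTreeCategorization} says exactly: $\mathcal{P}$ is winnable if and only if $\mathcal{P}$ contains a subpath $Q$, with $\geq 2$ vertices, whose two endpoints have hatness $2$ (since $h\geq 2$ forces $h=2$ there) and all of whose interior vertices (if any) have hatness $\leq 4$.

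The remaining step is a bookkeeping translation back to the cycle. A subpath $Q$ of $\mathcal{P}=\mathcal{G}\backslash v$ is precisely a block of consecutive vertices of $\mathcal{G}$ avoiding $v$; writing $u_1,u_2$ for its endpoints, $Q$ is one of the two arcs of $\mathcal{G}$ joining $u_1$ and $u_2$ — namely the one that misses $v$. So ``$Q$ has endpoints of hatness $2$ and interior vertices of hatness $\leq 4$'' translates verbatim into ``$u_1$ and $u_2$ have hatness $2$, and the arc of $\mathcal{G}$ between them avoiding $v$ has no interior vertex of hatness $\geq 5$'', which is the condition in the statement. (Note $u_1,u_2\in V(\mathcal{P})$, so $u_1,u_2\neq v$; this is the reading of the statement that the proof produces, and if $v$ itself happens to have hatness $2$ it changes nothing, as $v$ is excluded from every such $Q$.) Chaining the three equivalences — $\mathcal{G}$ winnable $\iff \mathcal{P}$ winnable $\iff \mathcal{P}$ has such a $Q$ $\iff$ the displayed cycle condition — finishes the proof.

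There is no genuine obstacle once the tree classification is available; the only care needed is in the final translation. Specifically one must be precise about the endpoint conventions — whether $v$, or a hatness-$\geq 5$ vertex, counts as ``lying between'' $u_1$ and $u_2$ when it sits at an endpoint rather than strictly inside the arc — and one should check the degenerate small cases, e.g. $k=3$, or $u_1$ adjacent to $u_2$ so that $Q$ is a single edge with no interior vertices to constrain. These are routine.
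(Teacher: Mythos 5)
Your proposal is correct and is exactly the paper's argument: the paper's proof is the one-liner "this follows by the definition of `deletable' combined with Theorem \ref{thm: FirstTreeCategorization} as specialized to paths," and you have simply written out that specialization (subtrees of a path are subpaths with hatness-2 endpoints and interior hatness $\leq 4$) and the translation back to the cycle. The care you flag about whether $v$ itself may serve as one of the two hatness-2 endpoints is the right thing to note, and is harmless in the lemma's only application, where $h(v)\geq 5$.
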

\begin{proof}
    This follows by the definition of ``deletable'' combined with Theorem \ref{thm: FirstTreeCategorization} as specialized to paths. 
\end{proof}
Let a \textit{bad cycle} be a Latvian cycle with $\forall v(h(v)\in \{2,3,5\})$ that contains exactly as many hatness-2 vertices as hatness-5 vertices and does not contain a winnable path subgraph.
\begin{lemma}\label{3.5.2badmonotone}
    Suppose all bad cycles are unwinnable. Then all Latvian cycles containing at least one vertex of hatness $\geq 5$ and no winnable path subgraph are unwinnable. 
\end{lemma}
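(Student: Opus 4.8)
The plan is to realize $\mathcal{G}$ as $\preceq$-below a \emph{bad cycle} and then invoke Proposition~\ref{monotone}. Write $\mathcal{G}\equiv(C_k,h)$; I may assume $k\ge 4$, since $C_3=K_3$ and $C_4$ are completely classified already (Proposition~\ref{CzechOnComplete}, Lemma~\ref{C3C4Solved}). The first step is to record a clean combinatorial form of the hypothesis. Specializing Theorem~\ref{thm: FirstTreeCategorization} to paths (exactly as in Lemma~\ref{3.5.1equivalentlyfordeletable}), a Latvian path is winnable iff it contains a subpath both of whose endpoints have hatness $2$ and all of whose interior vertices have hatness $\le 4$. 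Since a path subgraph of $C_k$ is just an arc, this shows that ``$\mathcal{G}$ has no winnable path subgraph'' is equivalent to: \emph{no two hatness-$2$ vertices are adjacent, and every arc of $C_k$ between two hatness-$2$ vertices contains an interior vertex of hatness $\ge 5$.} (Adjacent hatness-$2$ vertices would form a winnable $P_2$; conversely, once every ``basic'' arc between cyclically consecutive hatness-$2$ vertices contains such a blocker, so does every longer arc between hatness-$2$ vertices.)

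Next I would build the target bad cycle $\mathcal{G}'\equiv(C_k,h')$ with $h'\le h$ pointwise. Let $t$ be the number of hatness-$2$ vertices of $\mathcal{G}$. If $t=0$ then all hatnesses are $\ge 3$ and some is $\ge 5\ge 4$, so $\mathcal{G}$ is already unwinnable by the first bullet of Lemma~\ref{SzczechlaCycleContribution}, and we are done. If $t\ge 1$, then by the reformulation the hatness-$2$ vertices are pairwise non-adjacent and cut $C_k$ into $t$ basic arcs whose interiors partition the non-hatness-$2$ vertices, and each basic arc has an interior vertex of hatness $\ge 5$; mark one such vertex in each basic arc (for $t=1$ the single ``basic arc'' is all of $C_k$ minus the lone hatness-$2$ vertex, and we simply mark one hatness-$\ge 5$ vertex, which exists since $h(a)\ge 5$). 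Define $h'(v)=2$ on the hatness-$2$ vertices, $h'(v)=5$ on the $t$ marked vertices, and $h'(v)=3$ on every other vertex. Then $h'\le h$ (marked vertices had $h\ge 5$; a non-marked vertex either kept hatness $2$ or had $h\ge 3$ and now has $h'=3$), $h'$ takes values in $\{2,3,5\}$, and $\mathcal{G}'$ has exactly $t$ vertices of hatness $2$ and exactly $t$ of hatness $5$.

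It remains to check that $\mathcal{G}'$ has no winnable path subgraph, so that $\mathcal{G}'$ is indeed a bad cycle. The hatness-$2$ vertices of $\mathcal{G}'$ coincide with those of $\mathcal{G}$ and the graph is unchanged, so no two are adjacent; and every arc between two hatness-$2$ vertices contains a full basic arc, hence a marked vertex, which now carries hatness $5$. By the equivalence established in the first step, $\mathcal{G}'$ has no winnable path subgraph, so $\mathcal{G}'$ is a bad cycle. Finally $\mathcal{G}\preceq\mathcal{G}'$ by the second clause of the definition of $\preceq$ (same digraph, same guessness $g\equiv 1$, and $h'\le h$); by hypothesis $\mathcal{G}'$ is unwinnable, so the contrapositive of Proposition~\ref{monotone} gives that $\mathcal{G}$ is unwinnable.

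The only genuine subtlety I anticipate is the verification in the third paragraph that demoting the ``surplus'' hatness-$\ge 5$ vertices (those in a basic arc besides the marked one) down to hatness $3$ does not accidentally open a winnable subpath; this is precisely what the ``one marked vertex per basic arc'' bookkeeping controls, and it depends on having first extracted the clean arc-wise reformulation of ``no winnable path subgraph.'' Everything else is a direct application of the definitions of $\preceq$ and of a bad cycle together with Proposition~\ref{monotone}.
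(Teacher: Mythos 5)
Your proof is correct and follows essentially the same route as the paper's: mark one hatness-$\geq 5$ vertex in each arc between cyclically consecutive hatness-$2$ vertices, lower marked vertices to $5$ and all other non-hatness-$2$ vertices to $3$, verify the result is a bad cycle no harder than the original, and conclude via Proposition~\ref{monotone}. Your treatment is somewhat more careful than the paper's terse version, in particular in handling the $t=0$ and $t=1$ degenerate cases and in explicitly verifying (via the path reformulation) that the demotion of surplus high-hatness vertices does not create a winnable subpath.
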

\begin{proof}
    Consider such a Latvian cycle. Between every two vertices of hatness 2 (with no intervening vertex of hatness 2), pick exactly one vertex of hatness $\geq 5$ and mark it. Then lower the hatness of all marked vertices to $5$ and the hatness of all other non-hatness-2 vertices to 3. This is a bad cycle no harder than the original cycle; apply Proposition \ref{monotone}.
\end{proof}
Let an \textit{atomic bad path} be a Latvian path whose endpoints have hatness 2, with exactly one vertex of hatness $\geq 5$, and all other vertices of hatness $3$. 

\begin{lemma}\label{3.5.3atomicstitching}
    Suppose that for any strategy $f$ on an atomic bad path, there exists a partial coloring $c_{B,Y}$ of $B$ and $Y$ such that for any partial coloring $c_{A,Z}$ of $A$ and $Z$, $c_{B,Y}\cup c_{A,Z}$ extends to a quasi-admissible path. (Call such a $c_{B,Y}$ \textit{good}.) Then every bad cycle is unwinnable.
\end{lemma}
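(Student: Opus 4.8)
The plan is to cut the bad cycle at its hatness-$2$ vertices into atomic bad paths, apply the hypothesis to each piece to pin down a ``good'' coloring near that piece's endpoints, and then stitch the pieces' quasi-admissible colorings into a single disprover. Fix a bad cycle $\mathcal{C}=(C_k,h)$ — we may assume it has at least one hatness-$5$ vertex (if not, it is $(C_k,\star 3)$, which is not produced by Lemma~\ref{3.5.2badmonotone} and is in any case governed by Lemma~\ref{SzczechlaCycleContribution}) and that $k\geq 4$ (the case $C_3=K_3$ is settled by Lemma~\ref{C3C4Solved}) — and fix an arbitrary strategy $f$ on $\mathcal{C}$; the goal is a coloring under which every sage guesses wrong.

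First I would set up the decomposition. List the hatness-$2$ vertices $z_1,\dots,z_m$ in cyclic order. A Latvian path both of whose endpoints have hatness $\leq 2$ and all of whose interior vertices have hatness $\leq 4$ is winnable (apply Theorem~\ref{thm: FirstTreeCategorization} with $T'$ the whole path), so since $\mathcal{C}$ contains no winnable path subgraph, the cyclic arc from $z_i$ to $z_{i+1}$ must contain a vertex of hatness $\geq 5$ (this is automatic when $m=1$); as there are exactly $m$ hatness-$5$ vertices and $m$ such arcs, each arc contains exactly one, the rest of its vertices having hatness $3$. So the arc $P^{(i)}$, with endpoints $z_i,z_{i+1}$, is an atomic bad path (when $m=1$, $z_1$ plays both endpoint roles), and $V(\mathcal{C})$ is the disjoint union of $\{z_1,\dots,z_m\}$ with the interiors of the $P^{(i)}$.

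Then I would invoke the hypothesis piece by piece and glue. The plans of $f$ on the interior of $P^{(i)}$, together with any plans at its two endpoints $z_i,z_{i+1}$, form a strategy $f^{(i)}$ on the atomic bad path $P^{(i)}$: this is legitimate because every interior vertex of $P^{(i)}$ has the same out-neighborhood there as in $\mathcal{C}$, once $z_i,z_{i+1}$ are identified with the path's endpoints. (Crucially there is no circular dependence among the pieces — quasi-admissibility constrains only interior vertices, whose plans are internal to $P^{(i)}$, so the endpoint plans of $f^{(i)}$ never enter.) The hypothesis yields a good coloring $d^{(i)}$ of the two near-endpoint vertices $B^{(i)},Y^{(i)}$ of $P^{(i)}$ (a single vertex, namely the hatness-$5$ one, when $P^{(i)}\cong P_3$). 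Color $\mathcal{C}$ on each $B^{(i)},Y^{(i)}$ by $d^{(i)}$; these vertex sets are disjoint across $i$, so $c$ is well defined so far. Now every $z_j$ sees only $Y^{(j-1)}$ and $B^{(j)}$, whose colors are fixed, so $f_{z_j}$ already outputs one determined color; since $h(z_j)=2$, set $c(z_j)$ to the other, so $z_j$ guesses wrong. Finally, for each $i$, with $c$ now fixed on $z_i,z_{i+1},B^{(i)},Y^{(i)}$, goodness of $d^{(i)}$ — applied to the extension with $c_A=c(z_i)$ and $c_Z=c(z_{i+1})$, the ``diagonal'' extension when $m=1$ — extends $c$ over the rest of the interior of $P^{(i)}$ to a quasi-admissible coloring of $P^{(i)}$, so every interior vertex of $P^{(i)}$ guesses wrong under $f^{(i)}$, hence under $f$ (its neighborhood and plan agree in $P^{(i)}$ and in $\mathcal{C}$, and the endpoint colors of $P^{(i)}$ were chosen to equal $c(z_i),c(z_{i+1})$). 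These extensions occupy pairwise disjoint vertex sets, so they assemble into a coloring $c$ of all of $\mathcal{C}$ under which every vertex — every $z_j$ and every interior vertex of every $P^{(i)}$ — guesses wrong. Thus $c$ disproves $f$, and since $f$ was arbitrary, $\mathcal{C}$ is unwinnable.

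The idea here is slight — a bad cycle cut at its hatness-$2$ vertices is exactly a cyclic chain of atomic bad paths — so the work is almost entirely bookkeeping: pushing the decomposition through via Theorem~\ref{thm: FirstTreeCategorization} and the count of hatness-$5$ against hatness-$2$ vertices; verifying that an interior vertex of a piece has the same guess in the piece as in the cycle; and handling the degenerate cases ($m=1$, where one $z$ is reused as both endpoints and $d^{(1)}$ is applied on the diagonal; $P^{(i)}\cong P_3$, where $B^{(i)}=Y^{(i)}$ and there is nothing further to fill in). I expect the decomposition — making airtight that each arc between consecutive hatness-$2$ vertices houses exactly one hatness-$5$ vertex — to be the step most in need of care, together with confirming that the small-cycle and no-hatness-$5$ carve-outs are genuinely harmless in the intended application.
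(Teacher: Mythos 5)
Your proof is correct and takes essentially the same route as the paper's: cut the bad cycle at its hatness-$2$ vertices into atomic bad paths, fix the good colorings $c_{B,Y}$ on the near-endpoint vertices, give each hatness-$2$ vertex the one color its now-determined guess misses, and extend over each piece by quasi-admissibility. You are noticeably more careful than the paper about the degenerate cases --- in particular the all-hatness-$3$ ``bad cycle'' with no hatness-$2$ vertices, which the stated definition technically admits, which can be winnable, and which the paper's proof silently assumes away by asserting that every bad cycle is glued from atomic bad paths.
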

\begin{proof}
    Notice that bad cycles can be formed from atomic bad paths by gluing the ends together, i.e. by appropriately identifying vertices of hatness $2$. (If the bad cycle has but a single hatness-2 vertex, it is formed by identifying the endpoints of a single atomic bad path.) Now we construct a disprover for a given odd cycle using quasi-admissible paths. 
    Now fix the partial colorings $c_{B,Y}$ that are good in the atomic-bad-path subgraphs of the bad cycle. That is, we fix the color of all vertices of the bad cycle that are adjacent to hatness-2 vertices of the bad cycle. Thus, we've fixed a guess for each of the hatness-2 vertices. To each of those vertices, assign whatever color isn't being guessed. Relative to the atomic-bad-path subgraphs of the bad cycle, this is a choice of $c_{A,Z}$. By the lemma's assumption, the partial coloring chosen so far (which causes all of the hatness-2 vertices to guess wrong) can extend to a coloring that causes all other vertices to guess wrong, too. Thus we have a disprover for the bad cycle, proving the implication.
\end{proof}
Now comes the hard part: showing that the appropriate quasi-admissible paths exist. 

\begin{lmm}\label{23532lemma}
    For any plan $f_0$ on the middle three vertices of the Latvian path of hatnesses $(2,3,5,3,2)$, there exists a partial coloring $c_{B,Y}$ such that for any partial coloring $c_{A,Z}$, $c_{B,Y}\cup c_{A,Z}$ extends to a quasi-admissble path.
\end{lmm}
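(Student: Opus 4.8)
The plan is to turn the statement into a small combinatorial covering problem about the plans of the three interior sages, and then settle it by a counting argument, with a symmetry-reduced finite check as a fallback for the one genuinely tight configuration.

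First I would fix notation. Label the five vertices $A-B-C-Y-Z$ in path order, so that the ``middle three'' are $B,C,Y$, with $C=X$ by the path-labelling convention, and $h(A)=h(Z)=2$, $h(B)=h(Y)=3$, $h(C)=5$. Since $B$ sees $\{A,C\}$, $C$ sees $\{B,Y\}$ and $Y$ sees $\{C,Z\}$, a plan $f_0$ on $\{B,C,Y\}$ is a triple $(f_B,f_C,f_Y)$ with $f_B\colon[2]\times[5]\to[3]$, $f_C\colon[3]\times[3]\to[5]$, $f_Y\colon[5]\times[2]\to[3]$, and a quasi-admissible path on this $P_5$ is a full colouring in which $B$, $C$ and $Y$ all guess wrong. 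Writing $(b,y)$ for the value of $c_{B,Y}$, $(a,z)$ for the value of $c_{A,Z}$, and $t$ for the colour $c(C)$ (which we pick last), the colouring is quasi-admissible exactly when $t\notin X^b_a\cup Y^y_z\cup\{\gamma_{b,y}\}$, where $X^b_a=\{t\in[5]:f_B(a,t)=b\}$, $Y^y_z=\{t\in[5]:f_Y(t,z)=y\}$ and $\gamma_{b,y}=f_C(b,y)$. So the lemma is equivalent to: there is $(b,y)\in[3]^2$ with $|X^b_a\cup Y^y_z\cup\{\gamma_{b,y}\}|\le4$ for all $(a,z)\in[2]^2$.

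For the generic part, note that for each fixed $a$ the sets $X^0_a,X^1_a,X^2_a$ partition $[5]$, so at most one $b$ has $|X^b_a|\ge3$; hence $B^{\ast}=\{b:\max_a|X^b_a|\le2\}$ omits at most two colours and is nonempty, and symmetrically $Y^{\ast}=\{y:\max_z|Y^y_z|\le2\}\ne\emptyset$. Taking $(b,y)\in B^{\ast}\times Y^{\ast}$ gives $|X^b_a\cup Y^y_z|\le|X^b_a|+|Y^y_z|\le4$ for every $(a,z)$, so the only way $|X^b_a\cup Y^y_z\cup\{\gamma_{b,y}\}|$ can reach $5$ is the tight configuration in which $|X^b_a|=|Y^y_z|=2$, these two sets are disjoint, and $\gamma_{b,y}$ is precisely the single missing colour.

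The hard part --- the gap flagged in the remark just above the lemma --- is to rule out that \emph{every} $(b,y)\in B^{\ast}\times Y^{\ast}$ admits such a ``perfectly bad'' witness $(a,z)$. My plan here exploits that $\gamma_{b,y}=f_C(b,y)$ is a single colour: a perfectly bad witness pins $f_C(b,y)$ to one prescribed value (determined by the relevant size-$2$ fibres of $f_B$ and $f_Y$), so if all of $B^{\ast}\times Y^{\ast}$ were bad, $f_C$ would have to meet a whole array of point-constraints, which, together with the partition identities $\sum_b|X^b_a|=\sum_y|Y^y_z|=5$ that tightly restrict $B^{\ast}$, $Y^{\ast}$ and the sizes in play, should be impossible. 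Concretely I would split on the fibre-size profiles of $f_B(a,\cdot)$ and $f_Y(\cdot,z)$ (compositions of $5$ into at most three parts, the dangerous one being $(2,2,1)$ on both sides) and check the few surviving configurations. This bookkeeping is the main obstacle, and the place a slicker argument is most desirable; should it resist a clean write-up, the lemma still follows from a finite verification, since after the reduction only the data $(X^0_a,X^1_a)_a$, $(Y^0_z,Y^1_z)_z$ and $(\gamma_{b,y})$ matter, and modulo the obvious symmetries (relabelling the five colours at $C$, the two colours at $A$ and at $Z$, and the three colours at $B$ and at $Y$) only manageably many cases remain --- this is the brutish check already performed.
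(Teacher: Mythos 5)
Your translation of the lemma into a covering problem is correct and is a useful formalization: with $X^b_a=\{t:f_B(a,t)=b\}$, $Y^y_z=\{t:f_Y(t,z)=y\}$ and $\gamma_{b,y}=f_C(b,y)$, the lemma is indeed equivalent to finding $(b,y)$ with $\left|X^b_a\cup Y^y_z\cup\{\gamma_{b,y}\}\right|\le 4$ for all four $(a,z)$, and your observation that $B^{\ast}$ and $Y^{\ast}$ are nonempty is right. (For comparison, the paper offers only a brute-force sketch here and explicitly asks for a concise proof, so you are not competing against a written argument.) But the centerpiece of your plan --- that one can always find the witness inside $B^{\ast}\times Y^{\ast}$, i.e.\ that it is ``impossible'' for every pair in $B^{\ast}\times Y^{\ast}$ to be perfectly bad --- is false. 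Take $f_B(0,\cdot)$ sending $\{0,1,2\}\mapsto 1$ and $\{3,4\}\mapsto 0$, and $f_B(1,\cdot)$ sending $\{0,1,2\}\mapsto 2$ and $\{3,4\}\mapsto 0$; then $X^0_0=X^0_1=\{3,4\}$ while colours $1,2$ each have a fibre of size $3$, so $B^{\ast}=\{0\}$. Symmetrically take $f_Y(\cdot,0)$ sending $\{0,1\}\mapsto 0$ and $\{2,3,4\}\mapsto 1$, and $f_Y(\cdot,1)$ sending $\{0,1\}\mapsto 0$ and $\{2,3,4\}\mapsto 2$, so $Y^0_0=Y^0_1=\{0,1\}$ and $Y^{\ast}=\{0\}$. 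Now set $f_C(0,0)=2$. For every $(a,z)$ one gets $X^0_a\cup Y^0_z\cup\{\gamma_{0,0}\}=\{3,4\}\cup\{0,1\}\cup\{2\}=[5]$, so the unique pair of $B^{\ast}\times Y^{\ast}$ admits a perfectly bad witness for \emph{all} four choices of $(a,z)$. The lemma nevertheless holds for this $f_0$ --- e.g.\ $(b,y)=(1,0)$ works, since $X^1_0\cup Y^0_z=\{0,1,2\}$ and $X^1_1\cup Y^0_z=\{0,1\}$ both leave room even after adding $\gamma_{1,0}$ --- but the witness lives outside $B^{\ast}\times Y^{\ast}$, which your argument never examines.

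The underlying issue is that excluding $b$ from $B^{\ast}$ because $|X^b_{a_1}|\ge 3$ for one value $a_1$ says nothing about $|X^b_{a_2}|$ for the other value, which may be $0$ or $1$ and make $(b,y)$ an excellent witness; largeness of a fibre for one input is not a uniform obstruction. So the case analysis must be run over all nine pairs $(b,y)$, using the full profile $\left(|X^b_0|,|X^b_1|\right)$ and $\left(|Y^y_0|,|Y^y_1|\right)$, not just the maxima. Your fallback --- a symmetry-reduced exhaustive check over the fibre data --- is sound and would indeed establish the lemma, but it is exactly the brute force the paper already performs; the part of your proposal that was meant to improve on it does not survive. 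If you want a clean argument, a more promising route is to first dispose of all $(b,y)$ for which some $|X^b_a\cup Y^y_z|\le 3$ uniformly in $(a,z)$ (these are immune to the single colour $\gamma_{b,y}$), and only then count how many of the remaining pairs $f_C$ can simultaneously ``block,'' since each row $f_C(b,\cdot)$ and column $f_C(\cdot,y)$ consists of single colours subject to the prescribed values being distinct missing colours.
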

\begin{proof}[Proof sketch.] Casework can be somewhat mitigated by using symmetries and Lemma \ref{admissiblecontinuationlemma} appropriately, but it comes down to brute force. 
\end{proof}

\begin{lmm}\label{235lemma}
    Consider the Latvian game on $P_3$ with $c(A)=2$, $c(B)=3$, and $c(C)=5$. Fix a strategy $f$. There exists a color $p\in V_B$ such that for each color $q\in V_A$, there exists $\geq 3$ colors $r\in V_C$ such that $f_B(q,r)\neq p$. 
\end{lmm}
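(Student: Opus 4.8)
The plan is a short double-counting argument that does not invoke the admissible-path machinery; it rests only on the arithmetic that $|V_C| = 5$ is strictly less than $2|V_B| = 6$, together with $|V_A| = 2 < 3 = |V_B|$.

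First I would fix notation. Since the game is Latvian, $f_B$ is a map $V_A \times V_C \to V_B$, and by hypothesis $|V_A| = 2$, $|V_B| = 3$, $|V_C| = 5$. For $q \in V_A$ and $p \in V_B$ set $n_q(p) := |\{\, r \in V_C : f_B(q, r) = p \,\}|$, so that $\sum_{p \in V_B} n_q(p) = |V_C| = 5$ for each fixed $q$. Call $p$ \emph{heavy for $q$} when $n_q(p) \geq 3$, and let $H_q := \{\, p \in V_B : n_q(p) \geq 3 \,\}$.

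Next I would observe that $|H_q| \leq 1$ for each $q$: two distinct heavy colors would contribute at least $3 + 3 = 6 > 5$ to the sum $\sum_{p} n_q(p)$, a contradiction. Since $V_A$ has only the two elements $0$ and $1$, this gives $|H_0 \cup H_1| \leq 2 < 3 = |V_B|$, so I can choose $p \in V_B \setminus (H_0 \cup H_1)$. For this $p$ we have $n_0(p) \leq 2$ and $n_1(p) \leq 2$, hence for each $q \in V_A$ the set $\{\, r \in V_C : f_B(q, r) \neq p \,\}$ has size $|V_C| - n_q(p) \geq 5 - 2 = 3$, which is exactly the assertion.

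There is no real obstacle here; the only points that must line up are the two inequalities that keep the counting tight, namely $2(|V_C| - 2) > |V_C|$ (i.e.\ $h(C) \geq 5$), which forces each row to contain at most one heavy color, and $|V_A| < |V_B|$ (i.e.\ $h(A) < h(B)$), which prevents the heavy colors of the two rows from jointly covering $V_B$. If either inequality fails, the pigeonhole collapses, so the specific hatnesses in the statement are precisely what the proof uses.
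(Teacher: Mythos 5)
Your argument is correct: the row sums $\sum_{p\in V_B} n_q(p)=5$ force at most one ``heavy'' color per $q\in V_A$, and $|V_A|=2<3=|V_B|$ leaves a color $p$ that is heavy for neither row, which is exactly the assertion. The paper only sketches this lemma as ``essentially a counting argument,'' and your double-counting is precisely that argument carried out in full, so you have matched (and completed) the intended proof.
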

\begin{proof}[Proof sketch.]
    Essentially a counting argument in conjunction with Lemma \ref{admissiblecontinuationlemma}. 
\end{proof}

\begin{lmm}\label{endingwith2lemma}
    Consider a Latvian game on $P_k$ with $k\geq 4$, $c(Z)=2$, $c(Y)=5$, and $c(X)=3$. Fix a strategy $f$. Suppose there exist $\geq 3$ quasi-admissible paths $c_{A...Y}$ that are equal on $\{A,...,X\}$ but mutually unequal on $Y$. Then at least one of these extends to two quasi-admissible paths that agree on $\{A,...,Y\}$ and disagree on $Z$.
\end{lmm}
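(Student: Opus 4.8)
The plan is a one-step pigeonhole argument, once the definitions are unwound. First I would fix notation: let $c^{(1)},c^{(2)},c^{(3)}$ be three of the given quasi-admissible paths on $\{A,\dots,Y\}$, let $x^{\ast}$ denote their common value on $X$ (they agree on all of $\{A,\dots,X\}$), and set $y_j := c^{(j)}(Y)$, so that $y_1,y_2,y_3$ are pairwise distinct elements of $V_Y$.

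Next I would record exactly what appending a color $z\in V_Z$ to $c^{(j)}$ costs. In $P_k$ the vertex $Z$ is a leaf adjacent only to $Y$, so adding $Z$ leaves the neighbourhood — hence the guess — of every vertex in $\{A,\dots,X\}$ unchanged; those are precisely the interior vertices of the $\{A,\dots,Y\}$-path and they were already guessing wrong, $A$ remains an (unconstrained) endpoint, and $Z$ is a fresh (unconstrained) endpoint. Therefore $c^{(j)}\cup\{Z\mapsto z\}$ is a quasi-admissible path on $\{A,\dots,Z\}$ if and only if the one newly-interior vertex $Y$ — which sees only $X$ and $Z$ — guesses wrong, i.e. $f_Y(x^{\ast},z)\neq y_j$.

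Now the key step. Since $h(Z)=2$, the set $\{\,f_Y(x^{\ast},z):z\in V_Z\,\}$ has at most two elements, whereas $\{y_1,y_2,y_3\}$ has three; hence some $j$ satisfies $y_j\notin\{\,f_Y(x^{\ast},z):z\in V_Z\,\}$. For that $j$, both choices of $z$ give $f_Y(x^{\ast},z)\neq y_j$, so $c^{(j)}$ extends to two quasi-admissible paths on $\{A,\dots,Z\}$; these agree on $\{A,\dots,Y\}$ (both extend $c^{(j)}$) and disagree on $Z$ (the two values of $z$ are distinct because $|V_Z|=2$).

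I do not expect a genuine obstacle here; the only thing to handle carefully is the definitional bookkeeping of the second step — confirming that quasi-admissibility of the extended path imposes a condition on $Y$ and on nothing else, since $Y$ is the unique vertex whose status changes from ``endpoint'' to ``interior'' when $Z$ is appended and every other vertex's view is untouched. Note that, unlike the companion atomic-bad-path lemmata, this one uses neither Lemma~\ref{admissiblecontinuationlemma} nor Corollary~\ref{3PathLemma}: the hypothesis $h(Z)=2$ makes the relevant count trivial.
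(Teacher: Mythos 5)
Your proof is correct and is essentially the paper's own argument: the paper's one-line proof is exactly the pigeonhole observation that of the three candidate values for $c(Y)$, at least one is never guessed by $Y$ for either value of $c(Z)$ given the fixed $c(X)$. Your additional bookkeeping (that $Y$ is the only vertex whose quasi-admissibility status changes when $Z$ is appended) is a correct and welcome elaboration of what the paper leaves implicit.
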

\begin{proof}
    Of the 3 different values considered for $c(Y)$ here, at least one of them is not guessed by $Y$ with our fixed $c(X)$ for either possible value of $c(Z)$. 
\end{proof}

\begin{lmm}\label{2335lemma}
    Consider a Latvian game on $P_k$ with $k\geq 4$ with $c(A)=2$, $c(Z)=5$, and $c(v)=3$ otherwise. There exists a color $p\in V_B$ such that for each color $q\in V_A$, there are distinct colors $r_1,r_2\in V_Y$ such that for every color $s\in V_Z$ there is a quasi-admissible path with $c(A)=q,c(B)=p,c(Y)\in \{r_1,r_2\}$ and $c(Z)=s$. (In particular, there is a color $r\in V_Y$ such that for $\geq 3$ colors $s\in V_Z$ there's a quasi-admissible path with $c(A)=q,c(B)=p,c(Y)=r$ and $c(Z)=s$.)
\end{lmm}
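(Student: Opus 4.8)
The plan is to build the desired quasi-admissible paths from left to right, using the continuation machinery of Lemma~\ref{admissiblecontinuationlemma} and Corollary~\ref{3PathLemma}, after a short counting step that pins down $p$ at the $A$--$B$ end. Fix the strategy $f$; recall that $h(A)=2$, $h(Z)=5$, and every other vertex --- in particular $B,C,X,Y$ --- has hatness $3$, and that a path's endpoints, here $A$ and $Z$, are never constrained in a quasi-admissible path. For $q\in V_A$ and $p'\in V_B$ set $m(q,p')=\bigl|\{c\in V_C:f_B(q,c)=p'\}\bigr|$. Since $\sum_{p'}m(q,p')=|V_C|=3$, for each of the two colors $q\in V_A$ at most one $p'$ has $m(q,p')\ge 2$, so at most two colors of $V_B$ are ``bad''; as $|V_B|=3$ there is $p\in V_B$ with $m(q,p)\le 1$ for both $q$, i.e.\ $W_q:=\{c\in V_C:f_B(q,c)\neq p\}$ has $|W_q|\ge 2$ for both $q\in V_A$. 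Fix this $p$.

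For each $q$, the two-vertex coloring $(c(A),c(B))=(q,p)$ is a quasi-admissible path whose admissible rightward continuations into $V_C$ are exactly the elements of $W_q$, so there are at least two of them. Fixing $q$ and iterating Corollary~\ref{3PathLemma} one vertex at a time (its proof only manipulates the right end, so it applies verbatim to quasi-admissible paths), I extend $(q,p)$ to a quasi-admissible path $P^{(q)}$ on $\{A,\dots,X\}$, still agreeing with $(q,p)$ on $A,B$, that again has at least two admissible rightward continuations --- now into $V_Y$. Call that set $R_q\subseteq V_Y$; so $|R_q|\ge 2$. Every extension step is legitimate because the two vertices whose sizes Corollary~\ref{3PathLemma} requires to be $\ge 3$ always lie among $C,\dots,Y$, all of hatness $3$: the process never reaches back to the hatness-$2$ vertex $A$ nor forward to the hatness-$5$ vertex $Z$. (When $k=4$ the vertices $X$ and $B$ coincide, so no extension is needed and one simply takes $R_q=W_q\subseteq V_C=V_Y$.) By construction, for every $r\in R_q$ the path obtained by appending $c(Y)=r$ to $P^{(q)}$ is quasi-admissible on $\{A,\dots,Y\}$, i.e.\ each of $B,\dots,X$ guesses wrong under it.

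Now pick two distinct colors $r_1,r_2\in R_q$. Given $s\in V_Z$, the color $f_Y(c^{(q)}(X),s)$ guessed by $Y$ (who sees $c^{(q)}(X)$ and $c(Z)=s$) is a single element of $V_Y$, hence differs from at least one of $r_1,r_2$; call that one $r_j$. Then the coloring of $P_k$ agreeing with $P^{(q)}$ on $\{A,\dots,X\}$ and having $c(Y)=r_j$, $c(Z)=s$ is quasi-admissible: $B,\dots,X$ guess wrong because $P^{(q)}$-with-$r_j$ is quasi-admissible on $\{A,\dots,Y\}$, $Y$ guesses wrong by the choice of $r_j$, and $A,Z$ are the path's endpoints. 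This gives the claimed family $\{r_1,r_2\}$, which does not depend on $s$. For the parenthetical, note that $\{s\in V_Z:f_Y(c^{(q)}(X),s)=r_1\}$ and $\{s\in V_Z:f_Y(c^{(q)}(X),s)=r_2\}$ are disjoint subsets of the $5$-element set $V_Z$, so one of them has size $\le 2$; the corresponding $r_j$ then works for at least $3$ values of $s$.

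The only real obstacle is the bookkeeping in the second paragraph: one must check that iterating Corollary~\ref{3PathLemma} never invokes its ``$\ge 3$'' side condition for the hatness-$2$ vertex $A$ or the hatness-$5$ vertex $Z$ --- which it does not, since the whole extension takes place inside the hatness-$3$ block $B,\dots,Y$ --- and to keep the degenerate case $k=4$ (where $B$, $X$ coincide, as do $A$ and the vertex before $X$) straight. Everything else is the single counting argument for $p$ together with two one-line pigeonhole steps.
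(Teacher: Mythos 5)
Your proof is correct and follows exactly the route the paper intends: the paper's own ``proof'' of Lemma~\ref{2335lemma} is only the one-line sketch ``Applications of Lemmata \ref{admissiblecontinuationlemma} and \ref{3PathLemma},'' and your argument is a faithful, complete elaboration of that sketch --- the initial count pinning down $p$, the rightward iteration of Corollary~\ref{3PathLemma} through the hatness-3 block (with the $k=4$ degeneracy handled), and the final pigeonhole at $Y$ against the five colors of $Z$. No gaps.
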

\begin{proof}[Proof sketch.]
    Applications of Lemmata \ref{admissiblecontinuationlemma} and \ref{3PathLemma}. 
\end{proof}
\begin{lmm}\label{interesectingat5lemma}
    Consider a Latvian game on a path of at least 6 vertices with $h(v)=5$ for some $v\notin\{A,B,Y,Z\}$. Suppose there are three quasi-admissible paths $c_{v...Z}$ from $v$ to $Z$ that all equal some $r_0$ on $v$'s rightward neighbor. Suppose also that there exist colors $r_1,r_2$ of $v$
    s leftward neighbor such that for every color $s\in V_v$, there's a quasi-admissible path $c_{A...v}$ whose value on $v$'s leftward neighbor is $r_1$ or $r_2$ and whose value on $v$ is $s$. Then of these, we can choose leftward and rightward quasi-admissible paths agreeing on $v$ whose union is quasi-admissible. 
\end{lmm}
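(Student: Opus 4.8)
The plan is to cut the ambient path at $v$ into a left half $A\dots v$ and a right half $v\dots Z$, pick one quasi-admissible coloring of each half from among those guaranteed by the hypotheses so that the two colorings agree on $v$, and glue them along $v$; since $A$ and $Z$ are the endpoints of the glued path, the only admissibility constraint the gluing newly creates is at $v$ itself, so the whole problem reduces to arranging that $v$ does not guess her own color. Write $\ell$ and $m$ for the left and right neighbours of $v$. First I would record the (trivial) bookkeeping that makes the decomposition clean: because $v\notin\{A,B\}$ the left half $A\dots v$ has at least three vertices, so $\ell$ is an \emph{interior} vertex of it, and symmetrically $m$ is interior to the right half (using $v\notin\{Y,Z\}$). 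Consequently every vertex strictly between $A$ and $Z$ other than $v$ has both of its neighbours inside ``its own'' half, so for any two colorings of the halves that agree on $v$, such a vertex sees exactly the colors it saw inside that half and therefore inherits its incorrect guess from the quasi-admissibility of that half.

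Next I would exploit the rigidity on the right. All three given rightward quasi-admissible paths carry the same color $r_0$ on $m$, so along any gluing the guess of $v$ is $f_v(c(\ell),r_0)$, a function of $c(\ell)$ alone; and the left-hand hypothesis only produces colorings of $A\dots v$ in which $c(\ell)\in\{r_1,r_2\}$. Let $s_1,s_2,s_3$ be the three (distinct) colors the rightward paths place on $v$; for each $j$ the left hypothesis supplies a quasi-admissible coloring $L_j$ of $A\dots v$ with $c(v)=s_j$ whose value $r(s_j)$ on $\ell$ lies in $\{r_1,r_2\}$. By pigeonhole two of $r(s_1),r(s_2),r(s_3)$ coincide, say $r(s_1)=r(s_2)=r'$; since $f_v(r',r_0)$ is a single color and $s_1\ne s_2$, it fails to equal at least one of them, say $f_v(r',r_0)\ne s_1$. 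Gluing $L_1$ to the rightward path $R_1$ that carries $c(v)=s_1$ along their common vertex $v$ yields a partial coloring of $A\dots Z$ whose restriction to $\{A,\dots,v\}$ is $L_1$ and whose restriction to $\{v,\dots,Z\}$ is $R_1$; by the first paragraph every interior vertex other than $v$ guesses wrong, $v$ guesses $f_v(r',r_0)\ne s_1=c(v)$, and $A,Z$ are endpoints, so the union is quasi-admissible, which is the conclusion.

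I expect the genuinely load-bearing step to be the counting: the argument really needs \emph{three} values $s_j$ — two would not do, since nothing rules out $f_v(r_1,r_0)=s_1$ together with $f_v(r_2,r_0)=s_2$ — and it needs the rightward paths to be pairwise distinct \emph{on $v$}, which is the reading of the hypothesis consistent with Lemma~\ref{endingwith2lemma}. The rest, namely verifying that a vertex's guess depends only on the colors in its own half and that gluing two quasi-admissible colorings agreeing on the cut vertex, together with admissibility at the cut vertex, gives a quasi-admissible path, is routine once the half-decomposition is pinned down; the only place to be careful is the off-by-one identification of which vertices are interior to which sub-path, which is why I would nail that down before anything else.
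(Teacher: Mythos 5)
Your proof is correct and is essentially the paper's argument: the paper's entire proof sketch is the set-theoretic observation that if $\alpha\cup\beta=\delta$, $|\gamma|\geq 3$, $|\delta|=5$, then $|\alpha\cap\gamma|\geq 2$ or $|\beta\cap\gamma|\geq 2$, which is precisely your pigeonhole on $r(s_1),r(s_2),r(s_3)\in\{r_1,r_2\}$ followed by noting that $v$'s single guess $f_v(r',r_0)$ misses one of the two candidate values. You have also correctly identified the load-bearing reading of the hypothesis (the three rightward paths must be distinct on $v$) and supplied the routine gluing bookkeeping the paper omits.
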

\begin{proof}[Proof sketch.]
    Consider sets $\alpha,\beta,\gamma\subseteq \delta$ with $\alpha \cup \beta =\delta$, $|\gamma|\geq 3$, and $|\delta|=5$. Then $|\alpha \cap \gamma|\geq 2$ or $|\beta \cap \gamma| \geq 2$. 
\end{proof}
\begin{lmm}\label{thequasiadmissiblesexist}
    For any strategy $f$ on a basic path, there exists a partial coloring $c_{B,Y}$ of $B$ and $Y$ such that, for any partial coloring $c_{A,Z}$ of $A$ and $Z$, the partial coloring $c_{B,Z}\cup c_{A,Z}$ extends to a quasi-admissible path from $A$ to $Z$. 
    \end{lmm}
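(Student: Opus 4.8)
The plan is a case analysis on the location of the unique hatness-$5$ vertex $w$ in the path $A=v_1,v_2,\dots,v_k=Z$; write $d_L,d_R\ge 0$ for the numbers of (hatness-$3$) vertices strictly between $w$ and $A$, respectively between $w$ and $Z$. The organizing principle is the ``split and glue'' pattern already visible in the subsidiary lemmata: cut the path at $w$ into a left piece with hatnesses $(2,3,\dots,3,5)$ and a right piece with hatnesses $(5,3,\dots,3,2)$, use the one-sided lemmata to decide $c(B)$ and $c(Y)$ once and for all and to produce, on each side, a family of quasi-admissible colorings rich in the colour they place on $w$, and then splice one left member to one right member along $w$.

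Here is how the pieces fit. If $d_L\ge 2$, apply Lemma~\ref{2335lemma} to the left piece, with $w$ in the role of its hatness-$5$ endpoint and $w$'s left neighbour in the role of the adjacent hatness-$3$ vertex; this fixes $c(B)$ and yields, for each of the two values of $c(A)$, colours $r_1,r_2$ of $w$'s left neighbour such that \emph{every} colour of $w$ is attained by some quasi-admissible left coloring whose value on $w$'s left neighbour lies in $\{r_1,r_2\}$. If $d_L=1$ (left piece $(2,3,5)$), use Lemma~\ref{235lemma} instead: it fixes $c(B)$ and, for each $c(A)$, supplies at least three colours of $w$ attained by quasi-admissible left colorings sharing the fixed $c(B)$. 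Do the mirror on the right piece to fix $c(Y)$. Provided $\min(d_L,d_R)\ge 1$ (equivalently $w\notin\{A,B,Y,Z\}$, and then automatically the path has at least $6$ vertices unless $d_L=d_R=1$), Lemma~\ref{interesectingat5lemma} now applies --- in whichever orientation makes the ``every colour of $w$'' side the one coming from Lemma~\ref{2335lemma} --- and its set-theoretic kernel (three subsets of a $5$-set, two of them covering it, so one of that pair meets the third in at least $2$ points) lets us pick a single colour for $w$ that is attained on both sides and dodges $w$'s at most two possible guesses; the two quasi-admissible pieces then glue to a quasi-admissible path from $A$ to $Z$. The quantifications over $c(A)$ and $c(Z)$ built into Lemmata~\ref{2335lemma} and \ref{235lemma} are exactly what frees us to do this for all four endpoint colorings with one fixed $c_{B,Y}$, and where a trailing hatness-$2$ vertex must be re-attached to a piece, Lemma~\ref{endingwith2lemma} does it. The leftover case $d_L=d_R=1$, i.e.\ the path $(2,3,5,3,2)$, is Lemma~\ref{23532lemma} verbatim.

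The genuinely fiddly part --- where I expect the real work to be --- is the boundary case $d_L=0$ or $d_R=0$, i.e.\ $w$ adjacent to a hatness-$2$ endpoint, so that one of $c(B),c(Y)$ is forced to equal $c(w)$ and the clean split-and-glue picture collapses. There one argues directly: since $w$ carries at least $5$ colours while only $2\times 2$ endpoint colorings must be served, one can fix $c(w)$ to avoid the (at most four) colours $w$ could be compelled to guess across those colorings for an appropriate choice of $w$'s interior neighbour, and then thread a quasi-admissible coloring down the remaining hatness-$3$ tail --- which, being all small hatnesses, leaves generous room in the sense of Lemma~\ref{admissiblecontinuationlemma} --- respecting the already-fixed colour of the far distinguished vertex and calling on Lemma~\ref{endingwith2lemma} to absorb the far hatness-$2$ endpoint. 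The one place with real content is insisting that $c(B)$ and $c(Y)$ be chosen \emph{before} $c(A),c(Z)$ and made to work uniformly rather than reactively; once that discipline is maintained, the remainder is an orchestration of lemmata already in hand.
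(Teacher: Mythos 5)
Your proposal matches the paper's own (sketched) proof essentially exactly: both proceed by cases on the hatness sequence, i.e.\ on the position of the hatness-$5$ vertex, dispatching $(2,3,5,3,2)$ to Lemma~\ref{23532lemma}, using Lemmata~\ref{2335lemma} and \ref{235lemma} to fix $c(B),c(Y)$ and generate the one-sided families, gluing at the $5$-vertex via Lemma~\ref{interesectingat5lemma}, and absorbing an adjacent hatness-$2$ endpoint via Lemma~\ref{endingwith2lemma}. Your write-up is, if anything, more explicit than the paper's about how the splice at $w$ works and about the boundary cases.
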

    \begin{proof}[Proof sketch.]
        We break it down by the hatness sequence of the path. For (2,5,2), it's obvious. For (2,3,5,3,2), it's Lemma \ref{23532lemma}. For (2,3,...,3,5,2), apply Lemmata \ref{2335lemma} and \ref{endingwith2lemma}. For (2,3,5,2), apply Lemmata \ref{235lemma} and \ref{endingwith2lemma}. For (2,3,...,3,5,3,2), apply Lemmata \ref{2335lemma}, \ref{235lemma}, and \ref{interesectingat5lemma}. For (2,3,...,3,5,3,...,3,2) apply Lemmata \ref{2335lemma} and \ref{interesectingat5lemma}. (Ellipses denote an arbitrary (possibly zero) number of threes.) This covers all cases up to reversal of direction.
    \end{proof}
Obviously, Lemmata \ref{3.5.1equivalentlyfordeletable}, \ref{3.5.2badmonotone}, \ref{3.5.3atomicstitching} and \ref{thequasiadmissiblesexist} prove Theorem \ref{CategorizedFiveAndUp}.

\chapter{Constructors}\label{ch:constructors}
\epigraph{When we build, \\let us think that we build forever.
}{John Ruskin}

Loosely, constructors are propositions of the following forms.
\begin{itemize}
    \item Let $\mathcal{G}$ be winnable. We alter it to yield $\mathcal{G}'$, which is winnable.
    \item Let $\mathcal{G}_1,...,\mathcal{G}_k$ be winnable. We combine them to yield $\mathcal{G}'$, which is winnable. 
    \item Either of the above, with ``unwinnable'' replacing ``winnable'' in each instance.
\end{itemize}
These propositions are typically constructive:\footnote{There's no way to avoid the pun. And besides, how come puns are the only stylistic feature you're supposed to apologize for? This is peculiar to contemporary English.} they build explicit winning strategies from earlier winning strategies, or build disprovers from earlier disprovers. All of ours are. Their algorithmic importance is clear; see Section \ref{OurApproach}. (Also, they're fun.) For their history, see Subsection \ref{Slavictechniques}.
Here we use a few simple constructors from \cite{KLR21b} to simplify parts of \cite{KL19} and (algorithmically) solve the Latvian game on trees, which also works as a relaxed introduction to the use of constructors.

Then we extend several specific constructors from \cite{KLR21b}––for instance, instead of ``adding a vertex of hatness 2 adjacent to two vertices and increasing their hatnesses by 1,'' we have a general constructor for altering hatnesses by attaching new vertices of arbitrary $g,h,$ and degree; it uses hats-as-hints. The changes are stronger if you attach a new vertex to a clique. \cite{KLR21b} wanted to attach paths but only managed with $P_2$; we show how to attach $P_n$'s, with a variety of attachment styles and hatness functions on the $P_n$. We also elucidate an implicit technique for attaching vertices of hatness 3. Finally, we examine the essential ``product'' or ``clique-join'' family of constructors, which have broadened and evolved from \cite{KLR21b} to \cite{BDO21} to \cite{LK23}. We broaden them further in three complicated directions.  
A few constructors emerge in other chapters from their special machinery––they happen to all be deletion constructors. See Corollary \ref{cor:redundantvisiondelete}, Theorem \ref{DeletableByPeerSets}, Corollary \ref{cor:deleteCzechleaves}, and Theorem \ref{ReplaceVertexWithArcs}.

Other constructors from \cite{KLR21b}––especially ``sewing'' (3.12), ``surgical intervention'' (3.13), ``fastening'' (3.14), ``cone'' (3.16), and ``multi-cone'' (3.16.1)––can be generalized to the Czech digraph case. However, this thesis is already long. See Subsection \ref{Constructorquestions}.

\textbf{Postscript.} Though we don't do anything with it, it's worth remarking that many constructor theorems can be understood as being about criticality––that is, as speaking of situations where a property is fragile to any slight changes. (For instance, Proposition \ref{Removing2Leaf} shows that no critical Latvian game has a leaf of hatness 3.) This is useful for describing the necessary structure of minimal (or maximal) counterexamples to a conjecture, thereby helping you find a counterexample or prove that none exists. The proof of the Strong Perfect Graph Theorem fruitfully used this apporach \cite{CRST09}, as did the proof of the four color theorem \cite{AH76}. 

\section{Applying extant constructors}

The first constructor in the first constructor paper \cite{KLR21b} was this product constructor; it and its generalizations have been the most important in subsequent research. (These constructors were originally stated for graphs, but we state them for digraphs.)

\begin{proposition}[Theorem 3.1 of \cite{KLR21b}]\label{SinglePointProduct}
Let $\mathcal{G}_1\equiv (D_1,h_1)$ and $\mathcal{G}_2\equiv (D_2,h_2)$ be two winnable Latvian games such that $V(D_1)\cap V(D_2)=\{a\}$. Now consider the Latvian game $\mathcal{G}\equiv (D_1 \cup D_2, h)$, where 

\[h(v)=
\begin{cases}
    h_1(v) \text{ for } v\in V(D_1)\backslash a \\
    h_2(v) \text{ for } v\in V(D_2)\backslash a \\
    h_1(v)h_2(v) \text{ for } v=a.
\end{cases}\]
$\mathcal{G}$ is winnable. 
\end{proposition}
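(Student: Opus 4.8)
The plan is to identify the colour set $V_a$ of the shared vertex $a$ in $\mathcal{G}$ with the product $[h_1(a)]\times[h_2(a)]$, fix winning strategies $f^{(1)}$ for $\mathcal{G}_1$ and $f^{(2)}$ for $\mathcal{G}_2$, and then run them ``in parallel,'' with $a$ serving as a bridge that carries one coordinate into each of the two games. First I would record the structural fact that makes the superposition sound: since $V(D_1)\cap V(D_2)=\{a\}$, no vertex of $D_1\backslash a$ is adjacent in $D_1\cup D_2$ to any vertex of $D_2\backslash a$ (such an edge would have to lie in $E(D_1)$ or $E(D_2)$, but one endpoint is missing from each). Hence every $v\in D_i\backslash a$ has $N_{D_1\cup D_2}(v)=N_{D_i}(v)$, while $N_{D_1\cup D_2}(a)=N_{D_1}(a)\cup N_{D_2}(a)$ is a disjoint union.

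Next I would define the strategy $f$ for $\mathcal{G}$. A sage $v\in D_1\backslash a$ looks at the colours on $N_{D_1}(v)$; if $a$ is among them and shows colour $(\alpha,\beta)$, she reads off the first coordinate $\alpha$; then she guesses whatever $f^{(1)}_v$ would guess on that data. Symmetrically, $v\in D_2\backslash a$ uses the second coordinate and $f^{(2)}_v$. The sage $a$ computes $\alpha_0:=f^{(1)}_a$ applied to the colours she sees in $D_1$ and $\beta_0:=f^{(2)}_a$ applied to the colours she sees in $D_2$, and guesses the single pair $(\alpha_0,\beta_0)$; this is a legitimate Latvian plan because $g\equiv 1$, so one guess is exactly what is required.

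Then I would verify that $f$ wins. Given any coloring $c$ of $\mathcal{G}$, write $c(a)=(\alpha,\beta)$, and build colorings $c^{(i)}$ of $\mathcal{G}_i$ that agree with $c$ off $a$ and satisfy $c^{(1)}(a)=\alpha$, $c^{(2)}(a)=\beta$. Because $f^{(1)}$ wins $\mathcal{G}_1$, either some $v\in D_1\backslash a$ guesses correctly under $(f^{(1)},c^{(1)})$ or else $f^{(1)}_a$ returns $\alpha$. In the first case $v$ sees in $\mathcal{G}$ exactly $c^{(1)}(N_{D_1}(v))$ after the first-coordinate projection, so $v$ guesses correctly in $\mathcal{G}$ as well. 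The same dichotomy applies to $\mathcal{G}_2$. If neither ``some $v$'' branch occurs, then $\alpha_0=\alpha$ and $\beta_0=\beta$, so $a$ guesses $(\alpha,\beta)=c(a)$ and is correct. In every case some sage guesses right, so $c$ is not a disprover; hence $f$ wins and $\mathcal{G}$ is winnable. (One also gets, for free, that the construction is polynomial, as in Proposition~\ref{monotone}.)

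I do not expect a deep obstacle here; the work is in the bookkeeping. The one point that genuinely needs care is the trichotomy in the last paragraph: one must not assert ``$a$ guesses right'' unconditionally, but instead use that $f^{(i)}$ winning forces $a$ to be correct in $\mathcal{G}_i$ precisely when no other sage of $D_i$ is — only under that hypothesis does $a$'s product guess $(\alpha_0,\beta_0)$ land on $c(a)$. The edge-disjointness observation from the first paragraph is the other thing to get right, since it is what guarantees each non-bridge sage sees in $\mathcal{G}$ exactly the data her original strategy expects.
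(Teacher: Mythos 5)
Your proof is correct and uses essentially the same idea as the paper: the paper proves Proposition~\ref{SinglePointProduct} by citing it as a special case of the general product constructors (Propositions~\ref{ManyCliquesProduct}, \ref{CliqueAndOneMoreThingProduct}, \ref{TwoGeneralThingsProduct}), whose proofs are exactly your ``split-brain'' composite-color argument --- $a$ guesses each coordinate by running $f^{(1)}$ and $f^{(2)}$ separately, each side reads only its own coordinate, and the win follows from the dichotomy you state. Your direct write-up of the single-vertex case is sound, including the two points you flag (the edge-disjointness of $D_1\backslash a$ from $D_2\backslash a$, and that $a$ is forced correct in coordinate $i$ only when no other sage of $D_i$ wins).
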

\begin{proof}
    A special case of Proposition \ref{ManyCliquesProduct}, \ref{CliqueAndOneMoreThingProduct}, or \ref{TwoGeneralThingsProduct}. 
\end{proof}

\begin{corollary}[Corollary 3.1.1 of \cite{KLR21b}]\label{WinningTreeHatnesses}
    A Latvian game on a tree with $h(v)=2^{\deg(v)}$ is winnable. In particular, a Latvian path with hatnesses $(2,4,4,...,4,4,2)$ is winnable. 
\end{corollary}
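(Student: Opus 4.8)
The plan is a short induction on the number of edges of the tree $T$, gluing on one pendant edge at a time with the single-point product constructor (Proposition~\ref{SinglePointProduct}).

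\emph{Base case.} If $T=K_2$ is a single edge, both vertices have degree $1$, so the prescribed hatness function is the constant $2$. The Latvian game $(K_2,\star 2)$ is winnable: it is exactly the ``small rainbow'' Game~\ref{smallrainbow}, which is the case $s=h=2$ of Proposition~\ref{proofofrainbow}.

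\emph{Inductive step.} Suppose $T$ has at least two edges, so $|V(T)|\geq 3$, and fix a leaf $\ell$ with unique neighbor $u$. Let $T'=T\backslash \ell$. Then $T'$ is again a tree, with one fewer edge; removing $\ell$ leaves every vertex degree unchanged except that of $u$, which drops by one, so $\deg_{T'}(u)=\deg_T(u)-1$. Apply Proposition~\ref{SinglePointProduct} to the two Latvian games $\mathcal{G}_1\equiv(T',h_1)$ with $h_1(v)=2^{\deg_{T'}(v)}$ — winnable by the inductive hypothesis — and $\mathcal{G}_2\equiv(K_2,\star 2)$ played on the single edge $\{u,\ell\}$ — winnable by the base case. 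These games share exactly the vertex $u$ (since $\ell\notin V(T')$), and $D_1\cup D_2$ is precisely $T$, because the only edge of $\mathcal{G}_2$ not already present in $\mathcal{G}_1$ is $u\ell$. The resulting winnable game on $T$ has hatness function $h$ with $h(v)=h_1(v)=2^{\deg_{T'}(v)}=2^{\deg_T(v)}$ for $v\in V(T')\backslash u$, with $h(\ell)=h_2(\ell)=2=2^{\deg_T(\ell)}$, and with $h(u)=h_1(u)h_2(u)=2^{\deg_{T'}(u)}\cdot 2=2^{\deg_T(u)-1+1}=2^{\deg_T(u)}$. Thus this is exactly the Latvian game on $T$ with $h(v)=2^{\deg_T(v)}$, and it is winnable, completing the induction. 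For the ``in particular,'' a path $P_n$ with $n\geq 3$ has two endpoints of degree $1$ and $n-2$ interior vertices of degree $2$, so the prescribed hatnesses form the sequence $(2,4,\dots,4,2)$; the case $P_2=K_2$ gives $(2,2)$.

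There is no real obstacle here — the argument is bookkeeping about degrees. The only two points needing a second's care are checking that $D_1\cup D_2=T$ (i.e.\ that attaching the pendant edge $u\ell$ to $T'$ at $u$ reconstitutes $T$) and verifying the hatness arithmetic at the glue vertex, where the product of $2^{\deg_{T'}(u)}$ with the $2$ coming from $\mathcal{G}_2$ lands exactly on $2^{\deg_T(u)}$; both are transparent. One could equivalently phrase this non-inductively as ``build $T$ from $K_2$ by repeatedly taking single-point products with pendant edges of hatness $2$,'' but the leaf-removal induction is the cleanest write-up.
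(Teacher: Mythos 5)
Your proof is correct and is exactly the paper's argument — the paper's one-line proof (``it follows from Proposition~\ref{SinglePointProduct} and the winnableness of $(P_2,2)$; construct the tree by adding one leaf at a time'') is precisely your leaf-removal induction, which you have simply written out with the degree bookkeeping made explicit.
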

\begin{proof}
    It follows from Proposition \ref{SinglePointProduct} and the winnableness of the Classic game $(P_2,2)$; construct the tree by adding one leaf at a time. 
\end{proof}

\subsection{Simplifying ``$\mu(G)\geq 3$ when connected $G$ has $\geq 2$ cycles''}\label{simplifyingclassic}
The hard part of \cite{KL19} was proving that every connected graph containing more than one cycle has $\mu$ at least 3. The proof required elaborate computational y. I haven't managed to simplify the cases in which the two cycles share $\geq 2$ vertices, but constructors from \cite{KLR21b} give stronger statements with less hassle in the cases of $0$ or $1$ shared vertex. 
\begin{prop}[Theorem 3.4 of \cite{KLR21b}]\label{AddHatness2}
    Consider a winnable Latvian game $\mathcal{G}\equiv (D,h)$. Consider $b,c\in V(D)$. Let $\mathcal{G}'$ be the game $(D',h')$, where $D'$ is derived from $D$ by adding vertex $a$ and edges $\overline{ab}, \overline{ac}$, and 
    \[h'(v)=
\begin{cases}
    h(v)+1 \text{ for } v\in \{b,c\}\\
    2 \text{ for } v=a \\
    h(v) \text{ otherwise. }
\end{cases}\]
Then $\mathcal{G}'$ is winnable. 
\end{prop}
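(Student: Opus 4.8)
The plan is to route through ``Hats As Hints.'' By Proposition \ref{HeadlineHints}.9, $\mathcal{G}'$ is winnable if and only if the hint-game $\mathcal{G}'\backslash a$-with-$H$ is winnable for \emph{some} $g'(a),h'(a)=1,2$-hint $H$ with respect to $N^+(a),N^-(a)=\{b,c\},\{b,c\}$. Since $\mathcal{G}'\backslash a$ is nothing but the Latvian game on $D$ with $h(b)$ and $h(c)$ each raised by one, it suffices to name one good $H$ and build a winning strategy for that hint-game on top of the hypothesized winning strategy $f$ of $\mathcal{G}$, used as a black box. Write $\beta$ and $\gamma$ for the extra (``new'') colours that $b$ and $c$ respectively acquire.

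For $H$ I would take the partition of $\mathbb{H}(\{b,c\})$ into $A_0:=[h(b)]\times[h(c)]$ (``both $b$ and $c$ show an old colour'') and its complement $A_1$ (``$b$ shows $\beta$ or $c$ shows $\gamma$''); equivalently this is $H_{f_a}$ for the plan $f_a$ of $a$ that guesses colour $1$ whenever it sees a new colour on $b$ or on $c$, and guesses colour $0$ otherwise. The strategy I would give for $\mathcal{G}'\backslash a$-with-$H$: every sage $v\notin\{b,c\}$ ignores the hint and plays $f_v$, except that before applying $f_v$ she rounds any $\beta$ she sees on $b$ (and any $\gamma$ she sees on $c$) down to the colour $0$; when the Adversary announces ``$\neg A_1$,'' sages $b$ and $c$ also just play $f_b,f_c$; when he announces ``$\neg A_0$,'' sage $b$ guesses $\beta$ and sage $c$ guesses $\gamma$. (Unwinding the reduction, this is the same as leaving $a$ in the game, having $a$ guess $1$ iff a new colour is visible, and having $b$, $c$ switch behaviour according to $c(a)$.)

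The verification should be brief. For any coloring exactly one of ``$\neg A_0$,'' ``$\neg A_1$'' is true, so the Adversary's announcement is forced. If ``$\neg A_0$'' is announced then $c(b)=\beta$ or $c(c)=\gamma$, whence $b$'s guess $\beta$ or $c$'s guess $\gamma$ is correct. If ``$\neg A_1$'' is announced then $c(b)\neq\beta$ and $c(c)\neq\gamma$, so $c$ restricted to $V(D)$ is a legal coloring of $\mathcal{G}$, no sage sees a new colour anywhere (so no rounding occurs), every sage of $V(D)$ — including $b$ and $c$ — is guessing exactly according to $f$, and the sage who is correct under $f$ is correct here. So the strategy never loses. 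The construction is explicit, hence polynomial (cf. Proposition \ref{monotone}), and uses nothing about the structure of $D$ near $b$ or $c$; in particular it is indifferent to whether $b\sim c$.

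The only place intuition can go wrong — and the point I would flag as the main subtlety — is the ``both new'' configuration $c(b)=\beta$ \emph{and} $c(c)=\gamma$: one might fear that a single bit of hint cannot tell $b$ and $c$ which of them is carrying the ``genuine'' new colour. But it does not have to: in the ``$\neg A_0$'' branch $b$ and $c$ each simply bet on their \emph{own} new colour, so if both are new then both are right, and if exactly one is new that one is right. Dually, the colour-rounding done by the outside sages is harmless precisely because it only ever happens in the ``$\neg A_0$'' branch, where we never rely on those sages, whereas in the ``$\neg A_1$'' branch no new colours are present and the whole strategy degenerates to plain $f$. (This proposition is exactly the single-guess, two-neighbour instance of the general attaching constructor proved later in this chapter; see Corollary \ref{AddManyHatness2}.)
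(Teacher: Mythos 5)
Your proof is correct, and it is essentially the paper's argument: the paper derives this proposition as a special case of Corollary \ref{AddManyHatness2}, i.e.\ of Proposition \ref{prop:GeneralAttaching}, whose hats-as-hints construction (with $h(x)=2$, $g(x)=1$, $q_i=0$, $k_i=1$) instantiates to exactly your partition of $\mathbb{H}(\{b,c\})$ into ``all old colours'' versus ``some new colour'' and the same two plan ensembles for $b$ and $c$. You have simply unwound that general construction directly for the special case, as you note yourself at the end.
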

\begin{proof}
    Special case of Proposition \ref{AddManyHatness2}. 
\end{proof}

\begin{corollary}[Corollary 3.4.1 of \cite{KLR21b}]\label{323Win}
    Consider a Latvian game $(C_k,h)$  with $a,b,c\in V(C_k)$, $a\sim b\sim c$. Set $h(a)=h(c)=3$, $h(b)=2$, and $h(v)=4$ on all other vertices $v$. That game is winnable. 
\end{corollary}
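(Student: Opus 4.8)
The plan is to exhibit the target cycle as the output of the hatness-$2$ attaching constructor (Proposition \ref{AddHatness2}) applied to a winnable path. The key observation is that deleting the hatness-$2$ vertex $b$ from $C_k$ leaves a path on $k-1$ vertices whose two endpoints are $a$ and $c$; so I will start from that path — with $a$ and $c$ demoted to hatness $2$ and every interior vertex given hatness $4$ — and then re-attach $b$.

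First I would invoke Corollary \ref{WinningTreeHatnesses}. The Latvian path $P_{k-1}$ with hatness sequence $(2,4,4,\ldots,4,2)$ has $h(v)=2^{\deg v}$ at every vertex, since the endpoints have degree $1$ and the interior vertices have degree $2$; as $P_{k-1}$ is a tree, Corollary \ref{WinningTreeHatnesses} says this game is winnable. (When $k=3$ this "path" is the single edge $P_2$ with hatnesses $(2,2)$, which is still of the form $h(v)=2^{\deg v}$ on a tree, hence still covered.)

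Next I would apply Proposition \ref{AddHatness2} to this winnable game, taking the two endpoints of $P_{k-1}$ to play the roles of the vertices called $b$ and $c$ in the statement of that proposition. The proposition adjoins a new vertex — the one we call $b$ in the corollary — joined to both endpoints, gives it hatness $2$, and raises each endpoint's hatness from $2$ to $3$, leaving every other (interior) hatness equal to $4$. The underlying graph becomes $P_{k-1}$ together with one new vertex adjacent to both of its ends, i.e. precisely $C_k$, and the resulting hatness function is exactly $h(b)=2$, $h(a)=h(c)=3$, and $h(v)=4$ on all remaining vertices. Proposition \ref{AddHatness2} guarantees this game is winnable, which is the claim.

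There is essentially no obstacle here: all the substance is already packaged in Corollary \ref{WinningTreeHatnesses} and Proposition \ref{AddHatness2}, and the only real choice is picking the right path to start from. The one point worth a moment's care is the degenerate case $k=3$, where the starting game is the Classic game $(P_2,2)=(K_2,2)$, winnable e.g. by Proposition \ref{CzechOnComplete} since $\tfrac12+\tfrac12\geq 1$; attaching $b$ then yields $C_3$ with hatnesses $(3,2,3)$, consistent with the $C_3$ criterion $\tfrac13+\tfrac12+\tfrac13\geq 1$.
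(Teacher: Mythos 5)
Your proof is correct and is exactly the paper's argument: the paper's proof of this corollary reads, in full, ``Apply Proposition \ref{AddHatness2} to the path in Corollary \ref{WinningTreeHatnesses}.'' Your write-up just spells out the bookkeeping (and the $k=3$ edge case) that the paper leaves implicit.
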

\begin{proof}
    Apply Proposition \ref{AddHatness2} to the path in Corollary \ref{WinningTreeHatnesses}.
\end{proof}

\begin{prop}
    Let $\mathcal{G}$ be a game on a ``kayak paddle graph'': i.e. two disjoint cycles and a path going from one cycle to the other. Let every vertex have hatness $4$ except for the four vertices that are part of a cycle and neighbor a degree-3 vertex; they have hatness $3$. $\mathcal{G}$ is winnable.

    Let $\mathcal{G}'$ be a game whose graph is two cycles that share exactly one vertex. Let every vertex have hatness $4$ except for the four vertices that neighbor a degree-4 vertex; they have hatness $3$. $\mathcal{G}'$ is winnable. 
\end{prop}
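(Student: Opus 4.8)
The plan is to assemble both graphs by single-point products (Proposition~\ref{SinglePointProduct}) of the winnable cycle games furnished by Corollary~\ref{323Win} and the winnable path games furnished by Corollary~\ref{WinningTreeHatnesses}, arranging the glued vertex so that the product hatness $2\cdot 2 = 4$ lands exactly where we want it.

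\emph{The two-cycles-sharing-a-vertex case.} By Corollary~\ref{323Win}, for each $i\in\{1,2\}$ there is a winnable Latvian game on $C_i$ in which a chosen vertex $v_i$ has hatness $2$, the two cycle-neighbors of $v_i$ have hatness $3$, and every other vertex of $C_i$ has hatness $4$ (this is valid already for triangles, where there are no other vertices). Set up $C_1,C_2$ so that $V(C_1)\cap V(C_2)=\{v\}$, with $v$ playing the role of both $v_1$ and $v_2$, and disjoint otherwise. Proposition~\ref{SinglePointProduct} then produces a winnable Latvian game on $C_1\cup C_2$ with $h(v)=2\cdot 2 = 4$, with the four neighbors of $v$ at hatness $3$, and with all other vertices at hatness $4$. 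Since $v$ is the unique degree-$4$ vertex of $C_1\cup C_2$ and every vertex other than its neighbors has all its neighbors of degree $2$, this is precisely the game $\mathcal{G}'$.

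\emph{The kayak-paddle case.} Let the connecting path be $P$ with endpoints $u_1\in C_1$ and $u_2\in C_2$, and suppose $P$ has at least one interior vertex, so that the graph has exactly the four prescribed hatness-$3$ vertices. Take the winnable path game on $P$ with hatnesses $(2,4,\dots,4,2)$ from Corollary~\ref{WinningTreeHatnesses}, together with the winnable cycle games on $C_1,C_2$ as above with $u_i = v_i$, all with otherwise disjoint vertex sets. Apply Proposition~\ref{SinglePointProduct} first to $P$ and $C_1$ glued along $u_1$ (giving a winnable game on $P\cup C_1$ in which $h(u_1)=2\cdot 2 = 4$ while the two $C_1$-neighbors of $u_1$ keep hatness $3$ and everything else keeps hatness $4$), then to $P\cup C_1$ and $C_2$ glued along $u_2$. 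The resulting graph is the kayak paddle; the two attachment vertices and all interior path vertices have hatness $4$, the two $C_1$-neighbors of $u_1$ and the two $C_2$-neighbors of $u_2$ have hatness $3$, and everything else has hatness $4$---exactly $\mathcal{G}$. If instead $P$ is a single edge, then $u_1,u_2$ are adjacent degree-$3$ vertices, so the stated $\mathcal{G}$ additionally prescribes them hatness $3$; our construction gives them hatness $4$, and lowering a hatness can only make a game easier (Proposition~\ref{monotone}), so $\mathcal{G}$ is winnable in that case too.

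There is no real obstacle: the whole content is the arithmetic $2\cdot 2 = 4$ together with the values $2^{\deg}$ along a path, and the only thing demanding care is tracking the hatnesses through the two products and checking that the degenerate configurations (a single-edge connector, triangular cycles) behave, which they do.
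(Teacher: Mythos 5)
Your proof is correct and is exactly the paper's argument: glue the hatness-2 vertices of the Corollary~\ref{323Win} cycles (and, for the kayak paddle, the endpoints of a Corollary~\ref{WinningTreeHatnesses} path) via the single-point product of Proposition~\ref{SinglePointProduct}, so that $2\cdot 2=4$ lands at each attachment vertex. The only difference is that you spell out the hatness bookkeeping and the degenerate cases (triangles, a single-edge connector), which the paper's two-sentence proof leaves implicit.
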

\begin{proof}
    In the first case, apply Proposition \ref{SinglePointProduct} with the degree-2 vertices of two cycles from Corollary \ref{323Win} and the endpoints of one path from Corollary \ref{WinningTreeHatnesses}. 
    In the second case, apply Proposition \ref{SinglePointProduct} with the degree-2 vertices of two cycles from Corollary \ref{323Win}. 
\end{proof}
So any connected graph with two edge-disjoint cycles has $\mu\geq 3$ and verging on $4$. This is quicker and stronger than the proofs in \cite{KL19}.

\subsection{Solving Latvian trees (take 1)}
Based on leaf deletion, we build an algorithm for Latvian trees (another one appears in Subsection \ref{subsec:SlickerTreeProof}). 

\begin{prop}\label{Removing2Leaf}
    A leaf of hatness $\geq 3$ is deletable in any Latvian game. 
    Consider a Latvian game $\mathcal{G}$ and a  leaf $\ell$ of hatness $2$. Let $u$ be the neighbor of $\ell$. Let $\mathcal{G}'$ be obtained by deleting $\ell$ and setting $h(u)$ to $\left\lceil \frac{h(u)}{2}\right\rceil$. $\mathcal{G}'$ has the same outcome as $\mathcal{G}$. 
\end{prop}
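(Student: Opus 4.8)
The plan is to run both halves of the statement through the ``Hats As Hints'' dictionary. Fix the leaf $\ell$ with its unique neighbour $u$. By Proposition \ref{HeadlineHints}.9, deleting $\ell$ makes $\mathcal{G}$ equivalent to $\mathcal{G}\backslash\ell$ carrying a $1,h(\ell)$-hint with respect to $\{u\},\{u\}$ --- concretely, a partition $V_u=A_0\sqcup\dots\sqcup A_{h(\ell)-1}$ of which the Adversary truthfully announces to $u$ one part \emph{not} containing $c(u)$. The whole dichotomy lives here: if $h(\ell)=2$ this announcement pins down which of the two parts holds $c(u)$, whereas if $h(\ell)\ge 3$ at least two parts avoid $c(u)$, the Adversary keeps a real choice of what to say, and $u$ learns only that $c(u)$ lies in a union of $\ge 2$ parts.

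For ``a leaf of hatness $\ge 3$ is deletable'': the direction ``$\mathcal{G}\backslash\ell$ winnable $\Rightarrow$ $\mathcal{G}$ winnable'' is Proposition \ref{monotone} ($\mathcal{G}\backslash\ell\preceq\mathcal{G}$). For the converse I prove the contrapositive via the hint: if $\mathcal{G}\backslash\ell$ is unwinnable then $\mathcal{G}\backslash\ell$-with-$H$ is unwinnable for every such hint $H$, hence so is $\mathcal{G}$. Take a strategy $\sigma$ for the hinted game: plans $\sigma_{\ne u}$ on the vertices $w\ne u$, and an ensemble $\sigma^0,\dots,\sigma^{k-1}$ on $u$, $k=h(\ell)$. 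First, if some part $A_i$ is empty the Adversary may always announce ``$\neg A_i$'' and then beat $\sigma$ using unwinnability of $\mathcal{G}\backslash\ell$, so assume all parts nonempty. If $\sigma$ wins, Proposition \ref{HintWinning}.1 (with ambient game $\mathcal{G}\backslash\ell$) gives $L_{\mathcal{G}\backslash\ell}(\{u\},\sigma^i\cup\sigma_{\ne u},c)\subseteq A_i$ for all $i,c$. I then extract the key fact: for any coloring $c$ of $V(D)\backslash\ell$ making every sage other than $u$ wrong, if $c(u)\in A_j$ then $\sigma^i$ guesses $c(u)$ for every $i\ne j$ --- else $c$ would disprove $\sigma^i\cup\sigma_{\ne u}$, forcing $c(u)\in A_i$, impossible. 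So $k-1$ of the $k$ plans $\sigma^i$ agree, which is a strict majority precisely because $k\ge 3$. Let $\tau_u$ output that (unique) majority value; then $\sigma_{\ne u}\cup\tau_u$ has no disprover on $\mathcal{G}\backslash\ell$, contradicting unwinnability. (This sentence is also the specialization of Corollary \ref{cor:deleteCzechleaves}, since for a Latvian leaf $r(\ell)^{-1}=h(\ell)>2=g(u)+1$.)

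For the hatness-$2$ case, write $m=h(u)$, $m'=\lceil m/2\rceil$. Now a hint $H$ is a partition $(A_0,A_1)$, $u$ learns which part holds $c(u)$, so the hinted game ``branches'', and its outcome is dictated by the harder branch. I would first prove: $\mathcal{G}\backslash\ell$-with-$H$ is winnable iff $\mathcal{G}\backslash\ell$ with $h(u)$ reset to $\max(|A_0|,|A_1|)$ is winnable. ``$\Rightarrow$'' is immediate (keep only the larger branch). For ``$\Leftarrow$'', relabel $A_0$ and $A_1$ as initial segments of $[\max(|A_0|,|A_1|)]$ and have every sage, and $u$ in each branch, simply play the given winning strategy under that relabelling; any disprover of the transported strategy transports back to a disprover of the original. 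Combining with Proposition \ref{HeadlineHints}.9, $\mathcal{G}$ is winnable iff $\mathcal{G}\backslash\ell$ with $h(u)$ set to $\max(|A_0|,|A_1|)$ is winnable for \emph{some} partition; as that maximum ranges over $\{m',m'+1,\dots,m\}$ while shrinking a hatness never hurts (Proposition \ref{monotone}), this holds iff it holds for $h(u)=m'$, i.e.\ for the balanced partition --- which is exactly $\mathcal{G}'$.

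I expect the crux to be the strict-majority step: it is the entire reason the threshold sits at $h(\ell)=3$ (for $k=2$ only one plan ``agrees'', there is no majority, and indeed a hatness-$2$ leaf genuinely helps), so the write-up must isolate where $k\ge 3$ is used and check that the majority value is well defined. The rest is bookkeeping, except that in the hatness-$2$ case one should flag the benign degenerate instance $h(u)=2$, where $\mathcal{G}'$ acquires a vertex with $h=g=1$ and its ``outcome'' is read through the standard reduction convention; the claimed equivalence still holds there.
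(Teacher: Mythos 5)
Your proof is correct, but it takes a genuinely different route from the paper's. The paper dispatches the first sentence by citing Theorem 3.10 of \cite{KLR21b} (noting it is also a special case of Corollary \ref{cor:deleteCzechleaves}), and handles the hatness-2 case by a sandwich: Proposition \ref{SinglePointProduct} applied with $(P_2,2)$ shows that winnability of $\mathcal{G}'$ lifts to the game with $h(u)$ raised to $2\lceil h(u)/2\rceil$, while Theorem 3.6 of \cite{KLR21b} (via Corollary \ref{LatvianHAH}.2) pushes unwinnability of $\mathcal{G}'$ down to the game with $h(u)$ lowered to $2\lceil h(u)/2\rceil-1$, and $\mathcal{G}$ is squeezed between the two. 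You instead stay entirely inside the Hats-As-Hints machinery of Propositions \ref{HeadlineHints} and \ref{HintWinning}: the hatness-$\geq 3$ case becomes a majority-vote argument over the $k$-plan ensemble (with the threshold correctly located at $2(k-1)>k$), and the hatness-$2$ case becomes an exact, outcome-preserving identification of each hinted game with the leafless game whose $h(u)$ equals the larger part of the announced partition, then optimized over partitions. What each buys: the paper's version is shorter on the page because it imports two KLR21b theorems and reuses a constructor already set up; yours is self-contained, re-proves those imported facts from the thesis's own formalism (arguably fulfilling the stated ambition of Section \ref{sec:HAH}), and makes visible exactly why the dichotomy sits at $h(\ell)=3$ rather than obtaining it as a by-product of a squeeze. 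Your closing flag about the degenerate $h(u)=2$ instance matches the convention the paper adopts in Theorem \ref{thm: FirstTreeCategorization}.
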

\begin{proof}
    The first sentence is simply Theorem 3.10 of \cite{KLR21b} (though it's also a special case of our Corollary \ref{cor:deleteCzechleaves}). For the rest, if $\mathcal{G}'$ is winnable, then by Proposition \ref{SinglePointProduct} with $(P_2,2)$, so is $\mathcal{G}''$, which is the same as $\mathcal{G}$ except $h''(u)=2  \left\lceil \frac{h(u)}{2}\right\rceil$. If $\mathcal{G}'$ is unwinnable, then Theorem 3.6 of \cite{KLR21b} (which follows from our Corollary \ref{LatvianHAH}.2) says that $\mathcal{G}'''$, which is the same as $\mathcal{G}$ except $h'''(u)=2  \left\lceil \frac{h(u)}{2}\right\rceil-1$, also is unwinnable. 
    Clearly, $\mathcal{G}''$ is no easier than $\mathcal{G}$, and $\mathcal{G}'''$ is no harder than $\mathcal{G}$. This means that when $\mathcal{G}'$ is (un)winnable, so is $\mathcal{G}$.  
\end{proof}
Together, these propositions mean that one never actually has to think about leaves in a Latvian game.  In particular, it solves trees, since they can be destroyed by successive leaf deletions. This gives us a nice polynomial algorithm,

Scan the leaves of $T$. Delete all leaves of hatness $\geq 3$. For every leaf of hatness $2$, delete it and alter its neighbor's hatness according to Proposition \ref{Removing2Leaf}. If that neighbor now has hatness $1$, halt;\footnote{We temporarily relax our convention against having any hatness-1 vertices.} the original game was winnable. If not, and there's only one vertex left, halt; the original game was unwinnable. Otherwise, repeat this paragraph with the remaining tree. 

\begin{thm}\label{thm: FirstTreeCategorization}
    This algorithm is correct. Moreover, it returns ``winnable'' if and only if $T$ contains a subtree $T'$ with $h(v)\leq 2^{\deg_{T'}(v)}$ for all $v\in V(T')$. 
\end{thm}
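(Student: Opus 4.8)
The plan is to prove the two claims of Theorem \ref{thm: FirstTreeCategorization} in tandem: (i) the leaf-deletion algorithm terminates in polynomially many steps and correctly reports the outcome, and (ii) the reported outcome is "winnable" exactly when $T$ has a subtree $T'$ with $h(v)\le 2^{\deg_{T'}(v)}$ for all $v\in V(T')$. Correctness of (i) is essentially a bookkeeping argument: each pass of the algorithm strictly decreases $|V(T)|$ (we always delete at least one leaf — every finite tree with $\ge 2$ vertices has one), so the process halts after at most $|V(T)|$ passes, each scanning $O(|V(T)|)$ leaves; and each individual step is outcome-preserving by Proposition \ref{Removing2Leaf} (deleting a leaf of hatness $\ge 3$, or deleting a hatness-2 leaf and replacing $h(u)$ by $\lceil h(u)/2\rceil$). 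The only subtlety is the halting conditions: if some vertex reaches hatness $1$ then the current game is trivially winnable (that vertex guesses its unique color), and since every step was outcome-preserving, the original was winnable; if we are reduced to a single vertex with hatness $\ge 2$, that game is unwinnable by Corollary \ref{onevertexloses}, so the original was unwinnable. This establishes (i) independently of the structural characterization.

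For (ii), I would argue both directions. \textbf{"If" direction}: suppose $T$ contains a subtree $T'$ with $h_{T'}(v) := h(v) \le 2^{\deg_{T'}(v)}$ for all $v \in V(T')$. By Proposition \ref{monotone} and Corollary \ref{WinningTreeHatnesses}, the Latvian game on $T'$ with these hatnesses is no harder than the game on $T'$ with $h(v) = 2^{\deg_{T'}(v)}$, which is winnable; hence the game on $T'$ is winnable, and by Proposition \ref{monotone} again (adding back the rest of $T$ as extra vertices with their own hatnesses — note $T' \subseteq T$) the game on $T$ is winnable. Wait — one must be careful: Proposition \ref{monotone}'s first bullet lets us go from a winnable game on $T'$ to a winnable game on $T$ only if the hatnesses on $V(T')$ agree, which they do, and the extra vertices of $T \setminus T'$ can have arbitrary plans. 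So the "if" direction is clean.

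\textbf{"Only if" direction}, which is the main obstacle: I must show that whenever $T$ has \emph{no} such subtree, the algorithm reports "unwinnable." The cleanest route is to show the \emph{contrapositive via the algorithm's dynamics}: I claim the algorithm reports "winnable" if and only if such a $T'$ exists. Track how the algorithm's hatness updates interact with subtree existence. When we delete a hatness-$\ge 3$ leaf $\ell$ with neighbor $u$, no $T'$ can use $\ell$ unless $\deg_{T'}(\ell) = 1$, requiring $h(\ell) \le 2$ — impossible; so $\ell$ lies in no valid $T'$, and a valid $T'$ in $T$ exists iff one exists in $T \setminus \ell$. When we delete a hatness-$2$ leaf $\ell$ with neighbor $u$ and set $h(u) \mapsto \lceil h(u)/2\rceil$: if a valid $T'$ uses $\ell$ it must include the edge $\ell u$, "spending" one unit of $u$'s degree budget, i.e. needing $h(u) \le 2^{\deg_{T''}(u) + 1}$ in $T'' := T' \setminus \ell$, equivalently $\lceil h(u)/2 \rceil \le 2^{\deg_{T''}(u)}$ (using that $h(u) \le 2^{k+1} \iff \lceil h(u)/2\rceil \le 2^k$ for integers) — exactly the updated constraint; if a valid $T'$ doesn't use $\ell$, then since lowering $h(u)$ only loosens constraints, a valid subtree persists. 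Hence at each step "a valid $T'$ exists" is an invariant of the updated game, and the algorithm halts reporting "winnable" (some vertex hits hatness $1 \le 2^0 \le 2^{\deg}$, so the trivial one-vertex subtree is valid) precisely when the original had one, and reports "unwinnable" (single vertex, hatness $\ge 2 > 2^0$, no valid subtree) precisely when it did not. Combining with (i), the algorithm's verdict matches both the true outcome and the subtree criterion, proving the theorem; the delicate points to get exactly right are the ceiling identity and the claim that a valid subtree surviving an update is genuinely an iff, not just one direction.
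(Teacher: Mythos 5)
Your proposal is correct, and its skeleton matches the paper's: termination and outcome-preservation via Proposition \ref{Removing2Leaf} plus the two trivial halting cases, and the ``if'' direction via Corollary \ref{WinningTreeHatnesses} and Proposition \ref{monotone}. Where you genuinely diverge is the ``only if'' direction. The paper proves it constructively by reversing the run: starting from the surviving hatness-$1$ vertex it re-attaches every leaf that was deleted at hatness $2$, using Proposition \ref{SinglePointProduct} with $(P_2,2)$, and observes that this rebuilds a subtree $T'$ with $h'(v)=2^{\deg_{T'}(v)}\geq h(v)$. You instead prove that ``a valid subtree exists'' is a two-sided invariant of each deletion step, resting on the purely arithmetic identity $h\leq 2^{k+1}\iff\lceil h/2\rceil\leq 2^{k}$ and on the observation that a hatness-$\geq 3$ leaf can belong to no valid subtree. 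The two arguments encode the same doubling/halving bookkeeping, but yours cleanly separates the combinatorial characterization from the game theory --- no constructor is invoked in the ``only if'' step at all --- and, unlike the paper's one-directional rebuild, it simultaneously certifies the negative case (no valid subtree at termination implies none at the start). The price is that you must verify the invariant in both directions; your sketch fully handles the forward direction and correctly flags the backward one (a valid subtree of the updated game either contains $u$, in which case re-attaching $\ell$ works since $h(u)\leq 2\lceil h(u)/2\rceil$, or omits $u$ and survives unchanged), so once that half-page of checking is written out the proof is complete.
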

\begin{proof}
    As the algorithm runs, it preserves the game's outcome by Proposition \ref{Removing2Leaf}. Any game with a hatness-1 vertex is trivially winnable; any one-vertex game with hatness $>1$ is trivially unwinnable. 
    
    In the second part, ``if'' is easy to check. To get ``only if,'' assume we ended up at a hatness-1 vertex. Delete all other vertices, then build a game back up by reversing every Proposition \ref{Removing2Leaf} step. I.e., we add back all the vertices that we deleted when they had hatness $2$, adding them back successively by means of Proposition \ref{SinglePointProduct} and $(P_2,2)$. This is guaranteed to yield a tree game $(T',h')$ with $T'\subseteq T, h'(v)\geq h(v)$ and  $h'(v)=2^{\deg(v)}$, which completes the proof.
\end{proof}

\section{Generalizing some win-preserving constructors}

\subsection{Attaching a vertex}
We'd like to generalize Proposition \ref{AddHatness2}. 
\begin{proposition}\label{prop:GeneralAttaching}
    Suppose $f$ wins Czech game $\mathcal{G}\equiv (D,g,h)$. Specify a vertex set $Y=\{v_1,...v_n\}\subseteq V(D)$ and nonnegative integers $\{q_1,...,q_n\}, \{k_1,...,k_n\}$. Let $D'$ be $D$ with an additional vertex $x$ universal to the members of $Y$. Define $\mathcal{G'}\equiv (D', g',h')$ with $g'(v_i)=g(v_i)+q_i$ and $h'(v_i)=h(v_i)+k_i$ for all $v_i\in Y$, and $g'=g,h'=h$ on other vertices of $D$. 

    If $g(v_i)+q_i \geq (h'(x)-g'(x)) \lceil \frac{k_i}{h'(x)-1} \rceil$ and $q_i \geq (h'(x)-g'(x)-1)\lceil \frac{k_i}{h'(x)-1}\rceil$ for all $i$, then $f|_{G\backslash Y}$ extends to a winning strategy on $\mathcal{G}'$. 
\end{proposition}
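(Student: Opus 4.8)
The natural frame is ``Hats As Hints'' (Proposition~\ref{HeadlineHints}, Theorem~\ref{FinalCzechHintTheorem}): the new vertex $x$ will act as the provider of a $g'(x),h'(x)$-hint to $Y$, so I keep $x$ in the graph and exhibit an explicit plan $f_x$ for it, while every $w\in V(D)\backslash Y$ (and every $v_i\in Y$) keeps playing $f$, extended in any way at all onto the enlarged palettes of its $Y$-neighbours — an extension that will turn out never to matter, so that the ``$f|_{G\backslash Y}$ extends'' claim is automatic. Split each $v_i$'s $h(v_i)+k_i$ colours as $P^{(i)}_0\sqcup P^{(i)}_1\sqcup\dots\sqcup P^{(i)}_{h'(x)-1}$, where $P^{(i)}_0$ is identified with the old palette $[h(v_i)]$ (the \emph{clean} block) and $P^{(i)}_1,\dots,P^{(i)}_{h'(x)-1}$ split the $k_i$ new colours into $h'(x)-1$ \emph{dirty} blocks of size $\le s_i:=\lceil k_i/(h'(x)-1)\rceil$ (possible since $(h'(x)-1)s_i\ge k_i$). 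Call $v_i$ clean under a colouring $c$ if $c(v_i)\in P^{(i)}_0$, and write $U(c)\subseteq\{1,\dots,h'(x)-1\}$ for the set of dirty-block indices actually occupied.

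Fix once and for all a $g'(x)$-subset $M_0\subseteq[h'(x)]$ and sets $D^m\subseteq\{1,\dots,h'(x)-1\}$ for $m\in[h'(x)]$ with $|D^m|=h'(x)-g'(x)$ for $m\in M_0$ and $|D^m|=h'(x)-g'(x)-1$ for $m\notin M_0$, chosen so that each dirty index lies in at least $h'(x)-g'(x)$ of the $D^m$, i.e.\ is \emph{absent} from at most $g'(x)$ of them. Such a family exists: the total number of absences forced by the size requirements is $g'(x)(g'(x)-1)+(h'(x)-g'(x))g'(x)=g'(x)(h'(x)-1)$, exactly $g'(x)$ per dirty index, and these degrees are small enough that a prescribed-degree bipartite graph (indices versus colours) realising them exists. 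The same count yields the lemma I need: for every nonempty $U\subseteq\{1,\dots,h'(x)-1\}$, at most $g'(x)$ of the $D^m$ are disjoint from $U$ (if $D^m\cap U=\emptyset$ then $D^m$ is absent from any fixed $b_0\in U$, and that happens for $\le g'(x)$ values of $m$). Now put $f_x(c(Y))=M_0$ if $Y$ is entirely clean, and $f_x(c(Y))=\{\,m:D^m\cap U(c)=\emptyset\,\}$ padded to exactly $g'(x)$ colours otherwise; and let $v_i$, seeing $c(x)=m$, guess: if $m\notin M_0$, the guesses $f_{v_i}$ produces from the colours $v_i$ sees in $D$, read through $P^{(i)}_0\cong[h(v_i)]$, \emph{plus} every colour in the blocks $\{P^{(i)}_b:b\in D^m\}$; if $m\in M_0$, just every colour in those blocks. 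Legality is exactly the two hypotheses: the block-guesses number at most $|D^m|\,s_i$, so the first option uses $\le g(v_i)+(h'(x)-g'(x)-1)s_i\le g'(v_i)$ guesses (second hypothesis, since $s_i=\lceil k_i/(h'(x)-1)\rceil$) and the second uses $\le(h'(x)-g'(x))s_i\le g'(v_i)$ (first hypothesis).

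To check that this wins, fix a colouring $c$ of $\mathcal{G}'$. If $c(x)\in f_x(c(Y))$ then $x$ guesses right, so assume $c(x)=m\notin f_x(c(Y))$. If every $v_i\in Y$ is clean then $m\notin M_0$, so each $v_i$ plays $f_{v_i}$ and each $w\in V(D)\backslash Y$ plays $f_w$; every colour on $Y$ is then (through the identifications) an honest $\mathcal{G}$-colour, so this is honest play of the winning strategy $f$ on $c|_{V(D)}$ and somebody guesses right. Otherwise $U(c)\neq\emptyset$, and $m\notin f_x(c(Y))$ forces $D^m\cap U(c)\neq\emptyset$; choose $b^*\in D^m\cap U(c)$ and a vertex $v_{j^*}$ with $c(v_{j^*})\in P^{(j^*)}_{b^*}$. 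Since $b^*\in D^m$, the plan of $v_{j^*}$ for $c(x)=m$ guesses every colour of $P^{(j^*)}_{b^*}$, hence $c(v_{j^*})$, and wins. (Specialising to $h'(x)=2$, $g'(x)=1$, $k_i\le g(v_i)$, $q_i=0$ recovers Proposition~\ref{AddHatness2}.)

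The one genuinely delicate point is the choice of $f_x$ together with the families $D^m$ in the \emph{non-clique} case: once $|Y|\ge2$ and the $v_i$ see nothing of one another, the single revealed value $c(x)$ cannot possibly tell a given $v_i$ which block holds its own colour, so I abandon that goal and instead guarantee only that, whatever $c(x)$ the Adversary picks, \emph{at least one} occupied dirty block is covered by the common set $D^m$ — which is precisely the balanced-design property above, and is why the guarantee is a pair of inequalities rather than a single one. (When $Y$ is a clique each $v_i$ also sees the rest of $Y$; that extra information is what powers the stronger ``increase $h(v_i)$ by $h(v_i)$'' variant, handled in the corollaries, and is not needed here.)
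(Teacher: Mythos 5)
Your proof is correct and is essentially the paper's own argument in different clothing: the paper likewise partitions each $v_i$'s $k_i$ new colours into $h'(x)-1$ blocks of size $\lceil k_i/(h'(x)-1)\rceil$ and lets the colour of $x$ act as a $g(x),h(x)$-hint built from nested combinatorial prisms via Remark~\ref{RemarkAboutSimpleHints}, with the same two-case split (``the all-old box $P_0$ is excluded by the hint or not,'' i.e.\ your $m\in M_0$ or $m\notin M_0$) yielding exactly the two hypotheses. Your explicit plan for $x$ together with the balanced design $\{D^m\}$ is just the general form of the paper's cyclic choice $A_n=P_n\cup\cdots\cup P_{n+g(x)-1}$; the only substantive difference is that you verify the covering property by an absence-counting lemma where the paper reads it off from the definition of the shells.
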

\begin{proof}
    We use the hats-as-hints approach. For $n\in [h(x)]$, let $U_n$ denote $\prod [h(v_i) + \lfloor \frac{nh(v_i)}{h(x)-1} \rfloor ]$. Let $P_n$ equal $(U_n \backslash U_{n-1}) \cap \prod [h'(v_i)]$. (So $P_0=U_0=\mathbb{H}_h(Y)$, where $\mathbb{H}_h(Y)$ is specifically $\prod_{v\in Y} [h(v)]$.) From this $P_0,...,P_{h(x)-1}$, define $A_0,...,A_{h(x)-1}\subseteq \mathbb{H}(Y)$ according to Remark \ref{RemarkAboutSimpleHints} with $j=g(x)$ and $k=h(x)$. Let that be our $g(x),h(x)$-hint with respect to $Y,Y$. 
    Now we define the plan $f^n_Y$ (on $Y$) for each utterance ``$\neg A_n$.'' 
    
    If $P_0\subseteq \mathbb{H}_{h'}(Y)\backslash A_n$, then each $v_i$ uses her first $g(v_i)$ guesses according to $f$.  If $c(Y)\in \mathbb{H}_h(Y)$, it's correctly guessed by one of those ``original guesses'' because they win $\mathcal{G}$. 
    She uses her remaining $q_i$ guesses to guess those colors $d>h(v_i)$ such that $(0,0,...,d,...,0,0)\not\in A_n$, where $d$ is in the $i$'th place. By the definition of $A_n$, there are at most $(h(x)-g(x)-1)\lceil \frac{k_i}{h(x)-1}\rceil$ such colors for a given $i$, so (by the inequality on $q_i$) $v_i$ can guess all of them at once. By definition of these colors and $A_n$, there is no vector in $\mathbb{H}_{h'}(Y)\backslash \mathbb{H}_{h}(Y) \backslash A_n$ that doesn't have such a color in some coordinate. 
    We have $c(Y)\not \in A_n$, so if $c(Y)\not\in \mathbb{H}(Y)$, then some coordinate of it is correctly guessed by some ``extra guess,'' i.e. a guess afforded by the $+q_i$.

    If $P_0 \subseteq A_n$, then each each $v_i$ uses her $g(v_i)+q_i$ guesses to guess those colors $d$ such that $(0,0,...,d,...,0,0)\not\in A_n$. By the definition of $A_n$, there are at most $(h(x)-g(x)) \lceil \frac{k_i}{h(x)-1} \rceil$ such colors for a given $i$, so (by the inequality on $g(v_i)+q_i \equiv g'(v_i)$), $v_i$ can guess all of them at once. By definition of these colors and $A_n$, there is no vector in $\mathbb{H}_{h'}(Y)\backslash A_n$ that doesn't have such a color in some coordinate. So since $c(Y)\not\in A_n$, it's correctly guessed by some $v_i\in Y$.

    Thus, for any utterance ``$\neg A_n$,'' there's a winning strategy for ``$\mathcal{G}$-but-with-this-hint'' extending $f_{G\backslash Y}$. By \ref{HeadlineHints}.7, there's a winning strategy for $\mathcal{G}'$ extending $f_{D\backslash Y}$. 
\end{proof}

\begin{corollary}\label{AddManyHatness2}
    Let $\mathcal{G}$ be a winnable Czech game with $Y\subseteq V(D)$. Attach vertex $x$ universal to $Y$ with $g(x)=h(x)-1$. Let $\mathcal{G}'$ be obtained by replacing $g$ and $h$ for $Y$ with $g',h'$. If $(h'(v)-g'(v))-(h(v)-g(v))\leq g(v)g(x)$ for all $v\in Y$, then $\mathcal{G}'$ is winnable. In particular, if $h(x)=2$, we may set $h'(v)=h(v)+g(v)$ and $g'(v)=g(v)$. If $\mathcal{G}$ is Latvian, this is merely the alteration $h'(v)=h(v)+1$. 
\end{corollary}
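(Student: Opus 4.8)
The plan is to read this off directly from Proposition~\ref{prop:GeneralAttaching}. I would invoke that proposition with the same set $Y=\{v_1,\dots,v_n\}$, the same new universal vertex $x$ (which here has $g(x)=h(x)-1$), and the choices $q_i:=g'(v_i)-g(v_i)$ and $k_i:=h'(v_i)-h(v_i)$, so that the game it produces is exactly $\mathcal{G}'$. In the natural setting $g'\ge g$ and $h'\ge h$ on $Y$ these are nonnegative integers as required; if $h'(v_i)<h(v_i)$ for some $i$ one instead takes $k_i=0$ and passes from the slightly larger game to $\mathcal{G}'$ by monotonicity (Proposition~\ref{monotone}). Everything then reduces to checking the two numeric hypotheses of Proposition~\ref{prop:GeneralAttaching}.

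The key observation is that $g(x)=h(x)-1$ collapses them. Since $x$'s parameters are unchanged, $h'(x)=h(x)$ and $g'(x)=g(x)$, so $h'(x)-g'(x)=1$ and $h'(x)-g'(x)-1=0$; hence the second hypothesis $q_i\ge (h'(x)-g'(x)-1)\lceil k_i/(h'(x)-1)\rceil=0$ is automatic from $g'\ge g$. The first hypothesis $g(v_i)+q_i\ge (h'(x)-g'(x))\lceil k_i/(h'(x)-1)\rceil$ becomes $g'(v_i)\ge\lceil (h'(v_i)-h(v_i))/g(x)\rceil$, which, as $g(x)\ge 1$ and both sides are integers, is equivalent to $g'(v_i)\,g(x)\ge h'(v_i)-h(v_i)$.

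So the whole argument is one line of algebra: the hypothesis $(h'(v_i)-g'(v_i))-(h(v_i)-g(v_i))\le g(v_i)g(x)$ rearranges to $h'(v_i)-h(v_i)\le g'(v_i)+g(v_i)(g(x)-1)$, and then $g(v_i)\le g'(v_i)$ together with $g(x)-1\ge 0$ gives $h'(v_i)-h(v_i)\le g'(v_i)+g'(v_i)(g(x)-1)=g'(v_i)\,g(x)$, exactly what was needed. Thus $f|_{D\backslash Y}$ extends to a winning strategy on $\mathcal{G}'$, so $\mathcal{G}'$ is winnable. For the ``in particular'' clause: when $h(x)=2$ we have $g(x)=1$, and taking $h'(v)=h(v)+g(v)$, $g'(v)=g(v)$ makes $(h'(v)-g'(v))-(h(v)-g(v))=g(v)=g(v)g(x)$, so the hypothesis holds with equality; when $\mathcal{G}$ is moreover Latvian every $g(v)=1$, so this is precisely the change $h'(v)=h(v)+1$ with $g'(v)=1$ — the $|Y|$-fold generalization of Proposition~\ref{AddHatness2}. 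The only point demanding care (it is really a sanity check rather than an obstacle) is the integrality step in rewriting the ceiling inequality and the fact that the rearrangement uses $g(v_i)\le g'(v_i)$, so that assumption should be stated explicitly (or the $h$-direction deferred to Proposition~\ref{monotone}).
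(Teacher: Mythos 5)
Your proof is correct and is exactly the intended route: the paper offers no separate argument for Corollary~\ref{AddManyHatness2}, since it is meant to be read off from Proposition~\ref{prop:GeneralAttaching} by the specialization $q_i=g'(v_i)-g(v_i)$, $k_i=h'(v_i)-h(v_i)$, with $g(x)=h(x)-1$ collapsing the two ceiling hypotheses precisely as you compute. Your observation that the rearrangement silently uses $g'\ge g$ (and $h'\ge h$, else defer to Proposition~\ref{monotone}) is a fair catch about an implicit assumption in the statement, not a gap in your argument.
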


If the vertex set to which you're attaching this vertex is a clique, you can do much better. Theorem 3.7 of \cite{KLR21b} says that if you attach a vertex of hatness 2 to vertices $b,c$ with $b\sim c$, you can double the hatnesses of $b$ and $c$ without losing winnableness. Let's generalize that. 

\begin{proposition}
    Suppose $f$ wins Czech game $\mathcal{G}\equiv (D,g,h)$, and specify a clique $Y=\{v_1,...v_n\}\subseteq V(D)$. Consider $\mathcal{G}'\equiv (D',g',h')$, where $D'$ is $D$ with a new vertex $x$ universal to $Y$, $g'(v_i)=g(v_i)(h(x)-g(x))$ and $h'(v_i)=h(v_i)h(x)$ for each $v_i\in Y$ and $h'=h,g'=g$ for all other vertices.  $f_{D\backslash Y}$ extends to win  $\mathcal{G}'$. 
\end{proposition}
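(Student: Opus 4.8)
The plan is to run the hats-as-hints argument of Proposition~\ref{prop:GeneralAttaching} with the clique structure doing extra work. Put $k=h(x)$ and $m=g(x)$, so $k-m\ge 1$ and $m\ge 1$ by the standing assumption $h>g>0$, and identify $[h(v_i)k]$ with $[h(v_i)]\times[k]$; thus a colour of $v_i$ in $\mathcal{G}'$ is a pair $(a,t)$ with $a\in[h(v_i)]$, $t\in[k]$. Since no vertex of $D\setminus Y$ points to $x$ (as $N^-_{D'}(x)=Y$), the derived plan of $f_{D\setminus Y}$ is again $f_{D\setminus Y}$, so by Proposition~\ref{HeadlineHints}.7 it suffices to exhibit a $g(x),h(x)$-hint $H$ with respect to $Y,Y$ such that $f_{D\setminus Y}$ extends to a winning strategy on the derived game $\mathcal{G}'_{xH}$, i.e.\ on $(D,g'',h'')$-with-$H$, where $(g'',h'')=(g,h)$ off $Y$ while $g''(v_i)=g(v_i)(k-m)$ and $h''(v_i)=h(v_i)k$.

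For $H$, partition $\mathbb{H}_{h''}(Y)=\prod_i([h(v_i)]\times[k])$ into $k$ classes by the value of $\big(\sum_i t_i\big)\bmod k$, and set $A_j$ to be the union of the classes with value $j,j+1,\dots,j+m-1\pmod k$, as in Remark~\ref{RemarkAboutSimpleHints}; each vector lies in exactly $m$ of the $A_j$, so $H$ is a legitimate $m,k$-hint with respect to $Y$. The strategy: a sage $u\in D\setminus Y$ discards the second coordinate of any clique-neighbour's colour and otherwise plays $f_u$ (this is what ``$f_{D\setminus Y}$ extends'' means). A sage $v_i\in Y$ hearing ``$\neg A_j$'' reads off the first coordinates $a_{i'}$ of the other $v_{i'}\in Y$, feeds them together with the colours of her non-clique out-neighbours into $f_{v_i}$ to get $G_i\subseteq[h(v_i)]$ with $|G_i|=g(v_i)$; she also knows $\sum_{i'\ne i}t_{i'}$, and since $t\mapsto t+\sum_{i'\ne i}t_{i'}\pmod k$ is a bijection of $[k]$, ``$\neg A_j$'' pins $t_i$ down to a set $S_i\subseteq[k]$ with $|S_i|=k-m$. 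She guesses the $g(v_i)(k-m)=g''(v_i)$ pairs in $G_i\times S_i$; note this is exactly her guessness budget, with nothing to spare.

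To check it wins, fix ``$\neg A_j$'' and a colouring $c$ of $(D,g'',h'')$ with $c_Y\notin A_j$, and suppose no sage guesses right. Form $\tilde c$ on $V(D)$ by $\tilde c=c$ on $D\setminus Y$ and $\tilde c(v_i)=a_i$ on $Y$, where $a_i$ is the first coordinate of $c(v_i)$; this is a colouring of $\mathcal{G}$. For $u\in D\setminus Y$, the $\mathcal{G}$-sight of $u$ under $\tilde c$ is precisely the sight $u$ used in $\mathcal{G}'_{xH}$, so $\tilde c(u)\notin f_u(\tilde c(N^+ u))$. For $v_i\in Y$: from $c_Y\notin A_j$ we get $t_i\in S_i$, so her wrong guess forces $a_i\notin G_i$; but $G_i=f_{v_i}(\tilde c(N^+_{\mathcal G} v_i))$ and $\tilde c(v_i)=a_i$, so $v_i$ too guesses wrong on $\tilde c$. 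Hence $\tilde c$ disproves $f$ on $\mathcal{G}$, a contradiction; so the strategy wins $\mathcal{G}'_{xH}$, and Proposition~\ref{HeadlineHints}.7 yields the claim.

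The substantive point --- and the only place the hypothesis that $Y$ is a \emph{clique} is used --- is the computation of $S_i$: because $v_i$ sees the whole colour $(a_{i'},t_{i'})$ of every other $v_{i'}$, she can compute $\sum_{i'\ne i}t_{i'}$ and hence recover her blow-up coordinate $t_i$ up to $k-m$ possibilities from the hint, which is exactly what makes the guessness bookkeeping $g(v_i)\cdot(k-m)$ close. For a non-clique $Y$ this step collapses, which is why the statement does not generalize beyond cliques; everything else is routine verification that the hint is valid and the counts are exact.
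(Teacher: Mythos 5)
Your proof is correct, but it takes a different route from the one in the thesis. The paper's proof is a two-line reduction: form the auxiliary game on $P_2$ with vertices $x,z$, hatnesses $h(z)=h(x)$ and guessness $g(z)=h(x)-g(x)$ (winnable by Proposition \ref{CzechOnComplete}, since the ratios sum to $1$), and then clique-join it to $\mathcal{G}$ along $Y$ and $\{z\}$ via Proposition \ref{CliqueAndOneMoreThingProduct} or \ref{ManyCliquesProduct}; the bookkeeping of that constructor produces exactly $h'(v_i)=h(v_i)h(x)$ and $g'(v_i)=g(v_i)(h(x)-g(x))$. You instead give a direct hats-as-hints argument: your hint classes $A_j$, defined by $\sum_i t_i \bmod k$, are precisely the derived hint $H_{f_x}$ of the linear plan that $x$ would use in that $P_2$ game, and your set $S_i$ of surviving blow-up coordinates is the "imaginary color" computation from the product proof, unrolled. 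What the paper's route buys is brevity and modularity (it reuses machinery already proved); what yours buys is a self-contained verification with exact guessness accounting ($|G_i\times S_i|=g(v_i)(k-m)$ with nothing to spare), plus an explicit identification of the single step --- recovering $t_i$ up to $k-m$ possibilities from $\sum_{i'\neq i}t_{i'}$ and the hint --- where the clique hypothesis is indispensable, which the constructor-based proof leaves implicit. Both arguments are sound; yours is essentially the composition of the two lemmas the paper cites, written out in the language of Proposition \ref{HeadlineHints}.
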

\begin{proof}
    Consider the game on $P_2$ having vertices $x,z$. Set $h(z)=h(x)$ and $g(z)=h(x)-g(x)$. This is clearly winnable. Then apply \ref{CliqueAndOneMoreThingProduct} or \ref{ManyCliquesProduct} to the clique $Y$ and the vertex $Z$. (Really, Lemma 7 from \cite{BDO21} suffices.) 
\end{proof}

\begin{corollary}\label{cor:AddNearlyFullVertex}
    Suppose $f$ wins $\mathcal{G}$. Add vertex $x$ universal to clique $Y\subseteq V(D)$, with $g(x)=h(x)-1$. Multiply the hatness of each $v\in Y$ by $h(x)$. Call the resulting game $\mathcal{G}'$. Then $f_{D\backslash Y}$ extends to win $\mathcal{G}'$. 

    Suppose $f$ wins Latvian $\mathcal{G}$. Add a vertex $x$ universal to a clique $Y\subseteq V(D)$ with $h(x)=2$. Double the hatness of every vertex in $Y$. Call the resultant game $\mathcal{G}'$. $f_{D\backslash Y}$ extends to win $\mathcal{G}'$. 
\end{corollary}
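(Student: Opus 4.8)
The plan is to obtain both statements as the special case $g(x) = h(x) - 1$ of the clique-product proposition proved immediately above. First I would substitute $g(x) = h(x)-1$ into that proposition's formulas $g'(v_i) = g(v_i)\bigl(h(x)-g(x)\bigr)$ and $h'(v_i) = h(v_i)h(x)$. Since $h(x)-g(x) = 1$, the first collapses to $g'(v_i) = g(v_i)$, while the second is unchanged: $h'(v_i) = h(v_i)h(x)$. This is precisely the alteration described in the corollary's first sentence — guessness on $Y$ untouched, every hatness on $Y$ multiplied by $h(x)$ — so the cited proposition hands us directly that $f_{D\setminus Y}$ extends to a winning strategy on $\mathcal{G}'$.

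For the Latvian sentence I would specialize one step further. In a Latvian game $g \equiv 1$; choosing $h(x) = 2$ forces $g(x) = h(x) - 1 = 1$, so the attached vertex $x$ is itself single-guess and $\mathcal{G}'$ remains Latvian, and on $Y$ we get $g'(v_i) = g(v_i) = 1$ together with $h'(v_i) = 2h(v_i)$ — exactly ``double the hatness of every vertex in $Y$.'' Thus the second sentence is nothing but the $h(x) = 2$ instance of the first, and winnableness transfers by the same citation.

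There is no real obstacle: the only points to check are the arithmetic identity $h(x) - g(x) = 1$, that the standing convention $h > g > 0$ at $x$ amounts merely to $h(x) \geq 2$, and that single-guessness is preserved in the Latvian part. Should one want an argument not routed through the clique-product proposition, one may instead take the two-vertex game on $P_2 = K_2$ with vertices $x, z$, hatnesses $h(z) = h(x)$ and $g(z) = h(x) - g(x) = 1$, which is winnable by Proposition \ref{CzechOnComplete} because $g(x)/h(x) + g(z)/h(z) = 1 \geq 1$, and then apply the product (clique-join) constructor to the clique $Y$ and the vertex $z$; but this merely retraces the proof of the proposition being cited.
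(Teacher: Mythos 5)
Your proposal is correct and matches the paper's intent exactly: the corollary is stated without proof precisely because it is the specialization $g(x)=h(x)-1$ (hence $h(x)-g(x)=1$) of the clique-attachment proposition immediately preceding it, and your further specialization to $h(x)=2$ for the Latvian sentence is the intended reading. Your closing remark about routing through the $P_2$ game and the clique-join constructor is in fact exactly how the paper proves that preceding proposition, so nothing is gained or lost there.
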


\subsection{Attaching a path}
Here's \cite{KLR21b}'s quirky Theorem 3.8. 
Take a winnable game $\mathcal{G}$ and vertices $z,y$. Add new vertices $a,b$, with $a\sim z, a\sim b, b\sim y$. Set $h(a)=2$, $h(b)=3$. Double $h(z)$, and increase $h(y)$ by one. The resultant game is winnable.  ``Apparently it is hard to determine whether the graph obtained by attaching a new fragment via two independent `jumpers', is winning," they write. ``We are able to do this for very small fragment only'' [sic]. For Latvian games, however, we give versatile constructors of similar form: attach a path that links some vertices, and alter the hatnesses of those vertices. First, we explicitate and generalize an argument from \cite{KLR21b}.

\begin{proposition}\label{AttachHatness3}
    Suppose $f$ wins $\mathcal{G}$. Attach a vertex $x$ with $h(x)=3$ and $x$ complete to $Y\subseteq V(D)$, where $Y$ contains a vertex $b$ of hatness 2. For every other vertex in $Y$, increase its hatness by its guessness. $f$ extends to win the resultant game. 
\end{proposition}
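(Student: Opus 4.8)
The plan is to prove this via the Hats-As-Hints perspective, with $x$ the deleted vertex. Write $\mathcal{G}'\equiv(D',g',h')$ for the resultant game: $D'$ is $D$ with $x$ adjacent to every vertex of $Y$, $g'=g$ throughout, $h'(b)=h(b)=2$, $h'(v)=h(v)+g(v)$ for $v\in Y\setminus b$, and $g'(x)=1$, $h'(x)=3$. For $v\in Y\setminus b$ let $E_v=\{h(v),h(v)+1,\dots,h(v)+g(v)-1\}$ denote its set of $g(v)$ \emph{new} colors. By Theorem \ref{FinalCzechHintTheorem} applied to $\mathcal{G}'$ and the vertex $x$ (here $N^+(x)=N^-(x)=Y$ and $g'(x)=1$), it will suffice to produce a plan $\phi$ on $V(D)\setminus Y$ and three plans $\phi^0,\phi^1,\phi^2$ on $Y$ whose accepted-ends sets $L_{\mathcal{G}'\setminus x}(Y,\phi^i\cup\phi,c)$ are pairwise disjoint for every coloring $c$; to make ``$f$ extends'' literally true I will take $\phi=f|_{V(D)\setminus Y}$ and build the $\phi^i$ out of $f_Y$.

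The one nonroutine idea is to use $b$'s two colors as a parity bit. Partition $\mathbb{H}_{h'}(Y)$ into $A_0=\{c(Y):c(b)=0\text{ and }c(v)\in[h(v)]\text{ for all }v\in Y\setminus b\}$, $A_1=\{c(Y):c(b)=1\text{ and }c(v)\in[h(v)]\text{ for all }v\in Y\setminus b\}$, and $A_2=\{c(Y):c(v)\in E_v\text{ for some }v\in Y\setminus b\}$; these three sets partition $\mathbb{H}_{h'}(Y)$, so by Remark \ref{RemarkAboutSimpleHints} they constitute a $1,3$-hint, namely $H_{f_x}$ for the plan $f_x$ that makes $x$ guess $i$ exactly when $c(Y)\in A_i$. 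Now set $\phi^2=f_Y$ (every $v\in Y$ retains its original plan, legal since $h'(v)\ge h(v)$); let $\phi^0$ have $b$ play the constant guess $1$ and each $v\in Y\setminus b$ play the constant guess-set $E_v$; let $\phi^1$ have $b$ play the constant guess $0$ and each $v\in Y\setminus b$ again play $E_v$. The assembled strategy on $\mathcal{G}'$ is then the obvious one: $V(D)\setminus Y$ plays $f$, $x$ plays $f_x$, and each $v\in Y$, having seen $c(x)$, plays $\phi^{c(x)}_v$.

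To check it, I would split on $c(x)$. If $c(x)=f_x(c(Y))$ then $x$ guesses right. Otherwise $c(Y)\notin A_{c(x)}$. If $c(x)=0$: failing $A_0$ forces either $c(b)=1$ (then $b$, guessing $1$ under $\phi^0$, is right) or $c(v)\in E_v$ for some $v\in Y\setminus b$ (then that $v$ is right); the case $c(x)=1$ is symmetric. If $c(x)=2$: failing $A_2$ means every $v\in Y\setminus b$ wears an original color, so $c$ restricts to a legal $\mathcal{G}$-coloring on which all of $D$ is playing exactly $f$, and since $f$ wins $\mathcal{G}$ someone in $D$ guesses right. In the language of the $L$-sets this says $L_{\mathcal{G}'\setminus x}(Y,\phi^i\cup\phi,c)\subseteq A_i$ for each $i$, so pairwise disjointness follows from that of the $A_i$, and Theorem \ref{FinalCzechHintTheorem} yields winnability of $\mathcal{G}'$ — realized by the explicit strategy above, which agrees with $f$ off $Y$ and reduces to $f_Y$ on $Y$.

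The main thing to watch will be the bookkeeping $|E_v|=g(v)$: this is precisely what makes ``guess all of $E_v$'' a legal plan for $v$ and what exhausts its guess budget, so it is exactly why the hatness bump has to be by $g(v)$ and cannot be smaller. Conceptually there is nothing deeper than the observation that $b$, having only two colors, can by itself absorb the two branches ($\neg A_0$ and $\neg A_1$) in which no new color appears, thereby freeing every other vertex of $Y$ to devote all $g(v)$ of its guesses to new colors in the third branch; everything else is a matter of chasing definitions through the Hats-As-Hints dictionary.
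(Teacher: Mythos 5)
Your proof is correct and, once the Hats-As-Hints formalism is unwound, yields exactly the strategy the paper constructs directly: $x$ uses her third color to signal ``some new color appears'' and otherwise encodes $c(b)$, the hatness-2 vertex $b$ absorbs the two no-new-color branches, and each $v\in Y\setminus b$ spends its $g(v)$ guesses on its new colors unless told $\neg A_2$, in which case everyone reverts to $f$. The paper simply verifies this by direct case analysis rather than routing it through Theorem \ref{FinalCzechHintTheorem}, so up to relabeling $x$'s colors the two arguments are essentially the same.
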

\begin{proof}
We may assume $g(x)=1$. Extend $f$ as follows. If $x$ sees a new color on any member of $Y\backslash b$, she guesses $2$. Otherwise, she guesses $1-c(b)$. $b$ guesses $c(x)$, unless $c(x)=2$, in which case she guesses according to $f$. Each member of $Y\backslash b$ guesses her new colors unless $c(x)=2$, in which case she guesses according to $f$.

Assume $c(Y\backslash b)$ contains some new color: if $c(x)=2$, then $x$ guesses right; if $c(A)\neq 2$, then someone in $Y\backslash b$ guesses right. Now assume $c(Y\backslash b)$ contains no new colors. If $c(x)=2$, then everyone else plays according to $f$ and wins, since no new colors are being used. If $c(x)\neq 2$, then $b$ guesses $c(x)$ while $x$ guesses $1-c(b)$, so either $b$ or $x$ guesses right. 
\end{proof}

\begin{proposition}\label{WeakPath}
  Suppose $f$ wins $\mathcal{G}$. Let $a_1,a_2,...a_n$ be a path of hatnesses $2,3,4,4,...,4$ respectively or $3,2,4,4,...,4$ respectively, where an ellipsis indicates that the number of fours is arbitrary.\footnote{I.e., we can have paths (2,3) or (3,2).} 
  Attach $a_1$ to some $X\subseteq V(G)$, and attach $a_n$ to some $Z\subseteq V(G)$. (``Attach'' means ``make universal to.'') If $Z$ is a clique, double its hatnesses. Otherwise, increase its hatnesses by its guessnesses. Increase the hatnesses of $X$ by its guessnesses. $f$ extends to win the resultant game. 
\end{proposition}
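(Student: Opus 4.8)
The plan is to realize $\mathcal{G}'$ as the last game in a short chain, each game of which is obtained from the previous one by an already-established constructor applied to a winnable game. One cannot attach the whole path in a single stroke, since each of our constructors creates a new vertex with a \emph{fixed} neighbourhood. The key is to decouple two tasks: first build the hatness-$4$ ``spine'' $a_3,\dots,a_n$ together with the $Z$-side endpoint $a_n$, deliberately leaving a dangling hatness-$2$ leaf $t$; then attach the last vertex $a_1$ to the \emph{set} $\{t\}\cup X$, which in one move promotes $t$ to $a_2$ (at the correct hatness) and links $a_1$ to $X$ (raising $X$'s hatnesses).

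I would begin with the $Z$-attachment: add a fresh hatness-$2$, guessness-$1$ vertex $p$ universal to $Z$. If $Z$ is a clique this is the $h(x)=2$ instance of Corollary~\ref{cor:AddNearlyFullVertex}, which doubles every $h(v)$ for $v\in Z$; otherwise it is Corollary~\ref{AddManyHatness2}, which replaces each $h(v)$ by $h(v)+g(v)$. In both cases the resulting game $\mathcal{G}_1$ is winnable and $p$ is a hatness-$2$, guessness-$1$ leaf adjacent to exactly the vertices of $Z$. Next, for $n\geq 3$, grow the spine: take the free-standing winnable Latvian path $Q$ on $n-1$ vertices with hatness sequence $(2,4,4,\dots,4,2)$ — this is the $h(v)=2^{\deg v}$ path of Corollary~\ref{WinningTreeHatnesses} (when $n=3$ it is simply $(P_2,2)$) — and form the single-point product of $\mathcal{G}_1$ with $Q$ identifying $p$ with one endpoint of $Q$; here invoke Proposition~\ref{SinglePointProduct}, or, if $\mathcal{G}$ is a proper Czech game, its Czech analogue Proposition~\ref{CliqueAndOneMoreThingProduct} applied to the singleton clique $\{p\}$ and the ``one more thing'' $Q$. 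In the product $p$ acquires hatness $2\cdot 2=4$ and keeps guessness $1\cdot 1=1$; rename $p$ as $a_n$, the successive interior vertices of $Q$ as $a_{n-1},\dots,a_3$ (all hatness $4$, guessness $1$, untouched by the product), and the far endpoint of $Q$ as $t$ (still hatness $2$, guessness $1$). For $n=2$ this step is skipped and one sets $t:=p$. In all cases we now have a winnable game containing $D$, with $Z$ at its target hatnesses, with the path $a_n\sim a_{n-1}\sim\dots\sim a_3\sim t$ in place (the spine being empty when $n=2$, so that $t$ is itself adjacent to $Z$), and with $a_n$ adjacent to $Z$.

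Finally I would attach $a_1$ to $Y:=\{t\}\cup X$. In the $(2,3,4,\dots,4)$ case, add a fresh hatness-$2$, guessness-$1$ vertex $a_1$ universal to $Y$ via Corollary~\ref{AddManyHatness2}: each $v\in Y$ has $h(v)$ raised by $g(v)$, so the single-guess vertex $t$ becomes $a_2$ with hatness $3$ and each $v\in X$ becomes $h(v)+g(v)$; this game is precisely $\mathcal{G}'$. In the $(3,2,4,\dots,4)$ case, add instead a fresh hatness-$3$, guessness-$1$ vertex $a_1$ universal to $Y$ via Proposition~\ref{AttachHatness3} — legitimate because $t\in Y$ has hatness $2$ — so every \emph{other} vertex of $Y$, i.e.\ each $v\in X$, has $h(v)$ raised by $g(v)$ while $t$ stays at hatness $2$ and becomes $a_2$; again this game is precisely $\mathcal{G}'$. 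Winnability passes down the whole chain, and since every constructor used leaves the plans of $f$ on $V(D)\setminus X\setminus Z$ untouched (by the ``$f|_{\,\cdot}$ extends'' clauses of Proposition~\ref{prop:GeneralAttaching}, of Proposition~\ref{SinglePointProduct}/\ref{CliqueAndOneMoreThingProduct}, and of Corollary~\ref{cor:AddNearlyFullVertex}), $f$ extends to a winning strategy on $\mathcal{G}'$.

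The genuinely routine part is the verification that the graph, hatness function, and guessness function produced by this chain coincide with those in the statement, together with the bookkeeping for the small values $n=2,3$ and for degenerate $X$ or $Z$ (empty set, or a set containing its own hatness-$2$ vertices). The only real obstacle — and the one idea of the proof — is recognizing that a direct attachment is impossible, so that one must (i) manufacture the hatness-$4$ portion and the $Z$-endpoint by a product with a $2^{\deg}$-path, which conveniently deposits a spare hatness-$2$ leaf, and (ii) exploit that the final constructor attaches to a vertex \emph{set}, so that $\{t\}\cup X$ can be processed at once, simultaneously converting $t$ into the hatness-$3$ (case $1$) or hatness-$2$ (case $2$) vertex $a_2$ and wiring $a_1$ into $X$.
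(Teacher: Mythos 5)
Your proposal is correct and follows essentially the same route as the paper's proof: attach a hatness-$2$ vertex to $Z$ (via Corollary~\ref{AddManyHatness2} or \ref{cor:AddNearlyFullVertex}), build the hatness-$4$ spine ending in a spare hatness-$2$ leaf, and finish by attaching $a_1$ to that leaf together with $X$ using Corollary~\ref{AddManyHatness2} or Proposition~\ref{AttachHatness3} according to the case. The only cosmetic difference is that you assemble the spine in one stroke as a single-point product with the $(2,4,\dots,4,2)$ path of Corollary~\ref{WinningTreeHatnesses}, whereas the paper grows it leaf by leaf — but since that corollary is itself proved by the same iterated leaf attachment, the two arguments coincide.
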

\begin{proof}
    If the path is $P_2$, you can simply apply some combination of \ref{AddManyHatness2}/\ref{AttachHatness3}/\ref{cor:AddNearlyFullVertex}. Now assume the path is longer. Add a hatness-2 vertex to $Z$, applying Corollary \ref{AddManyHatness2} or Corollary \ref{cor:AddNearlyFullVertex} if $Z$ is a clique. Let that new vertex be $a_n$. Attach a hatness-2 leaf to $a_n$, altering $h(a_n)$ according to Corollary \ref{cor:AddNearlyFullVertex}. That new leaf is $a_{n-1}$. Continue this process until you add $a_2$. If you want a path of the first type, apply Corollary \ref{AddManyHatness2} with $x=a_1$, $Y=\{a_2\}\cup X$. If you want a path of the second type, apply Proposition \ref{AttachHatness3} with $x=a_1, Y\backslash b=X,b=a_2$.
\end{proof}
\begin{proposition}\label{StrongPath}
    Suppose $f$ wins Latvian game $\mathcal{G}$. Let $a_1,a_2,...a_n$ be a path of hatnesses $4,...,3,2,3,...,4$ or $4,...,2,3,3,...,4$, where an ellipsis indicates that the number of fours is arbitrary.\footnote{We require at least one on each side, since otherwise it reduces to Proposition \ref{WeakPath}.} Attach $a_1$ to some $X\subseteq V(D)$, and attach $a_n$ to some $Z\subseteq V(D)$. Increase the hatnesses of $X$ by its guessnesses; if $X$ is a clique, double the hatnesses instead. Do the same for $Z$.  
    $f$ extends to win the resultant game. 
\end{proposition}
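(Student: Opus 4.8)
The plan is to assemble the new path from the elementary attachment constructors already proved, in the same spirit as the proof of Proposition~\ref{WeakPath}; the one new wrinkle is that here the low-hatness ``core'' lies in the middle of the path rather than at an end, so instead of growing one string of $4$'s out of a single side we grow a string of $4$'s out of $X$ \emph{and} a string of $4$'s out of $Z$, and then splice the two inner endpoints together with the short core.

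Write the path as $a_1,\dots,a_n$; by hypothesis $p,q\ge 1$, so $n=p+q+3$ and the hatness sequence is either $(4^{p},3,2,3,4^{q})$ (\emph{type~1}) or $(4^{p},2,3,3,4^{q})$ (\emph{type~2}). First, starting from $\mathcal G$ and working entirely inside $\mathcal G$, grow the arm out of $Z$: attach a hatness-$2$ vertex $a_n$ universal to $Z$, using the ``double'' clause of Corollary~\ref{cor:AddNearlyFullVertex} if $Z$ is a clique and Corollary~\ref{AddManyHatness2} otherwise; either way $Z$'s hatnesses change exactly as in the statement and, $\mathcal G$ being Latvian, the resulting game is again Latvian. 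Then repeatedly attach a hatness-$2$ leaf to the vertex just added, each time via Corollary~\ref{cor:AddNearlyFullVertex} applied to the one-vertex clique consisting of that vertex, which doubles its hatness from $2$ to $4$; stop once $a_{p+4},\dots,a_n$ all have hatness $4$ and a hatness-$2$ vertex $w_Z:=a_{p+3}$ sits at the inner end of the arm. Running the mirror-image construction out of $X$ (which is untouched by the first arm, hence still a clique if it ever was one) produces an arm $X-a_1-\cdots-a_p-w_X$ with $h(a_1)=\dots=h(a_p)=4$ and a hatness-$2$ inner endpoint $w_X:=a_{p+1}$, and alters $X$'s hatnesses exactly as claimed. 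Every intermediate game is Latvian and winnable, since each constructor preserves winnableness.

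It remains to splice the core. For type~1, attach a single hatness-$2$ vertex $a_{p+2}$ universal to $\{w_X,w_Z\}$ by Corollary~\ref{AddManyHatness2}: this raises $h(w_X)$ and $h(w_Z)$ from $2$ to $3$, so the new path reads $(4^{p},3,2,3,4^{q})$. For type~2, attach instead a single hatness-$3$ vertex $a_{p+2}$ universal to $\{w_X,w_Z\}$ by Proposition~\ref{AttachHatness3}, choosing the distinguished hatness-$2$ vertex to be $b=w_X$ (which therefore keeps hatness $2$) and the remaining vertex of $Y$ to be $w_Z$ (whose hatness rises from $2$ to $3$), so the new path reads $(4^{p},2,3,3,4^{q})$. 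In both cases the only vertices of $\mathcal G$ whose hatnesses changed are those of $X$ and $Z$, changed exactly as claimed, and composing the constructive winning strategies furnished by the constructors used along the way gives a winning strategy on the final game extending $f$.

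The construction itself is routine once this decomposition is in view, so the main obstacle is really just the junction bookkeeping: one has to verify that each of $X$ and $Z$ gets its hatnesses modified exactly once (which dictates growing the $Z$-arm completely before touching $X$, and attaching the core only at the very end), that the two inner endpoints $w_X$ and $w_Z$ genuinely still have hatness $2$ when the core is attached, and that the ``$+1$'' clause of Corollary~\ref{AddManyHatness2} (for type~1) and the ``other vertex'' clause of Proposition~\ref{AttachHatness3} (for type~2) turn those $2$'s into exactly the $3$'s that the two prescribed hatness patterns require.
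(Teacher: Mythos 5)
Your proof is correct and is essentially the paper's own argument, just with the bookkeeping made explicit: grow two arms of hatness-$2$-then-$4$ vertices out of $X$ and $Z$ via Corollaries \ref{AddManyHatness2}/\ref{cor:AddNearlyFullVertex}, then splice the inner hatness-$2$ endpoints with a final application of Corollary \ref{AddManyHatness2} (type 1) or Proposition \ref{AttachHatness3} (type 2). The index-tracking and the verification that the core hatnesses come out as $(3,2,3)$ or $(2,3,3)$ are exactly what the paper leaves implicit.
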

\begin{proof}
    It's similar to the proof of \ref{WeakPath}. Add hatness-2 vertices $a_1$ and $a_n$ via \ref{AddManyHatness2} or \ref{cor:AddNearlyFullVertex} as appropriate. Add hatness-2 leaves via \ref{cor:AddNearlyFullVertex} as appropriate, working inward. For a path of the first type, finish with \ref{AddManyHatness2}. For a path of the second type, finish with \ref{AttachHatness3}.
\end{proof}
One can also devise versions of these constructors where the paths themselves are Czech, using \ref{AddManyHatness2}, \ref{cor:AddNearlyFullVertex}, and \ref{AttachHatness3} more liberally.

\subsection{Products of games}
From \cite{KLR21b} through \cite{BDO21,LK21,LK22} to \cite{LK23}, product constructors have been consistently and deeply useful. Here we extend that common reasoning, maybe to the limits of its usefulness.

\begin{defn}
    Let $D_1$, $D_2$ be digraphs with $H_1\subseteq V(D_1), H_2\subseteq V(D_2)$. Define a new digraph $D$ with $V(D)= (V(D_1) \backslash H_1) \coprod (V(D_2) \backslash H_2) \coprod (H_1 \times H_2)$. For $v\in V(D)$, let $\pi_i(v)$ denote the projection of $v$ to its $i$th coordinate if $v \in H_1 \times H_2$. If 
    $v\in V(D_j) \backslash H_j$, let $\pi_i(v)=v$ if $i=j$, and let it be undefined otherwise. Consider $u\in (V(D_j)\backslash H_i)$. We stipulate $\overrightarrow{uv}\in E(D)\iff \overrightarrow{u\pi_j(v)}\in E(D_j)$ and $\overrightarrow{vu}\in E(D)\iff \overrightarrow{\pi_j(v)u}\in E(D_j)$. 

    Now, suppose $u,v\in H_1\times H_2$. There is an arrow from $u$ to $v$ if and only if, for (exactly/at least) one $i$, we have $\overrightarrow{\pi_i(u)\pi_i(v)}\in E(D_i)$, and $\pi_j(u)=\pi_j(v)$ for $j$ not satisfying that condition. If we say ``exactly,'' we call the resulting digraph the \emph{Cartesian product of $D_1$ and $D_2$ with respect to $H_1$ and $H_2$} and denote it $D=D_1 \square_{H_1,H_2} D_2$. If we say ``at least,'' we call the resulting digraph the \emph{strong product of $D_1$ and $D_2$ with respect to $H_1$ and $H_2$} and denote it $D=D_1 \boxtimes_{H_1,H_2} D_2$. We extend these binary products to $n$-ary thus. 
    
    \[\bigsquare^{n}_{i=1} (D_i,H_i):= \;D_n\; \square_{H_n, \prod_{i=1}^{n-1} H_i}\; \bigsquare^{n-1}_{i=1} (D_i,H_i)\]
    
    \[\bigxsquare^{n}_{i=1} (D_i,H_i):= D_n \boxtimes_{H_n, \prod_{i=1}^{n-1} H_i} \bigxsquare^{n-1}_{i=1} (D_i,H_i)\]

    Notice that $\square$ and $\boxtimes$ are associative and commutative, so these are well-defined.
\end{defn}

The only constructor \cite{BDO21} used was a generalization of Proposition \ref{SinglePointProduct} that allowed you to join one vertex of a graph to a clique of another, for undirected Czech games. We extend it even further.\footnote{We don't include the ``reduction'' introduced in \cite{LK23}. Although it seems compatible, it would complicate already unwieldy statements.} We allow digraphs, and instead of taking the product of two games with respect to a clique and a vertex, we'll consider variations where we're taking the product of several graphs with respect to subgraphs, some of which may be cliques. It seems (unsurprisingly) that cliques are much ``easier'' than general subgraphs in two senses. You don't need requirements on the winning behavior of the subgraph (compare Propositions \ref{CliqueAndOneMoreThingProduct} and \ref{TwoGeneralThingsProduct}), and you don't need to loosen the guessness on the vertices in $D_i \backslash H_i$. Single points are easier still because they don't cause $\prod H_i$ to grow. The \cite{BDO21} version remains easier to work with, but we think our statements can be useful. If nothing else, we've pushed against the contours and requirements of graph-product reasoning. Also, the contrast between cliques and other digraphs is a counterintuitive contribution to the contrast between linear and general strategies: in these constructions, linearity is associated with ease and efficacy, though linear strategies can drastically underperform general ones (e.g. \cite{ABST20}).

The proofs all use the same idea, which is most easily seen in the two-game case. We join $\mathcal{G}_1$ to $\mathcal{G}_2$ by multiplying specified subgraphs in some way. We think of every ``middle vertex'' $v$ as having $h_1(v)h_2(v)$ composite colors, and they play in a ``split-brain'' fashion, guessing their ``left coordinate'' according to $f_1$ and guessing their ``right coordinate'' according to $f_2$, where $f_i$ wins $\mathcal{G}_i$. Vertices on the left only heed the left coordinate of composite colors, and vertices on the right only heed the right. They guess according to $f_1$ and $f_2$ respectively, sometimes with additional guesses to compensate for additional uncertainty in the middle. 


\subsubsection*{The constructors themselves}
The most obvious extension is the clique join with respect to several cliques. We use the same idea as in \cite{KLR21b}, \cite{BDO21}, or Proposition \ref{CzechOnComplete}, where cliques are thought of as having a collective color obtained by summing. 

\begin{prop}\label{ManyCliquesProduct}
    Consider general games $\mathcal{G}_i \equiv (D_i, g_i, h_i)$ with $H_i\subseteq V(D_i)$ inducing cliques in each. Let $f_i$ be winning strategies on each $\mathcal{G}_i$. Then the $f_i$ combine to a winning strategy $f$ on the game $\mathcal{G}=(D,g,h)$, where
     \[ D= \bigxsquare^{n}_{i=1} (D_i,H_i)\]  
 \[ g(v)=\begin{cases}
    g_i(v) &\text{ if } v\in V(D_i)\backslash H_i  \\
    \prod_{i=1}^n g_i(\pi_i(v)) &\text{ if } v\in \prod_{i=1}^n H_i 
    \end{cases}\]
\[h(v)=\begin{cases}
     h_i(v) &\text{ if } v\in V(D_i)\backslash H_i \\
    \prod_{i=1}^n h_i(\pi_i(v)) &\text{ if } v\in \prod_{i=1}^n H_i
    \end{cases}.\]
\end{prop}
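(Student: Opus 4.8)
The plan is to carry out the ``split-brain'' argument indicated in the excerpt, using the summing device of Proposition~\ref{CzechOnComplete} to make the cliques $H_i$ behave like single vertices. First I would fix notation: write $M:=\prod_{i=1}^n H_i$ for the set of ``middle'' vertices of $D$, and for $v\in M$ identify a colour of $v$ with a tuple $(c_1,\dots,c_n)$, $c_i\in[h_i(\pi_i(v))]$, writing $(c(v))_i$ for the $i$-th component of $v$'s assigned colour. Since each $H_i$ induces a clique, the product construction makes $M$ induce a clique inside $D$; moreover, for $v\in M$ and each coordinate $i$ with $w:=\pi_i(v)$, the out-neighbours of $v$ in $D$ include exactly all middle vertices lying over $H_i\setminus\{w\}$ and all peripheral $u\in V(D_i)\setminus H_i$ with $w\to_{D_i}u$ (plus the analogous data in the other coordinates). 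A peripheral vertex $w\in V(D_i)\setminus H_i$ sees, in $D$, all peripheral $u$ with $w\to_{D_i}u$ and, for each clique out-neighbour $w'\in H_i$ of $w$, \emph{all} of $M$ lying over $w'$. These are routine unwindings of the product definition, but they are the technical core of the argument and must be done carefully.

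Next I would introduce the \emph{effective colouring}. Given a colouring $c$ of $D$, define a colouring $c^{(i)}$ of $D_i$ by $c^{(i)}(w)=c(w)$ for $w\in V(D_i)\setminus H_i$ and $c^{(i)}(w)=\sum_{u\in M,\ \pi_i(u)=w}(c(u))_i \bmod h_i(w)$ for $w\in H_i$. The strategy $f$ is then: a peripheral $w\in V(D_i)\setminus H_i$ reconstructs, from what it sees in $D$, exactly the view $w$ has in $\mathcal{G}_i$ under $c^{(i)}$ (reading off actual colours of peripheral neighbours, and summing $i$-th coordinates over the relevant part of $M$ for clique neighbours), then plays $f_{i,w}$ of that; since $c^{(i)}(w)=c(w)$, its $g_i(w)=g(w)$ guesses are literally guesses for its own colour. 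A middle vertex $v$, for each coordinate $i$ with $w=\pi_i(v)$, knows everything in $w$'s $\mathcal{G}_i$-view under $c^{(i)}$ \emph{except} the single summand $(c(v))_i$ inside $c^{(i)}(w)$ --- equivalently it knows $c^{(i)}(w)$ up to the additive unknown $(c(v))_i$. It runs $f_{i,w}$ on that view to get $g_i(w)$ candidate values for $c^{(i)}(w)$, then subtracts the (known) remaining part of the sum to obtain $g_i(\pi_i(v))$ candidates for $(c(v))_i$; its final guess set is the Cartesian product of these coordinate guess sets, of size $\prod_i g_i(\pi_i(v))=g(v)$, as required.

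Finally I would verify that $f$ wins. Fix any colouring $c$ of $D$ and the induced $c^{(1)},\dots,c^{(n)}$. For each $i$, since $f_i$ wins $\mathcal{G}_i$, pick a vertex $w_i^{\ast}$ of $D_i$ that guesses correctly under $c^{(i)}$. If $w_i^{\ast}\in V(D_i)\setminus H_i$ for some $i$, then in $D$ that vertex sees precisely its $\mathcal{G}_i$-view under $c^{(i)}$, so its guess set contains $c^{(i)}(w_i^{\ast})=c(w_i^{\ast})$ and it guesses right. Otherwise every $w_i^{\ast}$ lies in $H_i$; put $v^{\ast}=(w_1^{\ast},\dots,w_n^{\ast})\in M$. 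For each $i$, the $i$-th coordinate guess set of $v^{\ast}$ contains $c^{(i)}(w_i^{\ast})$ minus the known partial sum, which is exactly $(c(v^{\ast}))_i$; hence the product guess set of $v^{\ast}$ contains $c(v^{\ast})$ and $v^{\ast}$ guesses right. Either way $c$ is not a disprover, so $f$ wins $\mathcal{G}$.

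The main obstacle I expect is not the winning argument (which, modulo notation, is the short product-of-guesses observation of the last paragraph) but the visibility bookkeeping of the first paragraph: one must confirm that in the $n$-fold strong product every vertex of $D$ receives \emph{exactly} the information needed to impersonate its role in the relevant $\mathcal{G}_i$, and in particular that a middle vertex's ignorance of its own colour amounts to ignorance of precisely one summand of one clique-coordinate's effective colour --- so that the $\prod_i g_i$ budget of guesses lands exactly right. A secondary annoyance is disposing of the degenerate cases cleanly (some $H_i$ a single vertex, where the sum has one term and no translation is needed; $n=1$; and compatibility with the running convention $h>g>0$).
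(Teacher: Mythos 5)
Your proposal is correct and follows essentially the same route as the paper's proof: your ``effective colouring'' $c^{(i)}$ is exactly the paper's ``imaginary colour'' $s_u$ (summing the $i$-th coordinates over the fibre of $M$ above $u$, mod $h_i(u)$), peripheral vertices impersonate their role in $\mathcal{G}_i$ against $c^{(i)}$, and middle vertices play split-brain with the Cartesian product of coordinate guess sets. Your concluding step of intersecting the winners $w_i^{\ast}$ into the single vertex $v^{\ast}=(w_1^{\ast},\dots,w_n^{\ast})$ is precisely the paper's intersection of the fibres $H_1\times\cdots\times\{u\}\times\cdots\times H_n$ of coordinatewise-correct guessers.
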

\begin{proof}We'll treat any coloring $c$ on $D$ as arising from/inducing an $n$-tuple $(c_1,...,c_n)$ of colorings on each $D_i$. This association is clearly bijective. In particular, each sage treats a color $c(a)$ on $a\in \prod_{i=1}^n H_i$ as a \textit{composite color} $(c_1(\pi_1(a)),...,c_n(\pi_n(a)))$.
    Sages in $V(D_i)\backslash H_i$ won't vary their guess as the various visible $c_j$'s vary for $i\neq j$. For each $H_i$ and each $u\in H_i$ we define an \emph{imaginary color} $s_u:=\left( \sum_{\{u\}\times \prod_{j\neq i} H_j} c_i(u)\right)\mod h_i(u)$. By the definition of $D$, $v$ can discern $s_u$ whenever $\overrightarrow{\pi_i(v)u}\in E(D_i)$.
    
    Each $v\in V(D_i) \backslash H_i$ guesses according to $f_i$, except that she feeds $s_u$ into the input for $c_i(u)$ for $u\in H_i$ and feeds $c(u)=c_i(\pi_i(u))=c_i(u)$ for $u\in V(D_i)\backslash H_i$. 

    Each $a\in \prod_{i=1}^n H_i$ plays in a ``split-brain'' fashion. She guesses her composite color  by deciding the guess for each $c_i(\pi_i(a))$ separately for each $i$ as follows. She imagines she is simply in $D_i$. Like the vertices in $D_i\backslash H_i$, she reads off the imaginary colors $s_v$ for each $v\in H_i$ that she would have been able to see in $D_i$, and for the hats she can see in $D_i \backslash H_i$, she simply reads off their colors. She runs $f_i$ on those inputs, but instead of making that result her raw guess, she makes that her assumption for the value of $s_u$, where $u\in H_i$ is such that $a\in \{u\} \times \prod_{j\neq i} H_j$. Then she guesses $c_i(a)$ accordingly, since she can see the $c_i$ for all other vertices in  $\{u\} \times \prod_{j\neq i} H_j$. (Notice that she guesses the Cartesian product of the guess-sets dictated by each $f_i$ for the various $\pi_i(a)$; this is why we must have $g(a)=\prod g(\pi_i(a))$.)

    Because each $f_i$ won $\mathcal{G}_i$, either some vertex of $D_i\backslash H_i$ has guessed right for some $i$ (in which case $\mathcal{G}$ has been won), or for every $i$, there is some set of the form $H_1 \times ... \times H_{i-1} \times \{u\} \times H_{i+1} ... \times H_n$ all of whose members correctly guess $s_u$ and therefore correctly guess their own $c_i$. We need only take the intersection of all such sets to find a sage who guesses every coordinate of her composite color correctly, and therefore guesses correctly overall. Thus, $f$ wins. 
\end{proof}

We can start to incorporate general digraphs instead of cliques by replacing the single point in \cite{BDO21}'s formulation. So we multiply two digraphs with respect to any subgraph in one and a clique in another.
\begin{prop}\label{CliqueAndOneMoreThingProduct}
    Consider general games $\mathcal{G}_1\equiv (D_1, g_1, h_1)$, $\mathcal{G}_2\equiv(D_2, g_2, h_2)$  with $H_1\subseteq V(D_1)$ inducing a clique and $H_2\subseteq V(D_2)$ arbitrary. Winning strategies $f_1,f_2$ for $\mathcal{G}_1, \mathcal{G}_2$ combine to yield a winning strategy $f$ on $\mathcal{G}\equiv (D,g,h)$, where

    \[D=D_1 \square_{H_1,H_2} D_2\]

    \[g(v)=\begin{cases}
        |H_2|\cdot g_1(v) &\text{ if } v\in V(D_1)\backslash H_1 \text{ and } \exists u\in H_1(\overrightarrow{vu}\in E(D_1)) \\
         g_1(v) &\text{ if } v\in V(D_1)\backslash H_1 \text{ and } \nexists u\in S(\overrightarrow{vu}\in E(D_1)) \\
        g_1(\pi_1(v)) \cdot g_2(\pi_2(v)) &\text{ if } v\in H_1\times H_2\\
        g_2(v) &\text{ if } v\in V(D_2) \backslash H_2
        \end{cases}\]

    \[h(v)=\begin{cases}
         h_1(v) &\text{ if } v\in V(D_1)\backslash H_1 \\
        h_1(\pi_1(v)) \cdot h_2(\pi_2(v)) &\text{ if } v\in H_1\times H_2\\
        h_2(v) &\text{ if } v\in V(D_2) \backslash H_2
        \end{cases}\]\end{prop}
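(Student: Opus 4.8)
The plan is to run the ``split-brain'' argument sketched in the preamble and carried out (for cliques on both sides) in Proposition~\ref{ManyCliquesProduct}, handling the asymmetry. Fix winning strategies $f_1$ for $\mathcal{G}_1$ and $f_2$ for $\mathcal{G}_2$. Read a coloring $c$ of $D$ as follows: the composite color on a middle vertex $v\in H_1\times H_2$ decodes to a pair $(\ell(v),\rho(v))\in[h_1(\pi_1 v)]\times[h_2(\pi_2 v)]$ (its ``left'' and ``right'' coordinates), while vertices of $V(D_1)\setminus H_1$ and $V(D_2)\setminus H_2$ keep their colors. From $c$ extract, for each $b\in H_2$, a $D_1$-coloring ``on layer $b$'' --- $\ell(a,b)$ for $a\in H_1$, and $c$ on $V(D_1)\setminus H_1$ (all layers agree off $H_1$) --- and one \emph{merged} $D_2$-coloring $\bar c$, with $\bar c(b)=\sum_{a\in H_1}\rho(a,b)\bmod h_2(b)$ for $b\in H_2$ and $\bar c(w)=c(w)$ for $w\in V(D_2)\setminus H_2$.

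Define $f$ thus. A vertex $u\in V(D_1)\setminus H_1$ with no out-arc into $H_1$ plays $f_1$ as $u$ (its view of the colors on $N^+_{D_1}(u)$ is layer-independent); a vertex $u$ with an out-arc into $H_1$ can see, for every $b\in H_2$, all layer-$b$ colors on $N^+_{D_1}(u)$, runs $f_1$ once per layer, and guesses the union of the $|H_2|$ resulting $g_1(u)$-sets, so $g(u)=|H_2|g_1(u)$. A vertex $w\in V(D_2)\setminus H_2$ sees the whole $H_1$-fibre above each of its out-neighbors $b\in H_2$, hence can compute $\bar c$ on $N^+_{D_2}(w)$ and play $f_2$ as $w$ with respect to $\bar c$ ($g_2(w)$ guesses). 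A middle vertex $(a,b)$ plays split-brain: for its left coordinate, play $f_1$ as the vertex $a$ inside $D_1$-layer $b$ (the needed colors are visible, since $(a,b)$ sees the rest of its column and all of $N^+_{D_1}(a)\setminus H_1$), giving $g_1(a)$ candidates; for its right coordinate, apply the $H_1$-clique trick inside its column $\{(a',b):a'\in H_1\}$, which is a clique of $D$: knowing $\rho$ on the rest of that column and $\bar c$ on $N^+_{D_2}(b)$, it turns each of the $g_2(b)$ values $f_2$ would guess for $b$ under $\bar c$ into the unique right coordinate making that value equal $\bar c(b)$. The guess set is the product, of size $g_1(a)g_2(b)=g(v)$.

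For correctness, fix $c$. Because $f_1$ wins $\mathcal{G}_1$, every layer $b$ has a right-guessing $D_1$-vertex; if for some $b$ that vertex lies in $V(D_1)\setminus H_1$ it also guesses correctly in $D$ and we are done, so assume that for each $b$ the layer-$b$ winner is some $a_b\in H_1$. Because $f_2$ wins $\mathcal{G}_2$, some $D_2$-vertex guesses right under $\bar c$; if it lies in $V(D_2)\setminus H_2$ we are again done, so it is some $b^\ast\in H_2$, whence $\bar c(b^\ast)$ is among the $g_2(b^\ast)$ values $f_2$ guesses for $b^\ast$, and the clique trick makes \emph{every} $(a,b^\ast)$ with $a\in H_1$ guess its right coordinate correctly. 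Then $(a_{b^\ast},b^\ast)$ has its right coordinate correct and, being the layer-$b^\ast$ winner for $f_1$, its left coordinate correct --- so it guesses its composite color. I expect the main obstacle to be the visibility bookkeeping of the second paragraph: verifying from the defining adjacencies of $D$ that columns are cliques, and above all that each middle vertex (and each shared $D_2$-vertex) sees the \emph{entire} $H_1$-fibre over each relevant out-neighbor, so that $\bar c$ is computable by everyone who needs it. This is precisely what the clique hypothesis on $H_1$ is meant to provide --- and it is why the $H_1$-indexed layers may be collapsed by summing while the $H_2$-indexed layers must instead be absorbed into the $|H_2|$-fold blow-up on the $D_1$ side. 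The rest is a routine translation of definitions.
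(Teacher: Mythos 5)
Your proposal is correct and is essentially the paper's own proof: your merged coloring $\bar c$ is exactly the paper's ``imaginary color'' $s_u=\sum_{H_1\times\{u\}}c_2\bmod h_2(u)$, and the layer-wise runs of $f_1$ (unioned over the $|H_2|$ copies for $D_1$-vertices seeing $H_1$), the split-brain play with the linearity translation on columns, and the final ``some layer wins all its right coordinates, every layer wins some left coordinate'' intersection all match. The visibility bookkeeping you flag does check out from the definition of $\square_{H_1,H_2}$ and the clique hypothesis on $H_1$, just as you anticipate.
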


\begin{proof}
    Again, we describe $f$, using composite color $c(a)=(c_1(\pi_1(a)), c_2(\pi_2(a)))$. For each $u\in H_2$ whom $v$ sees in $D_2$, we define imaginary color $s_u:=\left( \sum_{H_1 \times \{u\}} c_2(u)\right)\mod h_2(u)$. Each $v\in V(D_1)\backslash H_1$ guesses according to $f_1$ as follows. If in $D_1$ she sees some $H'_1\subseteq H_1$, she sees $|H_2|$ copies of $H'_1$ in $G$. She gives one guess-set for each such copy, as if she were in $D_1$, seeing only that copy and only the ``left part'' of the composite colors. (If there are guesses left over, they may be used however.) Each $v\in V(D_2) \backslash H_2$ calculates the imaginary colors she can see and guesses as though she were playing $f_2$ in $D_1$ seeing color $s_u$ on $u$.  

    As before, $a\in H_1\times H_2$ guesses $c(a)$ by guessing $ c_1(\pi_1(a)), c_2(\pi_2(a))$ separately. She guesses $c_1$ in accordance with $f_1$, seeing only the ``left part'' of anyone's colors, ignoring all vertices of $H_1 \times H_2$ except those in her own copy of $H_1$. For $c_2$, she plays according to $f_2$, except that her inputs are $s_u$ where appropriate and her output is a guess for $s_{\pi_2(a)}$, which she then translates into a guess for $c_2(a)$ by linearity, since she can see all other vertices in $H_1 \times \{u\}$. 

    Assume that no sage in $D_1\backslash H_1$ guesses right. Because $f_1$ wins (and the sages seeing $H_1\times H_2$ get $|H_2|$ tries), we have that for every coloring and for each set of the form $H_1 \times \{v\}$ for $v\in H_2$, there's a vertex $a\in \{H_1\} \times v$ that guesses $c_1(\pi_1(a))$ correctly. Assume that no sage in $D_2 \backslash H_2$ wins. Because $f_2$ wins, we have the for a given coloring, some set  $S$ of the form $H_1 \times \{v\}$ for $v\in H_2$, every vertex in $S$ guesses right. Combining these, we have that either a vertex in $V(D_1)\backslash H_1$ wins, a vertex $V(D_2)\backslash H_2$ wins, or a vertex in $H_1 \times H_2$ guesses both her $c_1$ and $c_2$ correctly and therefore wins. 
\end{proof}

If we want to use \textit{only} general subgraphs, we can no longer use the main virtue of cliques––that in each clique, someone will guess a given side of her composite color correctly. Thus, we have to rely on brute numbers to guarantee that some correct left guess intersects with some correct right guess.\footnote{This directs our attention to a theme we (and most recent writers on deterministic games) have ignored: \textit{how many} correct guesses can we guarantee? See Question \ref{ques:howmanycanweget}.} (There are other conditions one could imagine that guarantee an intersection, but this is the simplest and can easily be replaced with another.) 

\begin{prop}\label{TwoGeneralThingsProduct}
    Consider Czech games $\mathcal{G}_i\equiv (D_i, g_i, h_i)$ with $H_i\subseteq V(D_i)$. Let $f_i$ be strategies on each $\mathcal{G}_i$ such that, if every sage in $V(D_i)\backslash H_i$ guesses wrong, then at most $r_i$ members of $H_i$ guess wrong. Presume that $\sum_{i=1}^n \frac{r_i}{|H_i|}<1$. Then the $f_i$ extend to a winning strategy on the game $\mathcal{G}\equiv (G,g,h)$, where 

    \[D=\bigsquare^{n}_{i=1} (D_i,H_i)\]
   \[ g(v)=\begin{cases}
        \prod_{j\neq i}|H_j| \cdot g_i(v) &\text{ if } v\in V(D_i)\backslash H_i \text{ and } \exists u\in \prod_{i=1}^n H_i (\overrightarrow{vu}\in E(G)) \\
         g_i(v) &\text{ if } v\in V(D_i)\backslash H_i \text{ and } \nexists u\in \prod_{i=1}^n H_i (\overrightarrow{vu}\in E(G)) \\
        \prod_{i=1}^n g_i(\pi_i(v)) &\text{ if } v\in \prod_{i=1}^n H_i 
        \end{cases}\]
    \[h(v)=\begin{cases}
         h_i(v) &\text{ if } v\in V(D_i)\backslash H_i \\
        \prod_{i=1}^n h_i(\pi_i(v)) &\text{ if } v\in \prod_{i=1}^n H_i
        \end{cases}.\]
\end{prop}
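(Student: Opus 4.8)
The plan is to write down the combined strategy $f$ explicitly and then finish with a union-bound counting argument, in the spirit of the proofs of Propositions \ref{ManyCliquesProduct} and \ref{CliqueAndOneMoreThingProduct}, but with the ``imaginary color'' device (which exploited that the $H_i$ were cliques) replaced by a \emph{slicing} of the coloring. Fix a coloring $c$ of $D$ and treat the color of each middle vertex $a\in\prod_i H_i$ as a composite color $(c(a)_1,\dots,c(a)_n)$ with $c(a)_i\in[h_i(\pi_i(a))]$. For each index $i$ and each $\rho\in\prod_{j\neq i}H_j$, define a coloring $c^{(i,\rho)}$ of $D_i$ by setting $c^{(i,\rho)}(w)=c(a)_i$, where $a$ is the middle vertex with $\pi_i(a)=w$ and $\pi_j(a)=\rho_j$ for $j\neq i$, for each $w\in H_i$, and $c^{(i,\rho)}(u)=c(u)$ for each $u\in V(D_i)\backslash H_i$. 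Thus $c$ ``contains'' $\prod_{j\neq i}|H_j|$ slices of $D_i$ for each $i$, all of which agree on $V(D_i)\backslash H_i$.

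Next I would describe $f$. A middle vertex $a$, for each coordinate $i$, reconstructs the colors she sees on $N^+_{D_i}(\pi_i(a))$: the Cartesian-product structure gives her $c(b)_i$ on the middle vertices $b$ lying in her ``$i$-fibre'' (which is exactly $c^{(i,\rho)}$ restricted to $N^+_{D_i}(\pi_i(a))\cap H_i$, with $\rho=(\pi_j(a))_{j\neq i}$), together with $c(u)$ on the outer $D_i$-neighbours $u$ (which fills out the rest of $N^+_{D_i}(\pi_i(a))$). She runs $f_i$ on this partial coloring, obtaining $g_i(\pi_i(a))$ guesses for $c(a)_i$, and guesses the Cartesian product over $i$, of size $\prod_i g_i(\pi_i(a))=g(a)$. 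An outer vertex $v\in V(D_i)\backslash H_i$ recovers $c^{(i,\rho)}$ restricted to $N^+_{D_i}(v)$ for \emph{every} $\rho$ (she sees every $i$-fibre of every middle vertex over $N^+_{D_i}(v)\cap H_i$, plus the outer neighbours, whose colors do not depend on $\rho$), runs $f_i$ on each, and outputs the union; this is at most $\prod_{j\neq i}|H_j|\cdot g_i(v)$ guesses, and exactly $g_i(v)$ in the degenerate case $N^+_{D_i}(v)\cap H_i=\emptyset$, which is precisely the case in which $v$ has no arc into $\prod_i H_i$. So the guess counts match $g$.

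Then I would close the argument. Suppose $c$ is a disprover for $f$. Every outer vertex guesses wrong in $D$, and her $D$-guess set contains her $f_i$-guess against each slice, so for every $i$ and every $\rho$, all of $V(D_i)\backslash H_i$ guess wrong under $f_i$ against $c^{(i,\rho)}$; by the hypothesis on $f_i$, the set $W_{i,\rho}\subseteq H_i$ of vertices guessing right against $c^{(i,\rho)}$ has $|W_{i,\rho}|\geq|H_i|-r_i$. A middle vertex $a$ guesses wrong in $D$ if and only if $\pi_i(a)\notin W_{i,(\pi_j(a))_{j\neq i}}$ for some $i$ (since $c^{(i,\rho)}(\pi_i(a))=c(a)_i$ for $\rho=(\pi_j(a))_{j\neq i}$). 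For a fixed $i$, the number of $a\in\prod_i H_i$ for which this happens at coordinate $i$ is at most $r_i\prod_{j\neq i}|H_j|$; summing over $i$ and using $\sum_i r_i/|H_i|<1$ gives a total of at most $\bigl(\prod_j|H_j|\bigr)\sum_i r_i/|H_i|<|\prod_i H_i|$. Hence some $a$ is good at every coordinate, so that $a$ guesses right in $D$, contradicting that $c$ is a disprover.

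The step I expect to be the main obstacle is the visibility bookkeeping in the Cartesian product: pinning down that a middle vertex's reconstructed $D_i$-view is exactly $c^{(i,(\pi_j(a))_{j\neq i})}$ on $N^+_{D_i}(\pi_i(a))$ (no stray coordinates, nothing missing), that an outer vertex of $D_i$ sees every slice and nothing else relevant, and that the case split ``$\exists u\in\prod_i H_i(\overrightarrow{vu}\in E(D))$'' coincides with ``$N^+_{D_i}(v)\cap H_i\neq\emptyset$''. Once the visibility is nailed down, the remaining content---the counting, which is where $\sum_i r_i/|H_i|<1$ does its work---is routine.
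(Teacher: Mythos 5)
Your proof is correct and follows essentially the same route as the paper's: outer vertices of $D_i$ run $f_i$ once per fibre of $\prod_j H_j$ (using the enlarged guessness), middle vertices guess the Cartesian product of their per-coordinate $f_i$-guesses, and the union-bound count $\sum_i r_i\prod_{j\neq i}|H_j| < \prod_j |H_j|$ produces a middle vertex correct in every coordinate. Your explicit slicing $c^{(i,\rho)}$ is just a more careful formalization of the visibility bookkeeping that the paper handles informally.
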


\begin{proof} Again, we construct $f$, considering composite colors. Each $v\in V(D_i) \backslash H_i$ guesses according to $f_i$. (If in $D_i$ she sees nonempty $H'_i \subseteq H_i$, then  she sees $\prod_{j\neq i}|H_i|$ copies of $H'_i$. She makes one guess-set for each such copy, perceiving only the $c_i$ part of the composite colors. Each $a=(a_1,...,a_n)\in \prod_{i=1}^n H_i$ guesses her composite color by guessing each separate $c_i$ in accordance with $f_i$, as if she were in $D_i$. For these purposes, she perceives only the values of $c$ among the visible members of $V(D_i) \backslash H_i$ and the values of $c_i$ among the visible members of $\prod_{i=1}^n H_i$. (Notice that by definition, if $a_i \rightarrow b_i$ in $D_i$, we have a vertex of the form $b=(a_1,...,a_{i-1},b_i,a_{i+1},...,a_n)$ with $a\rightarrow b$. So the plan can be executed.)

    Now presume that no vertex of $D_i\backslash H_i$ wins in the game $\mathcal{G}$. Then at most $r_i\prod_{j\neq i} |H_i|$ of the vertices in  in $\prod_{i=1}^n H_i$ guess their $c_i$ incorrectly. By the problem's condition that $\sum_{i=1}^n \frac{r_i}{|H_i|}<1$, someone in $\prod_{i=1}^n H_i$ guesses all her $c_i$ correctly, so $f$ wins $\mathcal{G}$.
\end{proof}
We hope that these three propositions and proofs give some intuition for how this works, enabling the reader to combine them and extend the logic if she so desires. 

\chapter{Extending plans to metapeer and independent sets}

Say you're building a strategy, and so far you have a plan $f_0$ for everyone except for the sages in some independent set $S$. The task remaining to $S$ has a nice geometric interpretation. $\mathbb{H}(S)$, the possible colorings of $S$, form a combinatorial prism of measurements $\{h(v)\}$ for $v\in S$. Each dimension corresponds to one sage. A single guess of a single sage corresponds to a hyperplane perpendicular to that sage's axis. The sages in $S$ must coordinate to cover $L'(S,f_0,c)$ with their guesses. It's especially nice when $S$ is metapeer; we can treat it as a single reasoning entity. 

This chapter mostly considers the situation of a metapeer set lacking a plan, though this has implications for an independent set lacking a plan. We'll discuss how the size and shape of $L'(S,f_0,c)$ determine whether $f_0$ can extend to win. We'll discuss which additional vertices, universal to $S$, are helpful or not. Some constructors result: vertices with $N(v)$ independent and $r(v)$ too low can be replaced by enough arcs to make $N(v)$ metapeer; certain Czech leaves can be deleted wholesale; certain cut-vertices with low $r(v)$ can too. These give an even slicker algorithm for Latvian trees (Subsection \ref{subsec:SlickerTreeProof}).

Also, every vertex's neighborhood is metapeer in complete bipartite graphs, which have attracted attention in nearly every deterministic hat guessing paper. Using our techniques, we can completely characterize the winnable Czech games on complete bipartite graphs––the caveat being that our characterization has unknown time-complexity, even for $K_{1,2}$!

\section{Covering and winning}
\epigraph{I don't give a ––– what happens. I want you all to stonewall it, let them plead the Fifth Amendment, cover up or anything else, if it'll save it, save this plan. That's the whole point.}{Richard Nixon}

\subsection{Planes and sprawls}
Let's codify our intuitions about covering. Let $A$ be a $d$-dimensional vector of positive integers $(a_1,...,a_d)$.
Consider $Q\subseteq \mathbb{Z}^d$. An \emph{$A$-cover of $Q$} is a list $B\equiv \{b_{ij}, 1\leq i \leq d, 1\leq j \leq a_i\}$ of integers such that, for every vector $q\in Q$, there are $i,j$ such that $q_i=b_{ij}$. We say that $Q$ is \emph{$A$-coverable}. If $Q$ has no $A$-cover, we call it \emph{$A$-sprawling} or an \emph{$A$-sprawl.} (The definition works fine with ``nonnegative'' in place of ``positive'' and any set in place of $\mathbb{Z}$. We present it this way for ease.) For lists like this, let $\cup$ denote concatenation. 
\begin{lemma}\label{ProductOfCovering}
    Consider vectors $A,A'$ of dimensions $d,d'$. Consider sets $Q\subseteq \mathbb{Z}^d$ and $Q'\subseteq \mathbb{Z}^{d'}$. $Q\times Q'$ is an $A\cup A'$-sprawl if and only if $Q$ and $Q'$ are $A$- and $A'-$sprawls respectively. 
\end{lemma}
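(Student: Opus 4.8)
The statement is an "if and only if" about when a product of sets fails to be coverable. I will prove the contrapositive-flavored equivalent: $Q \times Q'$ is $A \cup A'$-coverable if and only if $Q$ is $A$-coverable or $Q'$ is $A'$-coverable. Unwinding the negations, "$Q\times Q'$ is a sprawl iff $Q$ is a sprawl and $Q'$ is a sprawl" is exactly the negation of this on both sides, so the two formulations are equivalent. I will work with the coverability formulation since covers are concrete objects (lists of integers) that are easy to manipulate.

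**The easy direction.** Suppose without loss of generality that $Q$ has an $A$-cover $B = \{b_{ij}\}$. I claim $B$ (viewed as a list indexed by the first $d$ coordinates, with nothing contributed in the last $d'$ coordinates) is already an $A\cup A'$-cover of $Q\times Q'$: given $(q,q') \in Q\times Q'$, since $q \in Q$ there are $i,j$ with $q_i = b_{ij}$, and the $i$-th coordinate of $(q,q')$ in $\mathbb{Z}^{d+d'}$ is $q_i$. To be fully careful about the indexing convention of an "$A\cup A'$-cover": the concatenated vector $A\cup A'$ has dimension $d+d'$, its first $d$ entries are those of $A$ and its last $d'$ are those of $A'$; a cover must supply $a_i$ integers for each of the first $d$ coordinates and $a'_i$ for each of the last $d'$. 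So I pad $B$ by choosing the required numbers of arbitrary integers in the last $d'$ coordinate-slots; this does not destroy the covering property, and it shows $Q \times Q'$ is $A\cup A'$-coverable. The symmetric argument handles the case where $Q'$ is $A'$-coverable.

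**The hard direction.** Suppose $Q\times Q'$ has an $A\cup A'$-cover $C$. Split $C$ into $C_1$ (the numbers assigned to the first $d$ coordinate-slots) and $C_2$ (those assigned to the last $d'$ slots); these are an "$A$-shaped list" and an "$A'$-shaped list" respectively. Suppose for contradiction that $C_1$ is not an $A$-cover of $Q$ and $C_2$ is not an $A'$-cover of $Q'$. Then there is some $q\in Q$ avoided by $C_1$ (no coordinate of $q$ lies in the corresponding slot of $C_1$) and some $q'\in Q'$ avoided by $C_2$. But then the concatenation $(q,q') \in \mathbb{Z}^{d+d'}$ lies in $Q\times Q'$, and none of its first $d$ coordinates is hit by $C_1$ while none of its last $d'$ coordinates is hit by $C_2$; since $C = C_1 \cup C_2$ partitions responsibility for the coordinate-slots, $(q,q')$ is avoided by all of $C$ — contradicting that $C$ is a cover. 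Hence $C_1$ covers $Q$ or $C_2$ covers $Q'$, which is what we wanted.

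**What's delicate.** There is no real combinatorial obstacle here; the only thing that needs care is the bookkeeping around the index convention for $A\cup A'$-covers — specifically that a cover of a product assigns its integers slot-by-slot to coordinates, so it splits cleanly into a part living on $\mathbb{Z}^d$ and a part living on $\mathbb{Z}^{d'}$, and that a vector in the product is avoided precisely when each half avoids the corresponding projection. Once the definitions are stated precisely (as they are in the excerpt), the proof is a two-line argument in each direction, with the "avoided point" construction in the hard direction being the crux. I would write it compactly, making sure to note explicitly that $\cup$ on lists means concatenation so that the splitting $C = C_1 \cup C_2$ is literally inverting the definition of the cover.
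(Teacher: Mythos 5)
Your proof is correct, and for the direction the paper actually writes out (a cover of one factor pads to a cover of the product, so coverability of either factor forces coverability of $Q\times Q'$) your argument is essentially identical to the paper's. The noteworthy difference is that the paper's proof stops there: it establishes only that ``$Q$ or $Q'$ coverable $\implies$ $Q\times Q'$ coverable,'' which is the contrapositive of just one implication of the stated ``if and only if.'' Your ``hard direction''---splitting a cover $C$ of the product into its first-$d$-slots part $C_1$ and last-$d'$-slots part $C_2$, and observing that if neither covers its factor then the concatenation $(q,q')$ of avoided points is avoided by all of $C$---supplies the converse that the paper omits. That converse is genuinely needed for the lemma as stated (and for its later use in extending plans over metapeer sets), so your write-up is the more complete of the two; the bookkeeping caveats you flag about how $\cup$ means concatenation and how slots split cleanly between the two factors are exactly the right things to be careful about.
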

\begin{proof}
    If $Q$ is $A$-coverable, then $Q\times \mathbb{Z}^n$ is $A\cup (0,...,0)$-coverable for any $n$: the same list of $b_{ij}$ still has the desired property. It's clear that increasing the entries of the covering vector and shrinking the set to be covered preserve covering. So if $Q$ is $A$-coverable, $Q\times Q'$ is $A\cup A'$-coverable. Symmetrically, if $Q'$ is $A'$-coverable, $Q\times Q'$ is $A\cup A'$-coverable. 
\end{proof}

We allow $g(S)$ to denote ``the vector of $g(v)$ for $v\in S$, in whatever order they're listed for the sake of $\mathbb{H}(S)$.'' $h(S)$ has the analogous meaning. When we speak of $g(S)$-covers, we'll implicitly require $b_{ij}\in [h(s_i)]$. We'll also speak of a $g(S)$-cover as a geometric object given by the set $\{x\in \mathbb{H}(S) \;\mid\; \exists i,j (x_i=b_{ij})\}$. 

\begin{proposition}\label{ifyouloseyouloseonsprawls}
    Consider a game $\mathcal{G}$ and a plan $f_0$ on $V(D)\backslash S$ for independent $\{s_1,...,s_n\}\equiv S$.
    \begin{enumerate}
        \item A plan $f_S$ for $S$ is a choice of a $g(S)$-cover $B$ for each value of $c_{N^+(S)}$. 
        \item $c_{N^+(S)}$ extends to disprove $f_0\cup f_S$ if and only if $f_S(c_{N^+(S)})$ is not a cover for $L'(S,f_0,c_{N^+(S)})$. 
        \item If $L'(S,f_0,c)$ is a $g(S)$-sprawl for some $c$, then $f_0$ does not extend to win. 
        \item If $S$ is metapeer and $L'(S,f_0,c)$ is $g(S)$-coverable for all $c$, then $f_0$ extends to win. 
    \end{enumerate}
  \end{proposition}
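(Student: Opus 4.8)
The plan is to establish the four items in order: part (1) is just unwinding definitions, part (2) is the bookkeeping identity that powers everything, part (3) is then immediate, and part (4) is where the real work — and the metapeer hypothesis — lies.

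For (1): a plan $f_S$ on $S=\{s_1,\dots,s_n\}$ is by definition a choice, for each $i$, of a function $f_{s_i}$ from the sights $\prod_{u\in N^+(s_i)}V_u$ to the $g(s_i)$-subsets of $[h(s_i)]$. Fixing a value of $c_{N^+(S)}$ restricts each $f_{s_i}$ to a single $g(s_i)$-set $B_i\subseteq[h(s_i)]$, and listing these together (with $g(s_i)$ entries in coordinate $i$) is precisely a $g(S)$-cover $B$; conversely such data determines $f_S$. For (2): fix $c_{N^+(S)}$, write $B=f_S(c_{N^+(S)})$, and let $c'$ be any coloring restricting to $c_{N^+(S)}$ on $N^+(S)$. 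Then $c'$ disproves $f_0\cup f_S$ iff nobody in $V(D)\setminus S$ and nobody in $S$ guesses right. Independence of $S$ gives $N^+(s_i)\cap S=\emptyset$, so each $s_i$'s guess depends only on $c'_{N^+(s_i)}\subseteq c_{N^+(S)}$ and hence equals $B_i$; thus "nobody in $S$ guesses right" is equivalent to "$c'_S$ is not covered by $B$". And "$c_{N^+(S)}\cup c'_S$ extends to a coloring making nobody in $V(D)\setminus S$ guess right" is exactly "$c'_S\in L'(S,f_0,c_{N^+(S)})$". Combining: $c_{N^+(S)}$ extends to a disprover iff there is $d_S\in L'(S,f_0,c_{N^+(S)})$ uncovered by $B$, i.e.\ iff $B$ fails to cover $L'(S,f_0,c_{N^+(S)})$.

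Part (3) drops out of (2): if $L'(S,f_0,c)$ is a $g(S)$-sprawl, then for any plan $f_S$ the $g(S)$-cover $f_S(c_{N^+(S)})$ cannot cover it, so $c$ gives a disprover of $f_0\cup f_S$; as every strategy extending $f_0$ has the form $f_0\cup f_S$, $f_0$ does not extend to win. For (4), partition $S=S_1\sqcup\dots\sqcup S_m$ by the components of $D$ that meet $S$. Metapeerness forces the members of each $S_k$ to be pairwise peer, so they share one out-neighborhood $N_k$, contained in component $k$; in particular the $N_k$ are pairwise disjoint. By Proposition~\ref{PropertiesofLL'}(4), $L'(S,f_0,c)=\prod_k L'(S_k,f_0,c)$, and $L'(S_k,f_0,c)$ depends only on $w_k:=c_{N_k}$; since the $N_k$ are disjoint, the tuple $(w_1,\dots,w_m)$ sweeps out all of $\prod_k\mathbb{H}(N_k)$ as $c$ varies. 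Now Lemma~\ref{ProductOfCovering}, iterated over the $m$ factors, says $\prod_k L'(S_k,f_0,w_k)$ is a $g(S)$-sprawl precisely when every factor is a sprawl; so the hypothesis of (4) is incompatible with every $S_k$ having a "bad" local sight $w_k$ making $L'(S_k,f_0,w_k)$ a sprawl — if each did, a single coloring realizing all those bad $w_k$ at once would make $L'(S,f_0,\cdot)$ sprawling, a contradiction. Hence some index $k_0$ has $L'(S_{k_0},f_0,w_{k_0})$ being $g(S_{k_0})$-coverable for \emph{every} $w_{k_0}\in\mathbb{H}(N_{k_0})$.

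Finish by building $f_S$: all sages of $S_{k_0}$ have out-neighborhood $N_{k_0}$, so each sees exactly $w_{k_0}=c_{N_{k_0}}$; let them jointly output a fixed $g(S_{k_0})$-cover of $L'(S_{k_0},f_0,w_{k_0})$ (one exists for each $w_{k_0}$ by the choice of $k_0$), and let all other sages in $S$ guess arbitrarily. This is a legal plan, and for every coloring $c$ the $S_{k_0}$-block of $f_S(c_{N^+(S)})$ covers $L'(S_{k_0},f_0,c_{N_{k_0}})$, so it covers $\prod_k L'(S_k,f_0,c_{N_k})=L'(S,f_0,c)$, because covering a single factor of a product suffices. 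By (2) no coloring extends to a disprover of $f_0\cup f_S$; thus $f_0\cup f_S$ wins, so $f_0$ extends to win. The one non-routine step is this reduction: seeing that metapeerness splits $L'(S,f_0,\cdot)$ as a product over the components, so that Lemma~\ref{ProductOfCovering} converts "coverable for all $c$" into "some single component $S_{k_0}$ is coverable for all its local sights", which is then handled exactly like a peer set. Everything else is bookkeeping — chiefly keeping the coordinate orderings of $\mathbb{H}(S)$, $g(S)$, and the various products consistent — which I would not spell out.
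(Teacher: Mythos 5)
Your proof is correct and follows essentially the same route as the paper's: parts (1)--(3) are the same definitional unwindings, and part (4) uses exactly the intended combination of Proposition~\ref{PropertiesofLL'}.4, Lemma~\ref{ProductOfCovering}, and the definition of metapeer to reduce to the peer case, where the sages jointly select a cover of $L'$. Your explicit pigeonhole step locating a single component $S_{k_0}$ that is coverable for every local sight is a valid (and slightly stronger than necessary) way of carrying out the paper's one-line reduction; one could equivalently let each peer block cover its own factor whenever it is coverable, but the underlying idea is the same.
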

\begin{proof}
    Each item builds on those before. 
    \begin{enumerate}
        \item Obviously, $c_{N^+(S)}$ is the input of $f_S$. That its output is a $g(S)$-cover is just to say that its output is a selection of $g(s_i)$ hyperplanes in $\mathbb{H}(S)$ perpendicular to the $i$th axis for all $i$. That's specified by $g(s_i)$ distinct values in $[h(s_i)]$ (for each $i$), which is precisely the output of a plan $f_S$.
        \item By definition, $L'(S,f_0,c_{N^+(S)})$ consists of possible partial colorings $c_S$ such that $c_{N^+(S)}\cup c_S$ extends to a coloring $c$ causing nobody in $V(D)\backslash S$ to guess right. By 1, a value $c_S\in \mathbb{H}(S)$ is guessed if and only if it's in the cover $f_S(c_{N^+(S)})$. So if all points of $L'(S,f_0,c_{N^+(S)})$ are covered, then we have (for this $c_{N^+(S)}$) that if everyone in $V(D)\backslash S$ guesses wrong, it means someone in $S$ guesses right. Thus, $c_{N^+(S)}$ does not extend to a disprover.
    
        If some point of $L'(S,f_0,c_{N^+(S)})$ is not covered by $f_S(c_{N^+(S)})$, then we have some value of $c_{N^+(S)}\cup c_S$ such that $c_S$ is not covered by $f_S(c_{N^+(S)})$, and a way to color $V(D)\backslash S \backslash N^+(S)$ so that nobody in $V(D)\backslash S$ guesses right. I.e., we have a value of $c_{N^+(S)}\cup c_S$ that extends to a disprover.
        \item If $L'(S,f_0,c)$ is a $g(S)$-sprawl for some $c$, then no matter what $f_S$ is, we have (by 2) a value of $c_{N^+(S)} \cup c_S$ that extends to a disprover for $f_0\cup f_S$. Thus, $f_0$ does not extend to win. 
        \item By Proposition \ref{PropertiesofLL'}.4, Lemma \ref{ProductOfCovering}, and the definition of metapeer, it suffices to prove this with ``peer'' in place of ``metapeer.'' That's easy. In a peer set $S$, each sage knows $c_{N^+(S)}$ precisely, so the sages collectively can choose any particular $g(S)$-cover they like to be the value of $f_S(c_{N^+(S)})$. Since we assume $L'(S,f_0,c_{N^+(S)})$ is always $g(S)$-coverable, they simply cover it, thereby winning for all extensions of $c_{N^+(S)}$ that would cause all members of $V(D)\backslash S$ to guess wrong, by 2. If it is a $g(S)$-sprawl, they lose by 3. \qedhere
    \end{enumerate} 
\end{proof}
\begin{prop}\label{prop:winnableifcomplementhasprism}
    Consider $\{s_1,...,s_n\}=S\subseteq V(D)$. Consider $Q\subseteq \mathbb{H}(S)$. $Q$ is $g(S)$-coverable if and only if $\mathbb{H}(S)\backslash Q$ contains a $(h(s_1)-g(s_1))\times ... \times (h(s_n)-g(s_n))$ combinatorial prism. 
\end{prop}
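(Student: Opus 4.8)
The plan is to unpack the definition of a $g(S)$-cover and restate it as a statement about set complements in each coordinate. Write $A = g(S) = (g(s_1), \dots, g(s_n))$ and $H = h(S) = (h(s_1), \dots, h(s_n))$, so that $\mathbb{H}(S) = \prod_{i=1}^n [h(s_i)]$. A $g(S)$-cover of $Q$ is, by definition, a list of integers $b_{ij}$ with $1 \le i \le n$, $1 \le j \le g(s_i)$, and (by our convention) $b_{ij} \in [h(s_i)]$, such that every $q \in Q$ has $q_i = b_{ij}$ for some $i,j$. For each $i$, collect $B_i = \{ b_{ij} : 1 \le j \le g(s_i)\} \subseteq [h(s_i)]$; since the $b_{ij}$ for fixed $i$ may as well be taken distinct (repeats only weaken the cover), we may assume $|B_i| = g(s_i)$. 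Then ``$B$ covers $Q$'' says exactly: there is no $q \in Q$ with $q_i \notin B_i$ for all $i$, i.e. $Q \cap \prod_{i=1}^n \big([h(s_i)] \setminus B_i\big) = \emptyset$.

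From here the equivalence is essentially a tautology once we make the right bijective identification. First I would prove the forward direction: given a $g(S)$-cover $B = \{b_{ij}\}$ of $Q$, set $Z_i = [h(s_i)] \setminus B_i$; then $|Z_i| = h(s_i) - g(s_i)$, so $Z_1 \times \dots \times Z_n$ is a $(h(s_1)-g(s_1)) \times \dots \times (h(s_n)-g(s_n))$ combinatorial prism (recall a combinatorial prism of those measurements is, by the Chapter~2 definition, precisely a product $Z_1 \times \dots \times Z_n$ of integer sets of those cardinalities), and the covering condition gives $Q \cap (Z_1 \times \dots \times Z_n) = \emptyset$, i.e. $Z_1 \times \dots \times Z_n \subseteq \mathbb{H}(S) \setminus Q$. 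For the converse: suppose $\mathbb{H}(S) \setminus Q$ contains a combinatorial prism $Z_1 \times \dots \times Z_n$ with $|Z_i| = h(s_i) - g(s_i)$. Each $Z_i$ need not be a subset of $[h(s_i)]$ a priori, but since the prism lies inside $\mathbb{H}(S) = \prod [h(s_i)]$ and is nonempty (each $Z_i$ is nonempty because $h(s_i) > g(s_i)$, using the standing assumption $h > g > 0$), its projection to coordinate $i$ forces $Z_i \subseteq [h(s_i)]$. Then put $B_i = [h(s_i)] \setminus Z_i$, which has size $g(s_i)$; list its elements as $b_{i1}, \dots, b_{i g(s_i)}$. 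Because $(Z_1 \times \dots \times Z_n) \cap Q = \emptyset$, every $q \in Q$ must fail to lie in some $Z_i$, i.e. $q_i \in B_i$ for some $i$; thus $B$ is a $g(S)$-cover of $Q$.

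I would close by noting the one genuine subtlety, which is the reason the argument isn't completely vacuous: the passage between ``a list $b_{ij}$'' (the cover data) and ``a tuple of sets $B_i$'' requires that we may take the $b_{ij}$, for fixed $i$, to be distinct — otherwise $|B_i|$ could be smaller than $g(s_i)$ and the complement $Z_i$ too large. This is harmless because replacing a non-injective list by any injective list with the same image (padding with arbitrary unused elements of $[h(s_i)]$, which exist since $|B_i| \le g(s_i) < h(s_i)$) only enlarges the set of covered points, so it preserves the property of being a cover; and conversely, shrinking $Z_i$ to any subset of size $h(s_i) - g(s_i)$ preserves containment in $\mathbb{H}(S) \setminus Q$. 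So the only real ``obstacle'' is bookkeeping around cardinalities and the $h > g > 0$ hypothesis guaranteeing nonemptiness; there is no combinatorial difficulty, and the whole thing is a few lines once the dictionary between covers and complement-prisms is spelled out. (One could also package it as an immediate consequence of Proposition~\ref{ifyouloseyouloseonsprawls} plus the geometric description of a $g(S)$-cover given just before that proposition, but the direct proof above is cleaner.)
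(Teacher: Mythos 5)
Your proof is correct and is exactly the paper's argument: the paper's entire proof is the one-sentence observation that the $(h(s_i)-g(s_i))$-measurement prisms inside $\mathbb{H}(S)$ are in bijection with $g(S)$-covers via coordinatewise complementation, which is precisely the dictionary you spell out. Your extra care about repeated entries in the list $b_{ij}$ (and padding to make the $B_i$ have full size $g(s_i)$) is a legitimate bit of bookkeeping the paper glosses over, but it does not change the approach.
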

\begin{proof}
    The $(h(s_1)-g(s_1))\times ... \times (h(s_n)-g(s_n))$ combinatorial prisms $P\subseteq \mathbb{H}(S)$ are in bijection by the complementation operation ``$\mathbb{H}(S)\backslash $'' with the $g(S)$-covers.  
\end{proof}
\begin{prop}\label{prop:TooBigThenSprawl}
    Let $S$ be an independent set and $f_0$ a plan on $V(D)\backslash S$. If $\exists c |L'(S,f_0,c)|>\left(\prod_{v\in S} h(v)\right) - \left(\prod_{v\in S} (h(v)-g(v))\right)$, then $f_0$ does not extend to win. 
\end{prop}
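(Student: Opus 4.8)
The plan is to chain together the two structural facts already proved in this section, so that the statement reduces to a one-line cardinality comparison. The key observation is that Proposition~\ref{ifyouloseyouloseonsprawls}.3 tells us it suffices to exhibit a single coloring $c$ for which $L'(S,f_0,c)$ is a $g(S)$-sprawl; and Proposition~\ref{prop:winnableifcomplementhasprism} converts ``$Q$ is $g(S)$-coverable'' into the purely combinatorial condition that $\mathbb{H}(S)\setminus Q$ contains a $(h(s_1)-g(s_1))\times\dots\times(h(s_n)-g(s_n))$ combinatorial prism. So the whole proposition comes down to counting: any such prism has exactly $\prod_{v\in S}(h(v)-g(v))$ elements, while $\mathbb{H}(S)$ has $\prod_{v\in S}h(v)$ elements.

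First I would take the $c$ witnessing the hypothesis, namely one with $|L'(S,f_0,c)| > \bigl(\prod_{v\in S}h(v)\bigr) - \bigl(\prod_{v\in S}(h(v)-g(v))\bigr)$. Then $|\mathbb{H}(S)\setminus L'(S,f_0,c)| = \prod_{v\in S}h(v) - |L'(S,f_0,c)| < \prod_{v\in S}(h(v)-g(v))$. Since a $(h(s_1)-g(s_1))\times\dots\times(h(s_n)-g(s_n))$ combinatorial prism is, by the definition in Section~\ref{definitionsection}, a product $Z_1\times\dots\times Z_n$ of sets of these sizes and hence has cardinality exactly $\prod_{v\in S}(h(v)-g(v))$, a set of strictly smaller cardinality cannot contain one. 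Therefore $\mathbb{H}(S)\setminus L'(S,f_0,c)$ contains no such prism.

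Next I would invoke Proposition~\ref{prop:winnableifcomplementhasprism} (with $Q = L'(S,f_0,c)$) to conclude that $L'(S,f_0,c)$ is not $g(S)$-coverable, i.e.\ that it is a $g(S)$-sprawl. Finally, Proposition~\ref{ifyouloseyouloseonsprawls}.3 applies verbatim (the hypothesis that $S$ is independent is exactly what is assumed here), yielding that $f_0$ does not extend to a winning strategy. That completes the argument.

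There is essentially no hard step: the content is entirely packaged in the two earlier propositions, and the only genuinely new ingredient is the trivial remark that the ambient box $\mathbb{H}(S)$ has $\prod h(v)$ points and a sub-prism of the stated measurements has $\prod(h(v)-g(v))$ points, so once $L'$ eats up more than the difference there is no room left for the prism. If anything requires care, it is only making sure the inequality is strict in the right direction (so that ``$<$'' genuinely forbids a subset of that exact size), which the hypothesis ``$>$'' supplies.
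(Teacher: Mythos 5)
Your proof is correct and is exactly the paper's argument: the paper's proof is the one-liner ``Apply Propositions~\ref{prop:winnableifcomplementhasprism} and~\ref{ifyouloseyouloseonsprawls}.3,'' and you have simply filled in the (correct) cardinality comparison showing the complement of $L'(S,f_0,c)$ is too small to contain the required combinatorial prism. No differences to report.
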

\begin{proof}
    Apply Propositions \ref{prop:winnableifcomplementhasprism} and \ref{ifyouloseyouloseonsprawls}.3.
\end{proof}

\subsection{Aside: Hamming balls and random covering}
 Hat guessing research has used the notion of Hamming distance since at least \cite{GG15}. The Hamming distance for two vectors is the number of places on which they disagree. Let a Hamming ball $H_r(a)$ be the set of vectors whose Hamming distance from vector $a$ is $\leq r$. Assume for the moment that $g$ and $h$ are constant for $S$. A plan $f_S$ for $S$ picks, for each $c$, vectors $a_1,...,a_g$ of pairwise Hamming distance $|S|$. (The vector $a_i$ is composed of the $i$th guess from $v_n$ in the $n$th place. It depends on $c$.) Any coloring $c$ such that $c(S)$ lies within a Hamming ball centered at $H_{|S|-1}(a_i)$ for some $i$ is (by definition) guessed correctly by $f_S$. That is, in order to win, you want an $f_S$ such that $L'(S,f_0,c) \subseteq \bigcup_{i=1}^g H_{|S|-1}(a_i)$ for any $c$. The choice of $\{a_i\}$ is totally free when $S$ is a peer set. Thus, it's easy to see (cf. Lemma 3.2 of \cite{ABST20}) that if $|L'(S,f_0,c)|\leq g|S|$, then there's a plan $f_S$ where $f_S\cup f_0$ is a winning strategy. You can show (cf. Lemma 3.3 of \cite{ABST20}) that this can be accomplished when $|L'(S,f_0,c)|<ge^{|S|/h}$. 

However, the formalism of Hamming balls is cumbersome when $g,h$ vary. We can still show, though, that small-enough sets are coverable when we switch the the formalism of planes. 
\begin{prop}\label{Hamminghowtowin}
    Let $S$ be metapeer and $f_0$ a plan on $V(D)\backslash S$. If $|L'(S,f_0,c_{N^+(S)})| <\prod_{v\in S} (1-r(v))^{-1}$ for all $c$, then $f_0$ extends to win. 
\end{prop}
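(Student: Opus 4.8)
The plan is to peel off the metapeer hypothesis immediately via Proposition \ref{ifyouloseyouloseonsprawls}.4, which says that if $S$ is metapeer and $L'(S,f_0,c)$ is $g(S)$-coverable for every $c$, then $f_0$ extends to win; and then to recast ``$g(S)$-coverable'' using Proposition \ref{prop:winnableifcomplementhasprism} as ``$\mathbb{H}(S)\setminus L'(S,f_0,c)$ contains a combinatorial prism of measurements $(h(s_i)-g(s_i))_i$.'' Since $(1-r(v))^{-1} = h(v)/(h(v)-g(v))$, the hypothesis reads $|L'(S,f_0,c)| < \prod_{v\in S} h(v)/(h(v)-g(v))$ for all $c$, so everything reduces to one clean combinatorial lemma: if $m_1,\dots,m_n$ are positive integers, $0<k_i<m_i$, and $Q\subseteq [m_1]\times\cdots\times[m_n]$ with $|Q| < \prod_i m_i/(m_i-k_i)$, then $[m_1]\times\cdots\times[m_n]$ contains an $(m_1-k_1)\times\cdots\times(m_n-k_n)$ combinatorial prism disjoint from $Q$. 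I would then apply this with $m_i=h(s_i)$, $k_i=g(s_i)$ (valid since $h>g>0$) and $Q=L'(S,f_0,c)$ for each fixed $c$.

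To prove the lemma I would use a first-moment argument, exactly the ``random covering'' idea foreshadowed above. Pick, independently for each coordinate $i$, a set $Z_i\subseteq[m_i]$ uniformly at random among all $(m_i-k_i)$-element subsets, and set $Z=Z_1\times\cdots\times Z_n$. For a fixed point $q=(q_1,\dots,q_n)\in Q$ the events $\{q_i\in Z_i\}$ are independent with $\Pr[q_i\in Z_i] = \binom{m_i-1}{m_i-k_i-1}/\binom{m_i}{m_i-k_i} = (m_i-k_i)/m_i$, so $\Pr[q\in Z] = \prod_i (m_i-k_i)/m_i$. By linearity of expectation the expected number of points of $Q$ that land in $Z$ is $|Q|\prod_i (m_i-k_i)/m_i < 1$; since this count is a nonnegative integer, it equals $0$ with positive probability, so some realization of the $Z_i$ gives a prism avoiding $Q$ entirely. (Equivalently, one can run the randomization ``per sage'': sage $s_i$ refuses to rule out a uniformly random set of $h(s_i)-g(s_i)$ colors, which makes the link to the Hamming-ball heuristic transparent.)

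There is no real obstacle here — the whole argument is bookkeeping plus a one-line expectation computation. The two points to be careful about are: matching the measurements, i.e. checking that the prism dimensions $h(s_i)-g(s_i)$ from Proposition \ref{prop:winnableifcomplementhasprism} are precisely the $m_i-k_i$ of the lemma; and noting where the metapeer hypothesis is genuinely used, namely to pass from ``coverable for every $c$'' to ``$f_0$ extends to win'' — a merely independent $S$ would not suffice, since the chosen cover must be realized as a function of $c_{N^+(S)}$ that every sage in $S$ can actually compute.
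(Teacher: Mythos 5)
Your proposal is correct and is essentially the paper's own proof: both reduce to $g(S)$-coverability via Proposition \ref{ifyouloseyouloseonsprawls}.4 and then apply the same first-moment argument, choosing the cover (equivalently, its complementary prism, by Proposition \ref{prop:winnableifcomplementhasprism}) uniformly at random so that each point of $L'(S,f_0,c)$ is missed with probability $\prod_{v\in S}(1-r(v))$. The detour through the prism formulation is only a cosmetic relabeling of the identical computation.
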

\begin{proof}
    By Proposition \ref{ifyouloseyouloseonsprawls}.4, it suffices to show that 
    $L'(S,f_0,c_{N^+(S)})$ is always $g(S)$-coverable. To do so, choose a $g(S)$-cover uniformly at random. It misses any given member of $L'(S,f_0,c_{N^+(S)})$ with probability $\prod_{v\in S} \frac{h(v)-g(v)}{h(v)}$, so if $|L'(S,f_0,c_{N^+(S)})|\prod_{v\in S} \frac{h(v)-g(v)}{h(v)}= |L'(S,f_0,c_{N^+(S)})|\prod_{v\in S} (1-r(v))<1$, then with positive probability it misses no member, so $L'(S,f_0,c_{N^+(S)})$ is $g(S)$-coverable.
\end{proof}

\subsection{Vertices universal to independent sets}
\epigraph{Then the due time arrived\\
For Halfdane’s son to proceed to the hall.\\
The king himself would sit down to feast.\\
No group ever gathered in greater numbers \\
Or better order around their [hint]-giver.}
{\textit{Beowulf} 1007-11, trans. Seamus Heaney}

We combine plane covering with the hats-as-hints approach. This spurs us to numerical questions about sprawl sizes, which ultimately yield constructors.     
\begin{proposition}\label{HAHplusSprawls}
    Consider a game $\mathcal{G}$ and $v\in V(D)$ such that $N(v)$ is independent. Fix a plan $f_0$ on $V(D) \backslash N(v) \backslash v$ and $f_v$ on $v$. Let  $P=\{P_1,...,P_{h(v)}\}\subseteq 2^{\mathbb{H}(S)}$ be the set family representing $H_{f_v}$ (according to Definitions \ref{def:derived} and \ref{def:hints}). Let $W\subseteq L'_{\mathcal{G}\backslash v}(N(v),f_0,c)$ represent a $g(N(v))$-sprawl. 
    \begin{enumerate}
        \item If for some $P_i$ and $c$, $L'(N(v),f_0,c)\backslash P_i$ is a $g(N(v))$-sprawl, then $f_0$ does not extend to win. 
        \item If $v$ is Lyonic and $\forall P_i,c$ we can $g(N(v))$-cover the set $L'(N(v),f_0,c)\backslash P_i$, then $f_0$ extends to win. 
        \item Suppose there is at least one sprawl in $L'(N(v),f_0,c)$ and $r(v)^{-1} > \min_{c,W} |W|$. Then $f_0$ does not extend to win $\mathcal{G}$. 
    \end{enumerate}
\end{proposition}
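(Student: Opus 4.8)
The plan is to argue by contradiction, supposing that $f_0$ extends to a winning strategy and extracting a forbidden configuration that contradicts Proposition \ref{HAHplusSprawls}.1. Any winning strategy extending $f_0$ restricts to a plan $f_v$ on $v$ together with plans on $N(v)$; so it suffices to show that for \emph{every} plan $f_v$ on $v$, the combined plan $f_0 \cup f_v$ fails to extend to win. Fix such an $f_v$ and let $H_{f_v}=\{P_1,\dots,P_{h(v)}\}$ be its derived hint (Definition \ref{def:derived}). The key structural fact I would use is that $H_{f_v}$ is a $g(v),h(v)$-hint with respect to $N(v),N(v)$, so by Definition \ref{def:hints} every point of $\mathbb{H}(N(v))$ lies in exactly $g(v)$ of the $P_i$.

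Next I would invoke the hypotheses to choose a witness: since at least one sprawl exists, $\min_{c,W}|W|$ is well defined, and by assumption $r(v)^{-1}>\min_{c,W}|W|$, so there is a coloring $c^\ast$ and a $g(N(v))$-sprawl $W^\ast\subseteq L'_{\mathcal{G}\setminus v}(N(v),f_0,c^\ast)$ with $g(v)\,|W^\ast| < h(v)$, i.e. $r(v)\,|W^\ast|<1$. Then comes a one-line averaging step: summing the hint property over $W^\ast$ gives $\sum_{i=1}^{h(v)}|P_i\cap W^\ast| = g(v)\,|W^\ast|$, so $\min_i |P_i\cap W^\ast| \le g(v)\,|W^\ast|/h(v) = r(v)\,|W^\ast| < 1$, hence some $P_{i_0}$ is disjoint from $W^\ast$.

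To finish, observe that $W^\ast \subseteq L'(N(v),f_0,c^\ast)\setminus P_{i_0}$, and that any superset (inside $\mathbb{H}(N(v))$) of a $g(N(v))$-sprawl is again a $g(N(v))$-sprawl, because a cover of the larger set would cover the smaller one. Thus $L'(N(v),f_0,c^\ast)\setminus P_{i_0}$ is a $g(N(v))$-sprawl, and Proposition \ref{HAHplusSprawls}.1 — applied to the plan $f_0\cup f_v$ with this $P_{i_0}$ and $c^\ast$ — tells us that $f_0\cup f_v$ does not extend to win. Since $f_v$ was arbitrary, $f_0$ does not extend to win $\mathcal{G}$. The argument is mostly bookkeeping once part 1 and the hints dictionary are available; the only points needing care are confirming that the derived hint is genuinely a $g(v),h(v)$-hint (so the averaging denominator is exactly $h(v)$) and that $L'$ and $L'_{\mathcal{G}\setminus v}$ name the same object here, which they do since $f_0$ already covers all of $V(D)\setminus v$ outside $N(v)$. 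I do not anticipate a substantive obstacle.
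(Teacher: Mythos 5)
Your argument for part 3 is correct and follows exactly the paper's route: sum the hint-membership counts over the sprawl $W^{\ast}$ to get $\sum_{i}|P_i\cap W^{\ast}|=g(v)|W^{\ast}|$, conclude by averaging that some $P_{i_0}$ is disjoint from $W^{\ast}$ because $r(v)|W^{\ast}|<1$, observe that a superset of a $g(N(v))$-sprawl is again a $g(N(v))$-sprawl, and hand the resulting configuration to part 1. You are also right to make explicit the quantification over all plans $f_v$ (the derived hint of \emph{any} plan is a genuine $g(v),h(v)$-hint by Proposition \ref{HeadlineHints}.1, so the averaging denominator is always $h(v)$); the paper leaves this implicit since $f_v$ is fixed in the statement.

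The gap is that the proposition has three parts and you prove only the third, using part 1 as a black box. Parts 1 and 2 are where the hint machinery actually earns its keep, and a complete proof needs at least a sentence for each. In the paper they follow by combining Proposition \ref{HintWinning}.1 --- a strategy for the game-with-hint wins if and only if, for every $i$ and $c$, the accepted ends are contained in $A_i$, which here means the sages in $N(v)$ must cover $L'(N(v),f_0,c)\setminus P_i$ upon hearing ``$\neg A_i$'' --- with Proposition \ref{ifyouloseyouloseonsprawls}.3 for part 1 (a sprawl cannot be covered by any choice of guesses, so some coloring survives) and Proposition \ref{ifyouloseyouloseonsprawls}.4 for part 2 (a metapeer set, which is what the Lyonic hypothesis supplies, can realize any prescribed $g(N(v))$-cover, so coverability of every $L'(N(v),f_0,c)\setminus P_i$ suffices to win). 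Since your part-3 argument terminates in an appeal to part 1, the omission is not merely cosmetic: without it the chain of reasoning is not closed.
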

\begin{proof}
     For the first point, combine Proposition \ref{HintWinning}.1 with Proposition \ref{ifyouloseyouloseonsprawls}.3. For the second, combine Proposition \ref{HintWinning}.1 with Proposition \ref{ifyouloseyouloseonsprawls}.4. For the third, notice that for a given $W$, we have $\sum_{i=1}^{h(v)} |P_i \cap W|=g(v)|W|$ by definition of $P$. So $\min_{P_i} P_i\cap W\leq \lfloor \frac{g(v)|W|}{h(v)}\rfloor=\lfloor \frac{|W|}{r(v)}\rfloor$, which if $r(v)>|W|$ is $0$. So for some $P_i,W$, we have $P_i\cap W=\emptyset$. Thus, $W\subseteq L'(N(v),f_0,c)\backslash P_i$. By the above, $f_0$ does not extend to win. 
\end{proof}
We wonder, then: what do these sprawls look like, and how big are they? In particular, how can we bound $\min |W|$? It's lower-bounded by Proposition \ref{Hamminghowtowin}. An upper bound follows from Proposition \ref{prop:TooBigThenSprawl}: any set $Q\subseteq \mathbb{H}(S)$ of order $\geq 1+\left(\prod_{u\in S} h(u)\right) - \left(\prod_{u\in S} (h(u)-g(u))\right)$ is a $g(S)$-sprawl. 
We can improve this by using a lemma proven cleverly by Noga Alon in communication labeled ``remark175.''
\begin{lemma}\label{AlonCoveringLemma}
    Let $A$ be a $d$-tuple of positive integers, and let $Q\subseteq \mathbb{Z}^d$ be a set with no $A$-cover such that for every $q\in Q$, $Q \backslash q$ has a $A$-cover. Then $|Q|\leq \prod_{i=1}^d(a_i+1)$. (This is tight for any $A$, as shown by the set $Q=\prod_{i=1}^d[a_i+1]$.)
\end{lemma}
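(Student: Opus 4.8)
The statement has two parts: the upper bound $|Q| \le \prod_{i=1}^d (a_i+1)$ for any minimal non-$A$-coverable $Q$ (``minimal'' meaning every proper subset is $A$-coverable), and the tightness claim. The tightness is immediate: for $Q = \prod_{i=1}^d [a_i+1]$, any $A$-cover would have to, for each coordinate $i$, omit at least one value in $[a_i+1]$ (since only $a_i$ values $b_{ij}$ are available in coordinate $i$ but there are $a_i+1$ values present), and then the vector whose $i$-th coordinate is an omitted value for every $i$ lies in $Q$ but is uncovered; removing any single point, however, frees up a coordinate value that can be ``patched,'' so one checks $Q\setminus q$ is coverable. So the work is entirely in the upper bound.

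For the upper bound, the natural approach is induction on $d$. The base case $d=1$: a set $Q\subseteq\mathbb{Z}$ with no $1$-tuple cover of size $a_1$ but every proper subset coverable must have exactly $a_1+1$ elements (fewer than $a_1+1$ distinct values would be coverable; more than $a_1+1$ would have a proper subset of size $a_1+1$ that is already a sprawl, contradicting minimality). For the inductive step, fix the last coordinate. Let $\pi: \mathbb{Z}^d \to \mathbb{Z}^{d-1}$ be projection onto the first $d-1$ coordinates, and for each value $t$ appearing in the $d$-th coordinate of some $q\in Q$, let $Q_t = \pi(\{q\in Q : q_d = t\})$ be the corresponding ``slice'' pushed down to $\mathbb{Z}^{d-1}$. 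The key structural observation I would aim to prove: let $A' = (a_1,\dots,a_{d-1})$; then the number of distinct values $t$ in the last coordinate is at most $a_d + 1$, and moreover each slice $Q_t$ is a minimal non-$A'$-coverable set \emph{after removing from it the contribution already covered by the best choice of last-coordinate values}. This is where the bookkeeping gets delicate — one needs to argue that you cannot do better than picking $a_d$ of the last-coordinate values to ``kill'' $a_d$ of the slices outright, leaving a residual family of slices each of which must be a sprawl in dimension $d-1$, and by minimality each such residual slice has size $\le \prod_{i=1}^{d-1}(a_i+1)$.

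More carefully, the argument I'd try to run: suppose $Q$ has $m$ distinct last-coordinate values $t_1,\dots,t_m$. If $m \le a_d$ we could include all of them in the cover's $d$-th coordinate and then $Q$ would be coverable, contradiction; so $m \ge a_d+1$. I claim $m = a_d+1$ exactly: if $m \ge a_d+2$, pick any $a_d$ of the values, say $t_1,\dots,t_{a_d}$, to use in coordinate $d$; the remaining points $Q' = \{q : q_d \in \{t_{a_d+1},\dots,t_m\}\}$ must then be non-coverable by $A'$ in the first $d-1$ coordinates, but $Q'$ is a proper subset of $Q$, contradicting minimality. So $m = a_d+1$. Now for \emph{each} choice of which $a_d$ of the $a_d+1$ values to use — say we omit $t_j$ — the set $\pi(\{q : q_d = t_j\}) = Q_{t_j}$ must be non-$A'$-coverable (else $Q$ is coverable). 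Each $Q_{t_j}$ contains a minimal non-$A'$-coverable subset, which by induction has size $\le \prod_{i=1}^{d-1}(a_i+1)$; I then want to conclude $|Q_{t_j}| \le \prod_{i=1}^{d-1}(a_i+1)$ itself, using minimality of $Q$ once more: if $Q_{t_j}$ strictly contained such a minimal sprawl, one of its points could be deleted from $Q$ while keeping $Q$ non-coverable (the deleted point's last coordinate $t_j$ still can't be used without... ). Summing $|Q| = \sum_{j=1}^{a_d+1} |Q_{t_j}| \le (a_d+1)\prod_{i=1}^{d-1}(a_i+1) = \prod_{i=1}^d(a_i+1)$.

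\textbf{Main obstacle.} The delicate point is the last minimality argument: showing that each slice $Q_{t_j}$, as a subset of $\mathbb{Z}^{d-1}$, is genuinely bounded by $\prod_{i=1}^{d-1}(a_i+1)$ and not merely that it \emph{contains} a set of that size. The slices are not independent — they share the ambient minimality of $Q$, and a single point of $Q$ lives in exactly one slice but its deletion affects coverability of $Q$ globally. I expect the clean way is to argue by contradiction at the level of $Q$: if some slice were too large, extract a point whose removal still leaves $Q$ non-coverable (because the ``excess'' in that slice is not needed to defeat covers that use $t_j$), contradicting minimality of $Q$. Making ``not needed'' precise — quantifying over all $A$-covers of $Q$ simultaneously and showing a redundant point exists — is the crux, and Alon's ``clever'' proof presumably finds a slick way around the casework, perhaps via a counting or polynomial argument rather than this inductive slicing; but the inductive slicing should work with enough care.
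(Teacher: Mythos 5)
There is a genuine gap in the inductive step. Your claim that the number $m$ of distinct last-coordinate values equals $a_d+1$ is false, and so is the consequent claim that each slice $Q_{t_j}$ must be a $(d-1)$-dimensional sprawl. Concretely, take $d=2$, $A=(1,1)$, and $Q=\{(0,0),(1,1),(2,2)\}$: a single value in each coordinate covers at most one diagonal point each, so $Q$ has no $A$-cover, while deleting any point leaves two points coverable by one ``row'' and one ``column''; hence $Q$ is minimal, yet it has $m=3>a_2+1=2$ distinct last-coordinate values, and every slice is a single point (trivially $A'$-coverable). The flaw in your argument for $m=a_d+1$ is that minimality only tells you the residual set $Q'$ has an $A$-cover \emph{using all $d$ coordinates}, which does not contradict $\pi(Q')$ having no $A'$-cover. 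Once $m$ can exceed $a_d+1$, your final summation $|Q|=\sum_j|Q_{t_j}|\leq(a_d+1)\prod_{i<d}(a_i+1)$ no longer has the right number of terms, and the slicing bookkeeping does not obviously recover the bound; the diagonal example shows the reason $Q$ is a sprawl is a genuinely cross-coordinate interaction, not a sprawl hiding in one slice.

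Your suspicion at the end is exactly right: the paper's proof is not an induction on dimension but a polynomial/linear-independence argument. For each $q\in Q$, minimality gives an $A$-cover $b^q$ of $Q\setminus q$ that fails to cover $q$; one forms $P^q(x)=\prod_{i=1}^{d}\prod_{j=1}^{a_i}(x_i-b^q_{ij})$, observes that $P^q(q')\neq 0$ exactly when $q'=q$, concludes that the $P^q$ are linearly independent, and bounds $|Q|$ by the dimension $\prod_i(a_i+1)$ of the space of polynomials with degree at most $a_i$ in $x_i$. Your tightness verification and the $d=1$ base case are fine, but the core of the upper bound as you have outlined it does not go through.
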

\begin{proof}
    For any $q\in Q$, there is an $A$-cover $b^{q} =\{b^q_{ij} :1\leq i\leq d,1\leq j \leq g_i\}$ of $Q\backslash q$ which is not an $A$-cover of $Q$. Therefore 
\begin{equation*}\label{firstforcoverproof}
    \text{for every $q' \in Q$, $q' \neq q$ there are $1\leq i\leq d,1\leq j\leq a_i$ such that $q'_i=b_{ij}^q$.}\tag*{(*)}
\end{equation*}
Also, for every $1\leq i \leq d, 1\leq j \leq a_i$, we have $q_{i}\neq b_{ij}$.
Now for every $q\in Q$, we define a real polynomial in the variables $x_1,...,x_d$.
\begin{equation*}
    P^q(x_1,...,x_d):=\prod^d_{i=1} \prod_{j=1}^{g_i} (x_i-b_{ij})
\end{equation*}
Consider any $q,q'\in Q$, interpreted as vectors. By \ref{firstforcoverproof}, $P^q(q')\neq 0$ if and only if $q'=q$. 

Thus, the $|Q|$ different polynomials $P^q$ are linearly independent. Note that they lie within the vector space of polynomials in $x_1,...,x_d$ where the degree of $x_i$ is $\leq a_i$. That space has dimension $\prod_{i=1}^d(a_i+1)$, so $|Q|\leq \prod_{i=1}^d(a_i+1)$. 
\end{proof}

\section{Applications}
\subsection{Deleting vertices (and replacing them with arcs)}
\epigraph{The waste of life occasioned by trying to do too many things at once is appalling.}{Orisen Swett Marden}

Some constructors (``destructors?'') take vertices away. So far, we've seen Corollary \ref{cor:redundantvisiondelete} and Proposition \ref{Removing2Leaf}. They help you simplify games either for specific analysis or to narrow the requirements to prove a theorem. We'd like to have more. In this case, we also get the first known instance of a constructor that can exchange vertices for arcs.

\begin{thm}\label{DeletableByPeerSets}
Let $v$ be Lyonic. If $r(v)^{-1}> \prod_{u\in N(v)} (g(u)+1)$, then $v$ is deletable.  
\end{thm}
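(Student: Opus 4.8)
The plan is to establish the two halves of deletability separately. The easy direction is that winnability of $\mathcal{G}\backslash v$ implies winnability of $\mathcal{G}$: since $D\backslash v \subseteq D$ and $g,h$ are unchanged on the surviving vertices, $\mathcal{G}\backslash v \preceq \mathcal{G}$, so Proposition \ref{monotone} applies. Everything else goes into the converse, which I will prove in contrapositive form: if $\mathcal{G}\backslash v$ is unwinnable, then $\mathcal{G}$ is unwinnable.

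So assume $\mathcal{G}\backslash v$ is unwinnable and fix an arbitrary strategy $f$ for $\mathcal{G}$; I want to produce a disprover. Write $S = N(v)$. Because $v$ is Lyonic, $S$ is metapeer in $D\backslash v$; in particular $S$ is independent in $D\backslash v$, and since the edges inside $S$ are the same whether or not $v$ is present, $S$ is independent in $D$ as well. Let $f_0$ be the restriction of $f$ to $V(D)\backslash v\backslash S$, and let $f_v$ be the plan $f$ assigns to $v$. Since $\mathcal{G}\backslash v$ has no winning strategy at all, $f_0$ does not extend to a winning strategy on $\mathcal{G}\backslash v$; invoking the contrapositive of Proposition \ref{ifyouloseyouloseonsprawls}.4 inside the game $\mathcal{G}\backslash v$ with the metapeer set $S$, there is a coloring $c$ for which $L'_{\mathcal{G}\backslash v}(S, f_0, c)$ is a $g(S)$-sprawl.

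Now shrink this sprawl to a minimal one: starting from $L'_{\mathcal{G}\backslash v}(S, f_0, c)$, discard points one at a time so long as what remains is still a $g(S)$-sprawl, arriving at a nonempty $W \subseteq L'_{\mathcal{G}\backslash v}(S, f_0, c)$ that is a $g(S)$-sprawl but such that $W\backslash q$ is $g(S)$-coverable for every $q \in W$. Lemma \ref{AlonCoveringLemma}, applied with the tuple $A = g(S)$, then bounds $|W| \leq \prod_{u\in S}(g(u)+1)$. Together with the standing hypothesis $r(v)^{-1} > \prod_{u\in S}(g(u)+1)$ this gives $r(v)^{-1} > |W| \geq \min_{c,W}|W|$, and there is of course at least one sprawl present. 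Proposition \ref{HAHplusSprawls}.3 --- whose only blanket hypothesis, independence of $N(v)$, we checked above --- now says that $f_0$ does not extend to a winning strategy on $\mathcal{G}$. Since $f$ is such an extension, $f$ does not win $\mathcal{G}$; as $f$ was arbitrary, $\mathcal{G}$ is unwinnable.

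The crux --- and the only place the numerical hypothesis is consumed --- is the passage from ``$\mathcal{G}\backslash v$ is unwinnable'' to ``every $f_0$ leaves behind a sprawl in $L'$'' followed by the size estimate for a minimal sprawl via Alon's polynomial argument (Lemma \ref{AlonCoveringLemma}); the quantity $\prod_{u\in N(v)}(g(u)+1)$ must be strictly beaten by the $r(v)$-dilution in Proposition \ref{HAHplusSprawls}.3 for the hint that $v$ supplies to be useless. The remaining subtlety is pure bookkeeping: verifying that ``Lyonic'' delivers exactly the metapeer hypothesis needed by Proposition \ref{ifyouloseyouloseonsprawls}.4 (inside $\mathcal{G}\backslash v$) and the independence hypothesis needed by Proposition \ref{HAHplusSprawls}.3 (inside $\mathcal{G}$), both of which follow from ``metapeer implies independent'' plus the fact that deleting $v$ does not alter adjacencies within $N(v)$.
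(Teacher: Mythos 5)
Your proof is correct and follows essentially the same route as the paper's: monotonicity (Proposition \ref{monotone}) for the easy direction, then the contrapositive of Proposition \ref{ifyouloseyouloseonsprawls}.4 to extract a sprawl, Lemma \ref{AlonCoveringLemma} to bound a minimal sprawl by $\prod_{u\in N(v)}(g(u)+1)$, and Proposition \ref{HAHplusSprawls}.3 to conclude. Your write-up is in fact slightly more careful than the paper's: you make explicit the passage to a \emph{minimal} sprawl before applying Alon's lemma, and you cite \ref{HAHplusSprawls}.3 where the paper's proof (apparently by typo) points to \ref{HAHplusSprawls}.2.
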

\begin{proof}
    If $\mathcal{G}\backslash v$ is unwinnable, then by \ref{ifyouloseyouloseonsprawls}.4, for any $f$, $L'(N(v),f,c)$ includes a sprawl for some $c$. By \ref{AlonCoveringLemma}, $\min_{c,W}|W|\leq \prod_{u\in N(v)} (g(u)+1))$. If $h(v)/g(v)=r(v)^{-1}$ is greater than that, $v$ is deletable by \ref{HAHplusSprawls}.2. 
\end{proof}
The following three corollaries hold because (respectively) every vertex in a bipartite graph is Lyonic, every leaf is Lyonic, and trees can be constructed by adding successive leaves.
\begin{coro}\label{cor:deleteinLatvianbip}
    Consider a Latvian game on $K_{n,m}$. If $v$ is a sage in the left partition with $h(v)>2^{m}$ (or in the right partition with $h(v)>2^{n}$), then $v$ is deletable. 
\end{coro}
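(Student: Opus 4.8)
The plan is to check that every vertex of a complete bipartite graph is Lyonic and then invoke Theorem~\ref{DeletableByPeerSets}, specialized to the Latvian case. Write $K_{n,m}$ with left part $L$ ($|L|=n$) and right part $R$ ($|R|=m$), and suppose $v\in L$, so that $N(v)=R$. First I would observe that in $D\setminus v = K_{n-1,m}$ every vertex $u\in R$ has $N(u)=L\setminus v$; hence any two vertices of $R$ are peer in $D\setminus v$, so $R=N(v)$ is a peer set, in particular metapeer, and $v$ is Lyonic. (The only thing to be slightly careful about is the degenerate case $n=1$, where the vertices of $R$ become isolated in $D\setminus v$; then they all have empty neighborhood, so they are still pairwise peer — alternatively they lie in distinct components — and $R$ is still metapeer. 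So Lyonicity holds in all cases.)

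Next I would specialize the numerical hypothesis of Theorem~\ref{DeletableByPeerSets}. In a Latvian game $g\equiv 1$, so $r(v)^{-1}=h(v)/g(v)=h(v)$ and $\prod_{u\in N(v)}(g(u)+1)=\prod_{u\in R}2 = 2^{m}$. Thus the condition ``$r(v)^{-1}>\prod_{u\in N(v)}(g(u)+1)$'' reads exactly ``$h(v)>2^{m}$,'' which is the hypothesis of the corollary. Theorem~\ref{DeletableByPeerSets} then gives that $v$ is deletable. The case $v\in R$ with $h(v)>2^{n}$ is identical after swapping the roles of $L$ and $R$.

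There is essentially no obstacle here: the statement is a direct translation of Theorem~\ref{DeletableByPeerSets} into the bipartite Latvian setting, and the content of the proof is just the bookkeeping that (i) bipartite neighborhoods are peer sets and (ii) $\prod_{u\in N(v)}2=2^{|N(v)|}$. If anything counts as the ``main point,'' it is the verification of the metapeer condition, which is where one must be attentive to the small-part edge cases noted above; the rest is purely substitution.
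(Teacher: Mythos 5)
Your proof is correct and follows exactly the paper's route: the paper derives this corollary from Theorem~\ref{DeletableByPeerSets} by noting that every vertex of a bipartite graph is Lyonic, which is precisely your verification, followed by the same substitution $g\equiv 1$, $\prod_{u\in N(v)}(g(u)+1)=2^{|N(v)|}$. Your treatment of the degenerate case $n=1$ is a small extra care the paper does not spell out, but otherwise the arguments coincide.
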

\begin{coro}\label{cor:deleteCzechleaves}
    Let $\ell$ be a leaf adjacent to $u$. If $1/r(\ell)>g(u)+1$, then $\ell$ is deletable. (If $1/r(\ell)\leq g(u)+1$, this may not hold: consider $D=P_2$, $h(\ell)=h(u)=k, g(\ell)=1, g(u)=k-1$.)
\end{coro}
\begin{coro}[Theorem 5 of \cite{BDFGM21}]
     For a tree $T$, $\mu_s(T)\leq s(s+1)$. 
\end{coro}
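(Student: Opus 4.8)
The plan is to derive the bound $\mu_s(T) \leq s(s+1)$ for trees from Corollary \ref{cor:deleteCzechleaves} by a straightforward induction on the number of vertices of $T$. The key observation is that $\mu_s(T) \leq k$ is equivalent to saying that the Polish game $(T, \star s, \star (k+1))$ is unwinnable, and that Polish games are in particular Czech games, so the leaf-deletion corollary applies directly to them.

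First I would set up the induction. Given a tree $T$ and the Polish game $\mathcal{G} = (T, \star s, \star k)$ with $k = s(s+1)+1$, I want to show $\mathcal{G}$ is unwinnable. If $T$ has one vertex, it is unwinnable by Corollary \ref{onevertexloses}. Otherwise, a tree with at least two vertices has a leaf $\ell$; let $u$ be its neighbor. In $\mathcal{G}$ we have $g(\ell) = s$, $h(\ell) = k = s(s+1)+1$, and $g(u) = s$, so $1/r(\ell) = h(\ell)/g(\ell) = (s(s+1)+1)/s = s+1 + 1/s > s+1 = g(u)+1$. Hence by Corollary \ref{cor:deleteCzechleaves}, $\ell$ is deletable: $\mathcal{G}$ has the same outcome as $\mathcal{G}\backslash \ell$, which is the Polish game $(T\backslash \ell, \star s, \star k)$ on the tree $T \backslash \ell$ with strictly fewer vertices. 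By the induction hypothesis $\mu_s(T\backslash \ell) \leq s(s+1) < k$, so $(T\backslash \ell, \star s, \star k)$ is unwinnable, and therefore so is $\mathcal{G}$. This shows $\mu_s(T) < k$, i.e. $\mu_s(T) \leq s(s+1)$.

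I do not anticipate a genuine obstacle here — the content is entirely in Corollary \ref{cor:deleteCzechleaves}, and the remaining work is bookkeeping. The one point to be careful about is the strict inequality: Corollary \ref{cor:deleteCzechleaves} requires $1/r(\ell) > g(u)+1$ strictly, and indeed with $k = s(s+1)+1$ we get $1/r(\ell) = s+1+1/s$, which exceeds $s+1$ because $s \geq 1$; had we tried $k = s(s+1)$ we would only get $1/r(\ell) = s+1$, which is not strictly greater, matching the sharpness remark in the corollary. A secondary tidiness issue is the base case and the convention $h > g > 0$: one should note that for $k = s(s+1)+1 > s$ the game $(T,\star s,\star k)$ is a legitimate game in our sense, and that "unwinnable for this one value of $k$" suffices since winnability is monotone in $k$ downward (Proposition \ref{monotone}), so $\mu_s(T)$, being the largest $k$ with the game winnable, is at most $s(s+1)$.
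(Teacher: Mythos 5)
Your proof is correct and follows exactly the route the paper intends: the paper derives this corollary from Corollary \ref{cor:deleteCzechleaves} together with the observation that trees can be dismantled by successive leaf deletions, which is precisely your induction. Your numerical check that $1/r(\ell)=s+1+1/s>s+1=g(u)+1$ for $k=s(s+1)+1$, and your remarks on the base case and on monotonicity in $k$, correctly fill in the bookkeeping the paper leaves implicit.
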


Given a digraph $D$ and independent set $S\subseteq V(D)$, let $D_{\overrightarrow{S}}$ be $D$ with arcs added to make $S$ metapeer. Given a game $\mathcal{G} \equiv (D,g,h)$, let $\mathcal{G}_{\overrightarrow{S}}$ be $(D_{\overrightarrow{S}},g,h)$. 
\begin{thm}\label{ReplaceVertexWithArcs}
    If $\mathcal{G}$ is winnable and $r(v)^{-1}>\prod_{u\in N(v)} (g(u)+1)$ for some directionless $v$, then $\mathcal{G}_{\overrightarrow{N(v)}}\backslash v$ is winnable.
\end{thm}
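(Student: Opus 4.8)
The plan is to deduce the statement from Theorem \ref{DeletableByPeerSets} applied to the modified game $\mathcal{G}_{\overrightarrow{N(v)}}$, rather than constructing a strategy by hand. Two observations do all the work. First, $D_{\overrightarrow{N(v)}}$ is obtained from $D$ purely by adding arcs while leaving $g$ and $h$ untouched, so $D\subseteq D_{\overrightarrow{N(v)}}$ and hence $\mathcal{G}\preceq \mathcal{G}_{\overrightarrow{N(v)}}$ via the first bullet of the ease relation; since $\mathcal{G}$ is winnable, Proposition \ref{monotone} gives that $\mathcal{G}_{\overrightarrow{N(v)}}$ is winnable. Second, in $\mathcal{G}_{\overrightarrow{N(v)}}$ the vertex $v$ is Lyonic: the added arcs were chosen exactly so that $N^{+}(\cdot)$ is constant on $N(v)$ in $D_{\overrightarrow{N(v)}}$, i.e.\ $N(v)$ is a peer set there (all of $N(v)$ lies in the same strong component as $v$, since $u\sim v$ for every $u\in N(v)$, so ``metapeer'' forces ``peer''), and because every $u\in N(v)$ has $v\in N^{+}(u)$, deleting $v$ subtracts the same element from each of these out-neighborhoods, leaving $N^{+}(\cdot)$ constant, hence $N(v)$ metapeer, in $D_{\overrightarrow{N(v)}}\backslash v$. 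Note also that $v$ is still directionless, since no arc incident to $v$ was added.

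Now the numerical hypothesis carries over verbatim: $g$ is unchanged on $N(v)$ and $r(v)$ is unchanged, so $r(v)^{-1}>\prod_{u\in N(v)}(g(u)+1)$ still holds in $\mathcal{G}_{\overrightarrow{N(v)}}$. Thus Theorem \ref{DeletableByPeerSets} applies to $v$ inside $\mathcal{G}_{\overrightarrow{N(v)}}$ and declares $v$ deletable there; combining this with the winnableness of $\mathcal{G}_{\overrightarrow{N(v)}}$ and the definition of ``deletable'' ($\mathcal{G}_{\overrightarrow{N(v)}}\backslash v$ has the same outcome as $\mathcal{G}_{\overrightarrow{N(v)}}$), we conclude that $\mathcal{G}_{\overrightarrow{N(v)}}\backslash v$ is winnable, which is exactly the claim.

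There is no genuinely hard step; the only thing needing care is verifying that Lyonicity of $v$ really transfers to $\mathcal{G}_{\overrightarrow{N(v)}}$ — equivalently that ``metapeer in $D_{\overrightarrow{N(v)}}$'' descends to ``metapeer in $D_{\overrightarrow{N(v)}}\backslash v$'' — together with confirming that passing from $\mathcal{G}$ to $\mathcal{G}_{\overrightarrow{N(v)}}$ disturbs neither $N(v)$, nor $g|_{N(v)}$, nor $r(v)$, so that the hypotheses of Theorem \ref{DeletableByPeerSets} are intact. A reader who wants to see what the replacement arcs actually buy the sages can instead unfold the proof of Theorem \ref{DeletableByPeerSets} in the setting of $\mathcal{G}_{\overrightarrow{N(v)}}$: one takes a winning strategy $f$ for $\mathcal{G}$, forms the hint $H_{f_v}$, and notes that in $\mathcal{G}_{\overrightarrow{N(v)}}\backslash v$ the peer set $N(v)$ collectively sees enough to run the derived plans, so by Proposition \ref{HAHplusSprawls}.2 it suffices to $g(N(v))$-cover each $L'(N(v),f_0,c)\backslash P_i$; the bound $r(v)^{-1}>\prod_{u\in N(v)}(g(u)+1)$ forces each such set to miss some $P_i$ entirely, and Lemma \ref{AlonCoveringLemma} then shows it is small enough to cover. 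I would present the reduction to Theorem \ref{DeletableByPeerSets}, as it is cleaner.
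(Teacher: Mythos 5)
Your proof is correct and is essentially the paper's own argument: the paper likewise notes $\mathcal{G}\preceq \mathcal{G}_{\overrightarrow{N(v)}}$ to get winnableness of the augmented game and then invokes Theorem \ref{DeletableByPeerSets} to delete $v$. You simply spell out the (true and worth-checking) verification that $v$ is Lyonic in $D_{\overrightarrow{N(v)}}$ and that the numerical hypothesis is undisturbed, which the paper leaves implicit.
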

\begin{proof}
   Because $\mathcal{G}\preceq \mathcal{G}_{\overrightarrow{N(v)}}$, the latter is winnable. By \ref{DeletableByPeerSets}, that implies $\mathcal{G}_{\overrightarrow{N(v)}}\backslash v$ is winnable.
\end{proof}
\begin{remark}
    Getting from a winning $f$ on $\mathcal{G}$ to a winning $f'$ on $\mathcal{G}_{\overrightarrow{N(v)}}\backslash v$ is simple. $f'=f$ for $V(D)\backslash N(v)\backslash v$, and $f'_{N(v)}$ is coordinated to cover $L'_{\mathcal{G}\backslash v}(N(v),f_0, c)$ along the lines of \ref{ifyouloseyouloseonsprawls}.4.
\end{remark}

\subsection{Solving Latvian trees (take 2)}\label{subsec:SlickerTreeProof}
\begin{lmm}\label{DeletableInTrees}
    For a vertex $v$ of a Latvian forest, if $h(v)>2^{\deg(v)}$, then $v$ is deletable.
\end{lmm}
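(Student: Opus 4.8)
The plan is to recognize that this lemma is almost immediate from Theorem \ref{DeletableByPeerSets}, once we verify the two hypotheses of that theorem in the present setting. Theorem \ref{DeletableByPeerSets} says that a Lyonic vertex $v$ with $r(v)^{-1} > \prod_{u \in N(v)}(g(u)+1)$ is deletable, so I need (i) that $v$ is Lyonic in a forest, and (ii) that the numerical hypothesis $h(v) > 2^{\deg(v)}$ translates into the product inequality for Latvian games.

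First I would check that $v$ is Lyonic, i.e.\ that $N(v)$ is metapeer in $D \backslash v$. Since a metapeer set only requires each pair of its vertices to be either peer or in different components, and $D$ is a forest, it suffices to show any two distinct neighbors $u_1, u_2$ of $v$ lie in different components of $D\backslash v$: if they lay in the same component, a path between them in $D\backslash v$ together with the edges $\overline{vu_1}$ and $\overline{vu_2}$ would close up a cycle in $D$, contradicting acyclicity. (If $D$ is a disconnected forest this argument is unaffected, since it is local to the component containing $v$.) Hence $N(v)$ is metapeer in $D\backslash v$ and $v$ is Lyonic.

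Second, I would unwind the numerics. In a Latvian game $g \equiv 1$, so $\prod_{u \in N(v)} (g(u)+1) = \prod_{u \in N(v)} 2 = 2^{\deg(v)}$, while $r(v)^{-1} = h(v)/g(v) = h(v)$. Thus the hypothesis $h(v) > 2^{\deg(v)}$ is exactly the inequality $r(v)^{-1} > \prod_{u \in N(v)}(g(u)+1)$ required by Theorem \ref{DeletableByPeerSets}. Applying that theorem gives that $v$ is deletable, completing the proof.

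I do not expect any real obstacle here: the only thing to be careful about is that Theorem \ref{DeletableByPeerSets} is stated with the strict inequality ``$>$'' (and the failure example in Corollary \ref{cor:deleteCzechleaves} shows strictness is genuinely needed), which matches the strict ``$h(v) > 2^{\deg(v)}$'' in the statement, so there is nothing lost. One could also note in passing that this recovers the leaf case ($\deg(v) = 1$, so leaves of hatness $> 2$ are deletable, cf.\ Proposition \ref{Removing2Leaf}) as the smallest instance.
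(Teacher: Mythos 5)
Your proposal is correct and follows the paper's own route exactly: reduce to Theorem \ref{DeletableByPeerSets} by observing that every vertex of a forest is Lyonic (two neighbors in the same component of $D\backslash v$ would force a cycle) and that in a Latvian game the hypothesis $h(v)>2^{\deg(v)}$ is precisely $r(v)^{-1}>\prod_{u\in N(v)}(g(u)+1)$. The paper leaves the numerical translation implicit; your spelling it out is a harmless elaboration of the same argument.
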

\begin{proof}
    By \ref{DeletableByPeerSets}, it suffices to prove that every vertex $v$ of a forest $F$ is Lyonic. That's obvious because $v$ is on the unique path between any two of its neighbors, so each of its neighbors lies in a different connected component of $F\backslash v$.
\end{proof}

This immediately suggests an algorithm: scan the vertices of $F$, looking for any $v$ with $h(v)>2^{\deg(v)}$. If you find none, stop and output ``winnable.'' If you find one, delete it and repeat. If you delete your last vertex, output ``unwinnable.'' 
\begin{thm}\label{TreeAlgo2}
    This algorithm is correct. Moreover, it returns ``winnable'' if and only if the forest $F$ contains a tree $T$ (not necessarily a whole component) with $h(v)\leq 2^{\deg_{T}(v)}$. 
\end{thm}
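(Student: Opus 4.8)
The plan is to run both halves of the statement off one observation—Lemma \ref{DeletableInTrees} makes every deletion the algorithm performs outcome-preserving—and then to catalogue exactly what the two possible halting states can look like.

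First I would handle termination and the correctness of each step: every iteration removes a vertex, so the algorithm stops after at most $|V(F)|$ steps, and whenever it removes a vertex $v$ with $h(v)>2^{\deg(v)}$, Lemma \ref{DeletableInTrees} says $v$ is deletable in a Latvian forest, so the game on the current forest has the same outcome as the game on the next forest. Hence the outcome of the original game on $F$ equals the outcome of the game on whichever forest $F'$ remains when the algorithm halts. Now analyze the two halting regimes. If the algorithm deletes every vertex, the penultimate forest was a single vertex $v$; since $h(v)\ge 2>1=2^{0}$ it was indeed deletable, and the game on $K_1$ is unwinnable by Corollary \ref{onevertexloses}, so by outcome-preservation the original game is unwinnable and reporting ``unwinnable'' is correct. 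If instead the algorithm halts with a nonempty forest $F'$, then every vertex of $F'$ satisfies $h(v)\le 2^{\deg_{F'}(v)}$; because $h(v)\ge 2$, no vertex of $F'$ has degree $0$, so $F'$ contains an edge. Take $T$ to be a component of $F'$ containing an edge: it is a tree on at least two vertices, $\deg_T=\deg_{F'}$ on $V(T)$, hence $h(v)\le 2^{\deg_T(v)}$ there. By Corollary \ref{WinningTreeHatnesses} the Latvian game on $T$ with hatnesses $2^{\deg_T(v)}$ is winnable, and by Proposition \ref{monotone} lowering the hatnesses keeps it winnable, so the game on $T$ is winnable; applying Proposition \ref{monotone} once more to the inclusion $T\subseteq F'$ (or Proposition \ref{prop:strongcomponly}) shows the game on $F'$ is winnable, so the original game is winnable and reporting ``winnable'' is correct.

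The ``moreover'' clause then follows almost immediately. If the algorithm returns ``winnable'', the tree $T\subseteq F'\subseteq F$ exhibited above has $h(v)\le 2^{\deg_T(v)}$ for all $v\in V(T)$, giving one direction. Conversely, if $F$ contains any subtree $T$ with $h(v)\le 2^{\deg_T(v)}$ for all $v\in V(T)$, then by Corollary \ref{WinningTreeHatnesses} and Proposition \ref{monotone} the game on $T$ is winnable, whence by Proposition \ref{monotone} applied to $T\subseteq F$ the game on $F$ is winnable, and by the correctness already proved the algorithm must return ``winnable''.

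The only place that needs care is the bookkeeping around the halting forest $F'$: checking that $h\ge 2$ forces $F'$ to have no isolated vertices (so that a genuine ``tree with an edge'' component exists), that degrees inside such a component coincide with degrees in $F'$, and that the subgraph relations $T\subseteq F'\subseteq F$ survive the sequence of deletions. None of this is deep, but it is where an otherwise mechanical argument could quietly go wrong, so I would state these points explicitly rather than leave them implicit.
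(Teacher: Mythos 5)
Your proof is correct and uses exactly the same two ingredients as the paper's (very terse) argument: Lemma \ref{DeletableInTrees} to make every deletion outcome-preserving, and Corollary \ref{WinningTreeHatnesses} together with Proposition \ref{monotone} to certify the surviving forest as winnable. You simply spell out the bookkeeping (the $K_1$ endgame via Corollary \ref{onevertexloses}, the absence of isolated vertices in the halting forest, and the ``moreover'' equivalence) that the paper leaves implicit.
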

\begin{proof}
    Lemma \ref{DeletableInTrees} guarantees that the algorithm's deletions preserve winnableness. Corollary \ref{WinningTreeHatnesses} guarantees that its outputs of ``winnable'' are true, while outputs of ``unwinnable'' are obviously true.
\end{proof}

\subsection{Czech stars as packing problems}
Stars provide the purest example of the discrete-geometry persepective. Consider $v$ to be the middle vertex and $S$ the leaves. The hardest winnable Latvian game on a star $K_{1,n}$ and the only (up to symmetries) winning strategy for it illustrate our covering concepts nicely. Recall that this is $h(\ell)=2$ for every leaf and $h(v)=2^{n}$. We have $\mathbb{H}(S)=\{0,1\}^n$. Each of the $2^n$ possible ``clues'' to be given to the leaves (i.e., $2^n$ possible colors of $v$) corresponds to the elimination of exactly one member of $\{0,1\}^n$, leaving a $(1,1,...,1)$-coverable set. (More generally, consider a Czech game on $K_{1,n}$ where $g(\ell)=h(\ell)-1$ for every leaf and $r(v)=\prod_{\ell\in S} h(\ell)$.) This is a good guessing-centric view of the tightness of Lemma \ref{AlonCoveringLemma}. General Czech stars are more elaborate.

An instance of the \textit{combinatorial prism packing problem} is a list of positive integers $a_1,...,a_n,$ $d_1,...,d_n,x,y$. A \textit{solution} for an instance of the combinatorial prism packing problem consists of a list of $x$ combinatorial prisms $B_k \subseteq \prod_{i=1}^n [d_i]$, each of which has measurements $a_1,...,a_n$, such that the intersection of any $y+1$ of them is empty.

\begin{prop}\label{lmm:WinOnCzechStars}
   A Czech star is winnable if and only if the combinatorial prism packing problem with $d_i=h(\ell_i)$, $a_i=h(\ell_i)-g(\ell_i)$, $x=h(v)$, and $y=g(v)$ has a solution. 
\end{prop}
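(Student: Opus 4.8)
The plan is to unfold what a strategy on a star is and read the packing condition off directly; morally this is the ``Hats As Hints'' dictionary (Proposition \ref{HeadlineHints}.9) applied to the center $v$, exploiting the fact that deleting $v$ leaves the leaves isolated, so their plans become mere fixed guess-sets indexed by the hint they receive. Write $S=\{\ell_1,\dots,\ell_n\}$. A strategy for $\mathcal{G}$ is a center plan $f_v\colon \mathbb{H}(S)\to\binom{[h(v)]}{g(v)}$ together with leaf plans $f_{\ell_m}\colon[h(v)]\to\binom{[h(\ell_m)]}{g(\ell_m)}$ (leaf $\ell_m$ sees only $v$). For each center-color $k\in[h(v)]$ I set $G_m^k:=f_{\ell_m}(k)$, the guesses $\ell_m$ makes on seeing $k$, and $B_k:=\prod_{m=1}^n\bigl([h(\ell_m)]\setminus G_m^k\bigr)$, a combinatorial prism of measurements $a_m=h(\ell_m)-g(\ell_m)$ inside $\prod_{m}[h(\ell_m)]$. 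Dually I set $A_k:=\{x\in\mathbb{H}(S):k\in f_v(x)\}$; since $|f_v(x)|=g(v)$ for every $x$, each point of $\mathbb{H}(S)$ lies in exactly $g(v)$ of the sets $A_0,\dots,A_{h(v)-1}$, and conversely every such family arises from a unique $f_v$ (it is precisely the hint $H_{f_v}$ of Definition \ref{def:derived}).

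Next I characterize disprovers. A coloring $c$ is a disprover exactly when nobody guesses right: $c(v)\notin f_v(c(S))$ (center wrong) and $c(\ell_m)\notin G_m^{c(v)}$ for all $m$ (every leaf wrong). The leaf condition says $c(S)\in B_{c(v)}$ and the center condition says $c(S)\notin A_{c(v)}$. Hence $f$ has a disprover if and only if $B_k\not\subseteq A_k$ for some $k$, so $f$ wins if and only if $B_k\subseteq A_k$ for every $k$. Therefore $\mathcal{G}$ is winnable if and only if one can choose $h(v)$ combinatorial prisms $B_0,\dots,B_{h(v)-1}$ of measurements $a_1,\dots,a_n$ in $\prod_m[h(\ell_m)]$ and a family $\{A_k\}$ (each point of $\mathbb{H}(S)$ in exactly $g(v)$ of them) with $B_k\subseteq A_k$ for all $k$.

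The last step is the ``fattening'' lemma: such an $\{A_k\}$ exists if and only if no point of $\mathbb{H}(S)$ lies in more than $g(v)$ of the prisms $B_k$. Necessity is immediate from $B_k\subseteq A_k$; for sufficiency I put each point $x$ into every $A_k$ with $x\in B_k$ (at most $g(v)<h(v)$ of them) and then into arbitrarily many further $A_k$'s until it lies in exactly $g(v)$ of them, choosing independently for each $x$. Finally, ``no point lies in more than $g(v)=y$ of the $B_k$'' is literally ``any $y+1$ of the $B_k$ have empty intersection,'' which is exactly the definition of a solution to the combinatorial prism packing problem with $d_i=h(\ell_i)$, $a_i=h(\ell_i)-g(\ell_i)$, $x=h(v)$, $y=g(v)$; chaining the equivalences gives the proposition. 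The proof is essentially bookkeeping, and the only place that needs genuine (if mild) care is keeping the two dual families straight — $B_k$ is what the leaves \emph{fail} to cover under hint $k$, $A_k$ is what center-color $k$ \emph{is willing} to cover — and noticing that the enlargement step never fails because $g(v)<h(v)$.
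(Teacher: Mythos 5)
Your proof is correct and is in substance the paper's own argument: the paper's (very terse) proof just cites Proposition \ref{HAHplusSprawls}.1/.2 together with Proposition \ref{prop:winnableifcomplementhasprism}, which encode exactly your dictionary --- the family $\{A_k\}$ is the hint $H_{f_v}$, and ``$\mathbb{H}(S)\setminus A_k$ is $g(S)$-coverable'' is precisely ``$A_k$ contains a prism of measurements $h(\ell_m)-g(\ell_m)$,'' i.e.\ your $B_k\subseteq A_k$. You have simply inlined those two propositions, including the fattening step, so nothing is missing.
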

\begin{proof}[Proof sketch.]
    Apply Propositions \ref{HAHplusSprawls}.1 and \ref{prop:winnableifcomplementhasprism}.
\end{proof}
This isn't a wholly satisfactory solution in the sense of Section \ref{OurApproach}. A solution for the combinatorial prism packing problem is relatively easy to check, but we don't know if it's hard to find. See Section \ref{sec:computation}.

\subsection{Complete bipartite graphs as packing/covering problems}
The same idea holds for bigger complete bipartite graphs. It's just more complicated. 
We'll arbitrarily designate one side to be the ``hint-giving side'' and the other to be the ``covering side.'' (We're finally giving life to the suggestion in the Hats as Hints section that we could use multiple hints.)
Let $a_1,...,a_n$ be the vertices on the left (the coverers) and let $b_1,...,b_m$ be the vertices on the right (the hint-givers). You may also think of $a_1,...,a_n$ as our peer set and $b_1,...,b_m$ as various vertices whose neighborhood is peer.

\begin{thm}\label{ref:CzechCompleteBipartite}
    Consider a complete bipartite Czech game  with vertices $A\equiv \{a_1,...,a_n\}$ on the left and $B\equiv \{b_1,...,b_m\}$ on the right. It is winnable if and only if there exists a set $\mathfrak{P}=\{\mathcal{P}_1,...,\mathcal{P}_m\}$ with the following properties: 
    \begin{itemize}
        \item for all $i$, $\mathcal{P}_i$ is a list of subsets $P_{i,q}\subseteq \mathbb{H}(A)$,  
        \item for all $i$, $\mathcal{P}_i$ has length $h(b_i)$,
        \item for all $i$ and for each $x\in \mathbb{H}(A)$, there exist exactly $g(b_i)$ sets $P_{i,q}\in \mathcal{P}_i$ such that $x\in P_{i,q}$,
        \item and for any $(q_1,q_2,...,q_m)\in \mathbb{H}(B)$, $\bigcup P_{i,q_i}$ contains a combinatorial prism of measurements $\{h(a_i)-g(a_i)\}$.
    \end{itemize}
\end{thm}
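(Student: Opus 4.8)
The statement is a direct generalization of the Czech-star characterization (Proposition~\ref{lmm:WinOnCzechStars}), now with several hint-givers instead of one. The plan is to run the ``Hats as Hints'' machinery of Section~\ref{sec:HAH} once for each vertex $b_i$ on the right, treating the entire right side $B$ as a collection of peer-neighborhood vertices whose plans become a bundle of hints to the peer set $A=\{a_1,\dots,a_n\}$. Concretely, I would delete all of $B$ one vertex at a time via Proposition~\ref{HeadlineHints}.9 (iterated): $\mathcal{G}$ is winnable iff $\mathcal{G}$ with $b_1,\dots,b_m$ removed and replaced by $m$ hints $H_1,\dots,H_m$ (each $H_i$ a $g(b_i),h(b_i)$-hint with respect to $N^+(b_i),N^-(b_i)$, which in the undirected complete bipartite case is just $A,A$) is winnable for some choice of those hints. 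Since the remaining graph $G[A]$ is edgeless, the ``remaining game'' has all of $A$ as a single peer set (each $a_j$ sees exactly $N^+(a_j)=B$, whose colors are now encoded by the hint-utterances), so Proposition~\ref{ifyouloseyouloseonsprawls} and the covering/sprawl calculus apply.

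First I would set up the translation between hints and the set-families $\mathcal{P}_i$. By Proposition~\ref{HeadlineHints}.1--2, a plan $f_{b_i}$ for $b_i$ is equivalent to a $g(b_i),h(b_i)$-hint $H_{f_{b_i}}$, whose constituent sets $A_{i,q}=f_{b_i}^{-1}(q)\subseteq\mathbb{H}(A)$ satisfy exactly: each $x\in\mathbb{H}(A)$ lies in exactly $g(b_i)$ of the $A_{i,q}$, and there are $h(b_i)$ of them. Setting $P_{i,q}:=\mathbb{H}(A)\setminus A_{i,q}$ — or, depending on which sign convention makes the ``union contains a prism'' clause come out right, keeping $A_{i,q}$ itself — gives precisely the first three bullets of the theorem. (I would check carefully which of $A_{i,q}$ versus its complement belongs in the statement; the membership-count condition is self-complementary in the sense that $x$ lies in exactly $g(b_i)$ of the $A_{i,q}$ iff it lies in exactly $h(b_i)-g(b_i)$ of the complements, so the third bullet is consistent either way up to renaming $g(b_i)\leftrightarrow h(b_i)-g(b_i)$, and I must pick the version that matches the prism clause.)

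Next I would analyze what it means for the peer set $A$, having heard the bundle of utterances ``$\neg A_{1,q_1}$'', ..., ``$\neg A_{m,q_m}$'', to be able to win. By Proposition~\ref{ifyouloseyouloseonsprawls}.4 (applied to the peer set $A$ in the game $G[A]$ with the hint information) together with Proposition~\ref{prop:winnableifcomplementhasprism}, the coverers can guarantee a correct guess for a given utterance-tuple $(q_1,\dots,q_m)$ if and only if the set of colorings of $A$ still ``live'' under that tuple — namely $\bigcap_i A_{i,q_i}$, the colorings consistent with all the (negated) hints — no wait: the utterance ``$\neg A_{i,q_i}$'' asserts $c(A)\notin A_{i,q_i}$, so the live set is $\bigcap_i (\mathbb{H}(A)\setminus A_{i,q_i}) = \bigcap_i P_{i,q_i}$ is the wrong combination; rather, what the coverers must cover is the set of colorings the Adversary is still allowed to use, and the Adversary, having committed to utterances indexed by $(q_1,\dots,q_m)$, may use any $c(A)$ with $c(A)\notin A_{i,q_i}$ for every $i$ — so the union-of-$A_{i,q_i}$ over $i$ is exactly the \emph{forbidden} set, and its complement $\bigcap_i(\mathbb{H}(A)\setminus A_{i,q_i})$ is what must be covered. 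Hence (via Proposition~\ref{prop:winnableifcomplementhasprism}) winnability for this tuple is equivalent to: the complement of that set — i.e.\ $\bigcup_i A_{i,q_i}$ — contains a combinatorial prism of measurements $\{h(a_j)-g(a_j)\}$. With $P_{i,q}:=A_{i,q}$ this is precisely the fourth bullet. Running over all tuples $(q_1,\dots,q_m)\in\mathbb{H}(B)$ (and noting by Proposition~\ref{HintWinning}.1 that a hint-strategy wins iff it wins against every utterance) closes the equivalence: a winning strategy exists iff such a family $\mathfrak{P}$ exists.

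\textbf{Main obstacle.} The delicate part is the bookkeeping of complements and which ``side'' of the membership condition goes where — i.e.\ making the four bullets line up with the hint formalism without an off-by-a-complement error, and confirming that the $b_i$'s genuinely act independently (that deleting them one at a time via iterated Proposition~\ref{HeadlineHints}.9 yields \emph{independent} hints whose utterances range over the product $\mathbb{H}(B)$, rather than some correlated subset). This hinges on the fact that in a complete bipartite graph each $b_i$ sees exactly $A$ and nothing else, so the hint $H_i$ depends only on $c(A)$ and the $m$ hints are genuinely separate coordinates; I would spell this out carefully. Everything else is a direct assembly of Propositions~\ref{HeadlineHints}, \ref{HintWinning}, \ref{ifyouloseyouloseonsprawls}, and \ref{prop:winnableifcomplementhasprism}, exactly as in the star case but iterated $m$ times.
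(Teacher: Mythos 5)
Your proof is correct and follows essentially the same route as the paper's (which is itself only a sketch citing Proposition \ref{HAHplusSprawls}.1 extended to multiple hint-givers and the star case \ref{lmm:WinOnCzechStars}): treat the plans on $B$ as a bundle of hints to the peer set $A$ and apply the covering/prism calculus to each utterance-tuple. Your complement bookkeeping does land in the right place in the end --- $P_{i,q}=A_{i,q}=f_{b_i}^{-1}(q)$, the set to be covered is $\mathbb{H}(A)\setminus\bigcup_i A_{i,q_i}$, and Proposition \ref{prop:winnableifcomplementhasprism} converts its coverability into the prism condition on $\bigcup_i P_{i,q_i}$.
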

\begin{proof}[Proof sketch.]
    \ref{HAHplusSprawls}.1 extends somewhat obviously from the case of one $v$ to the case of multiple $b_1,...,b_m$ with $\forall (N(b_i)=N(b_j))$. Then imitate the proof of \ref{lmm:WinOnCzechStars}. 
\end{proof}
\begin{corollary}\label{cor:ClassicBipartite}
    $\mu(K_{m,n})\geq k$ if and only if there exist $n$ partitions $\mathcal{P}_i$ of $[k]^m$ into $k$ parts such that, whenever we select one member $P_{i,q_i}$ from each $\mathcal{P}_i$, the set $\bigcup P_{i,q_i}$ contains a $(k-1)\times (k-1) \times ... \times (k-1)$ combinatorial prism.
\end{corollary}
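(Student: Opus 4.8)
The plan is to obtain this as a direct specialization of Theorem \ref{ref:CzechCompleteBipartite} to the Classic game. First recall that $\mu(K_{m,n})$ is by definition the largest integer $k$ for which the Classic game $(K_{m,n},\star k)$ is winnable; since lowering $h$ uniformly can only make a game easier (Proposition \ref{monotone}), winnability of $(K_{m,n},\star k)$ is equivalent to $\mu(K_{m,n})\ge k$. So it suffices to characterize winnability of the Czech game on $K_{m,n}$ with $h\equiv k$ and $g\equiv 1$.

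Next I would apply Theorem \ref{ref:CzechCompleteBipartite}, using the freedom to designate either part of $K_{m,n}$ as the ``covering side'': instantiate the theorem's $A$ as the part of size $m$ (vertices $a_1,\dots,a_m$) and its $B$ as the part of size $n$ (vertices $b_1,\dots,b_n$). This relabeling is exactly why the corollary reads ``$n$ partitions of $[k]^m$'' rather than the reverse. Substituting $h(v)=k$, $g(v)=1$ for every vertex, we get $\mathbb{H}(A)=[k]^m$ and $\mathbb{H}(B)=[k]^n$; the set $\mathfrak{P}$ consists of $|B|=n$ families $\mathcal{P}_1,\dots,\mathcal{P}_n$, each a list of subsets of $[k]^m$ of length $h(b_i)=k$.

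It then only remains to read off what the two nontrivial conditions of the theorem become. The condition ``for each $x\in\mathbb{H}(A)$ there are exactly $g(b_i)$ sets $P_{i,q}\in\mathcal{P}_i$ with $x\in P_{i,q}$'' says, for $g(b_i)=1$, that the $k$ listed sets of $\mathcal{P}_i$ are pairwise disjoint and cover $[k]^m$ — that is, $\mathcal{P}_i$ is a partition of $[k]^m$ into $k$ (possibly empty) parts, exactly as in the statement. The last condition, ``for any $(q_1,\dots,q_n)\in\mathbb{H}(B)$ the set $\bigcup_i P_{i,q_i}$ contains a combinatorial prism of measurements $\{h(a_j)-g(a_j)\}_{j=1}^{m}$'', becomes ``$\bigcup_i P_{i,q_i}$ contains a $(k-1)\times(k-1)\times\cdots\times(k-1)$ ($m$ factors) combinatorial prism'', since each $h(a_j)-g(a_j)=k-1$; and ``selecting one member $P_{i,q_i}$ from each $\mathcal{P}_i$'' is precisely ranging over $(q_1,\dots,q_n)\in[k]^n$. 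This reproduces the corollary verbatim.

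The argument has essentially no obstacle: it is pure bookkeeping of definitions, and the only points requiring a moment's care are (i) the symmetry swap of the two sides of $K_{m,n}$ so that the $n$-vertex side supplies the hints, and (ii) recording that ``exactly one listed set contains $x$'' is the same as the partition condition, with empty parts allowed but harmless. Alternatively — though less economically — one could bypass Theorem \ref{ref:CzechCompleteBipartite} and re-derive this case directly from Propositions \ref{HAHplusSprawls}.1 and \ref{prop:winnableifcomplementhasprism} together with the hats-as-hints correspondence, treating each right-hand vertex as a hint-giver to the peer set formed by the left-hand vertices; that route, however, just replays the proof of the theorem in miniature.
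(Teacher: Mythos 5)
Your proposal is correct and matches the paper's intended derivation exactly: the paper states this corollary immediately after Theorem \ref{ref:CzechCompleteBipartite} with no separate proof, precisely because it is the routine specialization to $g\equiv 1$, $h\equiv\star k$ that you carry out. Your bookkeeping (the side-swap so that the $n$-vertex part gives hints, the reduction of the ``exactly $g(b_i)=1$'' condition to a partition, and the $\mu(K_{m,n})\ge k$ $\iff$ $(K_{m,n},\star k)$ winnable step via Proposition \ref{monotone}) is exactly what the paper leaves implicit.
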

On an aesthetic note, it's not obvious (to me at least) without the hat guessing formalism that $m$ and $n$, or $\{a_i\}$ and $\{b_i\}$, can be exchanged in the above statements. 

\chapter{Randomizing hats}\label{chap:LLL}
Here we study restrictions on $r(v)$ that arise when we consider the uniform probability space of colorings and investigate the probability of the event that no sage guesses right. Mostly we use the Lovász Local Lemma (LLL) and its variants, which we'll state. These deal with dependency (di)graphs, structures whose vertices are events and whose arcs/edges express possible dependence. They say, of certain probability vectors over the vertices, that any set of events with these probabilities and this dependency (di)graph, that with positive probability (w.p.p.) none of the events occur. 

We discuss this approach's history and previous results. Then we define the ``Ratio game,'' which abstracts away from the $g,h$ of the Czech game to $r$ alone. We explain how all facts about Czech games in this paradigm are actually facts about Ratio games. 

The headline fact, which enables all applications, is: ``ratio is probability; visibility is dual to dependency.'' That is to say: the probability that $v$ guesses right is $r(v)$, and the dependency graph for the events ``sage $v$ guesses right'' is simply the dual of the visibility graph, where the vertex corresponding to a sage then corresponds to the event that she guesses right. (\cite{HV15} remarks that ``nearly all applications [of the Lovász Local Lemma] of which we are aware involve an undirected graph.'' This is one that does not!) These applications include a bound in $\Delta^-$ of $\hat{\mu}$ for a digraph and a necessary condition for a Ratio game to be winning. Also, we consider the hope of Scott and Sokal that Shearer's result on independence polynomials could extend to directed graphs and give a disappointing answer using the example of the Classic game $(\overrightarrow{C}_k, 2)$.

We should also note that LLL-related methods needn't always be negative. Certain variants, like Shearer's lemma for (undirected) dependency graphs, are sharp. That is to say, they give sufficient \textit{and necessary} conditions for a probability vector to guarantee that w.p.p., none of the events occur. (This is for generic events, however––for Ratio games, it's only been extended to chordal graphs \cite{BDO21}. See Question \ref{PolynomialWorksForAllRatioGames}.) 

Nor need they be nonconstructive (and so out of line with our algorithmic orientation). In recent years, there's been much work (e.g. \cite{HV15}) in constructive methods––algorithms that can find a specific member of the probability space such that none of the events happen. 

Finally, there are other notions of dependency (di)graphs. One might fruitfully consider the ``soft-core dependencies'' of \cite{SS06} (see especially Theorem 4.8). The ``d-dependency'' graph of \cite{KLP16} can be strictly sparser than our dependency digraph, though on first inspection it seems to be usually denser for hat guessing. In general, the ``variable-LLL'' approach \cite{HLLWX17} is appropriate for hat guessing, but it's not clear to me how useful. 

\textbf{History.} Nobody before has considered this method in conjunction with directed graphs. An ancient result of deterministic hat guessing––already ``folklore'' by \cite{Far16}––is that $\mu(G)\in O(\Delta(G))$. Different sources give different precise bounds. Reasoning by entropy compression, \cite{Far16} obtained $\mu\leq 4\Delta+4$. Using the Lovász Local Lemma, Farnik obtained $\mu \leq e\Delta+e$, but he remarked that by using \cite{She85}, one could obtain a bound of $<e\Delta$. \cite{ABST20}, using the same paradigm, shows a complex proposition that relates $\mu$ for subgraphs on vertices of high degree to $\mu$ for the whole graph. \cite{BDO21} brought in Shearer's Lemma to prove some necessary conditions on an undirected Ratio game, which they prove are sufficient when the graph is chordal. (They don't speak of ``Ratio games,'' but this is essentially what they prove.) This polynomial condition was picked up by \cite{LK22} and \cite{LK23} and combined with constructors.

\section{Defining Ratio games}
\epigraph{Natura non facit saltum.}{Leibiz, Lucretius, and others}

Throughout, ``measure'' means the usual Lebesgue measure on $[0,1)^n$ for any $n$. That's our probability space. I.e., we draw colorings $c$ uniformly at random from $[0,1)^{|V(D)|}$. For $m\in [0,1]$, let $\mathcal{F}_m$ denote the collection of subsets of $[0,1)$ of measure $m$. 

\begin{defn}
    Let $A, A_0,...,A_{k-1}$ be events. We say that $A$ is \textit{mutually independent of} the set of events $\{A_0,...,A_{k-1}\}$ if 
    
    \[\mathbf{P}\left(A\cap \left(\bigcap_{i\in J} A_i\right)\cap \left(\bigcap_{i\in [k]\backslash J} A_i^c\right)\right)=\mathbf{P}(A)\mathbf{P}\left(\left(\bigcap_{i\in J} A_i\right)\cap \left(\bigcap_{i\in [k]\backslash J} A_i^c\right)\right)\]

    for every $J\subseteq [k]$. Equivalently, for every $J\subseteq [k]$, we have 
    \[\mathbf{P}\left(A\, \mid \, \bigcap_{i\in J} A_i\cap \bigcap_{i\in [k]\backslash J} A_i^c \right)=\mathbf{P}(A).\] 
    Consider a set of events $\mathcal{A}=\{A_1,...,A_n\}$. A directed graph $D$ with $V(D)=\mathcal{A}$ is called a \textit{dependency digraph for $\mathcal{A}$} if for all $A_i\in \mathcal{A}$, $A_i$ is mutually independent of the events $\{A_j \; \mid \; \overrightarrow{A_iA_j}\not\in E(D)\}$. An undirected dependency digraph is called a \textit{dependency graph.} 
\end{defn}

\begin{defn}
    Fix a digraph $D$ with vertex set $X$. If a sequence $\mathbf{p}=\{p_x\}_{x\in X}\in [0,1]^X$ has the property that, for every collection of events $\{A_x\}_{x\in X}$ with dependency digraph $D$ with $\forall x (\mathbf{P}(A_x)\leq p_x)$ we have $\mathbf{P}(\bigcap_{x\in X} \neg A_x)>0$, then we say that $\mathbf{p}$ is \textit{good for $D$}. 
\end{defn}

The essence of the Lovász Local Lemma and its variants is to say, of sequences $\mathbf{p}$ and digraphs $D$, whether $\mathbf{p}$ is good for $D$ or not. Sufficient and necessary (albeit NP-hard) conditions are known for dependency graphs; for dependency digraphs less is known. 

\begin{defn}\label{def:RatioGames}
    A \textit{Ratio game} is a pair $(D, r)$, where $D$ is a digraph and $r$ is a function from $V(D)$ to $(0,1)$. We call $r(v)$ the \textit{ratio of $v$}. For each sage $v$, the set of possible hat colors $V_v$ is $[0,1)$;  a coloring $c$ is a function $v: V(D)\rightarrow [0,1)$. A plan for $v$ is a function $f_v: \prod_{u\in N^+(v)} V_u \rightarrow \mathcal{F}_{r(v)}$. That is, for every possible sight $v$ could have, she has an associated measure-$r(v)$ set that she will guess. We say she guesses right if $c(v)\in f_v(N^+(v))$. The rest of the nomenclature extends in the obvious fashion. As usual, $R_v$ denotes the event that sage $v$ wins. (For discussion of alternative definitions, limit properties, etc., see Subsection \ref{subsec:RatioGames}.)
\end{defn}  
\begin{defn}
    Let $\mathcal{G}\equiv (D,r)$ and $\mathcal{G}'\equiv (D',r')$ be Ratio games. The ease relation, $\preceq$, is the minimal partial order such that $\mathcal{G} \preceq \mathcal{G'}$ if either of the following hold: 
    \begin{itemize}
        \item $D\subseteq D'$ and $r=r'$ on $V(D)$.
        \item $D=D'$ and $r'\geq r$.
    \end{itemize}
    If $\mathcal{G}$ is a Czech game and $\mathcal{G}'$ is a Ratio game, with $D=D'$ and $r=r'$, we call $\mathcal{G}'$ the \textit{associated Ratio game of $\mathcal{G}$}. We confidently write $\mathcal{G}\preceq \mathcal{G}'$  and take the transitive closure under this additional stipulation. As before, we say $\mathcal{G}'$ is easier than $\mathcal{G}$.  
\end{defn}
Clearly, all probabilistically derived results about Czech games that use only $r$ without considering $g$ or $h$ actually pertain to the associated Ratio game of that Czech game.
\begin{prop}\label{RatioMonotone}
    If $\mathcal{G}\preceq \mathcal{G}'$ and $\mathcal{G}$ is winnable, then $\mathcal{G}'$ is winnable.    
\end{prop}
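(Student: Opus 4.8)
The plan is to copy the structure of the proof of Proposition~\ref{monotone}. Since $\preceq$ is by definition the transitive closure of a short list of generating relations, and since ``winnable'' is visibly preserved under composition of the associated strategy transformations, it suffices to handle one generating step at a time. In the combined Czech/Ratio poset there are exactly three kinds of generating step $\mathcal{G}\preceq\mathcal{G}'$ to treat: (a)~$D\subseteq D'$ and $r=r'$ on $V(D)$; (b)~$D=D'$ and $r'\ge r$ pointwise; (c)~$\mathcal{G}$ a Czech game and $\mathcal{G}'$ its associated Ratio game. (If $\mathcal{G}$ and $\mathcal{G}'$ are both Czech, no new generating step applies and the claim is Proposition~\ref{monotone}; note also that every generating step leads \emph{toward} a Ratio game, so a $\preceq$-chain starting from $\mathcal{G}$ stays within this list of cases.)

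Steps (a) and (b) are essentially immediate. For (a): the sages at vertices of $V(D)$ retain their plans, simply disregarding any hats on $V(D')\setminus V(D)$ that they can newly see (a plan may depend on only part of its input), and any new sage guesses an arbitrary fixed set of the prescribed measure, which exists since each ratio lies in $(0,1)$; restricting a coloring of $D'$ to $D$ and invoking the winning strategy of $\mathcal{G}$ produces a correct guess by some old sage. For (b): for each sage $v$ and each possible sight $x$, enlarge the guessed set $f_v(x)$ of measure $r(v)$ to a measurable superset of measure $r'(v)$ --- possible because its complement has measure $1-r(v)\ge r'(v)-r(v)$ --- and let $f'_v(x)$ be that superset; a sage correct under $f$ is still correct under $f'$.

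The only step carrying any content is (c), and the idea is discretization. Partition each colour set $V_v=[0,1)$ into $h(v)$ half-open intervals $I_v^0,\dots,I_v^{h(v)-1}$ of length $1/h(v)$, and let $\beta_v\colon[0,1)\to[h(v)]$ return the index of the interval containing its argument. A real coloring $c$ then induces a coloring $\bar c$ of the Czech game $\mathcal{G}$ via $\bar c(v)=\beta_v(c(v))$. From a winning strategy $f$ for $\mathcal{G}$ build $f'$ for $\mathcal{G}'$: sage $v$, observing $c(u)$ for $u\in N^+(v)$, computes $\beta_u(c(u))$ for each such $u$ --- exactly the data that $f_v$ consumes --- obtains the size-$g(v)$ guess set $f_v(\bar c(N^+(v)))\subseteq[h(v)]$, and guesses the union of the corresponding intervals, a set of measure $g(v)/h(v)=r(v)$. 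If some real coloring $c$ were a disprover for $f'$, then for the Czech coloring $\bar c$ no sage $v$ has $\bar c(v)\in f_v(\bar c(N^+(v)))$ --- for otherwise $c(v)\in I_v^{\bar c(v)}$ would lie in $v$'s Ratio-game guess --- so $\bar c$ would disprove $f$, a contradiction.

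The main thing to be careful about --- really the only delicate point --- is the bookkeeping in step (c): verifying that $f'_v$ depends only on what $v$ actually sees (it does, since $\beta_u(c(u))$ is a function of $c(u)$ alone) and that the guessed sets have exactly the required measure $r(v)$. After that, transitivity of the implication ``winnable $\Rightarrow$ winnable'' along any $\preceq$-chain completes the argument.
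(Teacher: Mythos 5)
Your proposal is correct and matches the paper's own argument: reduce to the generating cases of $\preceq$, handle the purely Ratio steps trivially, and for the Czech-to-associated-Ratio step discretize $[0,1)$ into $h(v)$ equal-measure intervals so that the Ratio strategy simulates the Czech one. The only difference is that you write out the bookkeeping the paper leaves as ``easy,'' which is fine.
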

\begin{proof}
    If both games are Czech, this is Proposition \ref{monotone}. If both games are Ratio games, it suffices to check it for each bullet point, which is trivial. If $\mathcal{G}$ is Czech and $\mathcal{G}'$ a Ratio game, it suffices (by the previous two sentences and the definition of $\preceq$ as a transitive closure)
    to prove it for $\mathcal{G}'$ the associated Ratio game. For that, it's easy to define a strategy for $\mathcal{G}'$ that imitates any strategy from $\mathcal{G}$ by segmenting the unit interval into $h(v)$ equal-measure segments, having the guess functions ignore any data besides which interval $c(u)$ is in (for $v\rightarrow u$), and have them output only sets from the the intersection of $\sigma$-algebra of that segmentation. Clearly, if the strategy from $\mathcal{G}$ wins, so does the strategy we derived for $\mathcal{G}'$. 
\end{proof}
\begin{prop}\label{prop:fractionalandratio}
    $\hat{\mu}(D)=1/r_s$, where $r_s$ is the supremum of constants $r_0$ such that $(D,r_0)$ is unwinnable.
\end{prop}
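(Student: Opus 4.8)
$\hat{\mu}(D)=1/r_s$, where $r_s$ is the supremum of constants $r_0$ such that $(D,r_0)$ is unwinnable.

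\medskip

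The plan is to relate the Ratio game $(D,r_0)$ with constant ratio $r_0$ to the Polish games $(D,\star s, \star k)$, exploiting that $\hat{\mu}(D) = \sup_s \mu_s(D)/s$ and that a Polish game's associated Ratio game is $(D,\star (s/k))$. First I would establish the two halves of the defining inequality. For the easy direction, suppose $r_0 < r_s$ is such that $(D,r_0)$ is unwinnable; I want to deduce that $\hat\mu(D) \le 1/r_0$, i.e. that for every $s$, $\mu_s(D)/s \le 1/r_0$, i.e. $s/\mu_s(D) \ge r_0$. If some Polish game $(D,\star s,\star k)$ with $s/k \ge r_0$ were winnable, then by Proposition~\ref{RatioMonotone} (applied to $\mathcal G \equiv (D,\star s,\star k) \preceq$ its associated Ratio game $(D, \star(s/k))$, and then the Ratio-to-Ratio bullet of $\preceq$ lowering the constant ratio from $s/k$ down to $r_0$) the Ratio game $(D, r_0)$ would be winnable — contradiction. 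Hence every winnable Polish game on $D$ has $s/k < r_0$, so $\mu_s(D)/s < 1/r_0$ for all $s$ (note $\mu_s(D)$ is by definition the largest $k$ with $(D,\star s,\star k)$ winnable, so $s/\mu_s(D) \ge r_0$, hence $\mu_s(D)/s \le 1/r_0$), giving $\hat\mu(D) \le 1/r_0$. Taking $r_0 \to r_s^-$ yields $\hat\mu(D) \le 1/r_s$.

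For the reverse inequality $\hat\mu(D) \ge 1/r_s$, I want to show that for every rational $r_0 > r_s$, there is a Polish game witnessing $\mu_s(D)/s$ close to $1/r_0$; then $\hat\mu(D) \ge \sup_{r_0 > r_s} 1/r_0 = 1/r_s$. Fix such a rational $r_0 = s/k$ with $s < k$. Since $r_0 > r_s$, the constant-ratio game $(D, r_0)$ is winnable (as $r_s$ is the supremum of the unwinnable constant ratios, anything strictly above it is winnable). The key step is then a \emph{discretization lemma}: a winning strategy for the Ratio game $(D, \star (s/k))$ can be converted into a winning strategy for the Polish game $(D, \star s, \star k)$. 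This is essentially the content of the argument in the proof of Proposition~\ref{RatioMonotone}, run in reverse: in a winning Ratio strategy for constant ratio $s/k$, each plan $f_v$ assigns to each visible configuration a measurable set of measure $s/k$; one partitions $[0,1)$ into $k$ equal subintervals, and one must show that the winning Ratio strategy can be taken (without loss) to respect this partition on inputs and outputs. The cleanest route is to observe that a Ratio game with constant ratio $s/k$ is \emph{equivalent} to the Polish game $(D,\star s,\star k)$ outright: the Adversary in the Ratio game may as well place hats only at the $k$ midpoints of the subintervals, and a general measurable plan of measure $s/k$ can be replaced by one that is a union of $s$ of these subintervals without decreasing the set of colorings it catches, by a standard rounding/averaging argument (if a measure-$s/k$ guess set catches a given color, round each of its "fractional" subinterval weights; since the measure is exactly $s k^{-1}$ and the relevant information is only which subinterval $c(u)$ lies in, an integral choice of $s$ subintervals does at least as well). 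Once $(D,\star(s/k))$ winnable $\iff (D,\star s,\star k)$ winnable is established, winnability of $(D,r_0) = (D,\star(s/k))$ gives $(D,\star s,\star k)$ winnable, hence $\mu_s(D) \ge k$, hence $\mu_s(D)/s \ge k/s = 1/r_0$, so $\hat\mu(D) \ge 1/r_0$. Letting $r_0 \to r_s^+$ over rationals finishes it.

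The main obstacle is the discretization step — making rigorous that a continuous Ratio-game strategy (or the Adversary's continuous freedom) can be collapsed to the finite Polish game without loss in either direction. One must be careful that the supremum defining $r_s$ is attained in the right sense (strict inequalities on both sides, so no boundary case $r_0 = r_s$ needs to be settled) and that "$\mu_s(D) = \infty$" edge cases — e.g. $D$ a disjoint union of cliques where every $(D,\star s,\star k)$ with $s/k \ge 1/\omega$-type bounds is winnable, or $D$ with no directed cycle where everything is unwinnable so $r_s = 1$ and $\hat\mu(D)$... actually $1$, consistent — are handled by the conventions already in force ($h > g > 0$, so $r_0 \in (0,1)$, and unwinnability of acyclic $D$ for all $r_0$ by Corollary~\ref{nodirectedcycle} forces $r_s = 1$, matching $\hat\mu(D) = 1$ which should be checked against the convention $\mu \le s^{-1}\mu_s \le \hat\mu$). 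Modulo that bookkeeping, the proposition follows from Proposition~\ref{RatioMonotone} and the equivalence of constant-ratio Ratio games with Polish games.
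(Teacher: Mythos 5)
Your overall plan --- prove the two inequalities separately, get $\hat\mu(D)\le 1/r_s$ from Proposition \ref{RatioMonotone}, and get $\hat\mu(D)\ge 1/r_s$ by converting winnable constant-ratio Ratio games into winnable Polish games --- is the natural expansion of the paper's two-line sketch (``by the definition of $\hat\mu$ and the density of the rationals''), but both halves as written have problems. In the first half the monotonicity is applied backwards: the ease relation gives $(D,r_0)\preceq (D,s/k)$ exactly when $s/k\ge r_0$, and Proposition \ref{RatioMonotone} propagates winnability from the harder game to the easier one, so from ``$(D,\star s,\star k)$ winnable with $s/k\ge r_0$'' you cannot ``lower the ratio'' to conclude that $(D,r_0)$ is winnable; indeed such Polish games are often winnable (take $k=s+1$ on $K_2$, with ratio $s/(s+1)$ well above $r_s=1/2$), so your intermediate conclusion ``every winnable Polish game has $s/k<r_0$'' is false, and even if it were true it would give $\mu_s/s>1/r_0$ rather than $\le$. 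The correct contradiction comes from a winnable Polish game with $s/k<r_0$, which forces $(D,s/k)\preceq(D,r_0)$ and hence $(D,r_0)$ winnable; that yields $s/\mu_s\ge r_0$ directly, which is exactly what your (self-contradicting) parenthetical asserts. This half is repairable by flipping the inequalities consistently.

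The second half has a genuine gap. Your ``discretization lemma'' --- that $(D,\star(s/k))$ winnable implies $(D,\star s,\star k)$ winnable --- carries all the weight, and the argument you sketch for it does not stand up: if the Adversary is restricted to the $k$ midpoints, a single measure-$s/k$ guess set can contain all $k$ midpoints (a measure-zero set), so the restricted Ratio game becomes trivially winnable and tells you nothing about the Polish game. The difficulty of the Ratio game lives precisely in the Adversary's continuous freedom, and a winning Ratio plan's output may depend arbitrarily on where within a subinterval each visible color falls, so there is no obvious rounding that preserves winning with the same ratio. The statement you need (a winnable constant-rational-ratio Ratio game has a winnable associated Czech game) is essentially a special case of the open question posed in Section \ref{subsec:RatioGames}. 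To be fair, the paper's own proof sketch does not engage with this either --- you have correctly located where the real content of the proposition lies --- but you have not supplied that content, and the averaging argument you gesture at will not supply it.
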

\begin{proof}[Proof sketch.]
    By the definition of $\hat{\mu}$ and the density of the rationals.
\end{proof}

\section{Visibility-dependency duality and its consequences}\label{sec:thmsofLLL}
\begin{thm}\label{thm:mainfact}
Consider a game $\mathcal{G}$ (it can be Ratio or Czech) and a strategy $f$. Then $\mathbf{P}(R_v)=r(v)$. Also, the digraph $D'=(\{R_v\},E)$ that has $\overrightarrow{R_vR_u}\in E$ if and only if $\overrightarrow{uv}\in D$ is a dependency digraph for the events $\{R_v\}$.
\end{thm}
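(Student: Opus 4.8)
The claim has two parts: the probability computation $\mathbf{P}(R_v) = r(v)$, and the identification of the dependency digraph. For the first part, the plan is to condition on everything $v$ can see. Fix a strategy $f$ and a vertex $v$. Let $W = N^+(v)$ be the out-neighborhood of $v$; the colors $c(W)$ together determine $f_v(c(W))$, a set of measure $r(v)$ in $[0,1)$ (in the Ratio case) or a set of $g(v)$ colors out of $h(v)$ (in the Czech case). Since the coloring is drawn uniformly, $c(v)$ is independent of $c(W)$, so conditioning on any fixed value of $c(W)$, the conditional probability that $c(v) \in f_v(c(W))$ is exactly $r(v)$ (recall that in the Czech case we defined $r(v) = g(v)/h(v)$, and $c(v)$ is uniform on $h(v)$ equally likely colors). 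Integrating (or summing) over the values of $c(W)$ gives $\mathbf{P}(R_v) = r(v)$.

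For the dependency-digraph part, I would verify the mutual-independence condition directly from the definition. Fix $v$, and let $T = \{u : \overrightarrow{R_v R_u} \notin E(D')\}$; by the stated construction of $D'$, this is exactly the set of $u$ with $u \not\rightarrow v$ in $D$, i.e.\ the set of sages who cannot see $v$. The event $R_v$ is a function only of $c(v)$ and $c(N^+(v))$ — call this tuple of coordinates $Y_v$. Each event $R_u$ for $u \in T$ is a function only of $c(u)$ and $c(N^+(u))$; since $u \not\rightarrow v$, none of these coordinates is $c(v)$. So the coordinate $c(v)$ appears in $Y_v$ but in none of the tuples governing the events $\{R_u : u \in T\}$. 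Now I want to conclude that $R_v$ is mutually independent of $\{R_u : u \in T\}$. The cleanest way: the joint event $\bigcap_{i \in J} R_{u_i} \cap \bigcap_{i \notin J} R_{u_i}^c$ is measurable with respect to the sigma-algebra generated by all coordinates other than $c(v)$ — more precisely, with respect to $\{c(w) : w \ne v\}$ (since no $u \in T$ sees $v$, and the $u$'s themselves are not $v$ because $D$ is loopless so $v \not\rightarrow v$, hence $v \in T$ is false — wait, one must be slightly careful: is $v \in T$? We need $v \not\rightarrow v$, which holds by looplessness, so indeed $v \in T$; but $R_v$ is trivially not ``mutually independent of itself,'' so the correct reading of the definition is that the set $\{A_j : \overrightarrow{A_i A_j} \notin E\}$ should be understood to exclude $A_i$ itself, or equivalently $D'$ may be taken loopless matching $D$). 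Granting that, $R_v$ depends only on $c(v)$ together with coordinates that also appear elsewhere, but the key point for mutual independence is that we can factor: condition on all coordinates except $c(v)$; then the conditional probability of $R_v$ is $r(v)$ by the argument of the first paragraph (the values $c(N^+(v))$ are now fixed), and this conditional probability does not depend on which values those other coordinates took. Since any Boolean combination of the $R_u$, $u \in T$, is determined by those other coordinates, averaging gives the product formula in the definition of mutual independence.

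The main obstacle I anticipate is the bookkeeping around \emph{exactly which coordinates each event depends on}, and making the ``condition on everything but $c(v)$'' argument airtight — in particular handling the fact that $N^+(v)$ coordinates are shared between $R_v$ and various $R_u$ with $u \in N^+(v)$ (these $u$ are \emph{not} in $T$, so that's fine, but one must make sure the argument never needs independence from those). The conceptual content is genuinely light — it's the ``a sage's own hat is independent of what she sees'' observation used already in Proposition~\ref{proofofrainbow} and Proposition~\ref{CzechOnComplete} — so the proof should be short, essentially: (i) $c(v)$ independent of $c(N^+(v))$ gives $\mathbf{P}(R_v \mid \mathcal{F}_v) = r(v)$ where $\mathcal{F}_v$ is the sigma-algebra of all coordinates except $c(v)$; (ii) every $R_u$ with $u \not\rightarrow v$ is $\mathcal{F}_v$-measurable; (iii) these two facts are exactly the definition of $R_v$ being mutually independent of $\{R_u : u \not\rightarrow v\}$, which is the dependency-digraph condition for $D'$. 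I would write it in roughly that three-step form, with the measure-theoretic phrasing covering the Ratio case and the finite-sum phrasing as the obvious specialization for the Czech case.
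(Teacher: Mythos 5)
Your proposal is correct and follows essentially the same route as the paper: condition on all coordinates other than $c(v)$, observe that the conditional probability of $R_v$ is identically $r(v)$ because $c(v)$ is drawn uniformly and independently of everything $v$ sees, and note that every Boolean combination of the events $R_u$ with $u\not\rightarrow v$ is measurable with respect to those remaining coordinates, so averaging yields the mutual-independence identity. The paper conditions on the full partial coloring $c_{D\backslash v}$ rather than just $c(N^+(v))$ for the first part, but this is a cosmetic difference; your careful tracking of which coordinates each $R_u$ depends on is exactly the content of the paper's remark that membership in $\mathcal{J}$ depends only on $c(V(D)\backslash v)$.
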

\begin{proof} It suffices to prove that $\mathbf{P}(R_v \, \mid \, (\bigcap_{u\in J} R_u)\cap (\bigcap_{w\in V(D)\backslash N^-(v)\backslash v \backslash J)} R_w))=r(v)$ for any $J\subseteq V(D)\backslash N^-(v)\backslash v$.  Denote by $\mathcal{J}$ the set of $c$ such that the event $\left((\bigcap_{u\in J} R_u)\cap (\bigcap_{w\in V(D)\backslash N^-(v)\backslash v \backslash J} R_w)\right)$ occurs. So the sufficient condition rephrases as $\mathbf{P}(R_v \; \mid \; c\in \mathcal{J})=r(v)$. 
    
For any fixed partial coloring $c_0$ on $V(D)\backslash v$, because the coloring of each vertex of $c$ is drawn independently and uniformly at random, we have $\mathbf{P}(R_v \; \mid \; c_{D\backslash v}=c_0)=\mathbf{P}(c(v) \in f(c_0(N^+(v))))=r(v)$. Since $c\in \mathcal{J}$ depends only on $c(V(D)\backslash v)$, we can integrate $\mathbf{P}(R_v \; \mid \; c_{D\backslash v}=c_0)$ over all $c_0$ such that $c_0$ extends to a coloring in $\mathcal{J}$, and we obtain $\mathbf{P}(R_v \; \mid \; c\in \mathcal{J})=r(v)$, completing the proof.
\end{proof}

\begin{remark}
    Theorem \ref{thm:mainfact} isn't even slightly true without reversing the arrows' directions. You can concoct counterexamples with Classic games of hatness $2$ on rather small digraphs. These can be strongly connected or not, the strategies can win or not, the directed girth can be whatever you like, and so on.  
\end{remark}

The previous theorem can be used with any variant of the Lovász Local Lemma––i.e., any result that says, ``if the dependency (di)graph and probabilities look a particular way, then with positive probability none of the events occur." Here are some applications. 

\begin{prop}\label{prop:application1}
    Consider a Ratio game $\mathcal{G}$. If there exists a function $x:V(D)\rightarrow \mathbb{R}$ such that $r(v)\leq x(v)\prod_{u\rightarrow v}(1-x(u))$ for all $v$, then $\mathcal{G}$ is unwinnable. 
\end{prop}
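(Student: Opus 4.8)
The plan is to invoke the asymmetric Lovász Local Lemma using the visibility--dependency duality established in Theorem \ref{thm:mainfact}. First I would recall that for a Ratio game $\mathcal{G} \equiv (D,r)$ and \emph{any} strategy $f$, Theorem \ref{thm:mainfact} tells us two things: (i) $\mathbf{P}(R_v) = r(v)$ for every $v$, where $R_v$ is the event that sage $v$ guesses right; and (ii) the digraph $D'$ on vertex set $\{R_v\}$ with an arc $\overrightarrow{R_v R_u}$ exactly when $\overrightarrow{uv} \in E(D)$ is a dependency digraph for the family $\{R_v\}$. The key observation is that $v$ is unwinnable means: for every strategy $f$, there is a coloring making everyone guess wrong, i.e. $\mathbf{P}\!\left(\bigcap_v \neg R_v\right) > 0$ would \emph{suffice} to prove unwinnability. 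So it is enough to show that the hypothesis on the function $x$ forces $\mathbf{P}\!\left(\bigcap_v \neg R_v\right) > 0$ regardless of $f$.

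Second, I would set up the asymmetric LLL in the directed form of Spencer/Erd\H{o}s--Lov\'asz. In its standard statement, one has events $A_v$, a dependency digraph in which $A_v$ is mutually independent of all $A_u$ with $\overrightarrow{A_v A_u} \notin E$, and a function $x : V \to (0,1)$ with $\mathbf{P}(A_v) \le x(v)\prod_{\overrightarrow{A_v A_u} \in E}(1 - x(u))$; the conclusion is $\mathbf{P}(\bigcap_v \neg A_v) \ge \prod_v (1 - x(v)) > 0$. I would apply this with $A_v := R_v$. By Theorem \ref{thm:mainfact}(ii), the arc $\overrightarrow{R_v R_u}$ lies in $E(D')$ iff $\overrightarrow{uv} \in E(D)$, i.e. iff $u \to v$ in $D$. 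Hence the product $\prod_{\overrightarrow{R_v R_u} \in E(D')}(1 - x(u))$ is exactly $\prod_{u : u \to v}(1 - x(u)) = \prod_{u \to v}(1 - x(u))$, matching the displayed hypothesis $r(v) \le x(v)\prod_{u \to v}(1 - x(u))$ once we use $\mathbf{P}(R_v) = r(v)$ from part (i). So the LLL hypotheses are verified verbatim, and we conclude $\mathbf{P}(\bigcap_v \neg R_v) > 0$.

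Third, I would close the loop: since this holds for \emph{every} strategy $f$ (the probabilities $\mathbf{P}(R_v) = r(v)$ and the dependency structure $D'$ are independent of $f$ by Theorem \ref{thm:mainfact}), every strategy admits a disprover, so $\mathcal{G}$ is unwinnable. A small point worth stating carefully is the direction-reversal: one must be sure the arcs in the dependency digraph point the way the asymmetric LLL expects (from $R_v$ to the events it may depend on), which is precisely why the hypothesis involves the \emph{in}-neighbors $u \to v$ of $v$ in $D$ rather than its out-neighbors. The Remark after Theorem \ref{thm:mainfact} warns that this reversal is essential, so I would emphasize it rather than gloss over it.

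The main obstacle, such as it is, is not a difficulty of proof but of bookkeeping: correctly translating between the arc conventions of $D$, the arc conventions of the dependency digraph $D'$, and the index set over which the asymmetric LLL product is taken, so that ``$u \to v$ in $D$'' ends up being exactly the product appearing in the hypothesis. Once that dictionary is pinned down, the proof is a one-line citation of the asymmetric LLL. If one wanted, one could even record the quantitative bound $\mathbf{P}(\bigcap_v \neg R_v) \ge \prod_v (1 - x(v))$, but for the stated conclusion (mere unwinnability) positivity is all that is needed.
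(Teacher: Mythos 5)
Your proposal is correct and follows exactly the paper's route: the paper's proof is the one-liner ``Immediate from Theorem \ref{thm:mainfact} and the directed Lov\'asz Local Lemma (e.g.\ Lemma 5.1.1 of \cite{AS08}),'' and you have simply written out the bookkeeping (the arc reversal making the LLL product range over in-neighbors $u\to v$) that the paper leaves implicit. No gaps.
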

\begin{proof}
    Immediate from \ref{thm:mainfact} and the directed Lovász Local Lemma (e.g. Lemma 5.1.1 of \cite{AS08}). 
\end{proof}
\begin{prop}\label{prop:application2}
    For a digraph $D$, $\hat{\mu}(D)\leq e(\Delta^-+1)$.
\end{prop}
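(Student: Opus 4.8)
The plan is to feed the symmetric specialization of the directed Lov\'asz Local Lemma (Proposition~\ref{prop:application1}, which is itself powered by the duality of Theorem~\ref{thm:mainfact}) into the characterization of $\hat\mu$ from Proposition~\ref{prop:fractionalandratio}. Since $\hat{\mu}(D)=1/r_s$ with $r_s=\sup\{r_0:(D,r_0)\text{ unwinnable}\}$, it suffices to prove that the \emph{constant}-ratio Ratio game $(D,r_0)$ is unwinnable whenever $r_0\le \frac{1}{e(\Delta^-+1)}$; this forces $r_s\ge\frac{1}{e(\Delta^-+1)}$ and hence $\hat{\mu}(D)\le e(\Delta^-+1)$.

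First I would dispose of the degenerate case $\Delta^-=0$: then $D$ is edgeless, so every game on $D$ is trivially unwinnable, $r_s=1$, and the claimed bound $e(\Delta^-+1)=e\ge 1=\hat\mu(D)$ holds. So assume $\Delta^-\ge 1$ and apply Proposition~\ref{prop:application1} to $(D,r_0)$ with the constant choice $x(v)=\frac{1}{\Delta^-+1}\in(0,1)$. For each vertex $v$, the product $\prod_{u\to v}(1-x(u))$ runs over the $\deg^-(v)\le\Delta^-$ in-neighbours of $v$, so
\[
x(v)\prod_{u\to v}(1-x(u))
=\frac{1}{\Delta^-+1}\left(1-\frac{1}{\Delta^-+1}\right)^{\deg^-(v)}
\geq \frac{1}{\Delta^-+1}\left(1-\frac{1}{\Delta^-+1}\right)^{\Delta^-}
\geq \frac{1}{e(\Delta^-+1)}
\geq r_0,
\]
where the second inequality uses $\deg^-(v)\le\Delta^-$ together with $1-\frac{1}{\Delta^-+1}\in(0,1)$ (raising a number in $(0,1)$ to a larger exponent only shrinks it), and the third is the routine estimate $(1-\tfrac1n)^{n-1}\ge\tfrac1e$ with $n=\Delta^-+1$ (equivalently, $\max_{x\in[0,1)}x(1-x)^{\Delta^-}$ is attained at $x=\frac{1}{\Delta^-+1}$). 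This is exactly the hypothesis of Proposition~\ref{prop:application1}, so $(D,r_0)$ is unwinnable, which completes the argument.

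There is essentially no obstacle here: the substantive content was already extracted in Theorem~\ref{thm:mainfact} — that the dual of the visibility digraph is a dependency digraph, so the neighbourhood degree governing the Local Lemma is the \emph{in}-degree, matching $\Delta^-$ rather than $\Delta^+$ or $\Delta$ — and repackaged into Proposition~\ref{prop:application1}. The only points that want a moment's care are (i) confirming that the index set of the product in Proposition~\ref{prop:application1} is $N^-(v)$, so that $\Delta^-$ (and not some other degree parameter) is what appears; (ii) getting the direction of the monotonicity right when bounding $\deg^-(v)$ above by $\Delta^-$ inside $(1-x)^{(\cdot)}$; and (iii) the trivial edge case $\Delta^-=0$, which must be split off because the optimal $x=\frac{1}{\Delta^-+1}$ would then fail to lie in $[0,1)$. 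Everything else is the standard symmetric-LLL optimization.
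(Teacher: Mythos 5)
Your proof is correct and is exactly the paper's argument: the paper's proof reads, in full, ``Set $x(v)=1/(\Delta^-+1)$ for all $v$. Apply \ref{prop:application1} and \ref{prop:fractionalandratio}.'' You have simply filled in the routine symmetric-LLL computation (and the harmless $\Delta^-=0$ edge case) that the paper leaves implicit.
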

\begin{proof}
   Set $x(v)=1/(\Delta^-+1)$ for all $v$. Apply \ref{prop:application1} and \ref{prop:fractionalandratio}.
\end{proof}
\begin{prop}\label{prop:application3}
    For a graph $G$, $\hat{\mu}(G)\leq (\Delta-1)^{1-\Delta} \Delta^{\Delta}<e\Delta$.
\end{prop}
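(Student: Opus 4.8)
The plan is to derive this from Proposition \ref{prop:application1} (the directed Lovász Local Lemma phrased for Ratio games) together with the visibility-dependency duality of Theorem \ref{thm:mainfact}, exactly as Proposition \ref{prop:application2} was derived but with a cleverer, non-uniform choice of the weighting function $x$. Since $G$ is undirected, $\Delta^-=\Delta^+=\Delta$, and the relevant product $\prod_{u\to v}(1-x(u))$ ranges over at most $\Delta$ neighbors. The aim is to find the optimal constant weight $x(v)\equiv t\in(0,1)$ so that every Ratio game with constant ratio $r<(\Delta-1)^{1-\Delta}\Delta^{-\Delta}\cdot(\text{something})$ — wait, more precisely, so that the bound $r\le t(1-t)^\Delta$ is maximized over $t$, and then invoke Proposition \ref{prop:fractionalandratio}.

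First I would set $x(v)=t$ for all $v$, so that the hypothesis of Proposition \ref{prop:application1} becomes $r(v)\le t(1-t)^{\deg(v)}$, which is implied (since $t<1$) by $r(v)\le t(1-t)^{\Delta}$ for all $v$. Hence any Ratio game $(G,\star r)$ with constant ratio $r\le t(1-t)^\Delta$ is unwinnable. Second, I would optimize the right-hand side: $\frac{d}{dt}\bigl[t(1-t)^\Delta\bigr]=(1-t)^{\Delta-1}\bigl[(1-t)-\Delta t\bigr]=(1-t)^{\Delta-1}(1-(\Delta+1)t)$, which vanishes at $t=\frac{1}{\Delta+1}$, giving the maximum value $\frac{1}{\Delta+1}\left(\frac{\Delta}{\Delta+1}\right)^{\Delta}$. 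That recovers the $e(\Delta+1)$-type bound but is \emph{not} quite the claimed $(\Delta-1)^{1-\Delta}\Delta^{\Delta}$; so the actual trick must be to exploit that in an undirected dependency graph one can use the \emph{symmetric} / stronger form of the Local Lemma — the version (sometimes attributed to Shearer, and as Farnik noted one can push to $<e\Delta$) in which, for a graph of maximum degree $\Delta$, events of probability $p$ with $e\,p\,(\Delta)\le 1$... no: the sharp symmetric statement for \emph{graphs} is that $p\le \frac{(\Delta-1)^{\Delta-1}}{\Delta^{\Delta}}$ suffices. So the real plan is: apply the undirected (symmetric) Lovász Local Lemma to the dependency graph furnished by Theorem \ref{thm:mainfact} — which is undirected precisely because $G$ is undirected — with $p=r$ and degree bound $\Delta$ (each $R_v$ is mutually independent of all $R_u$ with $u\not\sim v$, and there are at most $\Delta$ neighbors $u$), concluding that if $r\le \frac{(\Delta-1)^{\Delta-1}}{\Delta^{\Delta}}=(\Delta-1)^{\Delta-1}\Delta^{-\Delta}$ then $\mathbf{P}(\bigcap\neg R_v)>0$, so the game is unwinnable. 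Then $r_s\ge (\Delta-1)^{\Delta-1}\Delta^{-\Delta}$, and by Proposition \ref{prop:fractionalandratio}, $\hat{\mu}(G)=1/r_s\le \Delta^{\Delta}(\Delta-1)^{1-\Delta}$.

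The last step is the clean inequality $\Delta^{\Delta}(\Delta-1)^{1-\Delta}<e\Delta$, i.e. $\left(\frac{\Delta}{\Delta-1}\right)^{\Delta-1}<e$, which is the familiar fact that $(1+\tfrac{1}{\Delta-1})^{\Delta-1}<e$ for all $\Delta\ge 2$ (with the degenerate cases $\Delta=0,1$ handled separately — graphs of max degree $\le 1$ have $\hat\mu\le 1$ trivially, or one notes the formula is vacuous/interpreted by convention). The main obstacle I anticipate is not the calculus but getting the hypotheses of the symmetric Local Lemma exactly right against the dependency \emph{graph} (as opposed to digraph) from Theorem \ref{thm:mainfact}: one must check that $R_v$ is mutually independent of the family $\{R_u: u\not\sim v,\ u\ne v\}$ — which is immediate from that theorem since its dependency digraph, when $G$ is undirected, is its own reverse and hence an undirected dependency graph — and that the degree of $R_v$ in it is at most $\Delta(G)$. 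With that in hand the argument is just the standard symmetric LLL bound plus Proposition \ref{prop:fractionalandratio}; I would also remark that this simultaneously reproves Farnik's $\mu(G)<e\Delta$ since $\mu\le\hat\mu$.
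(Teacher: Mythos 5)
Your final argument is correct and is exactly the paper's route: the paper's proof is the one-liner ``Use Theorem 2 from \cite{She85} and Theorem \ref{thm:mainfact},'' i.e.\ apply Shearer's sharp symmetric bound $p\leq (\Delta-1)^{\Delta-1}\Delta^{-\Delta}$ to the dependency graph furnished by the visibility-dependency duality, then convert via Proposition \ref{prop:fractionalandratio}. Your opening detour through the directed LLL (Proposition \ref{prop:application1}) only recovers the weaker $e(\Delta+1)$ bound, as you yourself notice, so it can simply be cut.
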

\begin{proof}
    Use Theorem 2 from \cite{She85} and Theorem \ref{thm:mainfact}. 
\end{proof}

\section{The dream of a directed Shearer's Lemma}\label{sec:DirectedShearersLemma}

    Shearer's Lemma is a sharp version of the Lovász Local Lemma for undirected graphs. Here we discuss the problem of generalizing it to digraphs and conclude that it probably cannot be done slickly, at least for any digraph with an odd directed cycle.

    \begin{prop}[Theorem 1 of \cite{She85}, essentially]\label{prop:originalshearer}
        Consider $S\subseteq V(G)$. Define 
    \[Y_S(\mathbf{w})\equiv \sum_{\substack{S\subseteq I \\ I \text{ is independent}}}(-1)^{|T|-|S|} \prod_{x\in T} w_x. \]
        If $Y_S(\mathbf{w})\geq 0$ for all $S$ and $Y_\emptyset(\mathbf{w})>0$, then $\mathbf{w}$ is good for $G$. 
    \end{prop}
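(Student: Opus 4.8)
The statement is the classical Shearer Lemma, so the plan is to reproduce the standard argument (as in Scott–Sokal \cite{SS06}, whose presentation \cite{BDO21} and the rest of this thesis follow), adapted to the notation $Y_S(\mathbf{w})$ of the excerpt. I would first fix a collection of events $\{A_x\}_{x\in V(G)}$ with dependency graph $G$ and $\mathbf{P}(A_x)\le w_x$ for all $x$. The goal is to bound $\mathbf{P}\!\left(\bigcap_x \neg A_x\right)$ below by $Y_\emptyset(\mathbf{w})>0$. The natural route is to prove, by induction on $|S|$ (from large $S$ down, or equivalently on $|V(G)\setminus S|$), the inequality
\[
\mathbf{P}\!\left(\bigcap_{x\notin S}\neg A_x \ \Big|\ \bigcap_{x\in S} A_x\right)\ \ge\ \frac{Y_S(\mathbf{w})}{Y_{S'}(\mathbf{w})}
\]
for appropriate nested $S\subseteq S'$, or more cleanly to track the quantities $\mathbf{P}\!\left(\bigcap_{x\notin S}\neg A_x\cap \bigcap_{x\in S}A_x\right)$ and relate them to $Y_S(\mathbf{w})$ via the deletion–contraction recursion that the alternating sum $Y_S$ satisfies.

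Concretely, the key algebraic fact I would establish first is the recurrence: picking a vertex $v\notin S$ with, say, $v$ not adjacent to any vertex already forced, one has $Y_S(\mathbf{w}) = Y_{S}^{(G-v)}(\mathbf{w}) - w_v\, Y_{S\cup\{v\}}^{(G - N[v])}(\mathbf{w})$ (with the convention that terms are zero when $S$ fails to stay independent). This mirrors the independence-polynomial identity $I(G) = I(G-v) - w_v I(G-N[v])$. On the probabilistic side, the matching step is the inequality
\[
\mathbf{P}\!\left(\neg A_v \ \Big|\ \bigcap_{x\in T}\neg A_x\right)\ \ge\ 1 - \frac{w_v\, Y_{?}(\cdots)}{Y_{?}(\cdots)},
\]
obtained by writing $\mathbf{P}(\neg A_v\mid \mathcal{E}) = 1 - \mathbf{P}(A_v\mid \mathcal{E})$, using that $A_v$ is mutually independent of the events indexed outside $N[v]$ so that conditioning only on those does not change $\mathbf{P}(A_v)$, and then bounding the effect of the remaining conditioning events (the neighbors of $v$) by the inductive hypothesis. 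Iterating vertex by vertex in a fixed order and telescoping the ratios $Y_S/Y_{S'}$ collapses the product to $Y_\emptyset(\mathbf{w})$, giving $\mathbf{P}\!\left(\bigcap_x\neg A_x\right)\ge Y_\emptyset(\mathbf{w})>0$, hence $\mathbf{w}$ is good for $G$.

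I would carry out the steps in this order: (1) record the deletion–contraction recurrence and basic positivity bookkeeping for $Y_S$; (2) set up the conditional probabilities $q_S := \mathbf{P}\!\left(\bigcap_{x\notin S}\neg A_x\mid \bigcap_{x\in S} A_x\right)$ and state the target inequality $q_S \ge Y_S(\mathbf{w})/Y_{S\setminus\{v\}}(\mathbf{w})$ or the analogous ratio form; (3) run the induction, with the base case $S = V(G)$ trivial ($q_S = 1$, $Y_S = 1$ on the empty product); (4) in the inductive step, split off one vertex $v\notin S$, use mutual independence to handle the non-neighbors for free and the inductive hypothesis plus the hypothesis $Y_{S'}(\mathbf{w})\ge 0$ to control the neighbors, invoking the recurrence to identify the resulting expression with the claimed ratio; (5) telescope to conclude. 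The main obstacle is step (4): one must choose the vertex-elimination order and the precise form of the inductive invariant so that every ratio $Y_S/Y_{S'}$ that appears is genuinely $\le 1$ and the denominators are strictly positive — this is exactly where the full strength of the hypothesis ``$Y_S(\mathbf{w})\ge 0$ for \emph{all} $S$'' is consumed, and getting the bookkeeping of which conditioning events have already been ``used up'' correct is the delicate part. The rest is routine once that invariant is pinned down.
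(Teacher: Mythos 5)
The paper offers no proof of this proposition at all: it is imported as Theorem~1 of Shearer \cite{She85} (modulo a typo in the display, where the bound variable should be $I$ rather than $T$), and the surrounding text immediately defers to Scott--Sokal for a rephrasing. So there is no in-paper argument to compare yours against; what you have written is a plan for the standard inductive proof of Shearer's lemma, and as a plan it is essentially sound. Two remarks. First, your deletion--contraction identity is slightly off as written: splitting the independent sets $I\supseteq S$ according to whether $v\in I$ gives $Y_S^G=Y_S^{G-v}-w_v\,Y_S^{G-N[v]}$, equivalently $Y_S^G=Y_S^{G-v}-Y_{S\cup\{v\}}^{G}$; your hybrid term $w_v\,Y_{S\cup\{v\}}^{G-N[v]}$ either double-counts the factor $w_v$ or is an empty sum, depending on how one reads a subscript containing a vertex absent from the superscripted graph. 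Second, you have correctly located where all the content lives, namely your step (4): the naive induction on $\mathbf{P}\bigl(\bigcap_x\neg A_x\bigr)\ge Y_\emptyset$ does not close, because the recurrence subtracts a term that the inductive hypothesis only bounds from below; this is precisely why one must pass to a ratio invariant such as $\mathbf{P}\bigl(\bigcap_{x\in W}\neg A_x\bigr)\big/\mathbf{P}\bigl(\bigcap_{x\in W-v}\neg A_x\bigr)\ge Y_\emptyset^{G[W]}\big/Y_\emptyset^{G[W-v]}$, and the identity $Y_S^G=\bigl(\prod_{x\in S}w_x\bigr)Y_\emptyset^{G-N[S]}$ is what converts the hypothesis ``$Y_S\ge 0$ for \emph{all} $S$'' into the nonnegativity of the restricted polynomials and hence the positivity of every denominator that appears. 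Until that invariant is stated and verified, what you have is a correct skeleton with the hard lemma deferred, not a complete proof --- which is a perfectly reasonable position to be in for a result the thesis itself treats as a citation.
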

    It was elegantly rephrased by Scott and Sokal \cite{SS06}, with a tougher and deeper proof. 
    \begin{defn}
        For a graph $G$, define the \textit{multivariate independence polynomial} as the sum of monomials where each monomial is the product of variables representing members of an independent set. That is, \[Z_G(\mathbf{w})=\sum_{\substack{I\subseteq X \\I \text{ is independent}}}\prod_{x\in I} w_x. \]
        For a set $X$ and a sequence of radii $\mathbf{r}=\{r_x\}_{x\in X}$, let us define the \textit{closed polydisc} $\overline{D}_\mathbf{r}$ as the set of complex-valued vectors indexed over $X$ whose $x$ place has absolute value $\leq r_x$, for each $x$. I.e., 
        \[\overline{D}_\mathbf{r}=\{\mathbf{w}\in \mathbb{C}^X \;:\; \forall x(|w_x| \leq r_x)\}.\] 
    \end{defn}
    \begin{prop}[Theorem 1.3 of \cite{SS06}]\label{prop:shearerslemma}
        A sequence $\mathbf{p}$ is good for dependency graph $G$ if and only if $Z_G$ has no zeros in $\overline{D}_\mathbf{p}$. 
    \end{prop}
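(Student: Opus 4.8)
The plan is to prove both implications, reusing the already-quoted sufficiency of Shearer's original criterion (Proposition \ref{prop:originalshearer}) for the \emph{if} direction and constructing an extremal family of events for the \emph{only if} direction. Everything rests on two elementary facts about the multivariate independence polynomial: the deletion identity $Z_G(\mathbf{w})=Z_{G\setminus v}(\mathbf{w})+w_v\,Z_{G\setminus (N(v)\cup\{v\})}(\mathbf{w})$ (split independent sets according to whether they contain $v$) and the restriction identity that substituting $w_u=0$ for every $u\notin U$ converts $Z_G$ into $Z_{G[U]}$. Combining these with the definition of $Y_S$ gives the bridge formula $Y_S(\mathbf{p})=\big(\prod_{x\in S}p_x\big)\,Z_{G\setminus N[S]}(-\mathbf{p})$ when $S$ is independent (and $Y_S(\mathbf{p})=0$ otherwise), where $N[S]=\bigcup_{x\in S}(N(x)\cup\{x\})$.

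For the \emph{if} direction, suppose $Z_G$ has no zero in $\overline{D}_\mathbf{p}$. First I would check that $Z_{G[U]}(-\mathbf{p})>0$ for every induced subgraph $G[U]$: the map $t\mapsto Z_{G[U]}(-t\mathbf{p})$ is continuous on $[0,1]$ and equals $1$ at $t=0$, and it cannot vanish, since a zero at some $t_0$ would, after padding the missing coordinates with $0$ (the restriction identity), be a zero of $Z_G$ lying in $\overline{D}_\mathbf{p}$. Hence it stays positive, so $Z_{G[U]}(-\mathbf{p})>0$. Plugging this into the bridge formula gives $Y_S(\mathbf{p})\ge 0$ for all $S$ and $Y_\emptyset(\mathbf{p})=Z_G(-\mathbf{p})>0$, so Proposition \ref{prop:originalshearer} applies and $\mathbf{p}$ is good for $G$.

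For the \emph{only if} direction I would assume $Z_G$ has a zero in $\overline{D}_\mathbf{p}$ and produce events $\{A_v\}$ with dependency graph $G$, $\mathbf{P}(A_v)\le p_v$, and $\mathbf{P}(\bigcap_v\neg A_v)=0$. The first step is an analytic reduction: a zero in the polydisc forces some induced subgraph $G'$ with $Z_{G'}(-\mathbf{p})\le 0$, i.e.\ Shearer's positivity condition fails --- the point being that the first zero of an independence polynomial met while scaling along a nonnegative direction is real and negative. One proves this by induction on $|V(G)|$, writing $Z_G$ as affine in $w_v$ via the deletion identity and tracking the image of the polydisc under the associated M\"obius map, so that non-vanishing on all smaller polydiscs together with positivity on the negative segments would contradict the existence of the zero. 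Granting that Shearer's condition fails, the second step builds the witnessing events: order $V=\{v_1,\dots,v_n\}$ and construct events on a product space so that the conditional probabilities $\mathbf{P}\!\big(\bigcap_{j\ge i}\neg A_{v_j}\mid \text{prefix}\big)$ telescope into a product of ratios of independence polynomials of the trailing induced subgraphs evaluated at $-\mathbf{p}$; after passing to the bad $G'$ (freeze the remaining events to be null) the product collapses to $Z_{G'}(-\mathbf{p})\le 0$, forcing $\mathbf{P}(\bigcap_v\neg A_v)=0$. Alternatively one may simply cite the converse half of Shearer's theorem, which packages exactly this construction.

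The main obstacle is the analytic reduction in the \emph{only if} direction: ruling out a complex zero of $Z_G$ on $\overline{D}_\mathbf{p}$ once all the real evaluations $Z_{G[U]}(-\mathbf{p})$ are positive. This is the heart of Scott and Sokal's contribution; it needs the disc-dynamics argument above together with a ``no intermediate zero'' lemma for the independence polynomial along negative rays, and neither is a routine computation. By contrast, the deletion and restriction identities, the intermediate-value arguments, and the telescoping event construction are all standard once that lemma is available.
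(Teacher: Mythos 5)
The paper does not prove this proposition at all: it is imported verbatim as Theorem~1.3 of \cite{SS06}, with the text explicitly deferring to Scott and Sokal's ``tougher and deeper proof.'' So there is no in-paper argument to compare against; what you have written is an outline of the literature's proof, and as an outline it is essentially faithful. Your bridge formula $Y_S(\mathbf{p})=\bigl(\prod_{x\in S}p_x\bigr)Z_{G\setminus N[S]}(-\mathbf{p})$ for independent $S$ (and $Y_S=0$ otherwise) is correct and is exactly the link between Shearer's real-variable criterion (Proposition~\ref{prop:originalshearer}) and the polydisc formulation; your \emph{if} direction --- no zeros in $\overline{D}_\mathbf{p}$ forces $Z_{G[U]}(-t\mathbf{p})>0$ along $t\in[0,1]$ for every induced subgraph by continuity plus the restriction identity, hence Shearer's hypotheses hold --- is complete and correct. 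The caveat is that the two steps you flag as hard are in fact the entire content of the theorem, and your sketch only gestures at them: the claim that a complex zero in the closed polydisc forces some induced subgraph with $Z_{G'}(-\mathbf{p})\le 0$ is Scott--Sokal's Theorem~2.2 (the M\"obius-map induction you describe is the right mechanism, but carrying it out is several pages of work), and the converse extremal construction is the nontrivial half of Shearer's original paper. Since you ultimately say ``alternatively one may simply cite the converse half of Shearer's theorem,'' your argument, honestly accounted, reduces the cited Theorem~1.3 to two other cited theorems plus routine gluing --- which is a legitimate and correctly organized reduction, but not an independent proof. For the purposes of this thesis that is exactly the status the proposition already has.
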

    
    \begin{prop}[Proposition 9 of \cite{BDO21}, essentially]\label{prop:shearerforratio}
        Consider undirected Ratio game $\mathcal{G}\equiv (G,r)$. If $Z_G$ has no zeroes in $\overline{D}_r$, then $\mathcal{G}$ is unwinnable.  
    \end{prop}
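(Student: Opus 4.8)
The plan is to show that \emph{every} strategy for $\mathcal{G}$ admits a disprover, by passing to the probabilistic setting and invoking the duality of Theorem \ref{thm:mainfact} together with the Scott--Sokal form of Shearer's Lemma (Proposition \ref{prop:shearerslemma}). So fix an arbitrary strategy $f$ for the Ratio game $\mathcal{G}\equiv(G,r)$; it suffices to produce a coloring $c$ under which no sage guesses right, which exhibits $f$ as losing, and hence (as $f$ was arbitrary) $\mathcal{G}$ as unwinnable. Throughout, the ambient probability space is the uniform (Lebesgue) measure on $[0,1)^{V(G)}$, as in the definition of Ratio games.

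First I would apply Theorem \ref{thm:mainfact} to $\mathcal{G}$ and $f$: the events $R_v$ ($v$ guesses right) satisfy $\mathbf{P}(R_v)=r(v)$, and the digraph obtained from $G$ by reversing all arcs is a dependency digraph for $\{R_v\}$. Since $G$ is undirected, reversing arcs changes nothing, so (under the identification $v\mapsto R_v$) the graph $G$ itself is a dependency graph for the family $\{R_v\}_{v\in V(G)}$. Now by hypothesis $Z_G$ has no zeroes in the closed polydisc $\overline{D}_r$, so Proposition \ref{prop:shearerslemma} says that the sequence $r=\{r(v)\}$ is good for $G$ in the sense of the definition preceding it. Applying that definition to the particular family $\{R_v\}$ — which has dependency graph $G$ and whose probabilities equal (in particular are $\le$) the entries of $r$ — yields $\mathbf{P}\!\left(\bigcap_{v\in V(G)}\neg R_v\right)>0$.

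Finally, a set of colorings of positive measure is nonempty, so there exists $c\in[0,1)^{V(G)}$ for which no sage guesses right, i.e.\ $c$ disproves $f$. This completes the argument. The only genuinely delicate points are bookkeeping ones rather than real obstacles: one must check that the ``dependency digraph'' produced by Theorem \ref{thm:mainfact} really does match the $G$ appearing in Shearer's Lemma (immediate because $G$ is undirected), that the notion of a sequence being ``good for $G$'' used in Proposition \ref{prop:shearerslemma} is exactly the one applied here (it is, verbatim), and that the events $R_v$ are measurable so that $\mathbf{P}(\bigcap\neg R_v)$ makes sense — this last point is inherited from the setup of Theorem \ref{thm:mainfact}, whose proof already integrates $\mathbf{P}(R_v\mid c_{D\setminus v}=c_0)$ over colorings. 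I expect essentially no other difficulty; the substance of the proposition is entirely carried by the duality theorem and the cited form of Shearer's Lemma.
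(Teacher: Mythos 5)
Your proposal is correct and is precisely the argument the paper intends: its entire proof reads ``Proposition \ref{prop:shearerslemma} and Theorem \ref{thm:mainfact},'' and your write-up simply fills in the same chain (visibility-dependency duality gives $G$ as a dependency graph for the events $R_v$ with $\mathbf{P}(R_v)=r(v)$; nonvanishing of $Z_G$ on $\overline{D}_r$ makes $r$ good for $G$; positive probability of $\bigcap\neg R_v$ yields a disprover for the arbitrary strategy $f$). No differences worth noting.
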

    \begin{proof}
        Proposition \ref{prop:shearerslemma} and Theorem \ref{thm:mainfact}.
    \end{proof}

    Proposition \ref{prop:shearerforratio} was deployed to great effect in \cite{BDO21}. It makes me wonder––what about directed games? What I'm really asking here is, what about directed dependency digraphs? Can we find necessary and sufficient conditions for a sequence to be good on a digraph $D$?
    Scott and Sokal wondered the same thing. 
    On page 268 they said, 
    \begin{quote}
        It would be interesting to have a digraph analogue of Theorem 3.1, but we do not know how to do this.  
    \end{quote}
And on page 273 they reiterated,
\begin{quote}
    Let us remark that we have been able to relate the Lovász Local Lemma to a combinatorial polynomial (namely, the independent-set polynomial) only in the case of an \textit{undirected} dependency graph G. Although the Local Lemma can be formulated quite naturally for a dependency \textit{digraph} [2, 5, 23], we do not know whether the digraph Lovász problem can be related to any combinatorial polynomial. (Clearly the independent- set polynomial cannot be the right object in the digraph context, since exclusion of simultaneous occupation is manifestly a symmetric condition.) 
\end{quote}
    
    So the task is: find a combinatorial digraph polynomial $Q_D$ such that $\mathbf{p}$ is good for $D$ if and only if $Q_D$ and $\mathbf{p}$ stand in some relation to one another. Furthermore, whatever $Q_D$ and relevant theorem we obtain about it should restrict to $Z_G$ and Proposition \ref{prop:shearerslemma} or Proposition \ref{prop:originalshearer}
    for undirected graphs.\footnote{I find perverse and improbable the notion that there might exist an analogue for digraphs that breaks if the digraph ceases to be a proper digraph––especially since undirectedness is a global property, and we're dealing with the Lovász \textit{Local} Lemma here.} That immediately narrows our search. I reviewed much literature and found no digraph polynomials that generalize the independence polynomial. There's one relatively obvious candidate we can define, however. 

    \begin{defn}
        For a digraph $D$, define the \textit{multivariate ayclicity polynomial} $Q_D$ as the sum of monomials where each monomial is the product of variables representing members of an acyclic set. That is, \[Q_D(\mathbf{w})=\sum_{\substack{A\subseteq X}\\ A \text{ is acyclic}}\prod_{x\in A} w_x. \]
    \end{defn}

    Clearly, $Q_G=Z_G$ for undirected $G$, $Q_D(0)=1$, all events in a given monomial are mutually independent, and various other nice things check out. Furthermore, the correspondence between the versions persists.  
    
    Let $Y'$ be defined like $Y$ in Proposition \ref{prop:originalshearer}, except that ``acylic'' replaces ``independent.'' For $S\subseteq V(D)$, let $(-\mathbf{w1}_S)_x$ equal $-w_x$ for $x\in S$ and $0$ otherwise.  
  
    \begin{prop}\label{differentformsequiv}
        $Q_D(-\mathbf{w 1}_S)>0$ for all $S\subseteq V(D)$ if and only if $Q_D$ has no zeroes in $\overline{D}_{\mathbf{w}}$ if and only if $Y'_S(\mathbf{w})\geq 0$ for all $S$ and $>0$ for $S=\emptyset$.
    \end{prop}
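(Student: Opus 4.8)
\textbf{Proof plan for Proposition \ref{differentformsequiv}.}

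The plan is to establish a chain of equivalences among three conditions on a fixed nonnegative real vector $\mathbf{w}$: (i) $Q_D(-\mathbf{w1}_S)>0$ for all $S\subseteq V(D)$; (ii) $Q_D$ has no zeroes in the closed polydisc $\overline{D}_{\mathbf{w}}$; and (iii) $Y'_S(\mathbf{w})\geq 0$ for all $S$ with strict inequality at $S=\emptyset$. Since Proposition \ref{prop:originalshearer} (Shearer) and Proposition \ref{prop:shearerslemma} (Scott--Sokal) already give exactly this equivalence in the undirected case with $Z_G$ in place of $Q_D$ and ``independent'' in place of ``acyclic,'' the goal here is really to check that the \emph{formal} combinatorial identities underlying those two theorems only use the fact that the relevant set system (independent sets / acyclic sets) is a downward-closed family (an abstract simplicial complex) containing $\emptyset$ — and the acyclic subsets of a digraph form such a family. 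So the work is to isolate which parts of the classical arguments are ``structural'' and which genuinely use undirectedness, and confirm the former suffices.

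First I would record that the acyclic subsets of $V(D)$ form a simplicial complex $\mathcal{K}$: $\emptyset$ is acyclic, and any subset of an acyclic set is acyclic (deleting vertices cannot create a directed cycle). Then $Q_D(\mathbf{w}) = \sum_{A\in\mathcal{K}}\prod_{x\in A} w_x$ is precisely the ``independence-polynomial-style'' generating function of $\mathcal{K}$. Next I would show (i)$\iff$(iii) by a purely algebraic manipulation: expanding $Q_D(-\mathbf{w1}_S) = \sum_{A\in\mathcal{K},\,A\subseteq S}(-1)^{|A|}\prod_{x\in A}w_x$ and comparing with $Y'_S(\mathbf{w}) = \sum_{T\in\mathcal{K},\,S\subseteq T}(-1)^{|T|-|S|}\prod_{x\in T}w_x$. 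These two quantities are related by Möbius inversion / inclusion--exclusion on the Boolean lattice; in fact one is a signed transform of the other, and the standard computation (identical to the one in Shearer's original paper and reproduced in Scott--Sokal's Lemma on the ``$p$-analogue'') shows that all $Y'_S\geq 0$ with $Y'_\emptyset>0$ is equivalent to all $Q_D(-\mathbf{w1}_S)>0$. I would carry out just enough of this to exhibit the inversion formula $Q_D(-\mathbf{w1}_S) = \sum_{T\supseteq S}(\pm)\cdots$ and cite the classical source for the remaining bookkeeping rather than grinding it out.

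Then I would handle (ii)$\iff$(i) (or (ii)$\iff$(iii)), which is the deeper direction and where I expect the main obstacle. Scott and Sokal's proof of Proposition \ref{prop:shearerslemma} that ``no zeroes of $Z_G$ in $\overline{D}_{\mathbf{p}}$'' is equivalent to the reflection-positivity condition proceeds via their theory of the independence polynomial as a partition function and the ``fundamental theorem of the independent-set polynomial'' — crucially, the recurrence $Z_G = Z_{G\setminus v} - w_v Z_{G\setminus N[v]}$ (deletion--contraction on a vertex and its closed neighborhood). The worry is that this recurrence is special to independent sets and has no evident analogue for acyclic sets of a digraph (there is no notion of ``$N[v]$'' whose removal cleanly accounts for all acyclic sets containing $v$). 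So the honest route is: either (a) find that Scott--Sokal's argument can be recast to use only the simplicial-complex structure plus the sign-coherence of $Y'$ — I believe their Theorem 1.3, once one has the equivalence of (i) and the reflection-positivity statement, reduces the polydisc-zero-freeness to a statement about the ``Penrose'' expansion that is again purely about $\mathcal{K}$; or (b) invoke the more general result in Scott--Sokal (their treatment phrased for abstract ``hard-core'' systems / cluster expansions, Section 4 of \cite{SS06}) which already works at the level of an arbitrary finite ground set with a symmetric notion of incompatibility — and observe that ``not jointly acyclic'' is \emph{not} a pairwise-symmetric relation, which is exactly the objection Scott and Sokal raise, so (b) may fail and one must do (a). I would therefore devote the bulk of the proof to (a): extract the abstract lemma ``for any simplicial complex $\mathcal{K}$ with generating polynomial $P_{\mathcal{K}}$, the conditions $P_{\mathcal{K}}(-\mathbf{w1}_S)>0\ \forall S$ and $P_{\mathcal{K}}$ nonvanishing on $\overline{D}_{\mathbf{w}}$ are equivalent,'' prove it by the univariate-slice / Asano-contraction argument of Scott--Sokal that only manipulates $P_{\mathcal{K}}$ through its multiaffine structure, and then specialize $\mathcal{K}$ to the acyclic complex. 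The main obstacle, to be explicit, is verifying that the Scott--Sokal machinery genuinely never invokes the deletion--contraction recurrence in the step that yields polydisc nonvanishing, and if it does, supplying a substitute that works for general multiaffine polynomials with sign-coherent ``alternating'' evaluations.
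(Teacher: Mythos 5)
Your plan correctly locates the crux --- both halves of the classical Shearer/Scott--Sokal argument lean on structure peculiar to independence complexes --- but the abstract lemma you hope to extract in your route (a), namely that for an arbitrary simplicial complex $\mathcal{K}$ the conditions ``$P_{\mathcal{K}}(-\mathbf{w1}_S)>0$ for all $S$'' and ``$P_{\mathcal{K}}$ nonvanishing on $\overline{D}_{\mathbf{w}}$'' coincide, is false; indeed the proposition itself is false. Take $D=\overrightarrow{C}_3$ and $\mathbf{w}=(3/5,3/5,3/5)$. The acyclic sets are exactly the proper subsets of $V(D)$, so $Q_D(\mathbf{z})=1+z_1+z_2+z_3+z_1z_2+z_1z_3+z_2z_3$, and $Q_D(-\mathbf{w1}_S)$ takes the values $1$, $2/5$, $4/25$, $7/25$ for $|S|=0,1,2,3$ --- all positive, so the first condition holds. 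Yet $Q_D$ vanishes at $z_1=z_2=z_3=(-3+i\sqrt{3})/6$, a point of modulus $1/\sqrt{3}<3/5$, so the polydisc condition fails; and $Y'_{\{1\}}(\mathbf{w})=3/5-2\cdot(3/5)^2=-3/25<0$, so the third condition fails as well. Only the trivial implication from polydisc-nonvanishing to positivity at the points $-\mathbf{w1}_S$ (follow the real path $t\mapsto Q_D(-t\mathbf{w1}_S)$ from $Q_D(\mathbf{0})=1$) survives. So neither your route (a) nor the paper's own proof --- which consists of the single sentence ``imitate the proof of $(c)\iff(b')\iff(f)$ in Theorem 2.2 of \cite{SS06}'' --- can be completed.

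The same example also defeats the step you proposed to wave through. The equivalence of ``$Y'_S\geq 0$ for all $S$, $Y'_\emptyset>0$'' with ``$Q_D(-\mathbf{w1}_S)>0$ for all $S$'' is not purely M\"obius inversion on the Boolean lattice: for independence polynomials it rests on the identity $Y_S(\mathbf{w})=\left(\prod_{x\in S}w_x\right)\cdot Z_{G\setminus(S\cup N(S))}(-\mathbf{w})$, i.e., on the fact that the independent supersets of an independent $S$ are exactly the sets $S\cup I$ with $I$ independent and disjoint from $S\cup N(S)$. The acyclicity complex has no analogous link structure: in $\overrightarrow{C}_3$ the acyclic supersets of $\{1\}$ are $\{1\},\{1,2\},\{1,3\}$, whose ``link'' $\{\emptyset,\{2\},\{3\}\}$ is not the acyclicity complex of the induced subdigraph on $\{2,3\}$, and the computation above shows the two conditions genuinely diverge. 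None of this endangers Proposition \ref{thm:nodirectedshearers}, which verifies the $Q_D(-\mathbf{p1}_S)>0$ form directly (and one can check separately that $Q_{\overrightarrow{C}_3}$ is in fact zero-free on the polydisc of radius $1/2$); but it does mean that refuting that form does not automatically refute the other two, and Proposition \ref{differentformsequiv} should be retracted or weakened to the one surviving implication.
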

    \begin{proof}[Proof sketch.]
        Imitate the proof of $(c)\iff(b') \iff (f)$ in Theorem 2.2 of \cite{SS06}.
    \end{proof}
    Proposition \ref{differentformsequiv} establishes that the various forms of Shearer's Lemma, if they extend to acyclicity, all extend together.
    You might reasonably ask: ``why are we reusing the conditions from $Z_G$? Couldn't it be some more complex condition on $Q_D$ that restricts to `no zeroes in the polydisc' when $D$ is a graph? And sure! It could be. But I can't think of any candidates. 
    \begin{lemma}\label{lmm:calcpropdirectedcycle}
        Consider a collection $X$ of events $X_v$ having probability vector $\mathbf{r}=(r_v)_{v\in X}$ with dependency digraph $\overrightarrow{C}_{k}$ for $k$ odd. Then \[ 
            Q_D(-\mathbf{r1}_S)= 
            \begin{cases}
                \prod_{v\in V(D)}(1-r_v)+\prod_{v\in V(D)}r_v &\text{if $S=V(D)$}\\
                \prod_{v\in S} (1-r_v) &\text{otherwise $S=V(D)$}
            \end{cases}\]
    \end{lemma}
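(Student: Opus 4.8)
\textbf{Proof plan for Lemma \ref{lmm:calcpropdirectedcycle}.}

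The plan is to compute $Q_D(-\mathbf{r1}_S)$ directly from the definition by sorting the acyclic subsets of $\overrightarrow{C}_k$ by their behavior with respect to $S$. The key structural observation is that the only directed cycle subgraph of $\overrightarrow{C}_k$ is the whole cycle $\overrightarrow{C}_k$ itself (as a vertex set, all of $V(D)$). Hence a vertex set $A \subseteq V(D)$ is acyclic if and only if $A \neq V(D)$. So $Q_D(\mathbf{w}) = \left(\sum_{A \subseteq V(D)} \prod_{x \in A} w_x\right) - \prod_{x \in V(D)} w_x = \prod_{v \in V(D)}(1 + w_v) - \prod_{v \in V(D)} w_v$, using the standard factorization of the sum over all subsets. (Note this step uses only that $\overrightarrow{C}_k$'s unique directed cycle subgraph is spanning; oddness of $k$ is not needed here, though it matters for the larger argument in Section \ref{sec:DirectedShearersLemma}.)

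Next I would substitute $\mathbf{w} = -\mathbf{r1}_S$. By definition $(-\mathbf{r1}_S)_v = -r_v$ for $v \in S$ and $0$ otherwise. For the first product, $\prod_{v \in V(D)}(1 + (-\mathbf{r1}_S)_v) = \prod_{v \in S}(1 - r_v) \cdot \prod_{v \notin S} 1 = \prod_{v \in S}(1 - r_v)$. For the second product, $\prod_{v \in V(D)}(-\mathbf{r1}_S)_v$ is a product that includes the factor $0$ unless $S = V(D)$; so it equals $0$ when $S \subsetneq V(D)$ and equals $\prod_{v \in V(D)}(-r_v) = (-1)^k \prod_{v \in V(D)} r_v$ when $S = V(D)$. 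Since $k$ is odd, $(-1)^k = -1$, so this term is $-\prod_{v \in V(D)} r_v$. Putting these together: for $S \subsetneq V(D)$ we get $Q_D(-\mathbf{r1}_S) = \prod_{v \in S}(1 - r_v) - 0 = \prod_{v \in S}(1 - r_v)$, and for $S = V(D)$ we get $\prod_{v \in V(D)}(1 - r_v) - \left(-\prod_{v \in V(D)} r_v\right) = \prod_{v \in V(D)}(1 - r_v) + \prod_{v \in V(D)} r_v$, matching the claimed formula.

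The only place any care is required is confirming the claim that $\overrightarrow{C}_k$ has no proper nonempty subset inducing a directed cycle — this is immediate since deleting any vertex from $\overrightarrow{C}_k$ leaves a directed path, which is acyclic, and any induced subgraph on a proper subset is a disjoint union of directed paths. I don't anticipate a genuine obstacle here; the lemma is a bookkeeping computation, and the one subtlety worth flagging explicitly in the writeup is where the hypothesis that $k$ is odd enters, namely the sign $(-1)^k = -1$ in the $S = V(D)$ case (if $k$ were even the $\prod r_v$ term would appear with a minus sign, which is exactly why the parity matters for the downstream Corollary \ref{thm:nodirectedshearers}). Note also that the probability vector $\mathbf{r}$ and the dependency-digraph hypothesis play no role in the computation of $Q_D(-\mathbf{r1}_S)$ itself — they are only relevant for how this lemma is subsequently used — so the proof need not invoke mutual independence or anything probabilistic.
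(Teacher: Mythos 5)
Your proposal is correct and follows essentially the same route as the paper's proof: both rest on the observation that the unique non-acyclic subset of $\overrightarrow{C}_k$ is $V(D)$ itself, factor the subset sum as $\prod_{v\in S}(1-r_v)$, and account for the missing top monomial via $(-1)^k=-1$. Your version is slightly more explicit about the closed form $Q_D(\mathbf{w})=\prod_v(1+w_v)-\prod_v w_v$ before substituting, but the content is identical.
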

    \begin{proof}
        Every set except $V(D)$ in $\overrightarrow{C}_{k}$ is acyclic. The acyclicity polynomial of an acyclic set, evaluated on $-\mathbf{r}$, is \[1-\sum_{v\in S} r_v + \sum_{\substack{T\subseteq S  \\ |T|=2}} \prod_{v\in T} r_v -  \sum_{\substack{T\subseteq S  \\ |T|=3}} \prod_{v\in T} r_v...\] 
        which factors as $\prod_{v\in S} (1-r_v)$. $V(D)$, however, is not acyclic, so the monomial $-\prod_{v\in V(D)}r_v$ does not appear, so we add $\prod_{v\in V(D)}r_v$.
    \end{proof}

    \begin{prop}\label{thm:nodirectedshearers}
        Let $D$ be a digraph containing an odd directed cycle subgraph. There exist events $\{R_v\}$ having dependency digraph $D$ and probability vector $\mathbf{p}$ with $Q_D(-\mathbf{p1}_S)>0$ for all $S$ but $\mathbf{P}(\bigcap \neg R_v)=0$.
    \end{prop}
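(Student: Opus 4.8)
The plan is to reduce everything to the case of an odd directed cycle $\overrightarrow{C}_k$ and then exhibit an explicit family of events on $\overrightarrow{C}_k$ with the required properties. First I would handle the reduction: if $D$ contains an odd directed cycle subgraph on a vertex subset $W$, then I can build events $\{R_v\}_{v\in V(D)}$ by placing the soon-to-be-constructed ``bad'' events on $W$ and letting all other events be deterministic (say, the whole probability space, i.e. $R_v$ happens with probability $1$ for $v\notin W$, or better, probability-zero trivial events $\emptyset$ so that $\mathbf P(\bigcap_{v}\neg R_v)$ equals $\mathbf P(\bigcap_{v\in W}\neg R_v)$ times something — I need to pick whichever makes the intersection vanish and keeps $D$ a valid dependency digraph). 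The cleanest choice: let $R_v=\emptyset$ for $v\notin W$, so $\neg R_v$ is the whole space and $\mathbf P(\bigcap_v \neg R_v)=\mathbf P(\bigcap_{v\in W}\neg R_v)$; a probability-zero event is mutually independent of everything, so $D$ is trivially a dependency digraph once the restriction to $W$ works; and one must check $Q_D(-\mathbf p\mathbf 1_S)>0$, which — since $p_v=0$ off $W$ — collapses to the $\overrightarrow{C}_k$ computation on $S\cap W$ via Lemma \ref{lmm:calcpropdirectedcycle}. Actually we need $Q_D$, not $Q_{\overrightarrow C_k}$, so I should note that acyclic subsets of $D$ meeting $W$ in a non-full-cycle set contribute the same monomial while any set containing all of $W$ contributes $0$ in $Q_{\overrightarrow C_k}$ but might not in $Q_D$; a short argument is needed that the extra terms only add nonnegative quantities when all off-$W$ variables are $0$ — indeed every monomial with a zero variable vanishes, so $Q_D(-\mathbf p\mathbf 1_S)=Q_{D[S\cap W]}(-\mathbf p\mathbf 1_{S\cap W})$, and $D[S\cap W]$ is a union of directed paths (acyclic) unless $S\cap W=W$, in which case it's the full cycle; so Lemma \ref{lmm:calcpropdirectedcycle} applies verbatim.

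The heart is constructing, for odd $k$, events $R_0,\dots,R_{k-1}$ on a probability space with $\overrightarrow{C}_k$ as dependency digraph, with $\mathbf P(\bigcap_i \neg R_i)=0$, yet $Q_{\overrightarrow C_k}(-\mathbf p\mathbf 1_S)>0$ for all $S$. By Lemma \ref{lmm:calcpropdirectedcycle} the latter condition says $\prod_{i\in S}(1-p_i)>0$ for $S\subsetneq V$, i.e. each $p_i<1$, together with $\prod_i(1-p_i)+\prod_i p_i>0$, which is automatic. So I just need each $p_i\in(0,1)$ and an actual probabilistic realization where $\neg R_i$ cover the space. The natural candidate: use the Classic hat game $(\overrightarrow C_k,2)$ with the ``everyone guesses the color she sees'' identical strategy, which wins on odd cycles (``If, version 2'' in the proof of Theorem \ref{LatvianDirectedCycles}, pointing to Question \ref{SameStratQuestion}). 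Let $R_i$ be the event that sage $i$ guesses right; by Theorem \ref{thm:mainfact}, $\mathbf P(R_i)=r(i)=1/2$ and the dependency digraph is the dual of $\overrightarrow C_k$, which is again a $\overrightarrow C_k$ (just the reverse orientation — fine, relabel). Because the strategy wins, no coloring makes everyone guess wrong, i.e. $\bigcap_i \neg R_i=\emptyset$, so $\mathbf P(\bigcap_i\neg R_i)=0$. And $p_i=1/2\in(0,1)$ gives $Q_{\overrightarrow C_k}(-\mathbf p\mathbf 1_S)=2^{-|S|}>0$ for $S\subsetneq V$ and $=2^{1-k}>0$ for $S=V$. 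That's exactly what's needed.

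So the step-by-step write-up will be: (1) recall the identical-plan winning strategy for the Classic game $(\overrightarrow C_k,\star 2)$ on odd $k$ and its consequence $\bigcap_i\neg R_i=\emptyset$; (2) invoke Theorem \ref{thm:mainfact} to get probabilities $1/2$ and dependency digraph the reverse cycle; (3) compute $Q_{\overrightarrow C_k}(-\tfrac12\mathbf 1_S)$ via Lemma \ref{lmm:calcpropdirectedcycle} and observe positivity; (4) do the embedding of this configuration into a general $D$ containing an odd directed cycle by padding with null events, checking that both the dependency-digraph condition and the acyclicity-polynomial condition survive (the latter because variables vanishing off $W$ kill all the new monomials, reducing $Q_D$ on $-\mathbf p\mathbf 1_S$ to the induced computation on $S\cap W$). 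The main obstacle I anticipate is step (4): being careful that the padding events genuinely keep $D$ — with all its arcs, including arcs between $W$ and $V(D)\setminus W$ and among $V(D)\setminus W$ — a valid dependency digraph (probability-zero events are independent of everything, so this is fine but should be stated precisely), and that one hasn't accidentally needed $Q_D$ to have \emph{no zeros in the polydisc} rather than merely positivity at the real points $-\mathbf p\mathbf 1_S$; by Proposition \ref{differentformsequiv} these are equivalent, so citing it closes the gap. A minor subtlety worth a sentence: the dual of $\overrightarrow C_k$ is the oppositely-oriented $k$-cycle, still an odd directed cycle, so Lemma \ref{lmm:calcpropdirectedcycle} (stated for $\overrightarrow C_k$) applies after a trivial relabeling.

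\begin{proof}
Let $k$ be odd and consider the Classic game $(\overrightarrow C_k,\star 2)$. As noted in the proof of Theorem \ref{LatvianDirectedCycles}, the identical plan ``my hat is the color I see'' is a winning strategy $f$: in any coloring, either all hats agree and everyone guesses right, or some consecutive pair disagrees and the earlier sage of that pair guesses right. Fix this $f$ and let $R_v$ be the event that sage $v$ guesses right. Since $f$ wins, $\bigcap_{v}\neg R_v=\emptyset$, so $\mathbf P(\bigcap_v\neg R_v)=0$. By Theorem \ref{thm:mainfact}, $\mathbf P(R_v)=r(v)=\tfrac12$ for all $v$, and the dependency digraph of $\{R_v\}$ is the dual of $\overrightarrow C_k$, which is the oppositely oriented $k$-cycle; after relabeling, this is again $\overrightarrow C_k$. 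Taking $\mathbf p=(\tfrac12,\dots,\tfrac12)$, Lemma \ref{lmm:calcpropdirectedcycle} gives $Q_{\overrightarrow C_k}(-\mathbf p\mathbf 1_S)=2^{-|S|}>0$ for $S\subsetneq V$ and $Q_{\overrightarrow C_k}(-\mathbf p\mathbf 1_{V})=2^{1-k}>0$. This proves the claim when $D=\overrightarrow C_k$.

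In general, let $D$ contain a directed cycle subgraph on an odd-size vertex set $W$, with $|W|=k$. Let $\{R_w\}_{w\in W}$ be the events just constructed (on their probability space $\Omega_0$, with $p_w=\tfrac12$), and form events on $\Omega_0$ indexed by $V(D)$ by setting $R_v=\emptyset$ for $v\notin W$ and $p_v=0$ there. A probability-zero event is mutually independent of every family of events, so $D$ is a valid dependency digraph for $\{R_v\}_{v\in V(D)}$ (the restriction to $W$ already is, by the previous paragraph, since the dependency digraph on $W$ equals the relabeled $\overrightarrow C_k$, which is a subgraph of $D[W]$). We still have $\bigcap_{v\in V(D)}\neg R_v=\bigcap_{w\in W}\neg R_w=\emptyset$, so $\mathbf P(\bigcap_v\neg R_v)=0$. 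Finally, for any $S\subseteq V(D)$, every monomial of $Q_D$ corresponding to an acyclic set $A\not\subseteq W$ contains a variable with index outside $W$, which is set to $0$ in $-\mathbf p\mathbf 1_S$; hence $Q_D(-\mathbf p\mathbf 1_S)=Q_{D[W\cap S]}(-\mathbf p\mathbf 1_{W\cap S})$. If $W\cap S\subsetneq W$ then $D[W\cap S]$ is a sub-digraph of a directed cycle with a vertex removed, hence acyclic, so $Q_{D[W\cap S]}(-\mathbf p\mathbf 1_{W\cap S})=\prod_{w\in W\cap S}(1-\tfrac12)=2^{-|W\cap S|}>0$; if $W\cap S=W$ then $D[W]$ contains the cycle $\overrightarrow C_k$ and $Q_{D[W]}(-\mathbf p\mathbf 1_W)=\prod_{w\in W}(1-\tfrac12)+\prod_{w\in W}\tfrac12=2^{-k}+2^{-k}>0$ by the same computation as in Lemma \ref{lmm:calcpropdirectedcycle}. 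Thus $Q_D(-\mathbf p\mathbf 1_S)>0$ for all $S\subseteq V(D)$, completing the proof.
\end{proof}
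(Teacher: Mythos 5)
Your proof is correct and takes essentially the same route as the paper's: the identical-plan winning strategy for the Classic game $(\overrightarrow{C}_k,\star 2)$, Theorem \ref{thm:mainfact} for the probabilities $1/2$ and the dual dependency digraph, and Lemma \ref{lmm:calcpropdirectedcycle} for the positivity of $Q_{\overrightarrow{C}_k}$; the only difference is that you spell out the padding-by-null-events reduction that the paper compresses into its opening sentence. One caveat your explicitness makes visible (and which the paper's terser reduction shares): the claim that $D[W\cap S]$ is acyclic whenever $W\cap S\subsetneq W$ presumes that $D$ has no arcs among $W$ beyond the chosen odd cycle, since extra arcs could create a shorter directed cycle on a proper subset of $W$ and kill the corresponding monomials of $Q_D$, so strictly speaking both arguments need either to choose $W$ with $D[W]$ inducing exactly the cycle or to handle such chords separately.
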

    \begin{proof}
        We can set $p_x$ to $0$ for all events outside the odd directed cycle subgraph of $D$, so it suffices to prove this for $D=\overrightarrow{C}_k$. Consider the Classic game $(\overrightarrow{C}_k, 2)$ with a winning strategy as in Theorem \ref{LatvianDirectedCycles}, and let $R_v$ be the event that sage $v$ wins. The dual of a directed cycle is a directed cycle, so the dependency digraph of $\{R_v\}$ is $\overrightarrow{C}_k$, and $\mathbf{P}(R_v)=1/2$ for all $v\in V(\overrightarrow{C}_k)$ by Theorem \ref{thm:mainfact}. In this setup, by Theorem \ref{LatvianDirectedCycles}, we have $\mathbf{P}(\bigcap \neg R_v)=0$. However, by \ref{lmm:calcpropdirectedcycle} $Q_D(-\mathbf{r1}_S)=2^{-k+1}>0$ if $S=V(D)$ and $Q_D(-\mathbf{r1}_S)=2^{-|S|}$ otherwise.
    \end{proof}
Thus, if we had hoped to write a theorem like ``if $Q_D$ is nonvanishing in $\overline{D}_\mathbf{p}$, then $\mathbf{p}$ is good for $D$,'' our hopes are frustrated. See Subsection \ref{subsec:RatioGames} for other investigations on these methods. 

\chapter{Open problems and partial results}
I'll resist the temptation to write a final chapter as fanciful as the zeroth. Instead, this will merely point the way for further research by collating open problems. This includes all that have been stated in the literature (and remain unsolved), along with an indiscriminate menu of new ones. For some, we casually present preliminary results that didn't fit in any earlier chapter. We investigate Czech games on directed cycles and translate it into a Ramsey problem. We prove basic analytic facts about ratio games. We gesture toward complexity theory. Less propositionally, we'll discuss the relation of $\mu/\mu_s/\hat{\mu}$ to parameters, classes, and operations of (di)graphs, along with random (di)graphs, some constructors, and miscellany.
    
Many conjectures and questions on hat guessing have been disproved or answered. The very first question\footnote{Which includes Question 2.7.2 of \cite{HT13}.} of the first paper \cite{BHKL09}––concerning the order of $\mu(K_{n,n})$ and the independence of $\mu$ from $\omega$ more broadly––has been answered; see \cite{ABST20} and \cite{GG15} respectively. We still don't know whether there exists an upper bound for $\mu$ of planar graphs, but \cite{BDFGM21} conjectured that planar graphs have hat guessing number $\leq 4$, and \cite{GIP20} conjectured that it was at most 6. By \cite{LK23}, it is at least 22.  \cite{BDFGM21} also conjectured that $\mu$ was at most the Hadwiger number, which is quite false:\footnote{The easiest counterexample, however, is windmill graphs, for which the simplest proof is by constructors in \cite{LK21}. Indeed, by examining the Latvian triangle of hatnesses 4,4,2 and taking the product, we see that the bowtie graph (whose Hadwiger number is 3) has hat guessing number $\geq 4$ (and in fact $=4$ becuase it's a non-clique graph on 5 vertices).} the Hadwiger number of $D_d(N)$ from \cite{HL20} is $d+1$, while the hat guessing number is doubly-exponential in $d$. \cite{GIP20, HIP22, ABST20} and \cite{ABST20} guessed that $\mu\leq \Delta+1$. \cite{LK22} solidly disproved this, also answering (affirmitively) the question from \cite{HIP22} as to whether there exist edge-critical graphs of arbitrarily high diameter and hat guessing number. Though \cite{Bra21} guessed that there exist outerplanar graphs of $\mu$ at least 1000, \cite{KMS21} showed that $\mu \leq 40$ for outerplanar graphs. 

I have seen no conjecture proven, only disproven. Perhaps this is because finding counterexamples is easier than proving general theorems––or perhaps this is simply because this problem confounds our intuitions. Out of caution, we write ``question'' in lieu of ``conjecture'' even when representing others' conjectures. (Also of note: none of these counterexamples were found (to my knowledge) using computer search. Most used the constructors approach of Kokhas, Latyshev, and Retinskiy.)

We present all stated problems that remain open pertaining to the games and parameters defined in Section \ref{DefiningGamesAndParameters}. Questions without citation are original. We are not especially discerning in our choice of original questions; the broad menu is for inspiration. For the definition of any unfamiliar parameters or types of graphs, see \cite{deR14}. Also, whenever we pose a question concerning $\mu$, realize that the same could be asked of $\mu_s$ or $\hat{\mu}$. Similarly, when we speak of graphs, it often could be replaced by digraphs.

In this chapter, we sometimes omit or abridge proofs and definitions. When we speak of algorithms, the word ``efficient'' should be taken loosely. We're not committing to a formalism. 

\section{Games on particular (di)graph families}
\subsection{Extending Latvian games}
Originally, this subsection concerned Latvian cycles of $h(v)\geq 5$ and partial results thereupon, but then I figured out Theorem \ref{CategorizedFiveAndUp}. As mentioned in the introduction, this solves Latvian unicyclic graphs. So we wonder:
\begin{ques}
What Latvian games on graphs with exactly two cycles are winnable?    
\end{ques}
Because leaves can be worked out by Proposition \ref{Removing2Leaf}, it suffices to study kayak paddle graphs and graphs that consist of two cycles intersected at one vertex. The cleanest answer would be that all winnable Latvian games on those graphs can be attained from applying Proposition \ref{SinglePointProduct} to winnable unicyclic graphs. Those were the graphs on which we studied Classic games for Subsection \ref{simplifyingclassic}. There, we found that things were harder when two cycles shared one or more edges; see \cite{KL19}. Probably this represents an uptick in Latvian games' challenge as well.

\begin{ques}
    Consider graphs with maximum degree 3, no leaves, and exactly two vertices of maximum degree. What Latvian games on these graphs are winnable? How hard is this, computationally?
\end{ques}

\subsection{Nontrivial Czech games}\label{subsec:nontrivczechgames}
Czech games are tricky. It was easy enough to solve them for complete graphs, yet we have not solved Czech games for any other digraph!\footnote{Essentially. Of course, \ref{prop:strongcomponly} enables us to solve digraphs whose strong components are cliques, and Corollary \ref{cor:redundantvisiondelete} allows us to delete certain structurally redundant vertices, but this isn't especially interesting.} That seems worth remedying. 

The minimal connected non-complete graph is $P_3$. Label its vertices $A,B,C$. $P_3$ is isomorphic to $K_{1,2}$, so by Proposition \ref{lmm:WinOnCzechStars}, we have the following characterization of winnable $P_3$s. 
\begin{prop}
    A Czech game $\mathcal{G}$ on $P_3$ is winnable if and only if a combinatorial $h(A)\times h(C)$ rectangle can fit $h(B)$ combinatorial $(h(A)-g(A))\times (h(C)-g(C))$ rectangles, such that no $g(B)+1$ of the smaller rectangles have nonempty intersection. 
\end{prop}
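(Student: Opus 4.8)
The statement is an instance of Proposition~\ref{lmm:WinOnCzechStars} applied to the star $K_{1,2}$, so the plan is simply to recall that $P_3 \cong K_{1,2}$ and substitute. First I would fix the identification: take $B$ to be the center vertex of the star and $A, C$ to be the two leaves $\ell_1, \ell_2$. Then Proposition~\ref{lmm:WinOnCzechStars} says that the Czech game is winnable if and only if the combinatorial prism packing problem with parameters $d_1 = h(A)$, $d_2 = h(C)$, $a_1 = h(A)-g(A)$, $a_2 = h(C)-g(C)$, $x = h(B)$, $y = g(B)$ has a solution. Unwinding the definition of that packing problem in the two-dimensional case: a solution is a list of $h(B)$ combinatorial prisms $B_k \subseteq [h(A)] \times [h(C)]$, each of measurements $(h(A)-g(A), h(C)-g(C))$ — i.e. axis-aligned combinatorial rectangles of those side lengths — such that any $g(B)+1$ of them have empty intersection. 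That is verbatim the condition in the statement to be proved, modulo the (harmless) translation between ``list of prisms inside the ambient prism $\prod_i [d_i]$'' and ``rectangles fitting inside an $h(A) \times h(C)$ rectangle.''

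So the real content is entirely inherited from Proposition~\ref{lmm:WinOnCzechStars}, whose own proof is sketched as an application of Propositions~\ref{HAHplusSprawls}.1 and \ref{prop:winnableifcomplementhasprism}. If one wanted a self-contained argument for this special case rather than a pure citation, I would trace through that chain: delete the center $B$ and replace it with a $g(B), h(B)$-hint with respect to its neighborhood $\{A, C\}$ (justified by Theorem~\ref{FinalCzechHintTheorem} / Proposition~\ref{HeadlineHints}.9); observe that $\{A,C\}$ is a peer set (indeed an independent set with identical out-neighborhoods in $P_3 \backslash B$, namely $\varnothing$), so by Proposition~\ref{ifyouloseyouloseonsprawls} the surviving sages $A, C$ win against a given hint value ``$\neg A_i$'' precisely when the set of colorings not excluded by that hint is $g(\{A,C\})$-coverable; and by Proposition~\ref{prop:winnableifcomplementhasprism} that is equivalent to the complement containing an $(h(A)-g(A)) \times (h(C)-g(C))$ combinatorial rectangle. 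The hint being a $g(B), h(B)$-hint means it is specified by $h(B)$ sets $A_0, \dots, A_{h(B)-1} \subseteq [h(A)] \times [h(C)]$ with each point in exactly $g(B)$ of them; taking complements, the ``allowed rectangles'' $R_i := \mathbb{H}(\{A,C\}) \backslash A_i$ (or rather, the rectangles chosen inside them) form exactly a family of $h(B)$ rectangles of the prescribed size with the property that each point lies outside at least $h(B)-g(B)$ of the $A_i$, equivalently each point lies in at most $g(B)$ of them, equivalently no $g(B)+1$ of them share a common point.

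The main (and essentially only) obstacle is bookkeeping: making sure the ``exactly $g(B)$'' multiplicity condition on the hint sets $A_i$ translates correctly, under complementation and under the passage from ``a point can fail to be covered by some $A_i$'' to ``there exists a rectangle in the complement,'' into the clean ``no $g(B)+1$ of the small rectangles intersect'' phrasing — and checking that the degenerate cases (e.g. $g(B)=0$, or $h(A)=g(A)$, which we have excluded by the standing assumption $h>g>0$) behave. None of this is deep; it is the same translation already carried out for Proposition~\ref{lmm:WinOnCzechStars}, here specialized to $n=2$ leaves. Accordingly I would present the proof as: ``Apply Proposition~\ref{lmm:WinOnCzechStars}, using the isomorphism $P_3 \cong K_{1,2}$ with $B$ the center, and rewrite the combinatorial prism packing condition in the planar ($n=2$) case as a statement about rectangles.''
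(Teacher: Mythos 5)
Your proposal is correct and is exactly the paper's argument: the proposition is stated as an immediate consequence of the preceding sentence "$P_3$ is isomorphic to $K_{1,2}$, so by Proposition \ref{lmm:WinOnCzechStars}...", i.e. one identifies $B$ with the star's center, $A,C$ with the leaves, and specializes the combinatorial prism packing problem to $n=2$. Your additional tracing of the underlying chain (hints via Theorem \ref{FinalCzechHintTheorem}, coverability via Propositions \ref{ifyouloseyouloseonsprawls} and \ref{prop:winnableifcomplementhasprism}) is consistent with how the paper sketches the proof of Proposition \ref{lmm:WinOnCzechStars} itself, but is not needed for this corollary.
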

\begin{ques}
    Is there an efficient algorithm for determining whether a Czech  $P_3$ is winnable? I.e., is the 2-dimensional combinatorial prism packing problem in $P$?
\end{ques}

The minimal strongly connected non-clique proper digraph is $\overrightarrow{C}_3$. Lemma \ref{DirectedCyclesLemma} gives us a necessary condition for a Czech $\overrightarrow{C}_3$ to be winnable, but it's not sufficient.
\begin{prop}\label{prop:directedtrianglesweird}
    Let $\mathcal{G}$ be a Czech game on $\overrightarrow{C}_3$ with $k:=h(A)=h(B)=h(C)$ and $\ell:=g(C)=k-g(A)=k-g(B)$. $\mathcal{G}$ is winnable if and only if $\ell \mid k$. 
\end{prop}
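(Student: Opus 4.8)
The claim concerns the Czech game on $\overrightarrow{C}_3$ with all hatnesses equal to $k$ and guessnesses arranged so that $g(A) = g(B) = k - \ell$ and $g(C) = \ell$. The plan is to exploit the strong structure: the ratios are $r(A) = r(B) = (k-\ell)/k$ and $r(C) = \ell/k$, so $1 - r(A) = \ell/k = r(C)$ — we are exactly at the boundary of Lemma \ref{DirectedCyclesLemma}. Thus the lemma's strict inequality $1 - r(v) > r(u)$ fails everywhere, so it gives no obstruction, and we must work directly. First I would set up coordinates: think of the three sages in a cycle $A \to B \to C \to A$, so $A$ sees $B$, $B$ sees $C$, $C$ sees $A$. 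The key observation driving the ``if'' direction is that when $\ell \mid k$, we can use a modular/arithmetic-progression strategy. Partition $[k] = \{0, \dots, k-1\}$ into $k/\ell$ blocks of size $\ell$. Sage $C$, who makes $\ell$ guesses and sees $c(A)$, guesses the entire block of $[k]$ determined (in a fixed way) by $c(A)$; that is, $C$'s guess set is $B_{\phi(c(A))}$ for some function $\phi: [k] \to [k/\ell]$. Sages $A$ and $B$ each make $k - \ell$ guesses; $A$ sees $c(B)$ and $B$ sees $c(C)$. I would have $B$ guess the complement of a single block determined by $c(C)$: specifically $B$ guesses $[k] \setminus B_{\psi(c(C))}$, and similarly $A$ guesses $[k]\setminus B_{\chi(c(B))}$. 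A coloring is a disprover iff $c(C) \notin B_{\phi(c(A))}$, $c(B) \in B_{\psi(c(C))}$, and $c(A) \in B_{\chi(c(B))}$. The task is to choose $\phi, \psi, \chi$ (maps into the set of $k/\ell$ blocks, or equivalently into $\mathbb{Z}_{k/\ell}$) so that this system is unsatisfiable. Taking $\phi(a)$ to depend only on which block $a$ lies in — say $\phi = \bar\phi \circ \beta$ where $\beta: [k]\to\mathbb{Z}_{k/\ell}$ records the block — and likewise for $\psi, \chi$, the disprover conditions become: $\beta(c(C)) \ne \bar\phi(\beta(c(A)))$, $\beta(c(C)) = \bar\psi(\beta(c(C)))$?? — I need to be careful that $B$'s condition $c(B)\in B_{\psi(c(C))}$ reads $\beta(c(B)) = \bar\psi(\beta(c(C)))$ only if $\psi$ factors through blocks, which forces $\beta(c(B))$ to a specific value. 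Chaining, a disprover forces $\beta(c(A)) = \bar\chi(\bar\psi(\beta(c(C))))$ and $\beta(c(C)) \ne \bar\phi(\beta(c(A)))$; choosing $\bar\phi$ to be a bijection and $\bar\chi, \bar\psi$ to be its inverse pieces makes $\bar\phi(\beta(c(A))) = \beta(c(C))$ always, contradiction — so no disprover exists. (Working out the exact composition so the three conditions are jointly contradictory is the routine calculation I am deferring; it amounts to solving $x = \sigma(y)$, $y = \tau(x)$, $y \ne \rho(x)$ in $\mathbb{Z}_{k/\ell}$ with $\sigma,\tau,\rho$ to be chosen.)

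For the ``only if'' direction, suppose $\ell \nmid k$, write $k = q\ell + s$ with $0 < s < \ell$. I would argue by the probabilistic/counting method in the style of Lemma \ref{DirectedCyclesLemma} but pushed one step further around the cycle, or better, directly construct a disprover. Fix an arbitrary strategy. Sage $C$'s guess set $f_C(c(A))$ has size $\ell$; as $c(A)$ ranges over $[k]$, the guessed pairs $(c(A), y)$ with $y \in f_C(c(A))$ number $k\ell$ out of the $k^2$ pairs $(c(A), c(C))$. So for the average $c(C)$, the set of $c(A)$ for which $C$ would guess $c(C)$ has size $\ell$; there exists a color $\gamma$ for $C$ with at most $\ell$ (in fact the pigeonhole gives exactly, on average) such $c(A)$ — but I want to fix things the other way. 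Fix $c(C) = \gamma$ so that $|\{a : \gamma \in f_C(a)\}| \le \lfloor k\ell/k\rfloor = \ell$; actually since $\sum_a |f_C(a)| = k\ell$ and we want a $\gamma$ hit by few $a$'s, the minimum over $\gamma$ of $|\{a: \gamma \in f_C(a)\}|$ is $\le \ell$. Now $B$'s guess $f_B(\gamma)$ is determined, a set of size $k - \ell$; pick $c(B) = \beta \notin f_B(\gamma)$ — possible since $|f_B(\gamma)| = k - \ell < k$ — which also avoids $B$ guessing right. Then $A$'s guess $f_A(\beta)$ is determined, size $k - \ell$; we need $c(A) = \alpha$ with $\alpha \notin f_A(\beta)$ (so $A$ guesses wrong) and $\gamma \notin f_C(\alpha)$ (so $C$ guesses wrong). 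The forbidden set for $\alpha$ has size $\le (k - \ell) + \ell = k$ — which is not $< k$, so this crude bound fails, exactly as expected at the boundary. The hard part is here: I must use the non-divisibility to gain the extra slack. The refinement I expect to need: choose $\gamma$ more cleverly so that $|\{a : \gamma \in f_C(a)\}| \le \ell - 1$, which is possible precisely when $\ell \nmid k$ because $k\ell/k = \ell$ is achieved with equality only if $|\{a : \gamma\in f_C(a)\}| = \ell$ for every $\gamma$, meaning $f_C$ partitions $[k]^2$ into $k$ "rows" each of size... this forces a combinatorial-design-like regularity that, combined with the cyclic symmetry constraints from $A$ and $B$, cannot hold unless $\ell \mid k$. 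Making this rigorous is the crux — likely via a double-counting argument over all three sages simultaneously showing $\sum_v (\text{over-guessing slack})$ must be an integer multiple of $\ell/\gcd(\ell,k)$ or similar, contradicting $s \ne 0$.

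I would therefore organize the write-up as: (1) reduce to ratios and note Lemma \ref{DirectedCyclesLemma} is exactly tight; (2) ``if'': give the explicit block strategy with maps into $\mathbb{Z}_{k/\ell}$ and verify no disprover; (3) ``only if'': the sequential disprover construction, with the key lemma that $\ell \nmid k$ forces, for some color of some sage, a strictly-below-average fiber, providing the missing unit of slack to complete the disprover.

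\textbf{Main obstacle.} The genuine difficulty is the ``only if'' direction: squeezing out the one extra unit of counting slack from $\ell \nmid k$. The naive union bound around the cycle breaks by exactly $1$ at the divisibility boundary, so I expect to need either a global three-sage double-counting identity (some invariant that must be divisible by $\ell$) or an averaging argument showing the extremal "$f_C$ is perfectly $\ell$-regular" configuration is incompatible with $C$ also having to cooperate with $A$ and $B$ around the directed triangle. Identifying the right invariant is where the real work lies.
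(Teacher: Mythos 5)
Your ``if'' direction is sound and is essentially the paper's: the block strategy with identity (or mutually inverse) maps on $\mathbb{Z}_{k/\ell}$ is exactly what you get by first winning the $\ell=1$ game with hatness $k/\ell$ ($A$ and $B$ guess everything except what they see, $C$ guesses what she sees) and then duplicating every color $\ell$ times via the last bullet of the ease relation and Proposition \ref{monotone}. The genuine gap is in the ``only if'' direction, and it is not merely a deferred calculation: the specific refinement you propose is false. You want a color $\gamma$ of $C$ with $\lvert\{a : \gamma \in f_C(a)\}\rvert \le \ell-1$ and claim this is possible ``precisely when $\ell \nmid k$.'' But the fibers of $f_C$ can be perfectly $\ell$-regular for \emph{any} $k$ and $\ell$ --- take $f_C(a) = \{a, a+1, \dots, a+\ell-1\} \pmod k$ --- so every $\gamma$ has exactly $\ell$ preimages and your sequential construction stalls at a forbidden set of size exactly $k$, divisibility or no. No counting argument local to a single sage produces the missing unit of slack.

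What closes the gap is a global rigidity argument, which is what the paper's one-line sketch (``certain of these forbiddings are incompatible \dots the numerical requirements for $B$'s strategy'') is gesturing at. With your orientation ($A$ sees $c(B)$, $B$ sees $c(C)$, $C$ sees $c(A)$), write $\bar f_A(b) = [k]\setminus f_A(b)$ and $\bar f_B(c) = [k]\setminus f_B(c)$, each of size $\ell$. If no disprover exists, then for every $c$ the set $S_c = \bigcup_{b\in \bar f_B(c)} \bar f_A(b)$ must be contained in $\{a : c \in f_C(a)\}$: otherwise pick $a\in S_c$ with $c\notin f_C(a)$ and a witnessing $b$, and $(a,b,c)$ is a disprover. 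Summing over $c$, the left-hand sides total at least $k\ell$ (each $S_c$ contains some $\bar f_A(b)$ of size $\ell$), while the right-hand sides total exactly $\sum_a \lvert f_C(a)\rvert = k\ell$. Equality therefore holds throughout, which forces, for each $c$, all the sets $\bar f_A(b)$ with $b\in \bar f_B(c)$ to coincide with a single $\ell$-set $U_c$, and forces $U_c = \{a : c\in f_C(a)\}$. The distinct sets among the $U_c$ have pairwise disjoint preimages $\{b : \bar f_A(b) = U\}\supseteq \bar f_B(c)$ in $B$'s color set, each of size at least $\ell$, so there are at most $\lfloor k/\ell\rfloor$ of them; yet every $a$ lies in $U_c$ for any $c\in f_C(a)\neq\emptyset$, so these at most $\lfloor k/\ell\rfloor$ sets of size $\ell$ cover all $k$ colors of $A$. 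Hence $k\le \ell\lfloor k/\ell\rfloor$, i.e.\ $\ell\mid k$. This is the three-sage double-counting identity you suspected you needed: the invariant is the size of the composed relation $\{(c,a): \exists b,\ b\in\bar f_B(c),\ a\in\bar f_A(b)\}$, pinched between $k\ell$ from below and $\sum_a\lvert f_C(a)\rvert = k\ell$ from above.
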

\begin{proof}[Proof sketch.]
    ``If'' is easy. It's winnable with $\ell=1$: $A$ and $B$ guess whatever they don't see, and $C$ guesses what she sees. Then apply Proposition \ref{monotone}. For ``only if,'' we read $c(A)$ and $c(B)$ as the forbidding of certain values of $c(C)$ (with certain of these forbiddings incompatible) and consider the numerical requirements for $B$'s strategy.
\end{proof}
At least on the margin, determining winnable Czech $\overrightarrow{C}_3$s is strange indeed. 
\begin{ques}
    Is there an efficient algorithm for determining whether a Czech $\overrightarrow{C}_3$ is winnable?
\end{ques}
For both questions in this subsection, having an answer for $k$ would be lovely, but $3$ seems like a hard enough step already. 
Still, we've had more success with $\overrightarrow{C}_k$ than with any other nontrivial class of graphs. For nothing else have we solved the Latvian and Polish games. The following Ramseyish angle might be helpful. 
\begin{prop}
    Let $\mathcal{G}$ be a Czech game on $\overrightarrow{C}_k$ with hatnesses $h_0,...,h_{k-1}$ and guessnesses $g_0,...,g_{k-1}$. 
    Construct digraph $H_\mathcal{G}$ as follows. Start with the complete $k$-partite directed cycle $\overline{C}_{h_0,...,h_{k-1}}$ (as in \cite{ABST20}). For each vertex in the $i$th component, remove $g_{i-1}$ of its in-arrows. (As usual, $0-1=k-1$.)
    $\mathcal{G}$ is winnable if and only if there is some $H_\mathcal{G}$ with no $\overrightarrow{C}_k$ subgraph. 
\end{prop}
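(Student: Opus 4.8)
The plan is to build an explicit dictionary: strategies for $\mathcal{G}$ correspond bijectively to the admissible choices of $H_{\mathcal{G}}$, and disprovers correspond bijectively to $\overrightarrow{C}_k$-subgraphs of $H_{\mathcal{G}}$; the equivalence is then immediate. Throughout, indices are taken mod $k$, and we use the paper's convention that in $\overrightarrow{C}_k$ sage $i$ sees sage $i+1$ (since $i\rightarrow i+1$).

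First I would fix notation for $\overline{C}_{h_0,\dots,h_{k-1}}$: write $(i,p)$ for the vertex in the $i$th component corresponding to color $p\in[h_i]$, and recall that its arcs are exactly $(i-1,q)\rightarrow(i,p)$ over all $q\in[h_{i-1}],\,p\in[h_i]$. A plan for sage $i$ is a function $f_i:[h_{i+1}]\rightarrow\binom{[h_i]}{g_i}$, and a strategy $f$ is the tuple $(f_0,\dots,f_{k-1})$. Now observe that in $H_{\mathcal{G}}$ we delete, for each vertex $(i,p)$, exactly $g_{i-1}$ of its $h_{i-1}$ in-arrows (there is room since $h>g>0$), keeping $h_{i-1}-g_{i-1}$ of them. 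Choosing, for every $i$ and every $p\in[h_i]$, which $g_{i-1}$ in-arrows of $(i,p)$ to delete is precisely the data of a guess-set $f_{i-1}(p)\subseteq[h_{i-1}]$ of size $g_{i-1}$: keep the arc $(i-1,q)\rightarrow(i,p)$ iff $q\notin f_{i-1}(p)$. Since this runs over all vertices of all components, it gives a bijection between the possible $H_{\mathcal{G}}$ and the strategies $f$ for $\mathcal{G}$.

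Next I would analyze $\overrightarrow{C}_k$-subgraphs of a given $H_{\mathcal{G}}$. Every arc of $\overline{C}_{h_0,\dots,h_{k-1}}$, hence of $H_{\mathcal{G}}$, runs from component $i-1$ to component $i$; therefore a directed cycle on $k$ vertices must traverse the components in consecutive cyclic order and use exactly one vertex from each component. Thus a $\overrightarrow{C}_k$-subgraph of $H_{\mathcal{G}}$ is exactly a tuple $(c(0),\dots,c(k-1))$ with $c(i)\in[h_i]$ such that every arc $(i-1,c(i-1))\rightarrow(i,c(i))$ survives, i.e. $c(i-1)\notin f_{i-1}(c(i))$ for all $i$. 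Comparing with the definition of winning: viewing $c$ as a coloring of $\overrightarrow{C}_k$, sage $i-1$ sees $c(i)$, guesses the set $f_{i-1}(c(i))$, and wears color $c(i-1)$, so she guesses wrong precisely when $c(i-1)\notin f_{i-1}(c(i))$. Hence a $\overrightarrow{C}_k$-subgraph of $H_{\mathcal{G}}$ is the same object as a disprover of the strategy $f$ that corresponds to $H_{\mathcal{G}}$.

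Combining the two bijections: $\mathcal{G}$ is winnable iff some strategy $f$ has no disprover iff the corresponding $H_{\mathcal{G}}$ has no $\overrightarrow{C}_k$-subgraph, which is the claim. The only step needing genuine care is the structural observation that a directed $k$-cycle in a $k$-partite digraph whose arcs only join consecutive parts must pick one vertex per part (ruling out degenerate or part-skipping cycles); the rest is index bookkeeping that must be tracked consistently so that $g_{i-1}$ attaches both to sage $i-1$'s plan and to the in-arrows of component $i$. I expect that bookkeeping — not any real difficulty — to be the main place to be careful.
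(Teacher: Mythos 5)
Your argument is correct, and since the paper states this proposition in the open-problems chapter without any proof, your write-up simply supplies the intended argument: the removed in-arrows at $(i,p)$ encode the guess-set $f_{i-1}(p)$, and a surviving $\overrightarrow{C}_k$ (necessarily one vertex per part, as you verify) is exactly a disprover. The index bookkeeping is consistent with the paper's conventions, so nothing further is needed.
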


Finally, it seems worth remarking that we're close to solving Czech cycles with $h\leq 4$. 
The cutoff $h\leq 4$ is qualitatively interesting for Latvian cycle because (if our guesses above are true) the winnability criterion when $\exists v(h(v)\geq 5)$ is simple: ``contains two vertices of hatness 2 between which there is no vertex of hatness $\geq 5$.'' That cutoff is less interesting for Czech games. Nevertheless, we're close to solving it. (Ignore $C_3\equiv K_3$.)

Proposition \ref{CzechOnComplete} implies that any low-hatness ($h\leq 4$) cycle with a guessness-3 vertex is winnable, or a guessness-2 and hatness-3 vertex adjacent to a hatness-3 vertex. Proposition \ref{monotone} and Corollary \ref{WinningTreeHatnesses} imply that any low-hatness cycle with two guessness-2 vertices, or any low-hatness cycle with a guessness-2 vertex and a hatness-2 vertex is winnable. Theorem \ref{CategorizedTwoToFour} forbids certain configurations with a guessness-2, hatness-4 vertex. So we've reduced it to the following cases. They're probably solvable by hats-as-hints or admissible paths; I haven't tried. 
\begin{ques}
    Consider a Czech cycle $\mathcal{G}$, with $h(v)\in \{3,4\}$ for all $v$. $g(A)=2$, and $g(v)=1$ for other $v$. 
    \begin{enumerate}
        \item Suppose $h(Z)=h(B)=4$. Can we say that $\mathcal{G}$ is winnable or not? Does it depend on $h(A)$? Does it depend on the other vertices' hatnesses?
        \item Suppose $h(Z)=h(A)=h(C)=4$, and $h(B)=3$. Can we say that $\mathcal{G}$ is winnable or not? Does it depend on the other vertices' hatnesses?
    \end{enumerate}
\end{ques}

We expect general Czech cycles to be prohibitively difficult.

\subsection{Symmetrical strategies}
As Corollary 6 of \cite{Szc17} showed, there's a winning strategy for $(C_{3k},3)$ in which every vertex's strategy ``looks the same.'' We showed in the proof of \ref{LatvianDirectedCycles} that the same is true for $(\overrightarrow{C}_{2k+1}, 2)$. This feature seems impressive and delicate; when else can it be achieved? 

We might formalize that by considering (di)graph automorphisms. An automorphism of a digraph $D$ is a bijection $b: V(D)\rightarrow V(D)$ such that $v\rightarrow u \iff b(v) \rightarrow b(u)$. If we have a game $\mathcal{G}\equiv (D,g,h)$, and additionally $\forall v((h(b(v)),g(b(v)))=(h(v),g(v)))$, we say that $b$ is an automorphism of $\mathcal{G}$. 
Automorphisms form a group $Aut(\mathcal{G})$. Let $b$ be an automorphism and $f$ a strategy. There are many reasonable criteria by which one could call $f$ ``invariant under $b$.'' Pick your favorite. 

\begin{ques}\label{SameStratQuestion}
    What games (with nontrivial automorphism groups) are winnable via strategies invariant under $Aut(\mathcal{G})$? 
\end{ques}
\begin{ques}
    What groups $G$ have the property that there exists a game $\mathcal{G}$ and winning strategy $f$ invariant under $G$ as a subgroup of $Aut(\mathcal{G})$?
\end{ques}
There are many lovely graphs with rich automorphism groups. E.g. strongly regular graphs, Johnson graphs, cages, snarks, and so forth. This isn't a rigorous question, but we wonder whether there mightn't be some nice algebra involved in hat games on such graphs, or on similarly well-structured digraphs.

\section{The parameters $\mu/\mu_s/\hat{\mu}$ and (di)graph structure}
Although we've said little about the specific parameters $\mu,\mu_s,\hat{\mu}$, there are plenty of interesting open questions pertaining to them. (Again, most references to $\mu$ can be profitably replaced by $\mu_s$ or $\hat{\mu}$.)

\subsection{Bounding on classes}
A common form for hat guessing conjectures, ``the hat guessing number of a class,'' was made notationally precise by \cite{LK23}. For $p$ a (di)graph parameter and $\mathcal{C}$ a class of (di)graphs, we define $p(\mathcal{C})\equiv \sup_{D\in \mathcal{C}} p(D)$. Here are some open questions (on some of which conflicting conjectures have been made.)
\begin{ques}[\cite{BDFGM21, HIP22, LK21, Bra21}]
    What is $\mu(\mathcal{P}lanar)$? Is it $\infty$?
\end{ques}
\begin{ques}[\cite{Bra22, HL20}]
    What is $\mu(\mathcal{O}uterplanar)$?
\end{ques}
\begin{ques}[\cite{HL20, Bra21}]
    What is $\mu(\mathcal{N}o\mathcal{C}_4\mathcal{S}ubgraph)$? Is it $\infty$?
\end{ques}
\begin{ques}[\cite{Bra21}]
    Is it true that $\mu(\mathcal{N}o\mathcal{HS}ubgraph)=\infty$ if and only if $H$ includes a cycle?\footnote{One might also want to study families with multiple forbidden subgraphs, such as arise in treewidth studies. By \cite{Bra21}, we know that if the family of forbidden subgraphs contains a tree (or forest), the bound is finite, and if it contains all cycles $C_k$ for $k>N$ for some $N$, it's also finite. Other cases are open. The question of whether $\mu$ is necessarily unbounded for finite tree-free families is essentially the same as Question \ref{girthquestion}.
    }
\end{ques}
Forbidden \textit{induced} subgraphs (i.e., those achieved by deleting vertices alone) are fascinating in e.g. treewidth studies. For $\mu$, they are not quite as immediately interesting.
\begin{prop}
    Given a graph $H$, $\mu(\mathcal{H}-\mathcal{I}nduced\mathcal{S}ubgraph\mathcal{F}ree)=\infty$ unless $H\in \{P_1,P_2\}$. Given any $t$, $\mu((\mathcal{H},K_t)-\mathcal{I}nduced\mathcal{S}ubgraph\mathcal{F}ree)=\infty$ if $H$ is not complete bipartite. 
\end{prop}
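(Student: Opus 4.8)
The plan is to prove both parts by exhibiting, inside each class, members of two highly symmetric families whose hat guessing number is already known to be unbounded: the complete graphs $K_n$, with $\mu(K_n)=n$ (Proposition~\ref{CzechOnComplete} and the Remark following it), and the complete bipartite graphs $K_{n,n}$, with $\mu(K_{n,n})\to\infty$ (Theorem~7 of \cite{BHKL09}; the stronger polynomial bound of \cite{ABST20} is not needed). The leverage comes from the fact that these families have very restricted induced subgraphs: every induced subgraph of $K_n$ is a complete graph, and every induced subgraph of $K_{n,n}$ is a complete bipartite graph (here I take ``complete bipartite'' to include the edgeless graphs $\overline{K_m}=K_{m,0}$), while $\omega(K_{n,n})=2$. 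I will also use the elementary fact that, for a clique, ``induced subgraph'' coincides with ``subgraph,'' so $G$ has no induced $K_m$ exactly when $\omega(G)<m$.

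For the first statement, suppose $H\notin\{P_1,P_2\}$. If $H$ is not a complete graph, then for every $n$ the graph $K_n$ contains no induced copy of $H$ (its induced subgraphs are all complete), so the $H$-induced-subgraph-free class contains every $K_n$ and hence has unbounded $\mu$. If $H$ is a complete graph, then $H=K_m$ with $m\ge3$ (since $K_1=P_1$ and $K_2=P_2$ are the only excluded complete graphs); now $\omega(K_{n,n})=2<m$, so no $K_{n,n}$ has a $K_m$-subgraph, hence none has an induced $H$, and the class again contains every $K_{n,n}$. These two cases cover all admissible $H$, so $\mu$ of that class is $\infty$.

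For the second statement, fix $t$ and assume $H$ is not complete bipartite; the substantive range is $t\ge3$ (for $t\le2$ the class contains no graph with an edge, and the assertion must be read as vacuous there). Given $t\ge3$, every $K_{n,n}$ is triangle-free, so $\omega(K_{n,n})=2<t$ and $K_{n,n}$ has no induced $K_t$; and since its induced subgraphs are complete bipartite while $H$ is not, $K_{n,n}$ has no induced $H$. Thus $(\mathcal H,K_t)$-induced-subgraph-free contains every $K_{n,n}$, and $\mu(K_{n,n})\to\infty$ finishes it.

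There is no real analytic obstacle; the work is entirely in the case split and in fixing conventions. The one point that needs care is the edgeless graph $H=\overline{K_m}$ in the second statement: under the strict convention that a complete bipartite graph has two nonempty sides, $\overline{K_m}$ would count as ``not complete bipartite,'' yet $(\overline{K_m},K_t)$-induced-subgraph-free is a \emph{finite} class (bounded independence and clique numbers, by Ramsey), so its $\mu$ is bounded — the statement holds only if $\overline{K_m}$ is declared complete bipartite (equivalently, if the edgeless induced subgraphs of $K_{n,n}$ count as complete bipartite), or if the second statement is read with the extra hypothesis that $H$ has an edge. I would state the proposition with that convention made explicit.
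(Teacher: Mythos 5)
Your proof is correct and takes essentially the same route as the paper's: exclude non-cliques $H$ via the family $\{K_n\}$ and exclude cliques $K_{m\ge 3}$ (and the second statement) via the triangle-free family $\{K_{n,n}\}$, whose induced subgraphs are all complete bipartite. Your worry about edgeless $H$ is resolved by the paper's own convention, under which $K_{0,n}$ is a complete bipartite graph, so no extra hypothesis is needed.
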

\begin{proof}
    For the first statement, if $H$ is not a clique, observe that $\mu(K_t)$ is unbounded. If $H$ is a clique, or for the second statement, recall that $\mu(K_{n,n})$ is unbounded. 
\end{proof}
\begin{ques}
    If $H$ is complete bipartite, is $\mu((\mathcal{H},K_t)-\mathcal{I}nduced\mathcal{S}ubgraph\mathcal{F}ree)<\infty$?
\end{ques}
Minors are interesting, though. (For minors, we're allowed to delete edges, delete vertices, and contract edges.) By \cite{Bra21}, we know that $\mu(\mathcal{N}o\mathcal{HM}inor)$ is finite if $\mathcal{H}$ is a tree or a cycle.
\begin{ques}
    Given a graph $H$, what is $\mu(\mathcal{N}o\mathcal{HM}inor)?$ In particular, is it always finite?
\end{ques}

\begin{ques}
    Is $\mu(\mathcal{W}heels)>4$? Is $\mu(\mathcal{C}actuses)>4$? Is $\mu(\mathcal{K}ayak\mathcal{P}addles)>4$?\footnote{It's easy to show that they're all at least 4 by (respectively) $K_4$, the bowtie, and $K\mathbf{P}(3,3,1)$. In fact, I'd guess that $\mu(x,y,z)=3$ if either $x$ or $y$ is $>3$.} What is $\mu(\mathcal{S}eries\mathcal{P}arallel)$? 
\end{ques}

If you want to ask these questions with $\mu_{s>1}$ or $\hat{\mu}$, be aware that $\hat{\mu}$ is unbounded for any family with unbounded $\Delta$.

Some other families it could be interesting to consider might be Apollonian networks, unit-distance graphs,\footnote{``What is the hat guessing number of the plane?''} grids,\footnote{``What is the hat guessing number of $\mathbb{Z}^{2}$?''} hypergrids,\footnote{Cartesian products of paths––``What is the hat guessing number of $\mathbb{Z}^{k}$?''} triangular grids, cubical, Mycielski, and cubic graphs seem noteworth. Laman, modular, CPG, and perfectly orderable graphs could be interesting. 
Relative neighborhood, Urquhart, Gabriel, and Delaunay graphs could move us toward planar; from the other direction one could try apex graphs. Perhaps there's something to be achieved from considering the $k$ in $k$-outerplanar graphs in combination with Bradshaw's result on ``layered outerplanar'' graphs. 

Of course, for classes $\mathcal{C}$ such that $\mu(\mathcal{C})=\infty$, we can still wonder about the order of growth of $\mu$ in terms of $|V(G)|$. This transforms the question into, generally, ``for graph class $\mathcal{C}$, find the least $f_\mathcal{C}$ such that $\mu(G)\leq f_\mathcal{C}(|V(G)|)$ for all $G\in \mathcal{C}$.'' 
Asymptotic behaviors of $\mu$ have been an object of curiosity since \cite{BHKL09}.

\begin{ques}
    For what $b$ is $\mu(K_{n,n})\in \Theta(b(n))$? For what $b_r$ is $\mu(K_{n,n,...,n})\in \Theta(b_r(n))$, where the graph is $r$-partite?
    \end{ques}

\begin{ques}[\cite{GIP20, HIP22}]
    Is it true that for $n\geq 3$, $\mu(K_{n,n})\leq n$?
\end{ques}

\begin{ques}
    Let $G$ be a complete $r$-partite graph on $k$ vertices. Is $\mu(G)$ maximized by $G=K_{\lceil k/r \rceil,...,\lfloor k/r \rfloor}$?
\end{ques}

\begin{ques}
    For which families $\mathcal{C}$ is $\mu(G)\in O(\log |V(G)|)$ for $G\in \mathcal{C}$? In particular, for which $H,k$ is $(\mathcal{H},K_t)-\mathcal{I}nduced\mathcal{S}ubgraph\mathcal{F}ree$ such a family?
\end{ques}

The idea of considering lower bounds for graph classes is dull much of the time––most families include boring things like discrete graphs, i.e. those with $V=[n],E=\emptyset$. 

\subsection{Bounding with functions of other parameters}

Another common form is, ``what does $\mu$ have to do with parameter $p$?''
Some provocative examples (mentioned above) were disproved. Here are some unresolved matters.
\begin{ques}[\cite{BDFGM21, HL20}]
    Is $\mu(G)\geq \chi(G)-1$ for all $G$?
\end{ques}
\begin{ques}[\cite{BDFGM21,HIP22,Bra21,HL20}]\label{girthquestion}
    Is it true that for all integers $m,n$, there exists a graph with $\mu\geq m$ and girth $\geq n$?
\end{ques}
\begin{ques}
    For what $\epsilon$, if any, can we get $\mu <(e-\epsilon)\Delta$? (As a reminder, $e$ is Euler's number. See Proposition \ref{prop:application3}.)
\end{ques}

Here is a general template. 
\begin{ques}
    Does there exist a function $f$ such that $f(p(G))\leq \mu(G)$? How big can $f$ be? Does there exist a function $g$ such that  $\mu(G)\leq g(p(G))$? How small can $g$ be?
\end{ques}
Popular parameters for consideration include degeneracy \cite{ABST20, HL20, BDFGM21, KMS21}, minimal degree \cite{ABST20, Szc17, BDFGM21}, chromatic number \cite{BDFGM21,ABST20}, and girth \cite{BDFGM21,HIP22,Bra21,HL20}. (Citations here include both question-posing and partial results.) We still find these the most interesting and challenging. We also suggest considering genus, thickness, edge-density, page number, fractional chromatic number, list chromatic number, circuit rank, arboricity, boxicity, strength, Colin de Verdière number, and any form of -width or -depth. 

Some may only become interesting or tractable when combined with other parameters. For instance, \cite{BDFGM21} proved that girth and genus together, or order and chromatic number together, bound $\mu$. Further results of that form answer to this question, for parameters $p_1,...,p_n$ of $G$.
\begin{ques}
    Do there exist $f,g$ such that $f(p_1(G),...,p_n(G)) \leq \mu(G)\leq g(p_1(G),...,p_n(G))$? What are the restrictions on the functions $f,g$?
\end{ques}

Perfect graphs are an interesting role model. Just as $\omega(G)\leq \chi(G)$ immediately, we have $\omega(G)\leq \mu(G)$, and equality sometimes holds (for instance, in cliques). In both cases, we want to know: when are they equal? To make sure the answer is interesting––and can't just be spoiled by adding huge cliques to any graph––we specify that it must hold for any induced subgraph. 

\begin{ques}[``Hat-perfect graphs'']
    Let $\mathcal{F}$ be the graph family where $G\in \mathcal{F}$ if and only if $\mu(H)=\chi(H)$ for every induced subgraph $H$ of $G$.
\end{ques}
Finally, even for parameters that are formally shown to be independent from $\mu$, one might still wonder if they still ``generally have something to do with $\mu$.''
\begin{ques}
    If you're trying to maximize $\mu(G)$, and you're offered two graphs of equal order and size, do you (on average) prefer the one with higher diameter, lower diameter, or does it not matter? 
\end{ques}

\subsection{Changing $\mu$ via graph operations}\label{AddingEdgesAndVertices}
Most graph parameters don't change drastically by the addition or removal of a single edge or vertex. $\mu$ doesn't either, not usually. The ``typical'' edge-addition doesn't make much of a difference. Consider the evolution from $\overline{K_n}$ to $K_n$ by adding one edge at a time (in whatever order). We add $\binom{n}{2}$ edges for an increase from $\mu(\overline{K_n})=1$ to $\mu(K_n)=n$, so on average, a new edge increases the hat guessing number by $\frac{2}{n}$. So most of them don't increase $\mu$ at all. 

Yet edge-additions can be arbitrarily dramatic. It was shown in \cite{HIP22} that for fixed $d$ and sufficiently large $n$, $\mu(B_{d,n})=1+\sum_{i=1}^d i^i$. (It's even nicer aesthetically if we allow $0^0=1$.) It was shown in \cite{HT13} that $\mu(K_{d,n})\leq d+1$. You need add only $\binom{d}{2}$ edges to turn  $K_{d,n}$ into $B_{d,n}$. In that process, the average increase in $\mu$ induced by a new edge is at least $\frac{ -d+\sum_{i=1}^d i^i}{\binom{d}{2}}$, which is arbitrarily large for arbitrarily large $d$. Thus: 

\begin{prop}\label{prop:rightherebro}
    For any integer $k\geq 0$, there exist graphs $G,G'$ with $G'\backslash e =G$ for some $e\in E(G')$ and $\mu(G')-\mu(G)>k$.
\end{prop}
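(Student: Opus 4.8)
The statement is exactly the punchline of the paragraph preceding it, so the plan is simply to assemble the two cited facts into a clean quantitative gap. The plan is to fix $k \geq 0$, and then to choose $d$ large enough that the ``book minus its spine-edges'' jump exceeds $k$, then instantiate $G$ and $G'$ as a single edge-deletion pair witnessing a jump bigger than $k$ somewhere along the way from $K_{d,n}$ to $B_{d,n}$.

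First I would recall the two ingredients already available in the excerpt's survey: from \cite{HT13} we have $\mu(K_{d,n}) \leq d+1$ for all $n$ (stated as ``$\mu(K_{n,m})\leq n+1$''), and from \cite{HIP22} we have that for each fixed $d$ there is an $n_0(d)$ such that $\mu(B_{d,n}) = 1 + \sum_{i=1}^d i^i$ for all $n \geq n_0(d)$. Next I would observe the structural fact that $B_{d,n}$ is obtained from $K_{d,n}$ by adding exactly the $\binom{d}{2}$ edges among the $d$ ``spine'' vertices: in $K_{d,n}$ the two sides are the $d$-set and the $n$-set with no edges inside either side, while in $B_{d,n}$ we additionally make the $d$-set a clique. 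So there is a chain of graphs $K_{d,n} = H_0 \subseteq H_1 \subseteq \cdots \subseteq H_{\binom{d}{2}} = B_{d,n}$ where each $H_{j+1}$ is $H_j$ plus one edge. Since $\mu$ is monotone nondecreasing under edge addition (a subgraph of $G'$ with the same vertex-data is $\preceq G'$, so this is immediate from Proposition \ref{monotone}), and since $\mu(H_0) \leq d+1$ while $\mu(H_{\binom{d}{2}}) = 1 + \sum_{i=1}^d i^i$, the total increase along the chain is at least $\sum_{i=1}^d i^i - d$. By pigeonhole, some single step $H_j \to H_{j+1}$ must increase $\mu$ by at least $\lceil (\sum_{i=1}^d i^i - d)/\binom{d}{2}\rceil$.

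Then I would just do the elementary estimate: $\sum_{i=1}^d i^i \geq d^d$, so the average per-edge increase is at least $(d^d - d)/\binom{d}{2}$, which tends to infinity with $d$; pick $d$ so that this quantity exceeds $k$, pick $n \geq n_0(d)$, set $G' = H_{j+1}$ and $G = H_j$ for the pigeonhole index $j$, and note $e = E(H_{j+1}) \setminus E(H_j)$ gives $G'\backslash e = G$ with $\mu(G') - \mu(G) > k$. There is essentially no obstacle here — the only thing requiring the slightest care is making sure the monotonicity of $\mu$ under adding edges is genuinely in hand (it follows from the first bullet of the ease relation together with Proposition \ref{monotone}, applied at each fixed hatness $\star(\mu+1)$ or just directly from the definition of $\mu$ as a $\sup$ over winnable Classic games), and making sure we invoke the $\mu(B_{d,n})$ formula only for $n$ past its threshold. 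Everything else is arithmetic. So really the ``hard part'' is purely presentational: deciding how much of the $K_{d,n} \to B_{d,n}$ edge-count bookkeeping to spell out versus citing it as obvious.
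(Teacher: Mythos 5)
Your proposal is correct and is essentially identical to the paper's argument: both pass from $K_{d,n}$ to $B_{d,n}$ by adding the $\binom{d}{2}$ spine edges, compare $\mu(K_{d,n})\leq d+1$ with $\mu(B_{d,n})=1+\sum_{i=1}^d i^i$, and conclude by averaging/pigeonhole that some single edge addition produces an arbitrarily large jump. You simply spell out the monotonicity and pigeonhole steps that the paper leaves implicit.
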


We can use books to find extreme examples for vertices, as well! Let $B_{d,n}$ have $n$ sufficiently large that $\mu(B_{d,n})=1+\sum_{i=1}^d i^i$. Then, by deleting a single vertex in the central clique, we have $B_{d-1,n}$, with $\mu(B_{d,n})-\mu(B_{d-1,n})=d^d$, which is arbitrarily large for large enough $d$. So similarly:
\begin{prop}\label{prop:alsorightherebro}
    For any integer $k\geq 0$, there exist graphs $G,G'$ with $G'\backslash v =G$ for some $v\in V(G')$ and $\mu(G')-\mu(G)>k$.
\end{prop}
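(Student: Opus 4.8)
The plan is to reuse the book graphs, following exactly the sketch given just before the statement. The key external input is the result of \cite{HIP22} that for each fixed $d$ there is a threshold $n_0(d)$ with $\mu(B_{d,n}) = 1 + \sum_{i=1}^{d} i^i$ for all $n \geq n_0(d)$. First I would record the one structural fact needed: deleting a vertex of the spine (the central clique) of a book returns a smaller book. Concretely, in the adjacency rule ``$i \sim j$ iff $i<d$ or $j<d$'' a spine vertex is one with $i<d$; every non-spine vertex is adjacent to it and to no other non-spine vertex, while the remaining spine vertices stay universal, so restricting the rule to $V(B_{d,n})\backslash v$ becomes ``$i<d-1$ or $j<d-1$'' after relabelling. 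Hence $B_{d,n}\backslash v \cong B_{d-1,n}$.

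Given that, the construction is immediate. Choose $d$ with $d^d > k$ (for instance $d = k+2$), then choose $n \geq \max\{n_0(d),\, n_0(d-1)\}$. Set $G' = B_{d,n}$, let $v$ be any spine vertex, and let $G = G'\backslash v = B_{d-1,n}$. Since $n$ exceeds both thresholds, both hat-guessing numbers are computed by the \cite{HIP22} formula, so
\[
\mu(G') - \mu(G) = \Big(1 + \sum_{i=1}^{d} i^i\Big) - \Big(1 + \sum_{i=1}^{d-1} i^i\Big) = d^d > k,
\]
as required.

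The hard part is essentially nothing beyond invoking \cite{HIP22}; the only points genuinely to check are the routine isomorphism $B_{d,n}\backslash v \cong B_{d-1,n}$ for a spine vertex $v$ and the trivial observation that one page count $n$ can be taken above both $n_0(d)$ and $n_0(d-1)$ at once. This argument covers every $k\geq 0$ uniformly, so the $k=0$ case needs no separate treatment, and it is the exact mirror of the edge version, Proposition \ref{prop:rightherebro}: the same book graphs, read with one fewer edge or one fewer vertex in the spine, drop $\mu$ by an arbitrarily large amount.
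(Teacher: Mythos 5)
Your proof is correct and follows exactly the paper's own route: delete a spine vertex of $B_{d,n}$ (for $n$ large enough that the formula of \cite{HIP22} applies to both $B_{d,n}$ and $B_{d-1,n}$) to obtain $B_{d-1,n}$, and observe that $\mu$ drops by $d^d$, which exceeds $k$ for suitable $d$. The only additions beyond the paper's sketch are the routine verification that spine deletion yields the smaller book and the explicit choice of $n$ above both thresholds, both of which are worth having written down.
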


Our constructions are somewhat brutish, however. Could they be refined?
\begin{ques}
    Does Proposition \ref{prop:rightherebro} hold if we replace $>$ with $=$? How about Proposition \ref{prop:alsorightherebro}?
\end{ques}
We'd guess ``yes,'' and it can probably be attained by controlling the number of pages in a book. 
The constructions also require big graphs. Perhaps that requirement is intrinsic. If so, why?
\begin{ques}
    Suppose $G$ is obtained from $G'$ by deleting a single vertex. What is the maximum value of $\mu(G')-\mu(G)$ in terms of $|V(G)|$? What about in terms of $|E(G)|$ or $|E(G')|$? What if $G$ is obtained by deleting an edge? What if $G$ is obtained by deleting an edge, and we care in terms of $|E(G)|$?
\end{ques}

Finally, it seems like adding a universal vertex is quite powerful. Is it always?

\begin{ques}
    Let $G$ be a graph, and let $G'$ be obtained from $G$ by adding a vertex universal to every vertex of $G$. Is it possible that $\mu(G)=\mu(G')$? Let $G'$ be obtained from $G$ by adding $k$ nonadjacent universal vertices. What is the minimum possible value of $\mu(G')-\mu(G)$? What if the new vertices are adjacent?
\end{ques}

Similar propositions and questions work for digraphs, replacing ``edge'' with ``arc.'' And, of course, these aren't the only intereesting operations. For instance:

\begin{ques}
    Let $D_1$ and $D_2$ be digraphs. For a given notion of product, how does $\mu$ of the product of $D_1$ and $D_2$ relate to $\mu(D_1)$ and $\mu(D_2)$?
\end{ques}
Let $\overline{D}$ denote the complement of $D$, i.e. the digraph defined by $V(\overline{D})=V(D)$ and $E(\overline{D})=V(D)\times V(D)\backslash E(D)$ (except for loops, which we exclude).
\begin{ques}
    For what $f,g$ is $f(|V(G)|)\leq \mu(G)+\mu(\overline{G})\leq g(|V(G)|)$ (for undirected\footnote{There exists an infinite family of digraphs $D_n$ with $|V(D_n)|=n$, $\mu(D_n)=1$, and $D_n$ isomorphic to $\overline{D_n}$: set $V(D_n)=[n]$ and $i\rightarrow j$ if and only if $i<j$} $G$).
\end{ques}

Clearly $g(n)\leq 2n-1$, since $G$ and $\overline{G}$ can't both be cliques. Also, $g(n)\geq n+1$, as shown by $G$ a clique. That's insufficient, though. Consider the bowtie graph $B$. $|V(B)|=5$, and $\overline{B}$ is $C_4$ plus a stray vertex, so $\mu(\overline{B})=3$, so $g(5)\geq 7$. 
We have $f(n)\leq n+1$ by considering $D$ a clique, but that's also insufficient: $\overline{C_5}=C_5$ and $\mu(C_5)=2$, so $f(5)\leq 4$. By \cite{AC22}, $f\in \Omega(n^{1-o(1)})$. We have $g\in \Theta(n)$ and would guess that $f\in \Theta(n)$ too. 

 Another fundamental operation is edge contraction. 
 Contracting an edge of $C_{3k+7}$ increases $\mu$ by 1, but contracting an edge of $C_{3k+6}$ decreases $\mu$ by 1, and contracting an edge of $C_{3k+5}$ does nothing. We can decrease $\mu$ arbitrarily by contracting an edge in the main clique of a book. 
\begin{ques}
    In terms of $|V(G)|$ or $|E(G)|$, what is the maximum value of $\mu(G')-\mu(G)$, where $G$ is obtained from $G'$ by contracting an edge?
\end{ques}
\begin{ques}
    What is the maximum value for $\mu(H)-\mu(G)$, where $H$ is a minor of $G$? 
\end{ques}
\begin{ques}
    If $H$ is a minor of $G$ and $H\neq G$, is $\hat{\mu}(H)<\hat{\mu}(G)$? 
\end{ques}

\begin{ques}
    What are extremal values of $\mu(G)-\mu(H)$, where $G$ and $H$ are related by $Y-\Delta$ transformation?
\end{ques}

\begin{ques}
    What is the relationship between $\mu(G)$, $\mu(L(G))$, and $\mu(L(L(G)))$? (This seems especially tractable for $\hat{\mu}(G)$.)
\end{ques}

\section{Computational complexity aspects}\label{sec:computation}
This very unrigorous section speculates how one might fulfill some aims of Section \ref{OurApproach}. 

In Chapter 0, we alluded to the fact that you can't solve $(K_7,7)$ by brute force, that there are just too many strategies and colorings to check. You might wonder: how many? 
Let $\mathcal{G}\equiv (D, g, h)$ be a Czech game. There are 
\[\prod_{v\in V(D)} h(v)\] 
different colorings. Each sage $v$ has 
\[\binom{h(v)}{g(v)}^{\prod_{\overrightarrow{vw}\in E(D)} h(w)}\]
possible plans, so the group has a whole has 
\[\prod_{v\in V(D)} \binom{h(v)}{g(v)}^{\prod_{\overrightarrow{vw}\in E(D)} h(w)}\]
possible strategies. 

Speaking loosely, for $n=|V(D)|$, the number of colors is on the order of $h^n$, and the number of possible plans is on the order of $h^{gnh^{\Delta^+}}$. 

\begin{ques}
    Roughly circumscribe the family of games that can be practically checked by brute force. 
\end{ques}

These numbers get big fast. For instance, suppose we're playing a rather ordinary Polish game $(K_4, 2,8)$. Then we'd have to check $8^{8^3 \cdot 2\cdot 4}=2^{12,288}$ different strategies. (And check them against $8^4=2^{12}$ colorings each, but at this point, another factor of $2^{12}$ isn't especially impressive.) Yet we know at a glance how to win it. We'd like to develop that power for other games. When is that possible?

\begin{ques}
    Consider the language consisting of winnable Czech games. What's its complexity? How about the language of winnable Polish games, or of winnable Latvian games?
\end{ques}
We might also want to know how hard it is to evaluate a strategy––even if for no other reason than the fact that a winning strategy is a certificate for a winnable game. 
\begin{ques}
    Consider the language consisting of pairs $(\mathcal{G},f)$, where $\mathcal{G}$ is Czech game and $f$ wins $\mathcal{G}$. What's its complexity? How about Polish, Latvian, etc.?
\end{ques}
We'll discuss these questions here. Rigor is held in abeyance: in particular, we don't specify what the encoding scheme is for a game, so we avoid terms like ``2EXPTIME'' or ``NP.''

One might consider the performance of brute-force algorithms. The previous subsection guarantees a certain time-bound, and it's also not too hard to compute a space-bound. 
\begin{ques}
    For your favorite encoding scheme, what complexity classes do the brute-force algorithms restrict our recognition languages to?
\end{ques}

For specific cases, certain sub-problems have received algorithms that solve them efficiently. Current examples include ``Is $(G,2)$ winnable?'' (obvious), ``Is $(G,3)$ winnable?'' (\cite{KL19}), ``Is $(G,r)$ a winnable Ratio game, where $G$ is chordal?'' (\cite{BDO21}), ``Is $(K_n,g,h)$ winnable?" (\ref{CzechOnComplete}), and so forth. We've added a few to this list: ``Is $(\overrightarrow{C}_k,h)$ winnable?'' (\ref{LatvianDirectedCycles}), ``Is $(\overrightarrow{C}_k,\star s,\star h)$ winnable?'' (\ref{LatvianPolish}), ``Is $(T,h)$ winnable?" (\ref{thm: FirstTreeCategorization}/\ref{TreeAlgo2}), and ``Is $(C_k,h)$ winnable?'' (\ref{CategorizedTwoToFour} and \ref{CategorizedFiveAndUp}). 

As discussed above, we'd like to expand the territory of efficiently solved games and/or circumscribe that territory, ideally by showing that some relatively simple games are computationally hard. I suggest Czech games on $P_3$ or $\overrightarrow{C}_3$. If these turn out to be efficiently solvable, perhaps $K_{1,n}$ or $\overrightarrow{C}_n$ is thorny enough. See Subsection \ref{subsec:nontrivczechgames} for rephrasings that might be amenable to computational argument. I have no particular suggestions for Latvian games. 
\begin{ques}
    What's the computational hardness of Czech games on $P_3$? On $\overrightarrow{C}_3$? On $K_{1,n}$? How about $\overrightarrow{C}_n$?
\end{ques}
$K_{1,n}$ might be particularly fruitful. Professor Tanya Berger-Wolf points out that we've translated Czech stars to a packing problem on an integer lattice, which is related by duality to hitting and integer programming.
\begin{ques}
    What are the associated hitting problem and integer programming problem for the combinatorial prism packing problem?
\end{ques}
More generally, note that ``does this strategy win?'' is equivalently ``can I find a disprover?'' which is a specialized graph-coloring problem. There are many computationally hard graph-coloring problems (traditionally dealing with mere proper coloring). 
\begin{ques}
    Is there a hard graph-coloring problem that can be rephrased as a ``does this strategy win?" for a Czech game? How about Polish, Latvian, or Classic? What implications does this have (or not) for the ``is there a winning strategy'' question?
\end{ques}

We should note that \cite{KL19} reduces ``does this strategy work?'' to SAT for certain undirected Classic games. We have not examined it closely.
\begin{ques}
    Is the reduction of \cite{KL19} efficient? Can it be extended to directed Czech games? What does it imply about the complexity classes of problems we're interested in?
\end{ques}

Even without knowing what other languages might reduce to hat games, it's useful to stack and bundle the questions about hat-game complexity by knowing how the different questions relate to one another. 

First off, one might wonder about the complexity of the problem ``given $D$, output $\mu(D)$.'' However, any question of the form ``What is $\mu_s$ for $D$?'' can be answered by repeating the question ``Is $(D,\star s,\star h)$ winnable?'' for every constant $h\leq s|V(D)|$. Thus, the problem ``determine $\mu_s$'' has, for any fixed $s$, an efficient translation into problems of the form ``is this Polish game winnable?''

Now we'll consider which variants of the game can be reduced to one another. 
Obviously, the Classic game is a special case of both the Latvian and fractional games, which are both a special case of the Czech game. Undirected games are special cases of directed games. 
One version we didn't mention requires the addition of a ``guessing graph'' \cite{BHKL09}, a directed multigraph whose vertices are sages and whose arcs $\overrightarrow{uv}$ each indicate ``a chance for $u$ to guess the hat color of $v$.'' As usual, a single correct guess is victory. The Czech game is just the special case of this game where all the guessing graph's arcs are loops.\footnote{Strictly speaking, the text doesn't suggest variable hatness or make it clear that the guessing graph is a multidigraph, but it's an obvious generalization to make.} 

This game is in turn a special case of a variation considered in \cite{KL19} that allows variable hatness, a single guess, and the constraint that $c$ must be equal among certain preannounced sets of vertices.\footnote{Again, it wasn't considered quite this explicitly.} The equivalence from the previous game to this one is as follows. You have a vertex $A$ in the new game for every arc $\overrightarrow{uv}$ of the guessing graph. Suppose $B$ came from $\overrightarrow{xy}$. If there's an arc in the visibility graph from $u$ to $y$, add an arc from $A$ to $B$. If $y=v$, announce the stipulation $c(A)=c(B)$. The equivalence can easily be reversed.

So, allowing $\implies$ to denote ``can be polynomially reduced to,'' we have Classic $\implies$ Polish, Latvian $\implies$ Czech $\implies$ guessing-graph $\iff$ equality-stipulation, and for each version we have undirected $\implies$ directed. (Mostly, this reduction is accomplished by ``inclusion as a special case.'') 
\begin{ques}
   Can any of these $\implies$ relations be reversed?
\end{ques}

Finally, some authors \cite{GIP20} have considered arbitrary restrictions on $c$. Even if we restrict to a single guess, a constant hatness $k$, and graphs consisting only of disjoint copies $P_2$, having \textit{arbitrary} restrictions on $c$ is enough to encode most anything. 
A \emph{distributed guaranteed approximation game} is a quadruple $(S,A,K,G)$, for States of the world, Agents, Knowledge, and Guessing targets. Formally, $S$ and $A$ are sets; $K$ and $G$ are functions $A\rightarrow \mathsf{Part}(S)$; $W$ is a function $S\rightarrow 2^{2^A}$. A \emph{strategy} for a d.g.a.s. is a function $P$ that takes each $a\in A$ to an element of $\mathsf{Hom}(K(a), G(a))$. The game is winnable if and only if, $\forall_S s\exists_A a\forall_{K(a)}k(s\in k \implies s\in \mathbf{P}(k))$. 
A d.g.a.s. can be efficiently encoded as a Classical hat game with arbitrary restrictions on $c$. Details are left to the reader.

All the above makes it seem likely that evaluating the winnableness of a hat game is computationally very hard. Perhaps there are some practical fudges/compromises to make.
\begin{ques}
    Is there an efficient random algorithm to test for winnableness with (e.g.) 3/4 confidence? Can it be iterated to $1-\epsilon$ confidence?
\end{ques}
\begin{ques}
    Over some distribution of games, is there an algorithm whose average-case running time is good?
\end{ques}

\section{Ratio games, probability, and $\hat{\mu}$}\label{subsec:RatioGames}
We have some foundational and definitional queries about Ratio games. For instance, how ``obvious'' or ``natural'' is our particular definition?
One might have \textit{defined} the Ratio game by \ref{RatioMonotone}––that it's winnable if and only if it's no harder than a winnable Czech game. This seems perverse: consider the Ratio game $\mathcal{G}\equiv (K_2,r)$ with $r(v_1)=\pi^{-1}, r(v_2)=1-\pi^{-1}$. There's no winnable Czech game $(K_2,r')$ with $r'\leq r$. Yet $\mathcal{G}$ is a game on a clique with $\sum r(v)\geq 1$. That was the condition for Czech games to win on cliques, and we'd like to be the case for Ratio games, instead of ``either $\sum r(v)>1$ or $\sum r(v)=1$ but they're all rational.'' Definition \ref{def:RatioGames} avoids that nastiness.

Another proposal could be that a Ratio game is winnable if and only if some sequence of winnable Czech games approaches it. (One aesthetic objection is that it feels asymmetrical.)
\begin{defn}\label{winnableseq}
    For a Ratio game $\mathcal{G}\equiv (D,r)$, consider a sequence of Czech games $\mathcal{G}_1\equiv (D,g_1,h_1),\mathcal{G}_2\equiv (D,g_2,h_2),...$ such that $\lim_{n\rightarrow \infty} r_n(v)=r(v)$ for all $v\in V(D)$. If all $\mathcal{G}_i$ are unwinnable, we call this an \textit{unwinnable sequence}. If all $\mathcal{G}_i$ are winnable, we call it a \textit{winnable sequence}.
\end{defn}
 The following easy propositions (whose proofs we leave as an exercise) elucidate the connection between Definitions \ref{def:RatioGames} and \ref{winnableseq}.
\begin{prop}
    If $\mathcal{G}$ is unwinnable, it has an unwinnable sequence. If $\mathcal{G}$ is winnable, it has a winnable sequence. 
\end{prop}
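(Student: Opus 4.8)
I would prove both halves by approximating the real ratio function $r$ by ratios of integers and transporting (un)winnableness across the gap, but the two halves need different tools. For the unwinnable half I would approximate $r$ \emph{from below} and let the ease relation $\preceq$ do all the work. For the winnable half approximating from below is hopeless (a winnable Ratio game can sit exactly on its boundary, as the $(K_2,\pi^{-1},1-\pi^{-1})$ example in the excerpt shows), and $\preceq$ is of no help either, since no Ratio game is $\preceq$ any Czech game: the only clause of $\preceq$ linking the two types sends a Czech game to its \emph{associated} Ratio game, never the other way, so chains starting at a Ratio game reach only Ratio games. Instead I would approximate from above and build the Czech strategies directly, by a random-embedding trick.

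\textbf{Unwinnable half.} Suppose $\mathcal{G}\equiv(D,r)$ is unwinnable. For each $n$ and each $v\in V(D)$ pick a rational $q_n(v)\in(0,r(v)]$ with $r(v)-q_n(v)<1/n$, write $q_n(v)=g_n(v)/h_n(v)$ with $0<g_n(v)<h_n(v)$ (possible since $q_n(v)\in(0,1)$), and set $\mathcal{G}_n\equiv(D,g_n,h_n)$. Then $r_n=q_n\to r$ pointwise, so $(\mathcal{G}_n)$ is a candidate for an unwinnable sequence. Its associated Ratio game is $(D,q_n)$, so $\mathcal{G}_n\preceq(D,q_n)$ by the cross-clause of $\preceq$, and $(D,q_n)\preceq(D,r)$ because $q_n\le r$; transitivity gives $\mathcal{G}_n\preceq\mathcal{G}$. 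Since $\mathcal{G}$ is unwinnable, the contrapositive of Proposition \ref{RatioMonotone} forces every $\mathcal{G}_n$ to be unwinnable, which is what we want. (Approaching from above fails here: raising ratios can only make a game easier, so the $\mathcal{G}_n$ might become winnable.)

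\textbf{Winnable half.} Suppose $\mathcal{G}\equiv(D,r)$ is winnable, via a winning strategy $f$. Fix a large integer $N$, to be the common Czech hatness, and choose independent uniform reals $\rho_v(j)\in[0,1)$ for every $v\in V(D)$, $j\in[N]$. Define a Czech strategy: sage $v$ reads her out-neighbours' colours $(c(u))_{u\in N^+(v)}\in[N]^{N^+(v)}$, forms the ``virtual'' real sight $\tilde c(u):=\rho_u(c(u))$, computes the measurable set $A:=f_v(\tilde c(N^+(v)))$ of measure $r(v)$, and guesses the colours $\{\,j\in[N]:\rho_v(j)\in A\,\}$, padding arbitrarily if there are too few. \emph{Budget:} for fixed $(\rho_u)_{u\ne v}$, $A$ ranges over at most $N^{\Delta^+}$ sets, each of measure $r(v)$, and for each one $|\{j:\rho_v(j)\in A\}|$ is binomial$(N,r(v))$ in the remaining randomness; a Chernoff bound with deviation $t_N=\Theta(\sqrt{N\log N})$ and a union bound over these sets and over the $|V(D)|$ vertices show that with probability tending to $1$ every guess set has size at most $g_N(v):=\lceil Nr(v)+t_N\rceil$, which is $<N$ for $N$ large since $r(v)<1$. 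Fix such a $\rho$. \emph{Winning:} for any Czech colouring $c:V(D)\to[N]$, the real colouring $\tilde c:=(\rho_v(c(v)))_v$ is not a disprover for $f$, so some $v^\ast$ has $\tilde c(v^\ast)\in f_{v^\ast}(\tilde c(N^+(v^\ast)))=A$; hence $c(v^\ast)\in\{j:\rho_{v^\ast}(j)\in A\}$, so $v^\ast$ guesses correctly. Thus $\mathcal{G}_N\equiv(D,g_N,h_N)$ is winnable, and since $g_N(v)/N=r(v)+O(\sqrt{\log N/N})\to r(v)$, letting $N$ run gives a winnable sequence.

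\textbf{Main obstacle.} The only real idea is the random embedding in the winnable half: a given real-valued winning strategy can have guess sets spread wildly across any fixed finite colour partition, so a naïve discretization blows the guess budget, whereas simulating $f$ on \emph{randomly placed} representatives lets Chernoff tame the budget while the embedding preserves ``no disprover'' exactly. The one subtlety worth flagging is measurability: we never need $f$ itself to be a measurable map on sights, only that each individual guess set $f_v(s)$ is measurable, which is built into Definition \ref{def:RatioGames}, so the argument applies to arbitrary winning strategies. Everything else is bookkeeping — choosing the $q_n$ from below in the first half, choosing $N$ large in the second, and recording in each half why the other direction of approximation would fail.
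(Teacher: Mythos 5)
Your proposal is correct. Note first that the paper offers no proof to compare against: this proposition appears in the open-problems chapter with the remark that its proof is ``left as an exercise,'' so your argument stands or falls on its own, and it stands. The unwinnable half is exactly the intended routine exercise: approximate $r$ from below by rationals, observe $\mathcal{G}_n \preceq (D,q_n) \preceq (D,r)$ via the associated-Ratio-game clause and the $r'\geq r$ clause of $\preceq$, and apply the contrapositive of Proposition \ref{RatioMonotone}. Your diagnosis of why the winnable half cannot be done the same way is also right, and is corroborated by the surrounding text: the $(K_2,\pi^{-1},1-\pi^{-1})$ example shows there need be no winnable Czech game below a winnable Ratio game, and the implication ``rational Ratio game winnable $\Rightarrow$ some associated Czech game winnable'' is explicitly left open later in the same subsection, so you genuinely cannot round $r$ up to rationals and quote monotonicity. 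The random-embedding construction you supply instead is sound: conditioning on $(\rho_u)_{u\neq v}$, each of the at most $N^{\Delta^+}$ guess sets $A$ has measure exactly $r(v)$ and $|\{j:\rho_v(j)\in A\}|$ is binomial in $\rho_v$, so Chernoff with deviation $\Theta(\sqrt{N\log N})$ and a union bound keep every guess budget at $g_N(v)=\lceil Nr(v)+t_N\rceil < N$, while the embedding transports any Czech disprover to a Ratio-game disprover of $f$, which cannot exist. Reindexing over large $N$ gives the winnable sequence, and $g_N(v)/N\to r(v)$. The only points worth pinning down in a final write-up are the ones you already flag — measurability of each $f_v(s)$ (built into Definition \ref{def:RatioGames}) and padding undersized guess sets up to exactly $g_N(v)$ — plus the trivial remark that the constant in $t_N$ must be chosen large relative to $\Delta^+$ so the union bound over sights closes.
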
 
\begin{prop}
    Let $\mathcal{G}$ be a Ratio game. Suppose there exists winnable Czech $\mathcal{G}'$ such that $\mathcal{G}'\preceq \mathcal{G}$. Then $\mathcal{G}$ has a winnable sequence. If there does not exist such a $\mathcal{G}'$, then $\mathcal{G}$ has an unwinnable sequence. 
\end{prop}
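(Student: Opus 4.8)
The plan is to prove both halves from a single structural fact: unwinding the transitive closure that defines $\preceq$ shows that, for a Czech game $\mathcal{G}' \equiv (D', g', h')$ and a Ratio game $\mathcal{G} \equiv (D, r)$, we have $\mathcal{G}' \preceq \mathcal{G}$ if and only if $D' \subseteq D$ and $g'(v)/h'(v) \le r(v)$ for every $v \in V(D')$. First I would verify this characterization. The only elementary moves available in the combined ease order are: passing from a Czech game to its associated Ratio game (digraph and ratio unchanged); enlarging the digraph to a supergraph while keeping ratios fixed on the old vertices; multiplying one vertex's $(g,h)$ by a constant (which fixes its ratio); and raising ratios with the digraph held fixed. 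Each of these preserves the displayed condition, and conversely the condition is realized by first raising $r'$ up to $r|_{V(D')}$ and then embedding $D'$ into $D$. The one point needing care — my expected main obstacle — is confirming that no move ever shrinks the digraph or lowers a ratio, so that the transitive closure yields exactly this monotone condition and nothing larger; this is a finite inspection of the four Czech clauses and the two Ratio clauses.

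Next I would record a reduction: the existence of a winnable Czech $\mathcal{G}'$ with $\mathcal{G}' \preceq \mathcal{G}$ is equivalent to the existence of a winnable Czech game \emph{on the full digraph $D$} with every ratio $\le r$. Given a winner on $D' \subseteq D$ with ratios $\le r|_{V(D')}$, I extend it to $D$ by assigning each new vertex any rational ratio in $(0, r(v)]$ (possible since $r(v) > 0$); by the first bullet of Proposition \ref{monotone} the original winners keep winning while the added sages guess arbitrarily, so the extension stays winnable and now has all ratios $\le r$. The converse is immediate, since such a game is itself an eligible witness $\mathcal{G}'$ with $D' = D$.

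For the first assertion, suppose a winnable Czech $\mathcal{G}' \equiv (D', g', h')$ with $\mathcal{G}' \preceq \mathcal{G}$ exists. For each $n$ I build $\mathcal{G}_n \equiv (D, g_n, h_n)$ as follows: on $V(D')$ choose rational ratios $q_n(v) \in [\,g'(v)/h'(v),\, r(v)\,]$ with $q_n(v) \to r(v)$, and on $V(D) \setminus V(D')$ choose rational ratios in $(0, r(v)]$ likewise tending to $r(v)$. The restriction of $\mathcal{G}_n$ to $D'$ has ratios $\ge g'/h'$ and so is winnable by Proposition \ref{monotone} (raising ratios is an easing move), whence $\mathcal{G}_n$ itself is winnable by the supergraph-extension move just described. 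Since the ratio vector of $\mathcal{G}_n$ converges pointwise to $r$ by construction, $\{\mathcal{G}_n\}$ is a winnable sequence.

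For the second assertion, suppose no such $\mathcal{G}'$ exists; by the reduction, every Czech game on $D$ with all ratios $\le r$ is unwinnable. I would then take $\mathcal{G}_n \equiv (D, g_n, h_n)$ with rational ratios $r_n(v) \in (0, r(v)]$ chosen so that $r_n(v) \to r(v)$ — using $r_n(v) = r(v)$ when $r(v)$ is rational and a strictly increasing rational sequence below $r(v)$ otherwise. Each $\mathcal{G}_n$ has all ratios $\le r$ and is therefore unwinnable, while $r_n \to r$; hence $\{\mathcal{G}_n\}$ is an unwinnable sequence, which completes the dichotomy.
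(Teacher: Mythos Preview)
Your argument is correct. The paper does not actually prove this proposition --- it explicitly leaves it ``as an exercise'' --- so there is no approach to compare against; your unwinding of the transitive closure to the characterization $D' \subseteq D$ and $g'(v)/h'(v) \le r(v)$ on $V(D')$, followed by rational approximation from below, is precisely the intended routine verification.
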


\begin{prop}
    There exist winnable Ratio games $\mathcal{G}$ that have both winnable and unwinnable sequences. Some have winnable Czech $\mathcal{G}'$ such that $\mathcal{G}'\preceq \mathcal{G}$, and some do not.
\end{prop}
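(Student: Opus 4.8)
The statement to prove has three parts: (i) there exist winnable Ratio games with both a winnable and an unwinnable sequence; (ii) some of these admit a winnable Czech game $\mathcal{G}' \preceq \mathcal{G}$; (iii) some do not. The plan is to produce explicit small examples, drawing on the clique case since that is the only family whose Ratio/Czech winnability we fully understand (Proposition~\ref{CzechOnComplete} and the remark after it).

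\textbf{Part (ii), the easy half.} First I would take any winnable Czech game, e.g. $\mathcal{G}_0 \equiv (K_2, \star 1, \star 2)$ (the Classic game on an edge, winnable since $1/2 + 1/2 \geq 1$), and let $\mathcal{G}$ be its associated Ratio game $(K_2, r)$ with $r \equiv 1/2$. By Proposition~\ref{RatioMonotone} (or directly) $\mathcal{G}$ is winnable, and $\mathcal{G}_0 \preceq \mathcal{G}$ witnesses the ``some do'' clause. For the two sequences: the constant sequence $\mathcal{G}_n = \mathcal{G}_0$ is a winnable sequence converging to $\mathcal{G}$. For an unwinnable sequence converging to the same $\mathcal{G}$, take $\mathcal{G}_n \equiv (K_2, \star 1, \star (2n{+}1))$ on one vertex scaled so the ratios are $\tfrac{n}{2n+1}$ and $\tfrac{n}{2n+1}$ — more cleanly, let $\mathcal{G}_n$ have $h(v_1) = h(v_2) = 2n$, $g(v_1) = g(v_2) = n-1$, so $r_n(v_i) = \tfrac{n-1}{2n} \to \tfrac12$ while $\sum_i r_n(v_i) = \tfrac{n-1}{n} < 1$, hence $\mathcal{G}_n$ unwinnable by Proposition~\ref{CzechOnComplete}. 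Thus $\mathcal{G}$ has both a winnable and an unwinnable sequence, establishing part (i) simultaneously.

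\textbf{Part (iii), the real content.} Here I would exhibit a winnable Ratio game $\mathcal{G}$ with \emph{no} winnable Czech game below it in $\preceq$. The natural candidate is the clique example flagged in the paragraph just before Definition~\ref{winnableseq}: $\mathcal{G} \equiv (K_2, r)$ with $r(v_1) = \pi^{-1}$, $r(v_2) = 1 - \pi^{-1}$. Since $\sum_i r(v_i) = 1$, $\mathcal{G}$ is winnable (by the Ratio-game analogue of Proposition~\ref{CzechOnComplete}, which the excerpt asserts is the intended behaviour, or by directly running the $I_j$-interval strategy of that proof with real-valued hats). Now suppose some winnable Czech $\mathcal{G}' \equiv (K_2, g', h')$ has $\mathcal{G}' \preceq \mathcal{G}$; by the definition of $\preceq$ for the Ratio case this forces $r'(v_i) \geq r(v_i)$ for $i = 1,2$, i.e. $g'(v_1)/h'(v_1) \geq \pi^{-1}$ and $g'(v_2)/h'(v_2) \geq 1 - \pi^{-1}$. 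Adding these, $r'(v_1) + r'(v_2) \geq 1$, so $\mathcal{G}'$ is winnable — that part is consistent — but I need a \emph{contradiction}, so this particular $\mathcal{G}$ does not work for (iii). The fix: pick $\mathcal{G}$ winnable but \emph{critically} so, where \emph{every} coordinate sits at an irrational ratio and lowering is impossible. Concretely take $\mathcal{G} \equiv (K_2, r)$ with $r(v_1) = r(v_2) = 1/\sqrt{2}$; then $\sum r(v_i) = \sqrt2 > 1$ so $\mathcal{G}$ is winnable, yet for a Czech $\mathcal{G}' \preceq \mathcal{G}$ I still only get $r'(v_i) \geq 1/\sqrt2$, which is satisfiable by rationals, so again no contradiction. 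The genuine obstruction must come from a graph where winnability is a knife-edge condition that rational ratios overshoot. So instead I would use a graph that is \emph{not} a clique, where the winnability threshold on ratios is itself irrational: for instance a cycle, using $\hat\mu(C_n) \to 4$ and the chordal/independence-polynomial analysis of \cite{BDO21}, to locate a winnable Ratio game all of whose near-approximants with rational data fall on the unwinnable side.

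\textbf{Main obstacle and how I would handle it.} The crux is part (iii): finding a winnable Ratio game $\mathcal{G}$ such that \emph{no} $\mathcal{G}' \preceq \mathcal{G}$ is a winnable Czech game. By the $\preceq$-definition this is equivalent to: the set $\{\mathbf{r}' \in \mathbb{Q}^{V(D)} \cap \prod(0,1) : \mathbf{r}' \leq \mathbf{r},\ (D,\mathbf{r}')\text{ winnable}\}$ is empty while $(D,\mathbf{r})$ is winnable. For a clique this set is essentially always nonempty (rationals are dense below any point of the closed simplex $\sum r_i \geq 1$ that still lies in it, unless $\mathbf r$ is an extreme boundary point with all coordinates pinned — impossible when $n\geq 2$ since we can trade slack between coordinates). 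So \emph{cliques cannot give a (iii)-example}, and I must go to a graph whose winnability region, in ratio-space, has a boundary face on which $\mathbf r$ lies with \emph{no rational point of the region weakly below it} — which happens when the region's defining inequalities (from Shearer/independence-polynomial positivity, Proposition~\ref{prop:shearerforratio}) cut out a set whose lower boundary near $\mathbf r$ is, say, a single irrational point or an irrationally-sloped face pointing into the positive orthant. I would make this rigorous by taking $C_4$ (or $C_5$), computing $Z_{C_k}$ explicitly, and choosing $\mathbf r$ to be a boundary point of the winnable region all of whose rational under-approximants are separated from the region — the monotonicity of $Z_{C_k}$ in each variable (established en route to Proposition~\ref{prop:shearerforratio}) is exactly what makes ``weakly below a boundary point'' land outside the region. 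Filling in the precise coordinates and verifying the polynomial inequalities is the routine-but-fiddly part I would defer; the conceptual step is recognizing that (iii) is impossible on cliques and requires a graph with an irrational boundary face in ratio-space, supplied by the cycle computations of \cite{BDO21}.
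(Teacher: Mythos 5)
Your parts (i) and (ii) are fine: $(K_2,r)$ with $r\equiv 1/2$ is winnable, has the constant winnable sequence, and has the unwinnable sequence with ratios $\frac{n-1}{2n}$, with $(K_2,\star 1,\star 2)$ sitting below it. The problem is part (iii), where you invert the direction of $\preceq$. By definition, $\mathcal{G}'\preceq\mathcal{G}$ makes $\mathcal{G}$ the \emph{easier} game, so for a Czech $\mathcal{G}'$ below the Ratio game $\mathcal{G}$ on the same digraph the constraint is $r'\leq r$, not $r'\geq r$. With the correct direction, the example you discarded --- $(K_2,r)$ with $r(v_1)=\pi^{-1}$, $r(v_2)=1-\pi^{-1}$, which is exactly the example the text supplies just before Definition \ref{winnableseq} --- works immediately: any winnable Czech $\mathcal{G}'\preceq\mathcal{G}$ would need $r'\leq r$ componentwise and $r'(v_1)+r'(v_2)\geq 1=r(v_1)+r(v_2)$ by Proposition \ref{CzechOnComplete}, forcing $r'=r$, which is impossible because $\pi^{-1}$ is irrational. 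Since $\sum_i r(v_i)=1$, this Ratio game is winnable (run the interval strategy from the proof of Proposition \ref{CzechOnComplete} with real-valued hats), and it has both a winnable sequence (rational ratios approaching $r$ from above, summing to more than $1$) and an unwinnable sequence (rational ratios approaching from below, summing to less than $1$). That is the whole of part (iii).

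Your claim in the final paragraph that cliques cannot give a (iii)-example is therefore false, and the reason your argument for it fails is instructive: you say one can ``trade slack between coordinates,'' but $\preceq$ on a fixed digraph demands componentwise domination $r'\leq r$, and when $\sum_i r(v_i)=1$ exactly there is no slack to trade --- every coordinate is pinned, and pinning at an irrational value is precisely the obstruction. Note also that a point with $\sum_i r(v_i)>1$ strictly (your $r\equiv 1/\sqrt2$ attempt) cannot serve for \emph{any} part of this proposition, since it admits no unwinnable sequence at all: any Czech ratios converging to it eventually sum to more than $1$. The escape route through cycles and Shearer's lemma is both unnecessary and unexecuted --- you would additionally have to prove that the Ratio game is winnable \emph{at} the chosen boundary point, which the chordal-graph machinery of \cite{BDO21} does not supply for $C_k$ with $k\geq 4$.
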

The main question, that of equivalence, thus comes down to:
\begin{ques}
    Does there exist an unwinnable game with a winnable sequence?
\end{ques}
It seems also that these problems only arise on the border between winnableness and unwinnableness. 
\begin{ques}
    What, precisely, delineates that border?
\end{ques}

Further questions arise when this thinking about Ratio games as limits of Czech games. 
\begin{ques}
    Let $\mathcal{G}\equiv(D,r)$ be a Ratio game with $r(v)\in \mathbb{Q}$ for all $v$. Suppose every Czech game of which $\mathcal{G}$ is the associated Ratio game is unwinnable. Must $\mathcal{G}$ be unwinnable? 
\end{ques}   

\begin{ques}\label{MeasureEpsilon}
  Let $\mathcal{G}$ be a Ratio game such that, for any $\epsilon>0$, there is a strategy $f_\epsilon$ such that the probability of losing is $<\epsilon$. Must there exist a strategy $f$ such that the probability of losing is $0$? 
\end{ques}
\begin{ques}\label{MeasureZero}
    Let $\mathcal{G}$ be a Ratio game with a strategy $f$ such that the probability of losing is $0$. Must $\mathcal{G}$ be winnable?
\end{ques}

It also behooves us to consider the question implicitly posed by \cite{BDO21}: whether an undirected Ratio game's winnableness (modulo \ref{MeasureZero}) can be totally determined by evaluating $r$ in the multivariate independence polynomial of $G$.
\begin{ques}
    Let $\mathcal{G}$ be a Ratio game on a graph $G$. Prove or disprove: if $Z_G$ is nonvanishing in $\overline{D}_r$, must $\mathcal{G}$ be winnable?
\end{ques}

If that is proven, then we may fairly say that hat-guessing rightness is a generic phenomenon for anticorrelating events on a dependency graph. Perhaps this is true for dependency digraphs as well, though we lack the clean criterion of Shearer's lemma. 
\begin{ques}\label{PolynomialWorksForAllRatioGames}
    Let $\mathcal{G}\equiv (D,r)$ be a Ratio game. If the probability vector $r$ is not good for $D$, must $\mathcal{G}$ be winnable?
\end{ques}

By figuring out Ratio games, one could distinguish ``Czech games that are unwinnable because there isn't enough $r$ spread around in the right places'' from ``Czech games that are unwinnable simply because the hats are too discrete.''

\begin{ques}
    Let $\mathcal{G}$ be a Czech game for which the associated Ratio game is winnable, which nevertheless is unwinnable. What can we say about $\mathcal{G}$?
\end{ques}

The question isn't strictly pertinent to hat guessing, but we should acknowledge that \ref{thm:nodirectedshearers} only eliminates odd directed cycles. 
\begin{ques}
    If $D$ has no odd directed cycles, do we have ($Q_D$ nonvanishing in $\overline{D}_r$ $\iff$ $r$ is good for $D$)? 
\end{ques}
Finally, $\hat{\mu}$ intrigues us. 

\begin{ques}
    For $\Delta(G) \leq k$, what is $\sup \frac{\hat{\mu}(G)}{\mu(G)}$? Is it attained? Can it be attained or approached by a sequence of trees?
\end{ques}

\begin{ques}
    Does adding an edge or a leaf to a graph always increase $\hat{\mu}$?
\end{ques}

\begin{ques}
    Are cliques the only connected graphs\footnote{We can't say ``digraphs'' here, since we have $\mu(\overrightarrow{C}_k)=\hat{\mu}(\overrightarrow{C}_k)=2$.} for which $\hat{\mu}(G)=\mu(G)$?
\end{ques}

\begin{ques}
    What is $\sup \frac{\hat{\mu}}{\Delta}$? $e$?
\end{ques}
\begin{ques}
    Let $D_n$ be an infinite sequence of graphs and $g(n)$ a function. Is it true that, if 
    $\underset{n\rightarrow \infty }{\lim \inf} \mu(D_n) \geq g(n)$ and $|V(D_n)|\rightarrow \infty$, then $\underset{n\rightarrow \infty }{\lim \inf} \max_f \mathbf{P}(f \text{ wins } (1,g(n)))=1$? Does it hold if we replace $\inf$ with $\sup$? What if we eliminate $\inf$?
    \end{ques}
    
    \begin{ques}
        Fix $k\in \mathbb{N}$, and suppose that a (strongly connected, nontrivial) Latvian game $(D,h)$ is winnable with $h\geq k$ $h\neq \star k$. Must $\hat{\mu}(D)$ be $>k$?
    \end{ques}

    (One could formulate similar questions involving $\mu_s$ for various $s$.)

\section{Constructors}\label{Constructorquestions}
The fact that our constructors  (as we've said) allow us to polynomially build new winning strategies on composite graphs. This intuitively warns me that the constructors can be loose: probably we could get better strategies if we spent more time on them. In \cite{KLR21b}, they showed that the single-point product constructor wasn't tight. Specifically, they showed that although the games on $K_5$ with hatnesses $(4,5,5,5,6)$ couldn't be made any more difficult, their product on the hatness-6 vertices \textit{could} be made more difficult: specifically, the hatness of that central vertex could be increased from 36 to 37. That's not much, but we don't have a good sense of how big these discrepancies could get. 
\begin{ques}
    How tight/loose are our constructors? Can we tighten them?
\end{ques}
\begin{ques}
    Is there an analogue to Theorem 2.3 of \cite{LK23} for some of our constructors?
\end{ques}
\begin{ques}
    How may the constructors presented in this thesis be altered to include reduction, and is that useful at all?
\end{ques}
\begin{ques}
    How might the other constructors of \cite{KLR21b} generalize to directed Czech games?
\end{ques}
\begin{ques}
    If we're attaching a universal vertex $x$, the changes we can make are much more drastic if $N(x)$ is a clique. Can we formulate any gains if $N(x)$ is dense but not quite a clique?
\end{ques}

\begin{ques}\label{tantamounttodegen}
    Does there exist a $b$ such that, if $\deg(v)\leq k$ and $h(v)>b(k)$, $v$ can be deleted without changing the outcome of any undirected Latvian game?
\end{ques}
If the answer to Question \ref{tantamounttodegen} is ``yes,'' then $\mu(G)\leq b(d)$, where $d$ is the degeneracy. So if $b$ exists, $b(d)\geq 2^{2^{d-1}}$ by \cite{HL20}. 
\begin{ques}\label{trianglefree}
    Does there exist an $f$ such that, if $\deg(v)\leq k$, $h(v)>f(k)$, and $N(v)$ is independent, $v$ can be deleted without changing the outcome of any undirected Latvian game?
\end{ques}
Question \ref{trianglefree} would have the same effect, but only for triangle-free graphs. 

\begin{ques}
    What gains are there to be made by analyzing undirected games (especially Latvian games) with the following scheme?
    \begin{itemize}
        \item Find a tree decomposition $(T,\chi)$ for $G$ that somehow suits your purposes.
        \item Determine how $\mathcal{G}$ (or a harder game) can be constructed from the maximal Latvian game $h(v)=2^{\deg(v)}$ on $T$ and several smaller games $\mathcal{G}_i$.
        \item Profit.
    \end{itemize}
\end{ques}

\begin{ques}
    Let $\mathcal{G},\mathcal{H}$ be winnable games on digraphs $D,F$ with $V(D)=V(F)$ and $E(D)\cap V(F)=\emptyset$. Is there a constructor that allows us to create a winnable game $\mathcal{J}$ played on $D\cup F$ with higher $h$ and/or lower $g$ than either $\mathcal{G}$ or $\mathcal{H}$?
\end{ques}
If such a thing were possible, our understanding of trees and cycles would be very useful, since the study of decomposing edge-sets into graphs of simple families is quite rich. 

\begin{ques}
    In general, does there exist a winnability-preserving constructor $\mathcal{G}\mapsto \mathcal{G}'$ with $D=D'$ and $r\not \leq r' \not \leq r$?
\end{ques}

\section{Miscellaneous}

\begin{ques}[\cite{BHKL09}]
    For an integer $k>2$, are there infinitely many edge-critical graphs with $\mu=k$?
\end{ques} 
\begin{ques}[\cite{HT13}]
    ``Given cardinals (possibly finite, treated as sets) $c,m,k$, with $k<c$, what is the smallest size of a family $F$ of functions from $m$ to $c$ such that, for  every subset $A$ of $m$ of size $k$, $f|_A$ is one-to-one for some $f \in F$?''
\end{ques}

Hardin and Taylor define $P_{k}(n,m)$ as the set of digraphs on $n$ vertices with a strategy for the $k$-hats Classic game guaranteeing that at least $m$ of the sages guess right. (We've concerned ourselves with $m=1$.) The very general family of questions is: 
\begin{ques}[\cite{HT13}]\label{ques:howmanycanweget}
    What is $P_{k}(n,m)$ for various values of $k,n,m$?
\end{ques}

Grytczuk conjectured the following: 
\begin{ques}[\cite{Szc17,BDFGM21}]
    Prove or disprove: if $h(v)\leq \deg^-(v)$ for all $v\in V(G)$, then $\mathcal{G}$ is winnable. 
\end{ques}

The $\mu$ of random graphs has also received some attention \cite{BDFGM21, AC22, WC23}.
\begin{ques}[\cite{HL20}]
    Let $p(n)$ be such that $np(n)\rightarrow \infty$ as $n\rightarrow \infty$. Is it true that for every constant $C$, we have $\mu(\mathcal{G}(n,p(n)))>C$ with high probability?
\end{ques}
\begin{ques}
    Consider a function $b(n)$. (Even a constant would be quite interesting.) What is the threshhold value for the event $\mu(\mathcal{G}(n,p(n)))\geq b(n)$?
\end{ques}
\begin{ques}[Posed by Noga Alon]
    Can we find a better upper bound for $\mu(\mathcal{G}(n,p))$? In particular, one that's not based on the chromatic number?
\end{ques}
Perhaps the above questions can be attacked using graph martingales as in Chapter 7 of \cite{AS08}.
Broadly, stochastic processes of $\mu(G)$ seem interesting. 
\begin{ques}
    The results on Latvian cycles seem to suggest that ``smoothness'' matters. You don't want your hatness-2 vertex sandwiched too closely between hatness-4 vertices; that's more important than the average hatness of the graph. Hatness-5 vertices are especially destructive. Or, for paths, (2,4,...,4,2) is winnable, but (2,3,...,3,5,3,...,3,2) is unwinnable. Can this be formalized? What's going on? Does it generalize? 
\end{ques}

That's all I have. Thanks for reading.  Go in peace.

\end{document}